\documentclass[a4paper,11pt]{article}

\addtolength{\textwidth}{2cm}
\addtolength{\hoffset}{-1cm}
\addtolength{\textheight}{2cm}
\addtolength{\voffset}{-1cm}

\usepackage{amsmath}
\usepackage{amsthm}
\usepackage{amssymb}
\usepackage{bbm}
\usepackage{hyperref}
\usepackage{lscape}
\usepackage{stackengine}
\usepackage{xcolor}

\numberwithin{equation}{section}

\bibliographystyle{plain}

\newtheorem{coro}{Corollary}
\newtheorem{thm}[coro]{Theorem}

\newtheorem*{nb}{Remark}

\newtheorem{lem}{Lemma}[section]
\newtheorem{defi}[lem]{Definition}
\newtheorem{prop}[lem]{Proposition}
\newtheorem{cl}[lem]{Claim}
\newtheorem{cor}[lem]{Corollary}

\newcommand{\ssun}{s_{10}}
\newcommand{\deltaun}{\delta_{10}}

\newcommand{\sinit}{s_{11}}

\newcommand{\AAzero}{A_{12}}
\newcommand{\Szero}{s_{12}}

\newcommand{\sszeroun}{s_{13}}

\newcommand{\sszerodeux}{s_{14}}

\newcommand{\Cdix}{C_{15}}
\newcommand{\sdix}{s_{15}}

\newcommand{\ssquatre}{s_{16}}

\newcommand{\scinco}{s_{17}}

\newcommand{\squatre}{s_{18}}

\newcommand{\bMsix}{M_{19}}
\newcommand{\bAsix}{A_{19}}
\newcommand{\etazerozero}{\eta_{19}}
\newcommand{\squatro}{s_{19}}

\newcommand{\bAun}{A_{20}}
\newcommand{\etaun}{\eta_{20}}
\newcommand{\bsun}{s_{20}}

\newcommand{\donzeb}{\delta_{21}}
\newcommand{\sonzeb}{s_{21}}

\newcommand{\squatrep}{s_{22}}

\newcommand{\bMsixp}{M_{23}}
\newcommand{\bAsixp}{A_{23}}
\newcommand{\etazerop}{\eta_{23}}
\newcommand{\sonzep}{s_{23}}

\newcommand{\bAunp}{A_{24}}
\newcommand{\etaunp}{\eta_{24}}
\newcommand{\bsunp}{s_{24}}

\newcommand{\Avingt}{A_{25}}
\newcommand{\deltavingt}{\delta_{25}}
\newcommand{\mvingt}{m_{25}}
\newcommand{\svingt}{s_{25}}

\newcommand{\Avingtetun}{A_{26}}
\newcommand{\svingtetun}{s_{26}}

\newcommand{\Czerop}{C_{27}}

\newcommand{\Chuit}{C_{28}}

\newcommand{\Msix}{M_{29}}
\newcommand{\bAsixs}{A_{29}}
\newcommand{\etazero}{\eta_{29}}
\newcommand{\ssix}{s_{29}}
\newcommand{\Msixp}{M_{29}'}
\newcommand{\Chuitp}{C_{29}}
\newcommand{\uMsix}{M_{29}''}

\newcommand\1{\mathbbm 1}
\newcommand\barbelow[1]{\stackunder[1.2pt]{$#1$}{\rule{.8ex}{.075ex}}}
\newcommand{\Id}{{\mathop{\rm Id}}}
\newcommand{\m}[1]{\mathbbm{#1}}
\newcommand{\q}[1]{\mathcal{#1}}

\newcommand{\suno}{0}
\newcommand{\sunos}{\sigma^*}
\newcommand{\sdue}{\sigma_1}
\newcommand{\stella}{s_*}

\newcommand{\Sun}{{S_0}}
\newcommand{\Sdeux}{{S_1}}
\newcommand{\Strois}{{S_2}}
\newcommand{\Aquatre}{{A_3}}
\newcommand{\equatre}{{\eta_3}}
\newcommand{\Squatre}{{S_3}}
\newcommand{\ecinq}{{\eta_4}}
\newcommand{\Asept}{{A_5}}
\newcommand{\esept}{{\eta_5}}
\newcommand{\Ssept}{{S_5}}
\newcommand{\ehuit}{{\eta_6}}

\title{\textbf{On degenerate blow-up profiles for the subcritical semilinear heat
    equation}}
\author{Frank Merle\\
{\it \small CY Cergy Paris Universit\'e and IHES}\\
Hatem Zaag\\
{\it \small 
  Universit\'e Sorbonne Paris Nord,  LAGA, CNRS (UMR 7539)},\\
 {\it \small F-93430, Villetaneuse, France}
}

\begin{document}

\maketitle

\begin{abstract}
We consider the semilinear heat equation with a superlinear power
nonlinearity in the Sobolev subcritical range. We construct a 
solution which blows up in finite time only at the origin, with a
completely new blow-up profile, which is cross-shaped. Our method
is general and extends to the construction of other 
solutions blowing up only at the origin, with a large variety of
blow-up profiles, degenerate or not.
\end{abstract}

\medskip

{\bf MSC 2010 Classification}:  
35L05, 
35K10,   	
35K58,   	
35B44, 35B40


\medskip

{\bf Keywords}: Semilinear heat equation, blow-up behavior, blow-up profile.

\section{Introduction}

We consider the following subcritical semilinear heat equation
\begin{equation}\label{equ}
\partial_t u = \Delta u +|u|^{p-1}u,
\end{equation}
where $u:(x,t)\in \m R^N \times[0,T)\to \m R$, $T>0$,
\begin{equation}\label{condp}
p>1\mbox{ and } (N-2)p< N+2.
\end{equation}
We consider a solution  $u(x,t)$ blowing up in finite time $T>0$:
\[
\|u(t)\|_{L^\infty}\to \infty\mbox{ as }t\to T,
\]
and $a\in \m R^N$ a blow-up point of $u(x,t)$: $|u(a,t)|\to \infty$ as
$t\to T$.\\
From Giga and Kohn \cite{GKiumj87}
and Giga, Matsui and Sasayama \cite{GMSiumj04}, we know that all
blow-up solutions are Type 1 in the subcritical case:
\begin{equation}\label{type1}
\forall t\in [0,T),\;\;\;\|u(t)\|_{L^\infty}\le C(T-t)^{-\frac 1{p-1}}
\mbox{ for some }C>0.
\end{equation}

In order to simplify the exposition, we assume
\[
N=2
\]
and focus on the simplest case of the new profiles we handle in this
paper (see \eqref{A4} below).
Other examples for $N=2$ and some extensions to the
case $N\ge 3$ are given in Section \ref{secext}.
Following  Giga and Kohn in \cite{GKcpam85}, we introduce
\begin{equation}\label{defw}
w_a(y,s) = (T-t)^{\frac 1{p-1}}u(x,t) \mbox{ where } y = \frac{x-a}{\sqrt{T-t}}\mbox{ and }s=-\log(T-t).
\end{equation}
From Giga and Kohn \cite{GKcpam89}, we know that up to replacing $u$
by $-u$, we have
\begin{equation}\label{conv}
w_a(y,s) \to \kappa\equiv(p-1)^{-\frac 1{p-1}}\mbox{ as }s\to \infty,
\end{equation}
uniformly on compact sets.
According to Vel\'azquez \cite{Vtams93} (see also Filippas and Kohn
\cite{FKcpam92} together with Filippas and Liu \cite{FLihp93}), we may refine that
convergence and obtain the following ``blow-up profile'' $Q(y)$ such that
\begin{equation}\label{defQ0}
  w_a(y,s) -\kappa\sim Q(y,s) \mbox{ as }s\to \infty
\end{equation}
uniformly on compact sets,  with:\\
- either 
\begin{equation}\label{A2}
  Q(y,s) = -\frac \kappa{4ps}\sum_{i=1}^l h_2(y_l),
 \end{equation}
where $l=1$ or $2$, after a rotation of coordinates; keeping only the
leading terms in the polynomials involved in $Q(y,s)$, we obtain the following
quadratic form 
\begin{equation}\label{defmult2}
B(y) = \frac \kappa{4p} \sum_{i=1}^l y_i^2
\end{equation}
which is non zero and nonnegative;\\
- or 
\begin{equation}\label{Am}
  Q(y,s) =  -e^{-(\frac m2-1)s}\sum_{j=0}^m C_{m,j}h_{m-j}(y_1)h_j(y_2)
\end{equation}
as $s\to \infty$, for some even integer $m=m(a)\ge 4$, where $y=(y_1,y_2)$, $h_j(\xi)$ is the rescaled Hermite polynomial defined by 
\begin{equation}\label{defhj}
h_j(\xi) = \sum_{i = 0}^{\big[j/2\big]} \frac{j!}{i!(j - 2i)!}(-1)^i\xi^{j - 2i},
\end{equation}
and the multilinear form (obtained by keeping only the leading terms
of the polynomials of $Q(y)$)
\begin{equation}\label{defmult}
B(y) = \sum_{j=0}^m C_{m,j} y_1^{m-j}y_2^j
\end{equation}
is also non zero and nonnegative.

\medskip

If the origin is the only zero for the
multilinear form $B(y)$ defined in \eqref{defmult2}
and \eqref{defmult}, we are in the non-degenerate case. If not, we
are in the degenerate case. Accordingly, the corresponding blow-up
profile $Q(y)$ given in \eqref{A2} or \eqref{Am} will be said to be
non-degenerate or degenerate.

\medskip

Following the classification in \eqref{defQ0}, \eqref{A2} and
\eqref{Am}, a natural question arises: is $a$ an isolated blow-up
point or not?

\medskip

In the non-degenerate case, we know from Theorem 2 page 1570 in
Vel\'azquez \cite{Vcpde92} that $a$ is isolated. In particular, for any $b$ in a small ball
centered at $a$ with $b\neq a$, $u(b,t)$ has a finite limit denoted by
$u(b,T)$, as $t\to T$, with the following equivalents as $b\to a$:
\begin{align*}
u(b,T) &\sim \left[\frac{(p-1)^2}{8p}
  \frac{|x-a|^2}{|\log|x-a||}\right]^{-\frac 1{p-1}}&&\mbox{ if \eqref{A2}
  holds  with }l=2;\\
  u(b,T) & \sim \left[ \frac{(p-1)^2}\kappa B(x-a)\right]^{-\frac
           1{p-1}} &&
 \mbox{ if \eqref{Am} holds with } B(y)>0 \mbox{ for }y\neq 0.
\end{align*}

In the degenerate case, the situation is less clear. In fact, the only
known examples are ``artificial'', in the sense that the
solution depends only on one space variable, say $\omega\cdot x$ where
$\omega \in \m S^1$. A radial solution blowing-up outside the origin
in 2 dimensions gives rise to a degenerate situation too.
n these two examples, the blow-up point is not
isolated. Apart from these trivial examples, no more solutions with a
degenerate profile are known.

\medskip

Following this, we wonder whether there exists a solution obeying
\eqref{defQ0} such that \eqref{Am} holds for some $m\ge 4$ with a
degenerate multilinear form in \eqref{defmult} and an isolated blow-up
point. 
In this paper, we provide such an example, which is the first ever in
the subcritical range (see Theorem \ref{th1} below). Note that in the
supercritical case, Merle, Rapha\"el and Szeftel have already provided
in \cite{MRSimrn20}  an example of a signle-point blow-up solution with a
degenerate anisotropic profile, strongly relying on the existence of a stationary
solution to equation \eqref{eqw} below which decays to zero at infinity
(such a solution doesn't exist in the subcritical range).

\subsection{The existence question: state of the art and difficulties
  in the degenerate case}\label{secmatano}

We review here the question of the existence of blow-up solutions
obeying \eqref{A2} and \eqref{Am}.
Let us first mention that in the one dimensional case, the question
was positively answered by Bricmont and Kupiainen in \cite{BKnonl94} (see
also Herrero and Vel\'azquez \cite{HVdie92} for the case \eqref{Am}
with $m=4$).

\medskip

Let us go back to the two dimensional case and first focus on the non-degenerate case. The only examples we know
concern the case \eqref{A2} with $l=2$, 
thanks to Bricmont and Kupiainen 
\cite{BKnonl94} together with Merle and Zaag \cite{MZdmj97}. Such a
behavior is known to be stable with respect to perturbations in initial
data from \cite{MZdmj97} together with Fermanian, Merle and Zaag
\cite{FMZma00} and \cite{FZnonl00}.
Note that Herrero and Vel\'azquez showed the genericity of such a
behavior in \cite{HVasns92} and \cite{HVcras92} dedicated to the one
dimensional case, and in a non published document in higher space dimensions. 
We would like to mention also the solutions constructed by Nguyen
and Zaag in \cite{NZasens17}, showing a refinement of \eqref{A2} with
$l=2$.
As for the case \eqref{Am} with a non degenerate multilinear
form in \eqref{defmult}, no example is available, up to our knowledge,
not even for the symmetric cases with $B(x)=|x|^m$ or
$B(x)=x_1^m+x_2^m$ with an even $m\ge 4$.

\medskip

Concerning the degenerate case in \eqref{Am} with a degenerate
multilinear form in \eqref{defmult},
the only cases we know are the one-dimensional trivial cases we have
just mentioned above, showing a non-isolated blow-up point.
Apart from these trivial examples, no more solutions with a
degenerate profile are known.

\medskip

As stated earlier, the main goal of the paper is to provide an example
of a blow-up solution obeying \eqref{Am} in the degenerate case with
an isolated blow-up point.

\medskip

Let us mention that the question of having non trivial solutions with  degenerate
profiles was mentioned by Hiroshi Matano, 
because of the failure of  the formal computation.
As a matter of fact, the strategy used in the non degenerate case is
ineffective in the degenerate case, as we will explain below.

\medskip

Indeed, that strategy consists in
working
in the similarity variables setting
\eqref{defw}, where equation \eqref{equ} is transformed into the
following equation satisfied by $w_a$ (or $w$ for short): for all
$y\in \m R^2$ and $s\ge -\log T$,
\begin{equation}\label{eqw}
\partial_s w = \Delta w-\frac 12 y \cdot \nabla w -\frac w{p-1} +|w|^{p-1}w.
\end{equation}
For example, the idea used by Bricmont and Kupiainen in \cite{BKnonl94}
to construct their example in one space dimension with \eqref{Am}
which holds with $m=4$ 
consists in linearizing equation \eqref{eqw} around the following profile:
\begin{equation}\label{profy4}
(p-1+e^{-s}|y|^4)^{-\frac 1{p-1}}.
\end{equation}
Accordingly, if we intend to construct a solution obeying the
following degenerate estimate: 
\begin{equation}\label{A4}
w_0(y,s) -\kappa \sim - e^{-s}h_2(y_1)h_2(y_2) \mbox{ as }s\to \infty,
\end{equation}
uniformly on compact sets, a naive idea would be to linearize equation
\eqref{eqw} around the following profile:
\begin{equation}\label{profnaif}
\left(p-1+\frac{(p-1)^2}\kappa e^{-s}y_1^2 y_2^2\right)^{-\frac 1{p-1}},
\end{equation}
which already has the same expansion \eqref{A4} as the solution we intend to
construct.

\medskip

Unfortunately, a big problem arises with this profile, since it
doesn't decay to $0$ as $|y|\to \infty$, unlike the profile in
\eqref{profy4}. In fact, this smallness of the profile \eqref{profy4}
at infinity combined with the stability of the zero solution of
\eqref{eqw} is essential
in the control of the solution at infinity in space. In other words, with
the profile \eqref{profnaif}, we can't get such a control, and the
naive idea collapses, unless we can manage to get this decaying property.
Note that with the naive profile \eqref{profnaif},
the corresponding (approximate) solution is given by
$u(x,t)=(T-t)^{-\frac 1{p-1}}\left(p-1+\frac{(p-1)^2}\kappa
  \frac{x_1^2 x_2^2}{T-t}\right)^{-\frac 1{p-1}}$, which blows up
everywhere on the axes  $x_1=0$ and $x_2=0$.

\subsection{Main result of the paper}
\label{secstrat} 

In order to construct a solution obeying \eqref{A4}, following the previous subsection, a natural idea would be to refine
the expansion \eqref{A4} in order to get higher order terms ensuring
some decaying along the axes $y_i=0$, which are the degenerate
directions of the naive profile \eqref{profnaif}. Such a refinement is
given below in Lemma \ref{lemtay}. It shows that the term
$C_{6,0}(h_6(y_1)+h_6(y_2))$ may ensure that decaying property,
provided that we take $C_{6,0}=-\delta$ with $\delta>0$ and
large
(note that the parameter $C_{6,0}$ is free in the expansion of Lemma \ref{lemtay}).
Keeping only the leading terms of the polynomials, this leads
to the following refinement of the profile \eqref{profnaif}:
\begin{equation}\label{defPhi}
  \Phi(y,s)=    \left [p-1+\frac{(p-1)^2}\kappa\left( e^{-s}y_1^2y_2^2 + \delta
        e^{-2s}(y_1^6+y_2^6)\right)\right]^{-\frac 1{p-1}},
  \end{equation}
  as we will explain more in detail in Section \ref{secfor} below.
This refined version clearly has the decaying property at infinity
in space, which enables us to rigorously prove the existence of a
solution obeying the degenerate profile \eqref{A4} and blowing up only
at the origin, answering Matano's question: 
\begin{thm}[A blow-up solution for equation \eqref{equ} with a
  cross-shaped blow-up profile] \label{th1} When $N=2$, there exists
  $\delta_0>0$ such that for any $\delta\ge \delta_0$, there exists a solution $u(x,t)$ to
  equation \eqref{equ} which blows up in finite time $T$ only at the
  origin, with:\\
  (i) (Inner profile)
\begin{equation}\label{expw}
w_0(y,s) -\kappa \sim - e^{-s}h_2(y_1) h_2(y_2)\mbox{ as }s\to \infty,
\end{equation}
in $L^2_\rho(\m R^2)$ with
\begin{equation}\label{defro}
\rho(y) = \exp\left(-\frac{|y|^2}4\right)/(4\pi).
\end{equation}
and uniformly on compact sets.\\
(ii) (Intermediate profile):  For
  any $K>0$, it holds that
   \[
    \sup_{e^{-s}y_1^2y_2^2+\delta e^{-2s}(y_1^6+y_2^6) <K}
    \left|w(y,s)-\Phi(y,s)\right|\to 0 \mbox{ as } t\to T,
    \]
    where $\Phi(y,s)$ is defined in \eqref{defPhi}.\\
(iii) (Final profile): For any $x\neq 0$, $u(x,t)$ converges to a
finite limit $u(x,T)$ uniformly on compact sets of $\m
R^2\backslash\{0\}$ as $t\to T$, with
\begin{equation}\label{finalprof}
u(x,T) \sim\left[\frac{(p-1)^2}\kappa (x_1^2x_2^2+\delta(x_1^6+x_2^6))\right]^{-\frac 1{p-1}}\mbox{ as }x\to 0.
\end{equation}
\end{thm}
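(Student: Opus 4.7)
The overall plan follows the strategy introduced by Bricmont--Kupiainen \cite{BKnonl94} and refined by Merle--Zaag \cite{MZdmj97}, adapted to the refined degenerate profile $\Phi$ of \eqref{defPhi}. I would work in similarity variables \eqref{defw} with $a=0$, set $q(y,s) = w(y,s) - \Phi(y,s)$, and study its evolution under equation \eqref{eqw}, which takes the form $\partial_s q = \q L q + V(y,s) q + R(y,s) + N(q)$, where $\q L = \Delta - \frac{1}{2}y\cdot\nabla + 1$ is the Hermite operator in the Gaussian weight $\rho$ from \eqref{defro}, $V(y,s)$ is the potential coming from linearizing $|w|^{p-1}w$ around $\Phi$, $R(y,s)$ collects the defect by which $\Phi$ fails to solve \eqref{eqw}, and $N(q)$ is quadratic in $q$. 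The construction of $\Phi$ is designed so that $R$ is small and, crucially, so that $\Phi - \kappa$ \emph{decays} along the axes $y_1=0$ and $y_2=0$ at infinity, thanks to the correction $\delta e^{-2s}(y_1^6+y_2^6)$ with $\delta$ large.

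Step 1 (Spectral decomposition and shrinking set). The operator $\q L$ has eigenvalues $1-n/2$ for $n\ge 0$, with eigenfunctions the bivariate Hermite polynomials $h_{n-j}(y_1)h_j(y_2)$. The positive modes ($n=0,1$) and the zero mode ($n=2$) are finite-dimensional and unstable/neutral; negative modes ($n\ge 3$) are stable. I would define a time-dependent shrinking set $V_A(s)$ giving pointwise-in-$s$ bounds on the projections $q_{n,j}(s)$ adapted to the expected size of each mode dictated by $\Phi$: in particular, the $n=4$ projection on $h_2(y_1)h_2(y_2)$ must be of size $e^{-s}$, the $n=6$ projection of size $\delta e^{-2s}$, and higher modes must decay suitably. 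I would also include an outer norm controlling $q$ in the region where $|y|e^{-s/2}$ is not small, exploiting the decay of $\Phi$ there.

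Step 2 (Reduction to a finite-dimensional problem). I would parameterize initial data at $s=s_0$ by the finitely many coefficients corresponding to the nonnegative modes $n\le 2$, setting the other components to zero, and prove a priori that if a trajectory leaves $V_A(s)$ at some first time $s_1$, then only the nonnegative-mode projections can saturate their bounds. The technical heart is the estimate in the outer region: it is here that the decay of $\Phi$ along the axes, provided by the $\delta(y_1^6+y_2^6)$ term, is used to close the argument, mirroring the role of \eqref{profy4} in the one-dimensional case. A Brouwer-type topological degree argument on the reduced finite-dimensional map then produces initial data for which no projection ever saturates, hence $q(s)\in V_A(s)$ for all $s\ge s_0$. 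This immediately gives $q(s)\to 0$ in $L^2_\rho(\m R^2)$, and expanding $\Phi$ around $\kappa$ with \eqref{A4} yields statement (i).

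Step 3 (Intermediate and final profiles). For (ii), the membership $q(s)\in V_A(s)$ gives pointwise control of $q$ in the region $\{e^{-s}y_1^2y_2^2+\delta e^{-2s}(y_1^6+y_2^6)<K\}$ by standard parabolic regularity upgrades of the weighted $L^2_\rho$ control, so $w-\Phi\to 0$ uniformly there. For (iii), following \cite{MZdmj97}, I transfer the asymptotics of $w$ in similarity variables to the original coordinates by applying the local well-posedness of \eqref{equ} in a moving frame: for $x\ne 0$, picking $t(x)$ such that $|x|^2=(T-t(x))\cdot(e^{-s}y_1^2y_2^2+\delta e^{-2s}(y_1^6+y_2^6))\big|_{y=x/\sqrt{T-t(x)}}$ lies in the intermediate region, combine (ii) with continuation of $u$ on $[t(x),T)$ to get the equivalent \eqref{finalprof}, and deduce single-point blow-up from the fact that the right-hand side of \eqref{finalprof} is finite for $x\ne 0$.

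The main obstacle is the outer-region analysis in Step 2. In the non-degenerate case, the profile itself decays at infinity and the linearized operator is a small perturbation of $\q L$ everywhere, so the stable modes contract uniformly. In the degenerate setting, even with the correction $\delta(y_1^6+y_2^6)$, the decay of $\Phi-\kappa$ along the axes is only polynomial rather than uniform, and the potential $V(y,s)$ takes distinct asymptotic forms in the four ``petals'' separated by the axes. Closing the estimate requires choosing $\delta$ large enough that the effective linearized operator has negative spectrum on the stable modes in every region, and handling the crossover between the inner, intermediate, and outer regions without losing a factor in the time variable. This is precisely where Matano's formal objection was felt, and where the refinement from \eqref{profnaif} to \eqref{defPhi} is indispensable.
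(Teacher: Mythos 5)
Your overall framework --- spectral decomposition of $\q L$, a shrinking set, and a Brouwer-type reduction on the unstable modes --- matches the paper's template, and you correctly flag the control of $q$ at large $|y|$ as the crux. But that is precisely where your sketch does not close, and it is also where your proposed method diverges from the paper's. You suggest adding a Bricmont--Kupiainen-style ``outer norm'' and closing it by taking $\delta$ large so that the linearized operator contracts in each petal; in your last paragraph you concede you have no concrete mechanism for this and that the potential $V$ takes different forms in the four petals. In fact the paper explicitly abandons the outer-norm strategy for this degenerate profile: even with the $\delta(y_1^6+y_2^6)$ correction, $\Phi-\kappa$ decays only polynomially, anisotropically (much more slowly along the axes than across the petals), and the linearized potential is not a uniformly small perturbation of $\q L$ off compact sets. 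As written, your Step~2 is a restatement of the difficulty, not a proof.

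The paper's actual resolution is quite different and none of its ingredients appear in your outline. An $L^\infty$ constraint $\|q+\varphi\|_{L^\infty}\le 2\kappa$ is placed \emph{inside} the shrinking set $V_A(s)$ (Definition \ref{defvas}), and that constraint is verified by controlling $w_a(\cdot,s)$ for \emph{every} $a\in\m R^2$ via the identity $w_a(y,s)=w_0(y+ae^{s/2},s)$. This runs through: a gradient bound obtained from the Liouville theorem of \cite{MZcpam98}/\cite{MZma00} (Proposition \ref{propgrad}); a partition of $\m R^2$ into three regions $\q R_1,\q R_2,\q R_3$ according to the size of $G_0(a)=\frac{p-1}\kappa\bigl(a_1^2a_2^2+\delta(a_1^6+a_2^6)\bigr)$; the stability of $0$ for \eqref{eqw} in $\q R_1$; the stability of the heteroclinic orbit $\psi(s)=\kappa(1+e^s)^{-1/(p-1)}$ in $\q R_2$; and, in $\q R_3$, an explicit integration of \eqref{eqw} showing $w_a$ is pushed from the neighborhood of the unstable equilibrium $\kappa$ into the heteroclinic regime. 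Two further gaps: (a) you propose fine-tuning only the modes $n\le2$, but because the residual $R$ forces $q$ at rate $e^{-2s}$ while the linear decay of the $n=4$ and $n=6$ modes is only $e^{-s}$ and $e^{-2s}$, those modes are also transverse-outgoing on $\partial V_A(s)$; the paper must fine-tune five parameters $(d_{0,0},d_{2,0},d_{4,0},d_{4,2},d_{6,0})$, reaching up to $n=6$. (b) The paper linearizes around the refined profile $\varphi$ of \eqref{goodprof}, whose numerator $E=1+e^{-s}P+e^{-2s}Q$ is chosen so that the Hermite expansion of $\varphi$ matches the Taylor expansion of $w_0$ from Lemma \ref{lemtay}; linearizing around the crude $\Phi$ of the theorem statement would leave $O(e^{-s})$ discrepancies in the low modes that the shrinking set cannot absorb. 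You also omit the delay-regularizing estimate (Proposition \ref{propdelay}) needed to control the nonlinearity, which is discontinuous in $L^2_\rho$.
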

\begin{nb}
One may convince himself that the profiles in \eqref{defPhi} and
\eqref{finalprof} are cross-shaped, which justifies the title of the
theorem.
\end{nb}
\begin{nb}
 This is the first example of a blow-up solution in the subcritical
 range showing a degenerate
multilinear form in \eqref{defmult} with $m\ge 4$ and an isolated blow-up
point. Note that in the literature, many examples of blow-up solutions with only one
blow-up point are available (see Weissler \cite{Wjde84}, Bricmont and Kupiainen \cite{BKnonl94},
Merle and Zaag \cite{MZdmj97}, Nguyen and Zaag \cite{NZasens17}). Strikingly enough, almost all the known examples
are either radial or asymptotically radial, in the sense that they
approach a radially symmetric blow-up profile. Up to our knowledge,
the only exceptions 
hold in the Sobolev supercritical case, with
the Type 1 solution constructed by Merle, Rapha\"el and Szeftel
\cite{MRSimrn20}
(with a $\frac 1s$ rate in one direction and an exponentially decaying
rate in the other),
and also the Type 2 (i.e. non Type 1 \eqref{type1})
blow-up solution due to Collot, Merle and
Rapha\"el \cite{CMRjams20},
where the profile is anisotropic for both examples. No example is
available in the subcritical range.   
In Theorem \ref{th1},
we provide such an example,
with a non-radial solution blowing up only at the
origin, obeying the behavior \eqref{Am} with a degenerate multilinear
form in \eqref{defmult}, and an anisotropic blow-up profile.
\end{nb}
\begin{nb}
 Note that our method allows also to derive \textbf{new} solutions for
$N=2$ and also $N\ge 3$,
blowing up only at the origin, both in the degenerate and
the non-degenerate cases, showing a non necessarily radial profile in \eqref{Am} and
\eqref{defmult} (see below in Section \ref{secext}).
\end{nb}
\begin{nb}
  As we have just written before the statement of Theorem \ref{th1},
  our strategy relies on the refinement of the goal \eqref{A4} given
  in Lemma \ref{lemtay} below. 
Taking $C_{6,0}=-\delta$ where $\delta>0$ (and large) is
crucial in our argument. As a matter of fact, using our analysis and
the blow-up criterion we proved in \cite{MZcpam98} for solutions of
equation \eqref{eqw}, we show that the construction is impossible in
the case $C_{6,0}>0$. More precisely, if $u(x,t)$ is a symmetric solution (with
respect to the axes and the bissectrices) of equation
\eqref{equ}  blowing up at some time $T>0$ such that $u(0)\in
L^\infty(\m R^2)$ and estimate \eqref{expw} holds, then, estimate
\eqref{dlw0} holds with $C_{6,0}\le 0$ (note that a similar statement
holds without the symmetry assumption).
\end{nb}

\subsection{Extensions of the main result}\label{secext}

Following Theorem \ref{th1}, we would like to mention that our
strategy extends with no difficulty to the construction of solutions
to equation \eqref{equ} blowing up only at the origin with other types of profiles
(as defined in \eqref{defQ0}, \eqref{A2} and \eqref{Am})
summarized
in the following tables, respectively dedicated to the non-degenerate
and the degenerate cases, where $C_0=\frac{(p-1)^2}\kappa$ and
$\delta_0>0$ is large.

\medskip

Note that in the degenerate case, the idea behind the design of the
profiles we are presenting below is simple:\\
- First, considering the classification given in \eqref{defQ0} (and
its natural extension in higher dimensions), we choose the case
\eqref{Am} with some $Q(y)$ and a degenerate multilinear form $B(y)$ in
\eqref{defmult}; in Theorem \ref{th1}, we choose $Q(y)=e^{-s}h_2(y_1)h_2(y_2)$
and $B(y)=y_1^2y_2^2$.\\
- Then, we refine estimate \eqref{defQ0} by exhibiting higher order
terms, and select among them the polynomials which live on the
degenerate directions of $B(y)$; in Theorem \ref{th1}, the degenerate directions are
$y_1=0$ and $y_2=0$, the refinement of \eqref{defQ0} is given below in
\eqref{dlw0} and the polynomials we select in that refinement
of \eqref{defQ0} are $e^{-2s}h_6(y_1)$ and $e^{-2s}h_6(y_2)$.\\
- Finally, we design a profile similar to \eqref{defPhi}, with 2
terms, one corresponding to the degenerate multilinear form $B(y)$ defined in
 \eqref{defmult} (which is
$e^{-s}y_1^2y_2^2$ in Theorem \ref{th1}), and the other to the
polynomials we selected on the degenerate directions of $B(y)$ (which
is $e^{-2s}(y_1^6+y_2^6)$ in Theorem \ref{th1}). The key property of that profile is that
it is decaying to zero in all directions, though with different scales
according to whether we are in the degenerate or the non
degenerate directions of the multilinear form $B(y)$.

\medskip

Here are the 2 tables:
\begin{landscape}
\begin{center}
  \begin{table}
  \begin{tabular}{|c|c|l|}
    \hline
 &&\\    
   $Q(y,s)$
   & Equivalent of $u(x,T)$ as $x\to 0$&Conditions\\
 &&\\    
   \hline
            &&\\
    $-e^{(1-\frac k2)s} \displaystyle\sum_{i=0}^kC_{k,i}h_{k-i}(y_1)h_i(y_2)$
     & $\left[C_0B(x)\right]^{-\frac 1{p-1}}$ with
       $B(x) = \displaystyle\sum_{i=0}^k C_{k,i}x_1^{k-i}x_2^i$
   &
      \begin{tabular}{l}
        $N=2$, $\delta \ge \delta_0$,\\
      $k\ge 4$ is even and\\
$B(x)>0$
       for $x\neq 0$
       \end{tabular}\\
 &&\\
    \hline
          &&\\

$- e^{-(\frac k2-1)s}
\displaystyle\sum_{j_1+\dots+j_N=k}C_{k,j_2,,\dots,j_N}h_{j_1}(y_1)\dots h_{j_N}(y_N)$
     & $\left[C_0B(x)\right]^{-\frac 1{p-1}}$ with
$B(x)= \displaystyle\sum_{j_1+\dots+j_N=k}C_{k,j_2,\dots,j_N}x_1^{j_1}\dots x_N^{j_N}$
   &
      \begin{tabular}{l}
        $N\ge 3$, $\delta \ge \delta_0$,\\
      $k\ge 4$ is even and\\
$B(x)>0$
       for $x\neq 0$
       \end{tabular}\\
 &&\\
    \hline
   \end{tabular}
  \caption{Extensions in the non-degenerate case.}
  \end{table}
\end{center}
\end{landscape}
\begin{landscape}
   \begin{center}
\begin{tabular}{|c|c|l|}
    \hline
 &&\\    
   $Q(y,s)$
   & Equivalent of $u(x,T)$ as $x\to 0$&Conditions\\
 &&\\    
   \hline
 &&\\    
    $-e^{-s}h_2(y_1)h_2(y_2+a_0y_1)$
    & $\left[C_0\left(x_1^2(x_1+a_0x_2)^2+\delta(x_1^6+x_2^6)\right)\right]^{-\frac 1{p-1}}$
    & $N=2$, $\delta \ge \delta_0$, $a_0\in \m R$.\\
    &&\\    
    \hline
    &&\\
    $-e^{(1-\frac k2)s} h_{k_1}(y_1)\dots h_{k_l}(a_l y_1 +b_l y_2)$
    & $\left[C_0\left(x_1^{k_1}\dots (a_lx_1+b_lx_2)^{k_l}+\delta(x_1^{k+2}+x_2^{k+2})\right)\right]^{-\frac 1{p-1}}$
    &
      \begin{tabular}{l}
        $N=2$, $\delta \ge \delta_0$, $k=k_1+\dots+k_l$,\\
      $k_i\ge 2$ is even, $(a_i,b_i) \neq (0,0)$,\\
        the straight lines\\
        $\{y_1=0\}$,...., $\{a_iy_1+b_i y_2=0\}$\\
        are distinct.
      \end{tabular}
      \\
    &&\\
       \hline
    &&\\
    $-e^{(1-\frac k2)s}\displaystyle
    \prod_{i=1}^m\left(\sum_{j\in I_i}h_{2\theta_i}(y_j)\right)$
    &  $\left[C_0\left(
      \displaystyle 
      \prod_{i=1}^m\left(\sum_{j\in I_i}|x_j|^{2\theta_i}\right)
      + \delta(x_1^{k+2} + \dots +x_N^{k+2})
      \right) \right]^{-\frac 1{p-1}}$
 &
       \begin{tabular}{l}
         $N\ge 3$, $\delta \ge \delta_0$,
         $k=\displaystyle\sum_{i=1}^m 2 \theta_i$,\\
         and the set $I_i$ make a partition\\
         of $\{1,\dots,N\}$.
       \end{tabular}\\
    \\
    \hline
 \end{tabular}

  \bigskip
  
  Table 2 : Extensions in the degenerate case.
  \end{center}
\end{landscape}

Following these examples, we would to
 make some comments:\\
- Up to our knowledge, the only known examples such that \eqref{Am}
holds with some even $m\ge 4$ and a non-degenerate form in
\eqref{defmult} hold only in one space dimension. Those examples are
due to Bricmont and Kupiainen \cite{BKnonl94} (see also Herrero and
Vel\'azquez \cite{HVdie92} when $m=4$). Strikingly enough, we couldn't
find a proof in the literature for the existence of a solution with a
multilinear form $B(x) = |x|^4$ or $B(x) = x_1^4+\cdot+x_N^4$ when
$N\ge 2$. We are happy to say that our strategy provides such
examples, as stated in Table 1. By the way, the non-degenerate
multilinear forms $B(x)$ shown in that table need not be symmetric.\\
- The first example in Table 2 shows that the degeneracy directions of
the multilinear form $B(x)$ need not be orthogonal, unlike one may assume
from the constructed example in Theorem \ref{th1}.

 \bigskip

We proceed in several sections to prove Theorem \ref{th1}:\\
- first, in Section \ref{secfor},  we formally derive the profile in similarity variables;\\
- then, in Section \ref{secformu}, we explain our strategy for the
proof;\\
- in Section \ref{secdynw0}, we explain the dynamics of the equation
and suggest the general form of initial data;\\
- in Section \ref{seccontrolwa}, we prove a crucial $L^\infty$ bound
on the solution in similarity variables;\\
- in Section \ref{secmainres}, we conclude the proof of Theorem \ref{th1};\\
- finally, we prove the technical details in Section \ref{sectech}.

\section{Formal determination of the profile in similarity variables} \label{secfor}
In this paper, we ask whether we can have a solution $u(x,t)$ to equation
\eqref{equ} which blows up in finite time $T>0$ at the origin, such that
estimate \eqref{A4} holds, namely with 
\begin{equation*}
w_0(y,s) -\kappa \sim - e^{-s}h_2(y_1)h_2(y_2) \mbox{ as }s\to \infty,
\end{equation*}
uniformly on compact sets.
In order to simplify the calculations, we
will assume that $u(x,t)$ is symmetric with respect to the axes and
the bisectrices, for any
$t\in[0,T)$.

\medskip

As we have already discussed in Section \ref{secmatano}, applying
 the strategy of Bricmont and Kupiainen \cite{BKnonl94} is
ineffective, because the naive profile given in \eqref{profnaif} is
not decaying to zero along the axes $y_i=0$. Following what we wrote
in Section \ref{secstrat}, the idea to refine the naive guess in
\eqref{profnaif} goes through the refinement of the target behavior in
\eqref{A4}, which we give in the following:
\begin{lem}[Second order Taylor expansion]\label{lemtay}
  Following \eqref{A4} and assuming the solution is symmetric with
  respect to the axes and bissectrices, it holds that
  \begin{align}
    w_0(y,s) =&\kappa  - e^{-s} h_2h_2
    +e^{-2s}\left\{- \frac{32p}{3\kappa} h_0h_0
   - \frac{16p}\kappa (h_2h_0+h_0h_2)\right.\nonumber\\
&    -\frac{4p}\kappa (h_4h_0+h_0h_4)-\frac{32 p}\kappa h_2h_2
                                                +C_{6,0}(h_6h_0+h_0h_6) \label{dlw0}\\
              &\left. +(\frac{4p}\kappa s+C_{6,2})(h_4h_2+h_2h_4)
                +\frac p{2\kappa} h_4h_4\right\}+O\left(s^2
                e^{-3s}\right)\nonumber                
  \end{align}
  as $s\to \infty$, uniformly on compact sets, for some constants
  $C_{6,0}$ and $C_{6,2}$, where the notation $h_ih_j$ stands for $h_i(y_1)h_j(y_2)$.
  \end{lem}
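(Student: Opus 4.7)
My plan is to linearise equation \eqref{eqw} around $\kappa$ and perform a second-order Hermite mode-by-mode expansion, iterating once beyond the leading behaviour \eqref{A4}. Write $v=w_0-\kappa$; using $\kappa^{p-1}=1/(p-1)$, a Taylor expansion of the nonlinearity yields
\[
\partial_s v = \mathcal L v + \tfrac{p}{2\kappa}v^2 + G(v), \qquad \mathcal L v = \Delta v - \tfrac12 y\cdot\nabla v + v, \qquad G(v)=O(v^3).
\]
The operator $\mathcal L$ is self-adjoint in $L^2_\rho(\m R^2)$ with eigenfunctions $h_\alpha(y)=h_{\alpha_1}(y_1)h_{\alpha_2}(y_2)$ and eigenvalues $\lambda_\alpha=1-|\alpha|/2$. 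The Type 1 bound \eqref{type1} ensures $v\in L^\infty$, so the Hermite expansion is well defined; the symmetry hypotheses restrict to $\alpha$ with both components even and enforce $v_{\alpha_1,\alpha_2}=v_{\alpha_2,\alpha_1}$. Combining \eqref{A4} with Vel\'azquez's classification, $v_{2,2}(s)=-e^{-s}(1+o(1))$ while $v_\alpha(s)=o(e^{-s})$ for every other admissible $\alpha$.

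The key computation is to project $v^2$ onto the Hermite basis at leading order. The scalar identity $h_2(\xi)^2=h_4(\xi)+8h_2(\xi)+8$ tensorises into
\[
(h_2 h_2)^2=h_4 h_4+8(h_4 h_2+h_2 h_4)+8(h_4 h_0+h_0 h_4)+64\,h_2 h_2+64(h_2 h_0+h_0 h_2)+64\,h_0 h_0.
\]
Substituting the ansatz $v=-e^{-s}h_2 h_2+e^{-2s}R(y,s)$ and projecting onto $h_\alpha$ yields the scalar ODE $\dot v_\alpha=\lambda_\alpha v_\alpha+\tfrac{p}{2\kappa}c_\alpha e^{-2s}+\text{error}$, where $c_\alpha$ is the coefficient of $h_\alpha$ in the identity above. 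For $|\alpha|<6$ (non-resonant, $\lambda_\alpha>-2$), the homogeneous mode $Ce^{\lambda_\alpha s}$ is incompatible with the prescribed smallness of $v_\alpha$ unless $C$ takes its prescribed value ($C=-1$ for $\alpha=(2,2)$, $C=0$ otherwise); the particular solution $A e^{-2s}$ then solves $-2A=\lambda_\alpha A+\tfrac{p}{2\kappa}c_\alpha$, reproducing exactly the coefficients $-\tfrac{32p}{3\kappa}$, $-\tfrac{16p}{\kappa}$, $-\tfrac{4p}{\kappa}$ and $-\tfrac{32p}{\kappa}$ of \eqref{dlw0}. At the resonant shell $|\alpha|=6$, $\lambda_\alpha=-2$, the ansatz $v_\alpha=A(s)e^{-2s}$ gives $\dot A=\tfrac{p}{2\kappa}c_\alpha$, which integrates to $\tfrac{4p}{\kappa}s+C_{6,2}$ for $h_4 h_2,h_2 h_4$, and to the free constant $C_{6,0}$ for $h_6 h_0,h_0 h_6$ (where the source vanishes). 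At $|\alpha|=8$, only $h_4 h_4$ is forced (with $c_\alpha=1$), yielding the coefficient $\tfrac{p}{2\kappa}$.

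It remains to estimate the error. Modes $|\alpha|\ge 10$ are unforced at this order and decay at least like $e^{-3s}$. The cubic term $G(v)=O(e^{-3s})$ together with the quadratic cross-products in $v^2$ between $-e^{-s}h_2 h_2$ and the newly identified $e^{-2s}$ corrections provide an $O(e^{-3s})$ forcing at the next level; integration through a resonance and through the $s\,e^{-2s}$ mode can contribute up to two powers of $s$, which accounts for the $O(s^2 e^{-3s})$ remainder. The main obstacle is precisely this bootstrap step: closing the expansion in $L^2_\rho$ by controlling the nonlinear mode interactions and then upgrading to uniform convergence on compact sets via interior parabolic regularity for \eqref{eqw}. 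This is technical but standard in the Vel\'azquez/Filippas--Kohn framework, the only delicate point being the accurate tracking of the $s$-dependence coming from the resonance at $|\alpha|=6$.
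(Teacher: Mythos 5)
Your proposal is correct and matches the approach the paper defers to (Proposition 1 of \cite{MZimrn21}): linearize \eqref{eqw} around $\kappa$, expand $v=w_0-\kappa$ in the Hermite eigenbasis of $\mathcal L$, and integrate the mode ODEs driven by the quadratic source $\frac{p}{2\kappa}v^2$ using the tensorized identity $(h_2h_2)^2=h_4h_4+8(h_4h_2+h_2h_4)+8(h_4h_0+h_0h_4)+64h_2h_2+64(h_2h_0+h_0h_2)+64h_0h_0$, treating the $|\alpha|=6$ resonance separately. Your coefficient computations (the particular solutions $A=-\tfrac{p}{2\kappa}c_\alpha/(2+\lambda_\alpha)$ for $|\alpha|\neq 6$, the $\tfrac{4p}{\kappa}s+C_{6,2}$ growth and free $C_{6,0}$ at $|\alpha|=6$, and $\tfrac p{2\kappa}$ at $(4,4)$) all check out, and you correctly identify that the remaining work is the standard $L^2_\rho$ bootstrap plus parabolic regularity, which the paper also omits.
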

  \begin{proof} The proof is omitted since it is straightforward from the method we
    explained in Proposition 1 
of our paper \cite{MZimrn21}.
  \end{proof}
  From this expansion, we remark that the term $C_{6,0}e^{-2s} (
  h_6(y_1)+h_6(y_2))$ will ensure the good decaying property, if
  $C_{6,0}<0$, especially on the axes, where the main term
  $e^{-s}y_1^2y_2^2$ is $0$. More precisely, taking $C_{6,0}=-\delta$ for
  some $\delta>0$ to be fixed large enough, then, keeping only the leading term
  in the polynomials, we naturally suggest the following modified
  version of \eqref{profnaif}
  \begin{equation}\label{goodprof}
\varphi(y,s) =\left[\frac ED\right]^{\frac 1{p-1}}
\end{equation}
with
\begin{align}
  E&=1+e^{-s}P(y)+e^{-2s}Q(y),\label{defE}\\
  D&=p-1+\frac{(p-1)^2}\kappa\left( e^{-s}y_1^2y_2^2 + \delta
     e^{-2s}(y_1^6+y_2^6)\right),\label{defD}
     \end{align}
where $P(y)$ and $Q(y)$ are polynomials which will be chosen so  that the
numerator in \eqref{defE} is positive (hence, $\varphi$ is well
defined), and 
\begin{align}
\varphi(y,s) &=\kappa  - e^{-s} h_2h_2 \label{target}\\
                                 & +e^{-2s}\left\{-\delta (h_6h_0+h_0h_6)
                                    +\gamma (h_4h_2+h_2h_4)
                +\frac p{2\kappa} h_4h_4\right\}
                 +O\left( e^{-3s}\right)\nonumber
            \end{align}
            as $s\to \infty$, uniformly on compact sets,  for some $\gamma$ to be fixed later.
            Note that the aimed expansion agrees with the prediction in
            Lemma \ref{lemtay} at the order $e^{-s}$ and most of the
            order $e^{-2s}$.
 Most importantly, we will also require  
that $\varphi(\cdot,s)\in L^\infty(\m R^2)$  and $\varphi(y,s) \to 0$ as $|y|\to
  \infty$, for any $s\ge 0$, and the choice of $\gamma$ will be
            crucial for that. 

  \medskip
  
            Let us explicit the choice of $\delta$, $\gamma$, $P(y)$ and $Q(y)$. Since \eqref{goodprof} directly implies that
            \begin{align}
              &\varphi(y,s)=\kappa +e^{-s}\left(\frac\kappa{p-1} P(y)-y_1^2y_2^2\right)\label{dlphi}\\
             & +e^{-2s}\left(\frac\kappa{p-1} Q(y) +\frac{\kappa(2-p)}{2(p-1)^2}P(y)^2
- \frac{P(y)}{p-1} y_1^2y_2^2-\delta y_1^6-\delta y_2^6+\frac
               p{2\kappa}y_1^4y_2^4\right)
               +O(e^{-3s})\nonumber
\end{align}
as $s\to \infty$, uniformly on compact sets, we see by identification
with \eqref{target} that
we must have
\begin{align}
  P(y) = &\frac {p-1} \kappa\left(y_1^2y_2^2-h_2h_2\right),\label{defP}\\
  Q(y) =&\frac{p-1}
  \kappa\left(\frac{P(y)}{p-1} y_1^2y_2^2
+\frac{\kappa(p-2)}{2(p-1)^2}P(y)^2
  +\delta (y_1^6-h_6(y_1))+\delta (y_2^6-h_6(y_2))\right.\nonumber\\
  &\left. +\frac p{2\kappa}(h_4h_4-y_1^4y_2^4)
          +\gamma(h_4h_2+h_2h_4)\right).  \label{defQ}
\end{align}
Now,
we claim the following:
\begin{lem}[Good definition, boundedness and decaying at infinity of
  $\varphi(y,s)$ \eqref{goodprof} - \eqref{defP} - \eqref{defQ}]\label{lemphi}
  Take  $\gamma =\frac{6p-2}\kappa$ and consider
$\delta\ge \deltaun$
and $s\ge \ssun(\delta)$
 for some large enough
$\deltaun\ge 1$
and $\ssun(\delta)\ge 0$.
 Then:\\
(i) $E\ge \frac 12$, hence
$\varphi(y,s)$ is well defined and positive.\\
(ii)  $\|\varphi(\cdot,s)\|_{L^\infty}\le \kappa +C_0e^{-\frac
  s3}$, for some $C_0>0$.\\
(iii) $\|\nabla \varphi(\cdot,s)\|_{L^\infty}\le C_0 e^{-\frac s 6}$.\\
(iv) $\varphi(y,s) \to 0$ as $|y|\to\infty$.
\end{lem}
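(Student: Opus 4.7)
My plan is to exploit the very specific choice $\gamma=(6p-2)/\kappa$ by first showing that this value was engineered to make all degree-$6$ monomials of $Q(y)$ cancel. Expanding $h_4$, $h_6$ and collecting the degree-$6$ content of the bracket defining $Q$ in \eqref{defQ}, the polynomial $\frac{P}{p-1}y_1^2y_2^2$ contributes $\frac{2}{\kappa}(y_1^4y_2^2+y_1^2y_2^4)$, the term $\frac{p}{2\kappa}(h_4h_4-y_1^4y_2^4)$ contributes $-\frac{6p}{\kappa}(y_1^4y_2^2+y_1^2y_2^4)$, and $\gamma(h_4h_2+h_2h_4)$ contributes $\frac{6p-2}{\kappa}(y_1^4y_2^2+y_1^2y_2^4)$, which add up to $0$. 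Consequently $Q$ is a polynomial of degree at most $4$ whose leading term is $\frac{p-1}{\kappa}(30\delta-\frac{4p}{\kappa})(y_1^4+y_2^4)$ (using $h_6(\xi)=\xi^6-30\xi^4+\cdots$), strictly positive for $\delta$ large enough. Likewise $P$ has degree $2$ and is bounded below.

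For (i) and (iv), once the degree-$4$ leading coefficient of $Q$ is positive, Young's inequality lets me absorb the lower-order (and sign-indefinite) contributions of $Q$ into the $\delta(y_1^4+y_2^4)$ term at the cost of an additive constant $O(\delta)$, giving $Q(y)\ge -C\delta$ uniformly in $y$. Since $P$ is already bounded below by $-C$, I obtain $E\ge 1-Ce^{-s}-C\delta e^{-2s}\ge 1/2$ as soon as $s\ge\ssun(\delta)$, proving (i). Since $D>0$, this yields $\varphi=(E/D)^{1/(p-1)}>0$, i.e. it is well defined. Property (iv) then follows from the polynomial orders: as $|y|\to\infty$ at fixed $s$, $E\sim c\delta e^{-2s}(y_1^4+y_2^4)$ while $D\sim c\delta e^{-2s}(y_1^6+y_2^6)$, so $E/D=O(|y|^{-2})\to 0$.

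For (ii), I compute $(p-1)E-D=-\frac{(p-1)^2}{\kappa}e^{-s}h_2(y_1)h_2(y_2)+\frac{(p-1)^2}{\kappa}e^{-2s}T_4(y)-\frac{(p-1)^2\delta}{\kappa}e^{-2s}(y_1^6+y_2^6)$, where $T_4$ is of degree $\le 4$ (using the same cancellation as above). To upper bound it uniformly in $y$, I use $-h_2(y_1)h_2(y_2)=-y_1^2y_2^2+2(y_1^2+y_2^2)-4\le 2(y_1^2+y_2^2)$, and then the one-dimensional optimisation $\sup_{y_i\ge 0}[Ae^{-s}y_i^2-B\delta e^{-2s}y_i^6]=O(\delta^{-1/2}e^{-s/2})$ (by setting $u=e^{-s/2}y_i^2$ and maximising $2u-\delta u^3$). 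The excess positive mass in $T_4$ is in turn absorbed by the negative $-\delta(y_1^6+y_2^6)$ up to a constant $O(\delta)$. Hence $(p-1)E-D\le C\delta^{-1/2}e^{-s/2}+C\delta e^{-2s}\le Ce^{-s/3}$ for $s\ge\ssun(\delta)$. Dividing by $D\ge p-1$ and using $\varphi^{p-1}=E/D$, I conclude $\varphi\le \kappa(1+Ce^{-s/3})^{1/(p-1)}\le \kappa+C_0 e^{-s/3}$.

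For (iii), I write $\nabla\varphi=\frac{\varphi}{p-1}\bigl(\nabla E/E-\nabla D/D\bigr)$ and split on the threshold $|y|\le e^{s/2}$ versus $|y|\ge e^{s/2}$. In the inner region, $|\nabla E|\le Ce^{-s}|y|+C\delta e^{-2s}(1+|y|^3)\le Ce^{-s/2}$, $|\nabla D|$ admits a similar bound, and $E\ge 1/2$, $D\ge p-1$, so $|\nabla\varphi|\le Ce^{-s/2}\le Ce^{-s/6}$. In the outer region, the lower bounds $E\ge c\delta e^{-2s}(y_1^4+y_2^4)\ge c\delta e^{-2s}|y|^4/2$ and $D\ge c\delta e^{-2s}|y|^6/2$ (using $y_1^{2k}+y_2^{2k}\ge |y|^{2k}/2^{k-1}$) give $|\nabla E/E|+|\nabla D/D|\le C/|y|\le Ce^{-s/2}\le Ce^{-s/6}$. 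The matching of the two regions shows $\|\nabla\varphi(\cdot,s)\|_{L^\infty}\le C_0 e^{-s/6}$. The main obstacle throughout the argument is the bookkeeping around the cancellation of degree-$6$ terms in $Q$: without it, $Q$ would grow like $|y|^6$ and the Young-type estimates could no longer be dominated by the $-\delta(y_1^6+y_2^6)$ coming from $D$.
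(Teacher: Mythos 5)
Your treatment of items (i), (ii) and (iv) is essentially correct. For (i) and (iv) you reproduce the paper's cancellation of the degree-$6$ content of $Q$ under the choice $\gamma=\frac{6p-2}\kappa$; a very minor imprecision is that the degree-$4$ part of $Q$ also carries a $y_1^2y_2^2$ term with a $\delta$-independent coefficient (not just $y_1^4+y_2^4$), but this is still dominated by $30\delta(y_1^4+y_2^4)$ once $\delta$ is large, so your argument survives. For (ii), your route via $(p-1)E-D$ and the one-variable optimisation of $Au-B\delta u^3$ is a slightly cleaner reformulation of the same computation the paper performs by splitting $E\le C(N_1+N_2+N_3)$ and bounding each $N_i/D$; both isolate the $\kappa$-contribution from $N_1/D=1/D\le 1/(p-1)$ and control the remainder by $Ce^{-s/3}$.

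Item (iii), however, has a genuine gap. In your inner region $\{|y|\le e^{s/2}\}$ the claim that $|\nabla D|\le Ce^{-s/2}$ is false: at $y_1=y_2=e^{s/2}/\sqrt 2$ the cross term gives $|\partial_{y_1}D|\sim e^{-s}|y_1|y_2^2\sim e^{s/2}$, so dividing by $D\ge p-1$ produces a diverging estimate. More fundamentally, the sharp rate $e^{-s/6}$ in the lemma is realised at a critical configuration $y_1\sim e^{s/6}$, $y_2\sim e^{s/3}$ (or vice versa) which lies inside your inner ball; there $D=O(1)$ while $|\nabla D|\sim e^{-s/6}\gg e^{-s/2}$, so even a corrected inner bound of $|\nabla D|$ alone cannot give a rate better than $e^{-s/6}$, and no single spherical threshold separates the two regimes — the set where $D$ becomes large is governed by the anisotropic sets $\{e^{-s}y_1^2y_2^2\gtrsim 1\}$ and $\{e^{-2s}(y_1^6+y_2^6)\gtrsim 1\}$, not by $|y|$. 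One must bound the quotient $|\nabla D|/D$ directly: the paper isolates the offending term $D_2=Ce^{-s}|y_1|y_2^2$, freezes $y_2$, and maximises $\frac{e^{-s}|y_1|y_2^2}{1+e^{-s}y_1^2y_2^2+e^{-2s}y_2^6}$ over $|y_1|$, obtaining $\frac 12\bigl(\frac{e^{-s}y_2^2}{1+e^{-2s}y_2^6}\bigr)^{1/2}\le Ce^{-s/6}$ after a second optimisation over $y_2$. Your argument needs to be replaced by such a conditional maximisation, or by a two-threshold (anisotropic) decomposition tracking $e^{-s}y_1^2y_2^2$ and $e^{-2s}(y_1^6+y_2^6)$ separately.
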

   \begin{proof}
    Take $\gamma =\frac{6p-2}\kappa$ and consider $\delta\ge 1$ and $s\ge
    0$ to be taken large enough.\\
    (i) It's enough to show that both $P(y)$ and $Q(y)$ are bounded
    from below. Since
    \[
      h_2(\xi) = \xi^2-2,\;\;
     h_4(\xi) = \xi^4 - 12 \xi^2+12 \mbox{ and }
      h_6(\xi) = \xi^6-30\xi^4+180\xi^2-120
\]
from \eqref{defhj}, it follows from \eqref{defP} and \eqref{defQ} that
\begin{equation}\label{lowP}
  P(y) = \frac{2(p-1)}\kappa\left(y_1^2+y_2^2-2\right)
  \ge -\frac{4(p-1)}\kappa
\end{equation}
and
\begin{align}
  Q(y) =\frac{p-1}\kappa &\left(\frac 2
    \kappa\left[y_1^4y_2^2+y_1^2y_2^4\right]
                         +30\delta[y_1^4+ y_2^4]
                            +\frac
                           {6p}\kappa[-y_1^4y_2^2-y_1^2y_2^4 ]
                           +\gamma[y_1^4y_2^2+y_1^2y_4^2]
                            \right)
                            \nonumber\\
   &+O\left(\delta[1+|y|^2]+[1+|\gamma|][1+|y|^4]\right) \nonumber\\
 =\frac{p-1}\kappa &\left( \left(\frac 2\kappa-\frac{6p}\kappa+\gamma\right)\left[y_1^4y_2^2+y_1^2y_2^4\right]
                     +30\delta[y_1^4+ y_2^4] \right) \nonumber\\
                         &+O\left(\delta[1+|y|^2]+[1+|\gamma|][1+|y|^4]\right). \nonumber
\end{align}
Choosing $\gamma = \frac{6p-2}\kappa$,
we see that
\begin{equation}\label{expQ}
  Q(y) = \frac{30\delta(p-1)}\kappa [y_1^4+ y_2^4]
  +O\left(\delta[1+|y|^2]+[1+|y|^4]\right)
  \mbox{ as }|y|\to \infty,
\end{equation}
Taking $\delta$ large enough, then $|y|$ large enough, we wee that
\begin{equation}\label{lowQ}
Q(y) \ge \frac{15\delta(p-1)}\kappa [y_1^4+ y_2^4],
\end{equation}
which means that $Q(y)$ is bounded from below. Since \eqref{lowP}
implies that $P(y)$ is bounded from below too, taking $s$ large
enough, we see from \eqref{defE} that $E\ge \frac 12$, hence,
$\varphi$ defined in \eqref{goodprof} is well defined.\\
(ii) Using \eqref{lowP} and \eqref{expQ}, 
    we see from \eqref{goodprof} that
    \begin{equation}\label{total}
|\varphi(y,s)|^{p-1}\le C\frac{(N_1+N_2+N_3)}D
    \end{equation}
    where
    \begin{align}
      N_1&= 1+e^{-s},\;
      N_2 = \delta e^{-2s}|y|^4,\;
      N_3= e^{-s}|y|^2\label{defNi}
    \end{align}
    and $D$ is introduced in \eqref{defD}.
    Since $D\ge p-1$, it follows that
    \begin{equation}\label{tot1}
\frac{N_1}D\le \frac {1 +e^{-s}}{p-1}.
    \end{equation}
    Introducing $z=e^{-\frac s4}y$, we write
    \begin{equation}\label{tot2}
      \frac{N_2}D\le \frac{C\delta e^{-s}|z|^4}{1+z_1^2z_2^2+e^{-\frac
          s2}|z|^6}
      \le \frac{C\delta e^{-s}|z|^4}{e^{-\frac s2}+e^{-\frac s2}|z|^6}
      = \frac{C\delta e^{-\frac s2}|z|^4}{1+|z|^6}
      \le C \delta e^{-\frac s2}.
    \end{equation}
    Introducing $X=|z|^2$, we write
    \begin{equation}\label{n3d}
\frac{N_3}D\le \frac{C  e^{-\frac s2}|z|^2}{1+e^{-\frac
    s2}|z|^6}=C g_\epsilon(X),
\end{equation}
where
\begin{equation}\label{defgex}
\epsilon = e^{-\frac s2}, \;X=|z|^2 \mbox{ and }g_\epsilon(X)=\frac {\epsilon X}{1+\epsilon X^3}.
\end{equation}
Since $g_\epsilon'(X) = \frac{\epsilon(1-2\epsilon X^3)}{(1+\epsilon
  X^3)^2}$ which changes from negative to positive at $X=X_\epsilon
\equiv (2\epsilon)^{-\frac  13}$, it follows that
\[
g_\epsilon(X) \le g_\epsilon(X_\epsilon)=\frac{2\epsilon
  X_\epsilon}3=\frac{(2\epsilon)^{\frac 23}}3.
\]
Using \eqref{n3d} and \eqref{defgex}, we see that
\begin{equation}\label{tot3}
\frac{N_3}D \le Ce^{-\frac s3}.
\end{equation}
Collecting the estimates in \eqref{total}, \eqref{tot1}, \eqref{tot2}
and \eqref{tot3}, then recalling the definition
\eqref{conv}
of $\kappa$, we get the desired conclusion.\\
(iii)
By definition \eqref{goodprof} of $\varphi(y,s)$, we write for
all $s\ge 1$ and $y\in \m R^2$,
\[
(p-1)\log \varphi = \log E- \log D
\]
where $E$ and $D$ are defined in \eqref{defE} and \eqref{defD} (note
that $D>0$ by definition, and that $E>0$ from item (i) of this
lemma). Taking the gradient, we see that
\begin{equation}\label{defgrad}
\nabla \varphi =\frac{\varphi}{p-1}\left(\frac{\nabla E}{E}-\frac{\nabla D}{D}\right).
\end{equation}
Since
\begin{equation}\label{defD1}
 |\nabla D|\le C\delta e^{-2s}|y|^5+ Ce^{-s}|y_1|y_2^2+ Ce^{-s}|y_2|y_1^2\equiv D_1+D_2+D_3
\end{equation}
from \eqref{lowP} and \eqref{expQ}, we write from item (ii) of this lemma
\begin{equation*}
|\nabla \varphi|\le C\left[\frac{|\nabla E|}E+\frac {D_1}D+\frac {D_2}D+\frac {D_3}D\right].
\end{equation*}
Noting that 
\begin{align}
  E\ge &\frac {E_0}C\mbox{ where }E_0=1+ e^{-s}(y_1^2+y_2^2)+e^{-2s}(y_1^4+y_2^4),\label{lowE}\\
  |\nabla E|\le& C e^{-s}|y|+C\delta e^{-2s}(1+|y|^3),\nonumber
 \end{align}
from \eqref{lowP}, \eqref{lowQ} and \eqref{expQ},
using the definitions \eqref{defD} and \eqref{defD1} of $D$ and $D_1$,
then proceeding as for the
proof of item (ii) of this lemma, we show that
\begin{equation}\label{boundED}
\frac{|\nabla E|}E\le \frac{|\nabla E|}{E_0}\le C(\delta)e^{- \frac s3}\mbox{ and }\frac {D_1}D \le C(\delta)e^{- \frac s3}.
\end{equation}
By symmetry, it remains only to bound the term $\frac{D_2}D$, when
$y_2\neq 0$. Since
$\delta\ge 1$, using the
definitions \eqref{defD} and \eqref{defD1} of $D$ and $D_2$, we write
\begin{equation*}
\frac{D_2}D \le C\frac{e^{-s}|y_1|y_2^2}{1+e^{-s}y_1^2y_2^2+e^{-2s}y_2^6}.
\end{equation*}
As  a function in the variable $|y_1|$, the left-hand side realizes its maximum for
\[
|y_1|= \sqrt\frac{1+ e^{-2s}y_2^6}{e^{-s}y_2^2}.
\]
Therefore, it follows that
\[
\frac{D_2}D\le C \sqrt\frac{e^{-s}y_2^2}{1+ e^{-2s}y_2^6}\le C
e^{-\frac s6},
\]
in particular
\begin{equation}\label{boundD}
\frac{|\nabla D|}D\le Ce^{-\frac s6},
\end{equation}
and the estimate on $\nabla \varphi$ in item (iii) follows.\\
(iv) From \eqref{lowP}, \eqref{defQ} and \eqref{expQ}, we see that the numerator of
the fraction in \eqref{goodprof} is a polynomial of degree $4$,
whereas the denominator is of degree $6$. Thus, the conclusion
follows.\\
This concludes the proof of Lemma \ref{lemphi}.
\end{proof}

\section{Strategy of the proof}
\label{secformu}

From Section \ref{secfor}, we recall that
our goal is to construct $u(x,t)$, a solution of equation \eqref{equ}
defined for all $(x,t)\in \m R^2 \times [0,T)$ for some small enough
$T>0$, such that 
\begin{equation}\label{primary}
w_0(y,s) -\kappa \sim - e^{-s}h_2(y_1)h_2(y_2) \mbox{ as }s\to \infty,
 \end{equation}
uniformly on compact sets, 
where $w_0(y,s)$ is the similarity variables' version defined in
\eqref{defw}. We also recall our wish to construct a solution which is
symmetric with respect to the axes and the bisectrices.

\medskip

Consider $\varphi(y,s)$ our candidate for the profile,
defined in \eqref{goodprof}, \eqref{defP} and \eqref{defQ}, with
$\gamma=\frac{6p-2}\kappa$ and $\delta\ge 1$ fixed large enough, so that
Lemma \ref{lemphi} holds.
From the target expansion of $w_0$ given in Lemma \ref{lemtay} (with
$C_{6,0}=-\delta$ and $C_{6,2}=\gamma$) and the
expansion of the profile $\varphi$ given in \eqref{target}, we further
specify our goal by requiring that $q(y,s)$ is small in some sense
that will shortly given in \eqref{smallVA}, where 
\begin{equation}\label{defq}
q(y,s)=w_0(y,s) - \varphi(y,s).
\end{equation}
In order to achieve this goal, we need to write then understand the
dynamics of the equation satisfied by $q(y,s)$ near $0$. This is done
in the following section. 

\subsection{Dynamics for $q(y,s)$ defined in \eqref{defq}}

Roughly speaking, our first task is to perform local estimates for
bounded $y$, through a spectral analysis in $L^2_\rho$ near $\kappa$ \eqref{conv}, the 
constant solution of equation \eqref{eqw}, assuming a uniform bound
on the solution.

\medskip

More precisely, from \eqref{eqw}, we see that $q(y,s)$ satisfies the following
equation, for all $(y,s)\in \m R^2 \times [s_0, \infty)$, where
$s_0=-\log T$:
\begin{equation}\label{eqq}
\partial_s q = (\q L +V(y,s)) q +B(y,s,q) + R(y,s),
\end{equation}
with
\begin{align}
  \q L q = &\Delta q - \frac 12 y \cdot \nabla q +q,\;\;
 V(y,s) = p\varphi(y,s)^{p-1}-\frac p{p-1}, \label{defR}\\
 B(y,s,q) =& |\varphi(y,s)+q|^{p-1}(\varphi(y,s)+q)-\varphi(y,s)^p -
             p \varphi(y,s)^{p-1}q,\nonumber\\
  R(y,s)=&-\partial_s \varphi(y,s)+ (\q L-1) \varphi(y,s)-\frac {\varphi(y,s)}{p-1} +\varphi(y,s)^p. \nonumber
 \end{align}
Let us give in the following some useful properties of the terms
involved in equation \eqref{eqq}:\\
- \textit{The linear term}: 
Note that $\q L$ is a self-adjoint operator in $L^2_\rho$, the $L^2$
space with respect to the measure $\rho dy$ defined in \eqref{defro}.
 The spectrum of $\q L$ consists only in eigenvalues:
\begin{equation}\label{specL}
{\rm Spec }\;\q L = \{1-\frac m2\;|\;m\in \m N\}
\end{equation}
with the following set of eigenfunctions:
\begin{equation}\label{eigenL}
\{h_ih_j \equiv h_i(y_1)h_j(y_2)\;|\;i,j\in \m N\},
\end{equation}
which spans the space $L^2_\rho$, where we see the rescaled Hermite
polynomials \eqref{defhj}. Note also that
\begin{equation}\label{Lhihj}
\q L (h_ih_j) = \left(1-\frac{i+j}2\right)h_ih_j.
\end{equation}
- \textit{The potential}: It is bounded in $L^\infty$ and small in
$L^r_\rho$ for any $r\ge 1$, in the sense that
\begin{equation}\label{boundV}
  \|V(\cdot,s)\|_{L^\infty}\le \frac p{p-1}\mbox{ and }
  \|V(\cdot,s)\|_{L^r_\rho}\le C(r)e^{-s},
\end{equation}
for $s$ large enough, thanks to
Lemma \ref{lemphi} and a small Taylor expansion.\\
- \textit{The nonlinear term}: Since $\varphi$ is uniformly bounded in
space and time, thanks to item (iii) of Lemma \ref{lemphi}, assuming
the following a priori estimate, 
\begin{equation}\label{apriori}
\|q\|_{L^\infty(\m R^2\times [s_0, \infty))}\le M,
\end{equation}
we easily see that $B(y,s,q)$ is superlinear, in the sense that
\begin{equation}\label{boundB}
  |B(y,s,q)|\le C(M)|q|^{\bar p}\mbox{ where }\bar p =\min(p,2)>1,
\end{equation}
for all $y\in \m R^2$, $q\in \m R$ and large $s$.\\
- \textit{The remainder term}: By comparing the expression
\eqref{defR} of $R(y,s)$, with the expression of equation \eqref{eqw},
we see that $R(y,s)$ measures the quality of $\varphi(y,s)$ as an
approximate solution of \eqref{eqw}. In fact, through a straightforward calculation
(see below in Section \ref{secR}),
one can show that 
\begin{equation}\label{boundR}
\forall r\ge 2,\;\; \forall s\ge 0,\;\;\|R(s)\|_{L^r_\rho}\le C(r)e^{-2s},
\end{equation}
which is consistent with our approach in Section \ref{secfor}
(particularly Lemma \ref{lemtay} and estimate \eqref{target}),
where we constructed $\varphi$ as an approximate solution for equation \eqref{eqw}.

\medskip

Following these properties, constructing a small solution $q(y,s)$  to
equation \eqref{eqq} seems to be
reasonable, except for a serious issue: the control of the nonlinear
term $B(y,s,q)$, since the power function is not continuous in
$L^2_\rho$. The control of the potential term $Vq$ is delicate too. In
the following section, we explain our idea to gain those controls. 

\subsection{Control of the potential and the nonlinear terms in equation \eqref{eqq}}\label{secnonl}
In this section, we explain how we will achieve the control of the
potential and the nonlinear terms $Vq$ and $B$ in equation
\eqref{eqq}. 
For simplicity, we present the argument only for the nonlinear term
$B$, and restrict to the case when $p\ge 2$, hence $\bar p=2$ (the
actual proof for $Vq$ and $B$ will be done for any $p>1$).\\
Assuming the a priori estimate \eqref{apriori},
we may reduce equation \eqref{eqq} to the following linear equation
with a source term
\begin{equation}\label{eqlin}
  \partial_s q = (\q L+ \bar V)q +R(y,s)
\end{equation}
where
\begin{equation}\label{boundVb}
  |\bar V(y,s)|= \left |V(y,s) + \frac{B(y,s,q)}q\right|\le
C+C(M)|q|^{\bar p-1}\le   C+C(M)M^{\bar p-1}\equiv \bar C(M). 
\end{equation}
Thanks to the regularizing effect of the operator $\q L$ (see Lemma \ref{lemVel} below),
we may use equation \eqref{eqlin} together with \eqref{boundVb} and \eqref{boundR} to control the $L^4_\rho$ norm of the solution with the $L^2_\rho$ of the solution, up to some time delay:
\begin{equation}\label{delay}
\|q(s)\|_{L^4_\rho}\le e^{(1+\bar C)s^*}\|q(s-s^*)\|_{L^2_\rho}+Ce^{-2s}.
\end{equation}
Clearly, this is equivalent to writing that
\[
\|q(s)^2\|_{L^2_\rho}\le 2e^{2(1+\bar C)s^*}\|q(s-s^*)\|^2_{L^2_\rho}+2Ce^{-4s}, 
\]
which allows to control the nonlinear term $B(y,s,q)$ in $L^2_\rho$, thanks to \eqref{boundB}.

\bigskip

Note that this control is possible thanks to the a priori estimate \eqref{apriori}, which needs to be checked. We will explain that in the following section.

\subsection{Proof of the a priori bound \eqref{apriori}}

This is the key part of the argument, dedicated to the proof of the a
priori bound \eqref{apriori}, under which the local estimates presented
in the previous subsections hold. In fact, as we will shortly see from
the geometrical transform \eqref{wawb}, we reduce the uniform bound
for $w_0$ to a local estimate for $w_a$, where $a$ is arbitrary. Later
in Section \ref{seccontrolwa}, we will see that the control of $w_a$
will follow from the spectral analysis of equation \eqref{eqw}, in
particular the stability of its zero solution and its heteroclinic
orbit connecting $\kappa$ \eqref{conv} and $0$ and given below in \eqref{defpsi}.

\medskip

More precisely, 
since $\varphi$ is uniformly bounded by Lemma \ref{lemphi}, using the definition \eqref{defq} of $q$, it is enough to control $w_0(a,s)$, for any $a\in \m R^2$ and $s\ge s_0$, in order to prove \eqref{apriori}. Using the similarity variables' definition \eqref{defw}, we remark that
\begin{equation}\label{wawb}
  w_b(y,s) = w_0(y+be^{\frac s2},s).
\end{equation}
This way, we reduce the question to the control of $w_b(0,s)$, for any $b\in \m R^2$ and $s\ge s_0$. Since $w_b$ satisfies equation \eqref{eqw}, which is parabolic, we further reduce the question to the control of $\|w_b(s)\|_{L^2_\rho}$, for any $b\in \m R^2$ and $s\ge s_0$. This will be done through a careful choice of initial data (at $s=s_0$) for $w_0$, which completely determines initial data for $w_b$. Then, starting from these initial data, and integrating equation \eqref{eqw} for $s\ge s_0$, we will show that $w_b$ will decrease, providing the control on $\|w_b(s)\|_{L^2_\rho}$, hence on $w_b(0,s)=w_0(be^{\frac s2},s)$ (thanks to \eqref{wawb}), and finally on $\|w_0(s)\|_{L^\infty}$.

\subsection{Definition of a shrinking set to make $q(s)\to 0$}

From our formal analysis in Section \ref{secfor}, in particular the
target expansion of $w_0$ in Lemma \ref{lemtay}  (with
$C_{6,0}=-\delta$ and $C_{6,2}=\gamma$) and the expansion of
$\varphi$ in \eqref{target}, we may write the following target for
$q(y,s)$ defined in \eqref{defq}:
\begin{align}
q(y,s) =\frac p\kappa e^{-2s}
  \{&-32 h_0h_0-16(h_2h_0+h_0h_2)-4(h_4h_0+h_0h_4)
  -32h_2h_2\nonumber\\
  &+4s(h_4h_2+h_2h_4)\}+O(s^2e^{-3s})\mbox{ as } s\to \infty. \label{secondary}
  \end{align}
 In fact, we will not require such a sharp expansion.   We will instead
  require that the components of $q(y,s)$ are bounded by the same rate
  as the one
  in front of the corresponding eigenfunctions shown on the right-hand side of
  \eqref{secondary}. More precisely, 
expanding any function $v\in L^2_\rho$ 
on the eigenfunctions of $\q L$ given in \eqref{eigenL}:
\begin{equation}\label{decomp}
v = \sum_{i=0}^7\sum_{j=0}^iv_{i,j}h_{i-j}h_j+v_-,
\end{equation}
where
\begin{equation}\label{defvij}
v_{i,j} = \int v(y) k_{i-j}(y_1) k_j(y_2) \rho(y)dy
\end{equation}
is the coordinate of $v$ along the eigenfunction $h_{i-j}h_j$ (which
corresponds to the eigenvalue $\lambda =1-\frac i2$ by \eqref{Lhihj}), and
\begin{equation}\label{defkn}
k_n=h_n/\|h_n\|_{L^2_\rho}^2,
\end{equation}
our target in this paper is to construct a solution to
equation \eqref{eqq} defined on some interval $[s_0,\infty)$ where
$s_0=-\log T$ such that
\begin{equation}\label{third}
\forall s\ge s_0,\;\;q(s)\in  V_A(s),
 \end{equation}
where $V_A(s)$ is defined as follows:
\begin{defi}[A shrinking set to trap the solution]\label{defvas}
  For any $A>0$ and $s\ge 1$, 
  $V_A(s)$ is the set of all $v\in L^\infty$
  such that $\|v+\varphi(\cdot,s)\|_{L^\infty}\le 2\kappa$,
  $v_{i,j}=0$ if $i$ or $j$ is odd, 
  $|v_{i,j}|\le A e^{-2s}$ if $i\le 4$,
$|v_{6,0}|=|v_{6,6}|\le A s^2e^{-3s}$,
$|v_{6,2}|=|v_{6,4}|\le A se^{-2s}$ and
$\|v_-\|_{L^2_\rho} \le A^2s^2e^{-3s}$,
where $v$ is decomposed as in \eqref{decomp}, and the profile $\varphi$
defined in \eqref{goodprof} and Lemma \ref{lemphi}.
\end{defi}
\begin{nb}
The $L^\infty$ bound in this definition is crucial to control the nonlinear term (see
\eqref{apriori} and Section \ref{secnonl}).
\end{nb}
Clearly, we see that
\begin{equation}\label{smallVA}
  \mbox{if }v\in V_A(s),\mbox{ then }
  \|v\|_{L^2_\rho}\le CAs e^{-2s},
 \end{equation}
for $s$ large enough,
which shows that our goal in \eqref{third} implies indeed that $q(s)\to 0$ as
$s\to \infty$ in $L^2_\rho(\m R^2)$.
The next part of the paper is devoted to the rigorous proof
of our goal in \eqref{third}, and to the fact that it implies our main
result stated in Theorem \ref{th1}.
\section{Dynamics of the equation and general form of initial data} \label{secdynw0}

In this section, we study the dynamics of equation \eqref{eqq} and
suggest a general form of initial data, depending on some
parameters, which will be fine-tuned in some further step to provide a
suitable solution so that Theorem \ref{th1} holds. 

\medskip

We proceed in 3 steps, each presented in a subsection, starting by the
projections of equation \eqref{eqq} on the different components of the
decomposition \eqref{decomp}, then, we study the behavior of the flow
on the boundary of the shrinking set $V_A(s)$ defined in
\eqref{defvas}. Finally, we suggest a general form for initial
data. Note that technical details will be postponed to Section
\ref{sectech} below, hoping to make our exposition clearer.
\subsection{Dynamics  of equation \eqref{eqq} in the shrinking set $V_A(s)$} 

Aiming at proving
\eqref{third}, we assume $q(s)\in
V_A(s)$ for all $s\in [s_0,s_1]$ for some $s_1\ge s_0$ and derive
differential equations satisfied by $q_{i,j}$ and $q_-$ in this
section. In fact, we will first derive the size of the components of the remainder term
$R(y,s)$ \eqref{defR} in the following:
\begin{lem}[Expansion of $R(y,s)$ \eqref{defR}]\label{lemR}
  For all $r\ge 2$, it follows that
  \begin{align*}
R(y,s) =\frac {p e^{-2s}}\kappa &\left\{ 32 h_0h_0+32(h_2h_0+h_0h_2)
          +4(h_4h_0+h_0h_4) \right.\\
    &+\left. 32 h_2h_2+4(h_4h_2+h_2h_4) \right\}+O(e^{-3s})
  \end{align*}
  in $L^r_\rho(\m R^2)$, as $s\to \infty$.
\end{lem}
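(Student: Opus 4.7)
The plan is to substitute the Taylor expansion \eqref{target} of $\varphi$ into the four pieces $-\partial_s\varphi$, $(\q L-1)\varphi$, $-\frac\varphi{p-1}$, $\varphi^p$ of $R$ given by \eqref{defR}, and collect the result order by order in $e^{-s}$. For the nonlinear piece, I would write $\varphi = \kappa(1+\varepsilon)$ with $\varepsilon = \kappa^{-1}(-e^{-s}h_2h_2 + e^{-2s}F) + O(e^{-3s})$, where $F(y)=-\delta(h_6h_0+h_0h_6)+\gamma(h_4h_2+h_2h_4)+\tfrac p{2\kappa}h_4h_4$ is the coefficient of $e^{-2s}$ in \eqref{target}, and Taylor expand $(1+\varepsilon)^p$; using $\kappa^p = \kappa/(p-1)$, this produces
\[
\varphi^p = \tfrac{\kappa}{p-1} + \tfrac{p}{p-1}\bigl(-e^{-s}h_2h_2 + e^{-2s}F\bigr) + \tfrac{p}{2\kappa}e^{-2s}(h_2h_2)^2 + O(e^{-3s}).
\]
For the linear pieces I would use \eqref{Lhihj} in the form $(\q L-1)(h_ih_j) = -\tfrac{i+j}2 h_ih_j$ applied to each monomial of $F$, together with elementary time differentiation for $\partial_s\varphi$.

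\textbf{Cancellations and main computation.} Summing the four pieces, the constant-order contributions cancel because $\kappa$ is a steady state of \eqref{eqw}, and the $e^{-s}$-order contributions cancel because $h_2h_2$ lies in the eigenspace of $\q L$ of eigenvalue $-1$ (explicitly, the coefficient becomes $-1+2+\tfrac 1{p-1}-\tfrac p{p-1}=0$). At order $e^{-2s}$, the coefficients multiplying $h_6h_0$, $h_0h_6$, $h_4h_2$, $h_2h_4$ all vanish identically thanks to the choice $\gamma = \tfrac{6p-2}\kappa$ fixed in Lemma \ref{lemphi}, while the $h_4h_4$ contributions combine with the square term from the Taylor expansion of $\varphi^p$ to leave
\[
R(y,s) = \tfrac{p}{2\kappa}e^{-2s}\bigl[(h_2h_2)^2 - h_4h_4\bigr] + O(e^{-3s}).
\]
It then suffices to use the elementary one-variable identity $h_2(\xi)^2 = h_4(\xi) + 8h_2(\xi) + 8$ (immediate from \eqref{defhj}), which tensorises to
\[
h_2(y_1)^2 h_2(y_2)^2 - h_4(y_1)h_4(y_2) = 8(h_4h_2+h_2h_4) + 8(h_4h_0+h_0h_4) + 64\, h_2h_2 + 64(h_2h_0+h_0h_2) + 64\, h_0h_0,
\]
and multiplying by $\tfrac p{2\kappa}$ reproduces exactly the stated formula.

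\textbf{$L^r_\rho$ control and main obstacle.} The expansion \eqref{target} is only stated uniformly on compact sets, so to conclude in $L^r_\rho$ I would perform a near/far decomposition. On $\{|y|\le e^{s/6}\}$, the parameters $e^{-s}P(y), e^{-2s}Q(y), e^{-s}y_1^2 y_2^2, e^{-2s}(y_1^6+y_2^6)$ appearing in the definition \eqref{goodprof}--\eqref{defD} of $\varphi$ are small, so the Taylor expansion of $E/D$ and of $\varphi^p$ around $\kappa$ is valid with remainder bounded by a polynomial in $y$ times $e^{-3s}$; integration against the Gaussian $\rho$ gives an $L^r_\rho$ bound of order $e^{-3s}$. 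On the far region $\{|y|>e^{s/6}\}$, Lemma \ref{lemphi}(ii)--(iii) together with a direct $L^\infty$ bound on $\Delta\varphi$ (obtained by the same method as item (iii)) provides a uniform-in-$s$ $L^\infty$ bound on $R$, which integrates to an expression exponentially small in $e^{s/3}$ thanks to $\rho(y)\lesssim e^{-|y|^2/4}$. The main obstacle is really bookkeeping rather than depth: the cancellations at order $e^{-2s}$ are sensitive to the precise values of $\delta$, $\gamma$, $P$, $Q$ designed in Section \ref{secfor}, and any deviation (for example, choosing $\gamma\ne\tfrac{6p-2}\kappa$) would contaminate the $O(e^{-3s})$ remainder with an unwanted polynomial at the order $e^{-2s}$; the $L^r_\rho$ promotion itself is a standard near/far argument exploiting the Gaussian weight.
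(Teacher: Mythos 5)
Your proposal is essentially correct and follows the same route as the paper: differentiate the expansion \eqref{target} order by order, apply $\q L-1$ using \eqref{Lhihj}, Taylor-expand $\varphi^p$ around $\kappa$ (using $\kappa^{p-1}=\tfrac1{p-1}$), and reduce the $e^{-2s}$ coefficient to $\tfrac p{2\kappa}\bigl[(h_2h_2)^2-h_4h_4\bigr]$, which is then expanded with the Hermite identity $h_2^2=h_4+8h_2+8$. The final formula and the intermediate identity are correct. One point is worth correcting: the vanishing at order $e^{-2s}$ of the contributions coming from the $h_6h_0$, $h_0h_6$, $h_4h_2$, $h_2h_4$ terms of $F$ has nothing to do with the specific value $\gamma=\tfrac{6p-2}\kappa$. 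For a mode $h_ih_j$ with $\q L$-eigenvalue $\lambda$ inserted in $\varphi-\kappa$ at rate $e^{-ns}$, the four pieces of $R$ multiply it by $n+(\lambda-1)-\tfrac1{p-1}+\tfrac p{p-1}=n+\lambda$; for $h_6h_0$ and $h_4h_2$ one has $\lambda=-2$ and $n=2$, so the factor vanishes regardless of $\delta$ and $\gamma$, while for $h_4h_4$ ($\lambda=-3$) the factor is $-1$. The specific value of $\gamma$ is instead what guarantees the boundedness and decay of $\varphi$ in Lemma \ref{lemphi}, not the cancellation inside $R$; your closing remark that a different $\gamma$ would ``contaminate'' $R$ at order $e^{-2s}$ is therefore misleading, although it does not affect the computation. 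Your near/far decomposition to promote the compact-set expansion to $L^r_\rho$ is a reasonable and more explicit justification than the paper's terse assertion that the expansions hold in $L^r_\rho$.
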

\begin{proof}
Se Section \ref{secR} below.
\end{proof}

Now, we project equation \eqref{eqq} in the following:
\begin{prop}[Dynamics of equation \eqref{eqq} in
  $V_A(s)$] \label{propdyn}
  For any $A\ge 1$, there exists $\sinit(A)\ge 1$ such that the
following holds.
  Assume that
\begin{equation}\label{qs0}
 q(s_0)\in L^\infty,\;\; \nabla q(s_0)\in L^\infty\mbox{ and }
\forall r\ge 2,\;\;
\|q(s_0)\|_{L^r_\rho}\le C(r) A s_0e^{-2s_0},
  \end{equation}
and  that $q(s)\in V_A(s)$ satisfies equation \eqref{eqq}, for all $s\in
    [s_0,s_1]$, for some  $s_1\ge s_0\ge \sinit(A)$.  Then, for all $s\in [s_0,s_1]$:\\
    (i) For all $i\in \m N$ and $0\le j \le i$, $|q_{i,j}'(s)-(1-\frac
    i2)q_{i,j}(s)|\le C_iAse^{-3s}+R_{i,j}(s)$ ,\\
    (ii) $\frac d{ds}\|q_-(s)\|_{L^2_\rho} \le - 3
    \|q_-(s)\|_{L^2_\rho} + CAse^{-3s}+\|R_-(s) \|_{L^2_\rho}$,\\
   where $q(\cdot,s)$ and $R(\cdot,s)$ are decomposed as in \eqref{decomp}.
  \end{prop}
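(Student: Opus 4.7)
The plan is to project equation \eqref{eqq} against each eigenfunction $h_{i-j}(y_1)h_j(y_2)$ using the formulas \eqref{defvij}--\eqref{defkn} to obtain the ODEs in item (i), and to test the equation against $q_-$ itself in $L^2_\rho$ for item (ii). Throughout, I work under the standing hypothesis $q(s)\in V_A(s)$ on $[s_0,s_1]$, which via \eqref{smallVA} and Definition \ref{defvas} gives a uniform $L^\infty$ bound on $q$ as well as the $L^2_\rho$ estimate $\|q(s)\|_{L^2_\rho}\le CAse^{-2s}$. The linear term and the source $R$ are then read off by inspection, while the potential term $Vq$ and the nonlinearity $B(\cdot,s,q)$ must each be shown to contribute $O(Ase^{-3s})$, one mode at a time for item (i) and in $L^2_\rho$ norm for item (ii).

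\textbf{Linear and source contributions.} Using \eqref{defvij} and \eqref{eqq}, $q_{i,j}'(s)=\int [\mathcal L q+Vq+B+R]\,k_{i-j}k_j\,\rho\,dy$. Self-adjointness of $\mathcal L$ in $L^2_\rho$ together with \eqref{Lhihj} turns the $\mathcal L q$ integral into exactly $(1-i/2)q_{i,j}(s)$, and the $R$ integral produces $R_{i,j}(s)$ by definition. For the negative part, $\mathcal L$ is diagonal in the Hermite basis of \eqref{eigenL}, so its restriction to the subspace where $q_-$ lives (spanned by $h_{i-j}h_j$ with $i\ge 8$) has spectrum $\le 1-8/2=-3$ by \eqref{specL}; consequently $\langle \mathcal L q,q_-\rangle_{L^2_\rho}=\langle \mathcal L q_-,q_-\rangle_{L^2_\rho}\le -3\|q_-\|^2_{L^2_\rho}$. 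The energy identity $\tfrac12 \tfrac{d}{ds}\|q_-\|_{L^2_\rho}^2=\langle \partial_s q_-,q_-\rangle_{L^2_\rho}$, once divided by $\|q_-\|_{L^2_\rho}$, produces the dissipative term $-3\|q_-(s)\|_{L^2_\rho}$ of item (ii).

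\textbf{Potential and nonlinear contributions.} The crux is to bound the remaining integrals involving $Vq$ and $B$ by $C_iAse^{-3s}$, which is where the smoothing argument of Section \ref{secnonl} comes in. Since $\|V\|_{L^\infty}+\|B/q\|_{L^\infty}\le \bar C(A)$ on $V_A(s)$, the mild formulation \eqref{eqlin} plus the regularization in Lemma \ref{lemVel} give \eqref{delay}, and choosing a fixed time delay $s^*$ independent of $s$ yields $\|q(s)\|_{L^4_\rho}\le CAse^{-2s}$ for $s-s^*\ge s_0$, while for $s-s^*<s_0$ the same bound follows directly from the $L^\infty$ hypothesis in \eqref{qs0}. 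Hölder then gives, for every admissible $(i,j)$,
\[
\Bigl|\int Vq\,k_{i-j}k_j\rho\,dy\Bigr|\le \|V\|_{L^4_\rho}\|q\|_{L^4_\rho}\|k_{i-j}k_j\|_{L^2_\rho}\le Ce^{-s}\cdot CAse^{-2s}=CAse^{-3s}
\]
by \eqref{boundV}, and similarly $|\int B\,k_{i-j}k_j\rho\,dy|\le C\|q\|_{L^{2\bar p}_\rho}^{\bar p}$, which after interpolating with the $L^\infty$ bound (used when $\bar p<2$) is of the same order for $s$ large. The identical Hölder scheme bounds $\|(Vq+B)_-\|_{L^2_\rho}$ by $CAse^{-3s}$, closing item (ii).

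\textbf{Main obstacle.} The delicate step is the $L^2_\rho\to L^4_\rho$ smoothing: the constant $e^{(1+\bar C(A))s^*}$ in \eqref{delay} grows with $A$, so one must fix $s^*$ and then choose $\sinit(A)$ large enough that this factor is absorbed into the target rate $e^{-3s}$. Managing this circular dependence between $A$, $s^*$, the $L^\infty$ bound $M$, and the starting time $s_0$, together with the interpolation needed for the superlinear estimate on $B$ when $\bar p=\min(p,2)$ is close to $1$, is the true technical content of the proposition; the underlying projection mechanism on \eqref{specL}--\eqref{eigenL} is otherwise standard.
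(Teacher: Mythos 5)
Your proposal follows the paper's own proof in Section \ref{sectechdyn}: project \eqref{eqq} onto $k_{i-j}k_j$ for item (i) and onto $E_-$ for item (ii), use self-adjointness of $\q L$ and \eqref{Lhihj} for the linear and dissipative parts, and control $Vq$ and $B$ in $L^2_\rho$ via the delay regularizing estimate (Proposition~\ref{propdelay} in the paper, which formalizes your appeal to \eqref{delay} and Lemma~\ref{lemVel}). One caveat worth recording: the parenthetical claim that for $\bar p<2$ one closes the $B$-estimate by ``interpolating with the $L^\infty$ bound'' is not a valid justification, since in $V_A(s)$ the bound $\|q\|_{L^\infty}\lesssim\kappa$ carries no decay, so interpolation cannot improve $\|B\|_{L^2_\rho}\le C\|q\|_{L^{2\bar p}_\rho}^{\bar p}\le C(Ase^{-2s})^{\bar p}$ — an estimate which beats $Ase^{-3s}$ only when $\bar p>\tfrac 32$; the paper is equally laconic on this point, so you have not introduced a new gap, but the interpolation remark as written does not resolve the difficulty you correctly identify.
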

  \begin{proof}
    The proof is straightforward, except for the control of the
    potential term $Vq$ and the superlinear term $B$ in equation \eqref{eqq}, which both need
    a regularizing delay estimate, already mentioned informally in
    \eqref{delay}. For that reason, we only state that delay estimate
    here, and postpone the proof to Section \ref{sectechdyn} below.
    \end{proof}
  \begin{nb}
    The fact that $q$ satisfies the PDE \eqref{eqq} on the interval $[s_0,s]$ and
    not just at a particular time $s$ is important for
      the delay estimate \eqref{delay}.
 \end{nb}
  As we have just mentioned, let us precisely state that delay
  estimate, which is crucial for the proof of Proposition \ref{propdyn}:
  \begin{prop}[A delay regularizing estimate for equation \eqref{eqq}]\label{propdelay}
    Under the hypothesis of Proposition \ref{propdyn} and for any
    $r\ge 2$ and $s\in [s_0,s_1]$, it holds that
    \[
\|q(s)\|_{L^r_\rho}\le C(r)A s e^{-2s}.
    \]
  \end{prop}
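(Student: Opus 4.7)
For $r=2$, the bound is immediate from the membership $q(s)\in V_A(s)$ together with \eqref{smallVA}. For $r>2$, the strategy is to combine the hypercontractivity of the Ornstein--Uhlenbeck semigroup generated by $\q L$ with the observation that the $L^\infty$ control built into $V_A(s)$ effectively linearizes equation \eqref{eqq}.

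\emph{Linearization.} Since $\|q(s)+\varphi(\cdot,s)\|_{L^\infty}\le 2\kappa$ by the definition of $V_A(s)$, and $\|\varphi(\cdot,s)\|_{L^\infty}\le C$ by Lemma \ref{lemphi}(ii), we get $\|q(s)\|_{L^\infty}\le C$ on $[s_0,s_1]$. The superlinear bound \eqref{boundB} then gives $|B(y,s,q)|\le C|q|^{\bar p-1}|q|\le C|q|$, allowing us to rewrite \eqref{eqq} as
$$
\partial_s q=(\q L+\tilde V)q+R,\qquad \|\tilde V(\cdot,s)\|_{L^\infty}\le \bar C,
$$
with $\tilde V(y,s):=V(y,s)+B(y,s,q(y,s))/q(y,s)$ (set to $0$ where $q$ vanishes), exactly as in \eqref{boundVb}.

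\emph{Duhamel and hypercontractive iteration.} Fix a step $s_*>0$. For $s\in[s_0+s_*,s_1]$, Duhamel's formula reads
$$
q(s)=e^{s_*\q L}q(s-s_*)+\int_{s-s_*}^{s}e^{(s-\tau)\q L}\bigl[\tilde V(\cdot,\tau)q(\tau)+R(\cdot,\tau)\bigr]d\tau.
$$
The Nelson--Gross hypercontractivity of $e^{\tau\q L}$ (Lemma \ref{lemVel}) states that $\|e^{\tau\q L}\|_{L^p_\rho\to L^{q(\tau,p)}_\rho}\le e^\tau$ with $q(\tau,p)=1+(p-1)e^\tau$. Setting $p_0=2$ and $p_{k+1}=1+(p_k-1)e^{s_*}$, the sequence reaches (and exceeds) $r$ after a finite number $K=K(r,s_*)$ of steps. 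We prove by induction on $k\in\{0,\dots,K\}$ the bound
$$
\|q(s)\|_{L^{p_k}_\rho}\le C(p_k)\,Ase^{-2s},\qquad s\in[s_0+k s_*,s_1].
$$
The boundary term is handled by hypercontractivity and the induction hypothesis: $\|e^{s_*\q L}q(s-s_*)\|_{L^{p_{k+1}}_\rho}\le e^{s_*}\|q(s-s_*)\|_{L^{p_k}_\rho}\le e^{3s_*}C(p_k)Ase^{-2s}$. For the integral term, we use the a priori bounds $\|\tilde V q(\tau)\|_{L^{p_k}_\rho}\le \bar C\|q(\tau)\|_{L^{p_k}_\rho}\le C(p_k)A\tau e^{-2\tau}$ (induction hypothesis) and $\|R(\tau)\|_{L^{p_k}_\rho}\le Ce^{-2\tau}$ (Lemma \ref{lemR}), then apply hypercontractivity on the piece $[s-s_*,s-s_*/2]$ and the bounded $L^{p_{k+1}}_\rho\to L^{p_{k+1}}_\rho$ estimate on $[s-s_*/2,s]$ (using the induction hypothesis at level $p_{k+1}$ on the shorter interval, propagated forward in time). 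After $K$ iterations, the desired estimate at level $r$ follows.

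\emph{Main obstacle and initial interval.} The principal technical point is that when $\tau$ is close to $s$ inside the Duhamel integral, the elapsed time $s-\tau$ may be too small for hypercontractivity to raise integrability all the way from $p_k$ to $p_{k+1}$. This is the reason for the interval splitting above; equivalently, one may invoke the Mehler-kernel $L^p_\rho\to L^q_\rho$ smoothing with its integrable singularity $\tau^{-\alpha(p,q)}$ at $\tau=0$, in the spirit of \cite{BKnonl94,MZdmj97}. Finally, for $s\in[s_0,s_0+Ks_*]$, the estimate follows from the initial bound \eqref{qs0} and Gronwall applied to $\frac{d}{ds}\|q\|_{L^r_\rho}\le(1+\bar C)\|q\|_{L^r_\rho}+\|R\|_{L^r_\rho}$: since the interval has bounded length and $\|R(\tau)\|_{L^r_\rho}\le Ce^{-2\tau}$, we recover $\|q(s)\|_{L^r_\rho}\le C(r)Ase^{-2s}$.
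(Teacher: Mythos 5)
Your proof has a genuine gap in the handling of the Duhamel integral term $\int_{s-s_*}^s e^{(s-\tau)\q L}\,\tilde V(\tau)q(\tau)\,d\tau$. The Ornstein--Uhlenbeck semigroup does \emph{not} improve integrability instantaneously in $L^p_\rho$: as Lemma \ref{lemVel}(i) makes explicit, $e^{\tau\q L}$ maps $L^{p_k}_\rho$ into $L^{p_{k+1}}_\rho$ only for $\tau > \log\frac{p_{k+1}-1}{p_k-1}$, a strict lower bound, with \emph{no} power-law singularity as $\tau\to 0^+$. So for $\tau$ close to $s$ there is no available $L^{p_k}_\rho\to L^{p_{k+1}}_\rho$ estimate, and the invocation of ``Mehler-kernel smoothing with integrable singularity'' (as for the free heat kernel on $\m R^N$) does not apply in the Gaussian-weighted setting. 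Your proposed rescue --- ``using the induction hypothesis at level $p_{k+1}$ on the shorter interval, propagated forward in time'' --- is circular, since the $L^{p_{k+1}}_\rho$ bound is precisely what is being established. One also cannot use item (ii) of Lemma \ref{lemVel} (the $L^r_\rho\to L^r_\rho$ bound with no delay) on $\tilde V q(\tau)$, because that item requires a pointwise polynomial-growth control on the gradient, which is not known for $q(\tau)$ at general times.

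The paper sidesteps this entirely by \emph{not} treating $\tilde Vq$ as a source. Since $|\tilde V|\le \bar C$ and the kernel of $e^{\tau\q L}$ is positive, one passes to the scalar pointwise inequality $\partial_s|q|\le(\q L+C^*)|q|+|R|$ and applies the comparison principle, so that the sole Duhamel source is $|R|$. This is crucial: $R$ and $\nabla R$ have explicit polynomial growth in $y$ by \eqref{defR}, so item (ii) of Lemma \ref{lemVel} yields $\|e^{\tau\q L}|R|\|_{L^r_\rho}\le Ce^{\tau}\|R\|_{L^r_\rho}$ with no short-time obstruction. The boundary term is then handled by a \emph{single} application of item (i) with a fixed delay $s^*=\log(r-1)$, reducing directly from $L^2_\rho$ to $L^r_\rho$ --- no iterated hypercontractive ladder is needed. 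If $s-s^*<s_0$, one falls back to hypothesis \eqref{qs0} (which supplies the $L^r_\rho$ bound and the $L^\infty$ control of $\nabla q(s_0)$ needed for item (ii)). I suggest restructuring the argument along these lines: absorb the potential into the exponent via the comparison principle before writing Duhamel, and take one delay step rather than iterating.
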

  \begin{proof}
See below in Section \ref{sectechdelay}.
    \end{proof}

\subsection{Behavior of the flow on the boundary of $V_A(s)$}
Recall that our aim is to suitably choose $q(s_0)$ so that $q(s)\in
V_A(s)$ for all $s\ge s_0$.\\
As in the previous section, we assume that $q(s) \in V_A(s)$ for all
$s\in [s_0,s_1]$ for some $s_1 \ge s_0$. This time, we will assume in
addition that at $s=s_1$,
one component of $q(s_1)$ (as defined in \eqref{decomp}) ``touches'' the
corresponding part of the boundary of its bound defined in Definition
\ref{defvas}. 
We will then derive the position of the flow of that
component with respect to the boundary, and see whether it is inward
(which leads to a contradiction) or outward. This way, only the
components with an outward flow may touch. Hopefully, those components
will be in a finite dimensional space, leading the way to the
application of a consequence of Brouwer's lemma linked to some fine-tuning in initial
data, in order to guarantee that the solution will stay in $V_A(s)$
for all $s\ge s_0$. More precisely, this is our statement:
\begin{prop}[Position of the flow on the boundary of
  $V_A(s)$]\label{propflow}
  There exists
$\AAzero\ge 1$
such that for any
$A\ge \AAzero$,
there exists
 $\Szero(A)\ge 1$ such that  if the hypotheses of Proposition \ref
 {propdyn} hold with $s_0\ge \Szero$, then:\\
(i) If $q_{i,j}(s_1) = \theta A e^{-2s_1}$ with $i\le 4$, $i$ and $j$
even, and $\theta=\pm 1$, then, $\theta q_{i,j}'(s_1) > \frac d{ds}
 A{e^{-2s}}_{|s=s_1}$.\\
(ii) If $q_{6,j}(s_1)=\theta A s^2 e^{-3s}$ with $j=0$ or $j=6$, and $\theta=\pm 1$, then, $\theta q_{i,j}'(s_1) > \frac d{ds} As^2{e^{-3s}}_{|s=s_1}$.\\
 (iii) If $q_{6,j(s_1)}=\theta A s e^{-2s}$ with $j=2$ or $j=4$, and $\theta=\pm 1$, then, $\theta q_{i,j}'(s_1) < \frac d{ds} As{e^{-2s}}_{|s=s_1}$.\\
(iv) If $\|q_-(s_1)\|_{L^2_\rho} = A^2s_1^2 e^{-3s_1}$, then
$\frac d{ds}\|q_-(s_1)\|_{L^2_\rho}< \frac d{ds} {A^2s^2 e^{-3s}}_{|s=s_1}$.
\end{prop}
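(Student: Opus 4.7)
We substitute the saturated boundary values into the component-wise dynamics from Proposition \ref{propdyn}, and compare, sign by sign, with the time derivative of the corresponding bound in Definition \ref{defvas}. The source terms $R_{i,j}(s_1)$ and $\|R_-(s_1)\|_{L^2_\rho}$ are read directly off Lemma \ref{lemR}: at scale $e^{-2s}$, only eight coordinates are nonzero, along $h_0h_0$, $h_2h_0$, $h_0h_2$, $h_4h_0$, $h_0h_4$, $h_2h_2$, $h_4h_2$ and $h_2h_4$, each with coefficient of absolute value at most $32p/\kappa$; in particular $R_{6,0}$, $R_{6,6}$ and $\|R_-\|_{L^2_\rho}$ are all $O(e^{-3s})$.

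For (i), the eigenvalue of $h_{i-j}h_j$ is $\lambda_i = 1-i/2 \in \{1, 0, -1\}$. Multiplying Proposition \ref{propdyn}(i) by $\theta$ (using $\theta^2=1$) and dividing by $e^{-2s_1}$, the worst case is $i=4$ with $\theta$ chosen against the sign of $R_{4,j}$, giving $-A - 32p/\kappa + o(1) > -2A$ for $A$ larger than a constant. Part (ii) is the same phenomenon at eigenvalue $\lambda_6 = -2$: since $R_{6,0}=R_{6,6}=O(e^{-3s})$ and the barrier derivative $\frac{d}{ds}(As^2 e^{-3s}) \sim -3As^2 e^{-3s}$ beats the linear contribution $-2As_1^2 e^{-3s_1}$, the $O(As_1 e^{-3s_1})$ correction is absorbed by choosing $s_0$ large.

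Part (iii) is where $R$ contributes at the leading order: $R_{6,2}=R_{6,4}=(4p/\kappa)e^{-2s}+O(e^{-3s})$. At $q_{6,j}(s_1)=\theta A s_1 e^{-2s_1}$, Proposition \ref{propdyn}(i) gives $\theta q'_{6,j}(s_1)=-2As_1 e^{-2s_1}+\theta(4p/\kappa) e^{-2s_1}+O(As_1 e^{-3s_1})$. The barrier's derivative is $(1-2s_1)Ae^{-2s_1}$, so after cancelling the common $-2As_1 e^{-2s_1}$ the desired strict inequality reduces to $\theta(4p/\kappa)<A$, valid for $A>4p/\kappa$. This is the \emph{inward} direction, won precisely because the linear contraction at rate $2$ exactly matches the $s$-factor in the barrier, so the comparison is decided by the coefficient $4p/\kappa$ versus $A$.

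For (iv), Proposition \ref{propdyn}(ii) combined with $\|R_-(s_1)\|_{L^2_\rho} \le C e^{-3s_1}$ yields at the saturated value $\|q_-(s_1)\|_{L^2_\rho}=A^2 s_1^2 e^{-3s_1}$ the upper bound $(-3A^2 s_1^2+CAs_1+C)e^{-3s_1}$ on $\frac{d}{ds}\|q_-(s_1)\|_{L^2_\rho}$, whereas the barrier's derivative equals $(2A^2 s_1-3A^2 s_1^2)e^{-3s_1}$. The strict inequality reduces to $CAs_1+C<2A^2 s_1$, clear for $A$ larger than a constant. No step is difficult in isolation; the main care lies in the order of the choices ($\AAzero$ first, large enough to beat every source constant in the four parts simultaneously, then $\Szero(A)$ large enough so that every $O(s e^{-s})$ and $O(e^{-s})$ correction is dominated by the kept leading terms) and in tracking signs through the multiplication by $\theta$.
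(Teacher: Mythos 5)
Your proof is correct and follows the same approach as the paper: project the ODE from Proposition \ref{propdyn}, read off the component-wise size and sign of $R$ from Lemma \ref{lemR}, compare with the derivative of the barrier, and choose $A$ large then $s_0$ large. The paper only writes out (i) and (iv) and says (ii)-(iii) follow in the same way; you have usefully filled those in, including the correct observation that in (iii) the $R_{6,2}=R_{6,4}=(4p/\kappa)e^{-2s}+O(e^{-3s})$ term appears at leading order after the $-2As_1e^{-2s_1}$ cancellation, so the inward inequality reduces to $A>4p/\kappa$.
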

\begin{nb}
The flow in items (i) and (ii) is outward, while in items (iii) and
(iv), it is inward.
\end{nb}

\begin{proof}
  See Section \ref{sectechflow} below.
\end{proof}
Following this statement, we immediately see that the constraints on
$q_{6,0}=q_{6,6}$ and 
$q_-$ in Definition \ref{defvas} of $V_A(s)$ can never achieve equality:
\begin{cor}[$q_{6,2}=q_{6,4}$ and $q_-$ never quit] \label{corq-}
Following Proposition \ref{propflow} and assuming that $s_1>s_0$, it
holds that $|q_{6,2}(s_1)|=|q_{6,4}(s_1)|< As_1e^{-2s_1}$ and $\|q_-(s_1)\|_{L^2_\rho} < A^2s_1^2 e^{-3s_1}$.
 \end{cor}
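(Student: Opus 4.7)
The plan is a standard contradiction argument at the ``touching time'' $s_1$, using that Proposition \ref{propflow}(iii) and (iv) give \emph{strictly} inward flow estimates. The strategy is: if one of the bounds in question is saturated at $s_1$, then the corresponding quantity, which lies below its barrier throughout $[s_0,s_1]$, must have a nonnegative left derivative at $s_1$; but the strict inward flow forces that derivative to be negative.

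First I would handle $|q_{6,2}(s_1)|=As_1e^{-2s_1}$ (the case of $q_{6,4}$ follows automatically since the imposed symmetry under $y_1\leftrightarrow y_2$ gives $q_{6,4}(s)=q_{6,2}(s)$ identically, so the two equalities occur simultaneously). Pick $\theta=\mathrm{sign}\, q_{6,2}(s_1)\in\{\pm1\}$ and set
\[
g(s)=\theta q_{6,2}(s)-Ase^{-2s},\qquad s\in[s_0,s_1].
\]
Membership $q(\sigma)\in V_A(\sigma)$ gives $\theta q_{6,2}(\sigma)\le |q_{6,2}(\sigma)|\le A\sigma e^{-2\sigma}$, hence $g\le 0$ on $[s_0,s_1]$. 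Combined with $g(s_1)=0$, and using $s_1>s_0$ so there is a genuine interval to the left, the left derivative satisfies $g'(s_1^-)\ge 0$. On the other hand, Proposition \ref{propflow}(iii) applied with this $\theta$ yields
\[
g'(s_1)=\theta q_{6,2}'(s_1)-\frac{d}{ds}\bigl(Ase^{-2s}\bigr)\Big|_{s=s_1}<0,
\]
which is the desired contradiction. So $|q_{6,2}(s_1)|<As_1e^{-2s_1}$ strictly.

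The case $\|q_-(s_1)\|_{L^2_\rho}=A^2s_1^2e^{-3s_1}$ is handled the same way, with
\[
h(s)=\|q_-(s)\|_{L^2_\rho}-A^2s^2e^{-3s}.
\]
By Definition \ref{defvas} one has $h\le 0$ on $[s_0,s_1]$, and $h(s_1)=0$ by assumption. Since $\|q_-(s_1)\|_{L^2_\rho}>0$ and $s\mapsto\|q_-(s)\|_{L^2_\rho}^2$ is absolutely continuous along the parabolic flow, $h$ is differentiable on a left-neighborhood of $s_1$, so $h'(s_1^-)\ge 0$. But Proposition \ref{propflow}(iv) asserts $h'(s_1)<0$, contradiction. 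The only mild subtlety is the regularity of $\|q_-(s)\|_{L^2_\rho}$, which is not genuinely an obstacle because we are at a point where it is positive; otherwise the proof is immediate once Proposition \ref{propflow} is in hand, as the latter has already absorbed all the analytic work.
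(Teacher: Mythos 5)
Your proof is correct and is essentially the same as the paper's: a contradiction argument comparing the left derivative at the touching time $s_1$ against the strict transversality of Proposition \ref{propflow}(iii)--(iv). The paper only spells out the $q_-$ case and says the other ``follows in the same way''; you give both, and your observation that $q_{6,4}\equiv q_{6,2}$ by the $y_1\leftrightarrow y_2$ symmetry and that the regularity of $\|q_-(s)\|_{L^2_\rho}$ is unproblematic away from zero are small, accurate refinements.
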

\begin{proof}
  We will only prove the estimate on $q_-$, since the other follows in
  the same way.
  Proceeding by contradiction, we assume that
  $\|q_-(s_1)\|_{L^2_\rho}\ge A^2s_1^2 e^{-3s_1}$. Since $q(s)\in
  V_A(s)$ for all $s\in[s_0,s_1]$ by hypothesis, it follows that
  \[
    \forall s\in (s_0,s_1],\;\; \|q_-(s)\|_{L^2_\rho} \le A^2s^2 e^{-3s}
\mbox{ and }
    \|q_-(s_1)\|_{L^2_\rho} = A^2s_1^2 e^{-3s_1}.
  \]
  In particular, this translates intro the following estimate between
  the derivatives of both curves:
  \[
  \frac d{ds} \|q_-(s_1)\|_{L^2_\rho} \ge \frac d{ds} A^2s^2 e^{-3s},
\]
which is a contradiction by item (iv) of Proposition \ref{propflow}.
\end{proof}

\subsection{Choice of initial data}
Following Corollary \ref{corq-}, we see that the control of $q(s)$ in
$V_A(s)$ reduces to the control of 5 components:
$q_{0,0}$, $q_{2,0}$, $q_{4,0}$, $q_{4,2}$ and $q_{6,0}$ 
(together with the control of $\|q(s)+\varphi(s)\|_{L^\infty}$,
which is a major novelty of our
paper). It happens that item (i) in Proposition \ref{propflow}
indicates that the flow of those components is ``transverse outgoing''
on the boundary of the constraint introduced in Definition
\ref{defvas}. As in our various papers where we construct solutions to
PDEs with a prescribed behavior (see for example \cite{MZdmj97}), the
control of this finite-dimensional part will be done through a
Brouwer-type lemma, involving some fine-tuning of parameters in
initial data. Namely, the idea would be to introduce the following
family of initial data (for $w_0(y,s_0)$ linked to $q(y,s_0)$ by
definition \eqref{defq}) depending on parameters
$d_{0,0}$, $d_{2,0}$, $d_{4,0}$, $d_{4,2}$ and $d_{6,0}$:
\[
w_0(y,s_0) = \varphi(y,s_0)+\left[Ae^{-2s_0} S(y)+As^2e^{-3s} \bar S(y)\right]\chi(y)
\]
where 
\begin{align}
S(y)=& d_{0,0}+d_{2,0}[h_2(y_1)+h_2(y_2)] +
  d_{4,0}[h_4(y_1)+h_4(y_2)] + d_{4,2}h_2(y_1)h_2(y_2), \nonumber\\
  \bar S(y) =&d_{6,0}(h_6(y_1)+h_6(y_2)),\label{defS}
\end{align}
for some sufficiently decaying function $\chi$. In fact, in order to
stick to the shape of the profile $\varphi$ \eqref{goodprof}, we will
in fact take the following initial data for equation \eqref{eqw}:
\begin{equation}\label{defw0}
w_0(y,s_0) = \left[ \frac ED + \frac{p-1}{\kappa D^2}\left(Ae^{-2s_0}S(y) +As^2e^{-3s} \bar S(y) \right)\right]^{\frac 1{p-1}},
\end{equation}
where $E$, $D$, $S$ and $\bar S$ are introduced in \eqref{defE},
\eqref{defD} and \eqref{defS},
which yields the following initial data for equation
\eqref{eqq}:
\begin{equation}\label{q0}
q(y,s_0)=w_0(y,s_0)-\varphi(y,s_0).
\end{equation}
n the following, we exhibit a set for the parameters so that $q(s_0)$
is well-defined, $q(s_0)\in V_A(s_0)$ with other smallness and decay
properties, inherited from the profile $\varphi(y,s_0)$ given in
\eqref{goodprof}.
More precisely, this is our statement:
\begin{prop}[Initialization]\label{propinit} For any $A\ge 1$, there
  exists
$\sszeroun(A)\ge 1$
 such that for all
$s_0\ge \sszeroun(A)$,
there exists a set
$\q D(A,s_0)\subset[-2,2]^5$ such that for
  all parameter $d\equiv (d_{0,0}, d_{2,0}, d_{4,0}, d_{4,2}, d_{6,0})\in \q
  D$, we have the following 2 properties:\\
 (i) $E+\frac {p-1}{\kappa D}\left(Ae^{-2s_0}S(y) +As^2e^{-3s} \bar S(y)\right)\ge   \frac 14$,
  hence $w_0(y,s_0)$ \eqref{defw0}
  and $q(y,s_0)$ \eqref{q0} are well-defined.\\
  (ii) $q(s_0)\in  V_A(s_0)$ introduced in Definition \ref{defvas},
  estimate \eqref{qs0} holds true,
  $|q_{6,2}(s_0)|=|q_{6,4}(s_0)|\le  CAe^{-3s_0}<A s_0e^{-2s_0}$, $\|q_-(s_0)\|_{L^2_\rho}\le CAe^{-3s_0}<A^2 s_0^2e^{-3s_0}$,
  $\|w_0(s_0)\|_{L^\infty}\le \kappa+ Ce^{-\frac {s_0}3}$ and
  $\|\nabla w_0(s_0)\|_{L^\infty}\le Ce^{-\frac {s_0}6}$. Moreover,
   the function
  \begin{equation}\label{121}
  \begin{array}{rcl}
    \q D &\longrightarrow &[-A e^{-2s_0}, A e^{-2s_0}]^4\times [-As_0^2 e^{-3s_0}, As_0^2 e^{-3s_0}],\\
   d&\longmapsto& (q_{0,0}(s_0),
  q_{2,0}(s_0), q_{4,0}(s_0), q_{4,2}(s_0),q_{6,0}(s_0))
  \end{array}
  \end{equation}
is one-to-one.
  \end{prop}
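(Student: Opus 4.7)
The plan is to treat $w_0(\cdot, s_0)$ as a Taylor perturbation of the profile $\varphi(\cdot, s_0)$, tune the five parameters $d$ so as to prescribe the first five Hermite projections of $q(s_0)$, and then use symmetry together with the smallness of the correction terms to control the remaining components. Writing
\begin{equation*}
w_0(y,s_0)^{p-1} = \frac{E}{D}\bigl(1 + \tau(y)\bigr), \quad \tau(y) := \frac{p-1}{\kappa E D}\bigl(A e^{-2s_0}S(y) + A s_0^2 e^{-3s_0}\bar S(y)\bigr),
\end{equation*}
the first task is to show $\|\tau(\cdot,s_0)\|_{L^\infty}\to 0$ as $s_0 \to \infty$, which by the same splitting and $z = e^{-s/4}y$ rescaling as in the proof of Lemma \ref{lemphi}(i)-(ii) reduces to controlling the new ratio $A s_0^2 e^{-3s_0}|y|^6 / D$; since $D$ contains the term $\delta e^{-2s_0}(y_1^6 + y_2^6)$, this ratio is $O(A s_0^2 e^{-s_0}/\delta)$ and vanishes. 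Combined with $E \ge 1/2$ from Lemma \ref{lemphi}(i), this proves item (i), and a verbatim repetition of Lemma \ref{lemphi}(ii)-(iii) applied to the perturbed numerator yields the claimed $L^\infty$ bounds on $w_0(s_0)$ and $\nabla w_0(s_0)$.

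For the projections of $q(s_0) = w_0(s_0) - \varphi(s_0)$, I Taylor-expand
\begin{equation*}
q(y, s_0) = \varphi(y, s_0)\bigl[(1+\tau)^{1/(p-1)} - 1\bigr] = \frac{\varphi \tau}{p-1} + O\bigl(\varphi\tau^2\bigr).
\end{equation*}
Using $\varphi^{p-1} = E/D$, $\kappa^{p-1}(p-1) = 1$, and the expansions $E = 1 + O(e^{-s_0})$, $D = (p-1) + O(e^{-s_0})$, $\varphi = \kappa + O(e^{-s_0})$ valid on any compact set, one obtains
\begin{equation*}
q(y,s_0) = \frac{1}{p-1}\bigl(A e^{-2s_0}S(y) + A s_0^2 e^{-3s_0}\bar S(y)\bigr) + \mathcal{E}(y,s_0),
\end{equation*}
where the error $\mathcal{E}$ collects the polynomial correction factor $\frac{\varphi D}{(p-1)E} - 1$ times the perturbation, plus the Taylor remainder $O(\tau^2)$. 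A careful argument (splitting $|y|$ into bounded and large regions, using the Gaussian weight to tame polynomial factors) shows $\|\mathcal{E}(\cdot,s_0)\|_{L^2_\rho} = O(Ae^{-3s_0})$. Projecting onto the basis \eqref{eigenL} via \eqref{defvij} together with the explicit form \eqref{defS} of $S$ and $\bar S$ yields to leading order $q_{i,j}(s_0) = \frac{Ae^{-2s_0}}{p-1} d_{i,j} + O(Ae^{-3s_0})$ for $(i,j) \in \{(0,0), (2,0), (4,0), (4,2)\}$ and $q_{6,0}(s_0) = \frac{As_0^2 e^{-3s_0}}{p-1} d_{6,0} + O(Ae^{-3s_0})$, while the undesignated modes satisfy $|q_{6,2}(s_0)|, |q_{6,4}(s_0)| = O(Ae^{-3s_0})$ and $\|q_-(s_0)\|_{L^2_\rho} = O(Ae^{-3s_0})$ since no leading-order contribution reaches them.

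Finally, I define $\q D(A, s_0)$ as the preimage under the map \eqref{121} of the target box $[-Ae^{-2s_0}, Ae^{-2s_0}]^4 \times [-As_0^2 e^{-3s_0}, As_0^2 e^{-3s_0}]$, intersected with $[-2, 2]^5$. The near-diagonal linear structure above makes this map a small continuous perturbation of a diagonal invertible map, so it is one-to-one on $\q D$; for $s_0$ large, $\q D$ is a small topological perturbation of a box around the origin of radius $O(p-1)$, contained in $[-2, 2]^5$. The vanishing of the projections on eigenfunctions with $i$ or $j$ odd is automatic because $S$, $\bar S$, $E$ and $D$ are all invariant under $(y_1, y_2) \mapsto (\pm y_1, \pm y_2)$, making $w_0(s_0)$ and $\varphi(s_0)$ symmetric and hence $q(s_0)$ symmetric as well. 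The main obstacle is the precise $L^2_\rho$ estimate of $\mathcal{E}(y,s_0)$: bounding the contributions to $q_{6,2}, q_{6,4}, q_-$ by $O(Ae^{-3s_0})$ requires a careful decomposition of the polynomial correction factor $\frac{\varphi D}{(p-1)E} - 1$ and of the nonlinear remainder $O(\tau^2)$, then integrating the resulting polynomial expressions against $\rho$.
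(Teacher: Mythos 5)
Your approach mirrors the paper's: expand $w_0(\cdot,s_0)$ around $\varphi(\cdot,s_0)$, identify the leading contribution $\propto Ae^{-2s_0}S + As_0^2e^{-3s_0}\bar S$ with an $O(Ae^{-3s_0})$ remainder in $L^r_\rho$, project onto the Hermite basis, and invoke symmetry to kill the odd modes. Item (i) and the $L^\infty$/gradient bounds are handled the same way.

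There is, however, a genuine gap in the way you establish that the map \eqref{121} is one-to-one. You argue that the map is ``a small continuous perturbation of a diagonal invertible map, so it is one-to-one on $\q D$.'' That implication is false in general: a $C^0$-small perturbation of an injective map on a compact set need not be injective (think of $x\mapsto x$ on $[-1,1]$ perturbed by a tiny sawtooth). What one actually needs is that the $O(Ae^{-3s_0})$ remainder in $q_{i,j}(s_0)=\frac{Ae^{-2s_0}}{p-1}d_{i,j}+O(Ae^{-3s_0})$ (resp.\ the $q_{6,0}$ component) is small \emph{together with its $d$-derivatives}, uniformly for $d\in[-2,2]^5$, so that the Jacobian of the map stays close to the invertible leading diagonal. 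Since $w_0(\cdot,s_0)$ depends on $d$ through the nonlinear power $(\cdot)^{1/(p-1)}$, this is not automatic and must be checked. The paper flags exactly this in passing: ``please note that this identity holds after differentiation in $d$.'' Your proof does not address it.

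A secondary point worth noting. Your Taylor computation gives the leading coefficient $\frac{1}{p-1}$, so the preimage of the target box $[-Ae^{-2s_0},Ae^{-2s_0}]^4\times[-As_0^2e^{-3s_0},As_0^2e^{-3s_0}]$ has side lengths $\approx 2(p-1)$ — your own phrase ``radius $O(p-1)$.'' That is not ``contained in $[-2,2]^5$'' once $p\ge 3$. Intersecting with $[-2,2]^5$, as you do, salvages the inclusion but may destroy surjectivity onto the box, which is what is really needed downstream in the degree argument. Either the normalization of $S,\bar S$ (or the cube $[-2,2]^5$) should be adjusted, or one must verify $p-1\le 2$ — the point is that the claim ``contained in $[-2,2]^5$'' does not follow from ``radius $O(p-1)$'' and cannot simply be asserted. (Incidentally, your coefficient $\frac{1}{p-1}$ looks correct; the paper's displayed expansion appears to drop this factor, which is a bookkeeping slip rather than a conceptual issue, but it is precisely the kind of thing that bites here.) Finally, a small algebra slip: the correction factor multiplying the perturbation should be $\frac{(p-1)\varphi}{\kappa E D}-1$ rather than $\frac{\varphi D}{(p-1)E}-1$; the latter equals $\kappa-1\ne 0$ at $y=0$ unless $p=2$, so it does not vanish to leading order as you need.
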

  \begin{proof}
See Section \ref{sectechinit}.
    \end{proof}

\section{Control of $w_a(y,s)$ for any $a$}\label{seccontrolwa}
Starting from this section, our goal is to show that the $L^\infty$ bound on $q(y,s)+\varphi(y,s)$ in 
    Definition \ref{defvas} of $V_A(s)$ never enjoys an equality
    case.
    In other words, if $q(s_0)$ is given by \eqref{q0} for some
    parameter  $(d_{0,0},  d_{2,0}, d_{4,0}, d_{4,2}, d_{6,0})\in \q D$ defined
    in Proposition \ref{propinit} and $q(y,s)$ satisfies equation
    \eqref{eqq} with $q(s)\in V_A(s)$ given in
    Definition \ref{defvas}, for all $s\in [s_0,s_1]$, for some $s_1\ge
    s_0$, with $s_0$ large enough, then
    \begin{equation}\label{g0}
      \forall s\in [s_0,s_1],\;\;\forall y\in \m R^2,\;\;
      |q(y,s)+\varphi(y,s)|=|w_0(y,s)|<2 \kappa,
    \end{equation}
   where we have used the definition
\eqref{defq} of $q(y,s)$. Using the
relation \eqref{wawb}, this is equivalent to showing that
\begin{equation}\label{g1}
  \forall a\in \m R^2,\;\; \forall s\in [s_0,s_1],\;\; |w_a(0,s)|< 2\kappa.
 \end{equation}
 We will proceed in several steps in order to control $w_a(y,s)$ for
 any $a\in \m R^2$: we first give a uniform control of
 the gradient, then we explain our strategy, depending on the region
 where $a$ belongs to. The next three subsections are dedicated to the proof
 of the estimate in each region. Finally, we give in the last subsection a concluding
 statement for the whole section.
 
\subsection{Control of the gradient}
In this section,
we use the Liouville theorem we proved in \cite{MZcpam98}
and \cite{MZma00} for equation \eqref{equ} in order to show that
$\|\nabla q(s)\|_{L^\infty}$ is small, provided that $s_0$ is large enough.
Let us first recall our version of the Liouville theorem, stated for
equation \eqref{eqw}:
\begin{prop}[A Liouville theorem for equation \eqref{eqw}]\label{propliou}
Under condition \eqref{condp}, consider $W(y,s)$ a solution of
equation \eqref{eqw} defined and uniformly bounded for all $(y,s)\in
\m R^N\times (-\infty,\bar s)$ for some $\bar s\le +\infty$. Then, either $W\equiv 0$, or $W\equiv \pm \kappa$
or $W(y,s)=\pm \kappa(1\pm e^{s-s^*})^{-\frac 1{p-1}}$ for all $(y,s)\in
\m R^N\times (-\infty, \bar s]$ and for some $s^*\in \m R$. In all cases, it
holds that $\nabla W \equiv 0$.
\end{prop}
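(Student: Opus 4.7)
The plan is to follow the strategy of Merle--Zaag \cite{MZcpam98,MZma00}, where this Liouville-type result was originally proved, so the proof of the proposition reduces to invoking that classification. Since $W$ is a uniformly bounded ancient solution of \eqref{eqw} on $\m R^N\times(-\infty,\bar s)$, parabolic regularity yields uniform bounds on all derivatives of $W$. The Giga--Kohn energy
\[
\mathcal{E}[w(s)]=\int_{\m R^N}\Bigl(\tfrac12|\nabla w|^2+\tfrac{w^2}{2(p-1)}-\tfrac{|w|^{p+1}}{p+1}\Bigr)\rho\,dy
\]
is non-increasing along the forward flow of \eqref{eqw}, so $\mathcal{E}[W(s)]$ admits a finite limit as $s\to-\infty$. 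Parabolic compactness then extracts a subsequence $W(\cdot,s_n)\to W_\infty$ whose limit is a bounded stationary solution of \eqref{eqw} on $\m R^N$. In the Sobolev subcritical range \eqref{condp}, Giga--Kohn's classification of bounded stationary solutions forces $W_\infty\in\{0,\pm\kappa\}$.

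I then perform a case analysis on $W_\infty$. If $W_\infty\equiv 0$: the linearization of \eqref{eqw} around $0$ has spectrum contained in $(-\infty,-\tfrac{1}{p-1}]$, hence every nonzero mode grows unboundedly backward in time, contradicting the uniform $L^\infty$ bound on $W$ unless $W\equiv 0$. If $W_\infty\equiv\pm\kappa$: set $v=W\mp\kappa$, so that $\partial_s v=\q L v+F(v)$ with $\q L$ as in \eqref{defR} and $F$ quadratic near $0$. By \eqref{specL}, the positive eigenvalues ($m=0,1$) correspond to modes that decay backward, while the zero eigenvalue ($m=2$), whose eigenspace is spanned by $\{h_ih_j\,:\,i+j=2\}$, governs the backward asymptotics.

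The main obstacle, and the core argument of \cite{MZcpam98}, is a rigidity statement on this neutral eigenspace: projecting the quadratic remainder $F(v)$ onto its eigenfunctions yields a closed ODE system for the neutral coordinates, and a careful sign analysis combined with the constraint $v\to 0$ backward forces the off-diagonal modes $y_iy_k$ ($i\neq k$) to vanish and the diagonal modes along $h_2(y_i)$ to share a common coefficient. This rigidity implies that $v$ is asymptotically $y$-independent backward, and a unique-continuation / bootstrap argument propagates this $y$-independence to the whole interval $(-\infty,\bar s)$. Equation \eqref{eqw} then reduces to the scalar ODE
\[
W'=-\tfrac{W}{p-1}+|W|^{p-1}W,
\]
whose bounded ancient solutions on $(-\infty,\bar s)$ are exactly the equilibria $0,\pm\kappa$ together with the explicit heteroclinic orbits $\pm\kappa(1\pm e^{s-s^*})^{-1/(p-1)}$ parametrised by $s^*\in\m R$. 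In every case $W$ depends only on $s$, so $\nabla W\equiv 0$, completing the proof.
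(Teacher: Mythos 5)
Your proposal reduces, just as the paper does, to invoking the Liouville theorems of Merle--Zaag: the paper's own ``proof'' is a citation to Theorem 1.4 of \cite{MZcpam98} (nonnegative case), Theorem 1 of \cite{MZma00} (signed case), and Corollary 1.5 of \cite{MZcpam98} for the adaptation when $\bar s<+\infty$. You correctly identify this, and the broad architecture you sketch -- monotonicity of the Giga--Kohn energy, extraction of stationary backward limits from parabolic compactness, classification of bounded stationary solutions as $\{0,\pm\kappa\}$ in the subcritical range, linearization at $\pm\kappa$, finite-dimensional reduction to the $\lambda=0$ eigenspace of $\mathcal L$ -- is faithful to those references.

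However the central step of your sketch is internally inconsistent. You claim the neutral-mode rigidity ``forces the off-diagonal modes $y_iy_k$ to vanish and the diagonal modes along $h_2(y_i)$ to share a common coefficient'' and then ``this rigidity implies that $v$ is asymptotically $y$-independent backward.'' These two assertions contradict each other: if the $h_2(y_i)$ coefficients coincide at a \emph{nonzero} common value $c$, then $v\sim c\sum_i h_2(y_i)$ as $s\to-\infty$, which is manifestly $y$-dependent. What the Merle--Zaag argument actually shows is that a nonvanishing neutral ($\lambda=0$) block forces blow-up of the solution in finite time (forward or backward, depending on the sign of the quadratic-form eigenvalues appearing in the ODE on the neutral modes), contradicting the a priori bound; hence the entire $\lambda=0$ block vanishes, leaving only the $\lambda=1$ coordinate, which is precisely what makes $W$ $y$-independent and reduces the problem to the scalar ODE. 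Stated as you stated it, the step does not close. You also omit the distinction the paper is careful to make: for $\bar s<+\infty$ the additional solutions $\pm\kappa(1-e^{s-s^*})^{-1/(p-1)}$ with $e^{\bar s-s^*}<1$ must be admitted (this is the point of Corollary 1.5 of \cite{MZcpam98}), whereas your list of ``bounded ancient solutions'' treats both signs of the exponential uniformly as if they were always allowed.
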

\begin{nb}
If $\bar s=+\infty$, then the unbounded solution $W(y,s)=\pm
\kappa(1-e^{s-s^*})^{-\frac 1{p-1}}$ never occurs. If $\bar s
<+\infty$, then that solution may occur with some $s^*$ satisfying $
e^{\bar s-s^*}<1$.
\end{nb}
\begin{proof}
If $\bar s=+\infty$, see Theorem 1.4 page 143 in \cite{MZcpam98} for the
nonnegative case and Theorem 1 page 106 in \cite{MZma00} for the
unsigned case.\\
If $\bar s<+\infty$, then the statement follows from a small
adaptation of the previous case. See Corollary 1.5 page 144 in
\cite{MZcpam98} where a similar adaptation is carried out.
\end{proof}
Let us now state our result for the gradient:
\begin{prop}[Smallness of the gradient]\label{propgrad}
  For all $A\ge 1$ and $\delta_0>0$, there exists
  $\sszerodeux(A,\delta_0)\ge 1$ 
  such that for all
 $s_0\ge \sszerodeux(A,\delta_0)$,
  if $q(s_0)$ is given by \eqref{q0} for some $(d_{0,0},
  d_{2,0}, d_{4,0}, d_{4,2},d_{6,0})\in \q D$ defined in Proposition
  \ref{propinit}, and $q(s)\in
  V_A(s)$ for all $s\in [s_0,s_1]$ for some $s_1\ge s_0$
  satisfies equation \eqref{eqq}, then, for all $s\in [s_0,s_1]$,
  $\|\nabla q(s)+\nabla \varphi(s)\|_{L^\infty}\le \delta_0$.
  \end{prop}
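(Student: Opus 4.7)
The plan is to argue by contradiction, using the translation identity \eqref{wawb} together with Proposition \ref{propliou}. Suppose the estimate fails: then there exist sequences $s_{0,n}\to\infty$, parameters $d_n\in\q D(A,s_{0,n})$ producing initial data $q_n(s_{0,n})$ via \eqref{q0}, times $s_{1,n}\ge s_{0,n}$, $\sigma_n\in[s_{0,n},s_{1,n}]$, and points $a_n\in\m R^2$, such that the corresponding solutions $w_{0,n}$ satisfy $q_n(s)\in V_A(s)$ on $[s_{0,n},s_{1,n}]$ but $|\nabla w_{0,n}(a_n,\sigma_n)|>\delta_0$. Using \eqref{wawb}, this is equivalent to $|\nabla w_{a_n,n}(0,\sigma_n)|>\delta_0$ for an appropriate $a_n$.

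Next, I would exploit parabolic regularity and pass to a limit. Define $W_n(y,\tau)=w_{a_n,n}(y,\sigma_n+\tau)$, which also satisfies \eqref{eqw} on $\tau\in[s_{0,n}-\sigma_n,s_{1,n}-\sigma_n]$ and obeys the uniform $L^\infty$ bound $\|W_n(\cdot,\tau)\|_{L^\infty}\le 2\kappa$, coming from the $V_A(s)$ constraint on $q_n$ together with item (ii) of Lemma \ref{lemphi} for $\varphi$. Since the nonlinearity $|w|^{p-1}w$ is therefore uniformly bounded, standard parabolic interior estimates give uniform $C^{2,\alpha}_{\mathrm{loc}}$ bounds on $W_n$ on compact subsets of its domain; up to a subsequence $W_n\to W$ in $C^1_{\mathrm{loc}}$, where $W$ solves \eqref{eqw}, $\|W\|_{L^\infty}\le 2\kappa$, and $|\nabla W(0,0)|\ge\delta_0>0$.

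Two cases must now be handled according to the behavior of $\sigma_n-s_{0,n}$. If $\sigma_n-s_{0,n}\to\infty$ along a subsequence, then $W$ is a uniformly bounded solution of \eqref{eqw} defined on $(-\infty,\bar\tau]$ for some $\bar\tau\in[0,+\infty]$, so Proposition \ref{propliou} forces $\nabla W\equiv 0$, contradicting $|\nabla W(0,0)|\ge\delta_0$. Otherwise $\sigma_n-s_{0,n}\to T_*<\infty$ along a subsequence, and $W$ is defined on $[-T_*,\bar\tau]$. Here I invoke the initial data information from Proposition \ref{propinit}(ii), namely $\|\nabla w_{0,n}(s_{0,n})\|_{L^\infty}\le Ce^{-s_{0,n}/6}\to 0$, so by \eqref{wawb} the translated initial gradients $\|\nabla w_{a_n,n}(\cdot,s_{0,n})\|_{L^\infty}$ also tend to $0$, and $W(\cdot,-T_*)$ is spatially constant. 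Uniqueness for \eqref{eqw} with spatially constant Cauchy data (the equation reduces to the ODE $\dot W=-W/(p-1)+|W|^{p-1}W$) then gives that $W$ is spatially constant throughout $[-T_*,\bar\tau]$, so $\nabla W(0,0)=0$, again a contradiction.

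The main obstacle is ensuring that the compactness argument yields $C^1$ convergence of $W_n$ at the point $(0,0)$ so that the lower bound $|\nabla W_n(0,0)|>\delta_0$ passes to the limit. This relies on the $L^\infty$ bound coming from $V_A(s)$ and Lemma \ref{lemphi}; once the nonlinearity is bounded, parabolic regularity for \eqref{eqw} produces the necessary uniform $C^{2,\alpha}$ estimates on compact subsets of the parabolic cylinder. A secondary point is that in the bounded-time case the initial time $-T_*$ may coincide with the boundary of the cylinder; this is handled by the fact that $\|\nabla W_n(\cdot,s_{0,n}-\sigma_n)\|_{L^\infty}\to 0$ on all of $\m R^2$, so a Duhamel argument applied to \eqref{eqw} propagates the smallness of the gradient up to and past the initial time, making the convergence valid at $(0,0)$.
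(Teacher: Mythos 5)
Your proof is correct and follows essentially the same route as the paper: argue by contradiction, use parabolic regularity to extract a limiting bounded solution of \eqref{eqw}, invoke the Liouville theorem (Proposition \ref{propliou}) when the time gap $\sigma_n-s_{0,n}$ is unbounded, and exploit the initial gradient smallness $\|\nabla w_0(\cdot,s_0)\|_{L^\infty}\le Ce^{-s_0/6}$ from Proposition \ref{propinit} via a Gronwall/Duhamel estimate on $\nabla w$. The paper packages the Gronwall step slightly differently — it shows $s_{2,n}-s_{0,n}\to\infty$ must always hold under the contradiction hypothesis, so the bounded-time case is ruled out rather than handled via a spatially-constant limit plus ODE uniqueness — but your ``secondary point'' Duhamel argument is in fact exactly the paper's estimate, and the ingredients and conclusion match.
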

\begin{proof}
See Section \ref{secgrad}.
\end{proof}

\subsection{Strategy for the control of $w_a(y,s)$}

With this estimate, we can make a reduction of our goal \eqref{g1} in
the following:
\begin{cl}[Reduction] \label{clreduc} Under the hypotheses of Proposition
  \ref{propgrad}, assuming that
  \begin{equation}\label{condgrad}
    \delta_0 \le \frac \kappa 4 
    \mbox{ and }
 s_0\ge \sszerodeux(A,\delta_0),
\end{equation}
 we see that estimate
  \eqref{g1} follows from the following:
\begin{equation}\label{g2}
    \forall a\in \m R^2,\;\;\forall s\in [s_0,s_1],\;\;
    \|w_a(s)\|_{L^2_\rho} \le \frac 32 \kappa.
  \end{equation}
\end{cl}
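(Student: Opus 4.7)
The plan is to exploit the uniform gradient control from Proposition \ref{propgrad} to upgrade the $L^2_\rho$ bound \eqref{g2} to the pointwise bound \eqref{g1}. First I would observe that differentiating \eqref{wawb} in $y$ gives $\nabla_y w_a(y,s)=\nabla_y w_0(y+ae^{s/2},s)$, so $\|\nabla w_a(s)\|_{L^\infty}=\|\nabla w_0(s)\|_{L^\infty}\le \delta_0$ for every $a\in\m R^2$ and $s\in[s_0,s_1]$, the last inequality coming from Proposition \ref{propgrad} applied to $w_0=q+\varphi$.

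Next, since $\rho$ is a probability density ($\int_{\m R^2}\rho\,dy=1$ by \eqref{defro}), for any $a$ and $s$ I would write the identity
\[
w_a(0,s)=\int_{\m R^2} w_a(y,s)\,\rho(y)\,dy+\int_{\m R^2}\bigl[w_a(0,s)-w_a(y,s)\bigr]\rho(y)\,dy.
\]
Cauchy--Schwarz with $\|1\|_{L^2_\rho}=1$ bounds the first term by $\|w_a(s)\|_{L^1_\rho}\le \|w_a(s)\|_{L^2_\rho}$, and the mean-value inequality together with the gradient bound above controls the second by $\delta_0\int_{\m R^2}|y|\rho(y)\,dy=\delta_0\sqrt{\pi}$, the last integral being a direct Gaussian computation (pass to polar coordinates and substitute $u=r/2$).

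Finally, invoking \eqref{g2} and the hypothesis $\delta_0\le \kappa/4$ I would conclude
\[
|w_a(0,s)|\le \tfrac{3}{2}\kappa+\delta_0\sqrt{\pi}\le \tfrac{3}{2}\kappa+\tfrac{\sqrt{\pi}}{4}\kappa<2\kappa,
\]
which is exactly \eqref{g1}, since $\sqrt{\pi}/4<1/2$. There is no serious obstacle here: the argument is a one-step gradient-transport estimate, and the only point to verify is that the margin left by the hypotheses $\|w_a\|_{L^2_\rho}\le \tfrac32\kappa$ and $\delta_0\le \kappa/4$ suffices for a strict inequality at the point $y=0$, which it does comfortably.
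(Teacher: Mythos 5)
Your argument is exactly the one in the paper: transport the gradient bound from $w_0$ to $w_a$ via \eqref{wawb}, use the Taylor/mean-value inequality $|w_a(y,s)-w_a(0,s)|\le \delta_0|y|$, integrate against $\rho$, and invoke Cauchy--Schwarz with the $L^2_\rho$ bound. The only divergence is the value of $\int_{\m R^2}|y|\rho(y)\,dy$: you compute $\sqrt{\pi}$, the paper's \eqref{calculrho} asserts $1$; with $\rho(y)=e^{-|y|^2/4}/(4\pi)$ on $\m R^2$ your value $\sqrt{\pi}$ is in fact the correct one, and as you note the conclusion still holds because $\sqrt{\pi}/4 < 1/2$, so $\tfrac32\kappa+\delta_0\sqrt{\pi}\le\tfrac32\kappa+\tfrac{\sqrt{\pi}}{4}\kappa<2\kappa$.
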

\begin{proof}
Under the hypotheses of Proposition \ref{propgrad}, assume that \eqref{g2} holds. Noting that $\|\nabla
  w_a(s)\|_{L^\infty} \le \delta_0$ from Proposition \ref{propgrad} together with
  the relations \eqref{defq} and \eqref{wawb}, we use a Taylor
  expansion to write
\begin{equation}\label{lipwa}
  |w_a(y,s) - w_a(0,s)| \le |y| \cdot \|\nabla w_a(s)\|_{L^\infty}
  \le  {\delta_0} |y|.
\end{equation}
Therefore, since
\begin{equation}\label{calculrho}
\int\rho(y)dy=1\mbox{ and }\int|y|\rho(y) dy=1
\end{equation}
by
definition \eqref{defro} of $\rho$, it follows that
\[
  |w_a(0,s)| \le \int|w_a(y,s)|\rho(y)dy +\delta_0
  \le \|w_a(s)\|_{L^2_\rho}+ \delta_0
  \le \frac 32 \kappa +\frac \kappa 4 < 2\kappa,
\]
and the claim follows.
\end{proof}
In the following subsections, assuming that \eqref{condgrad} holds, we
will prove either \eqref{g0}, \eqref{g1} or \eqref{g2}, according to
the context. Using the sharper gradient estimate at initial time $s_0$
given in item (ii) of Proposition \ref{propinit}, let us remark that
at $s=s_0$, $w_a(y,s_0)$ is ``flat'' in $L^2_\rho$, in the sense that
it is close to some constant independent from space, as we prove in
the following: 
\begin{lem}[Flatness of $w_a(y,s_0)$] \label{lemflat}
  For any $A\ge 1$,
  $s_0\ge \sszeroun(A)$
and parameter
  $d=(d_{0,0}, d_{2,0},  d_{4,0}, d_{4,2}, d_{6,0})\in \q D(A,s_0)$, for any
  $a\in \m R^2$,
  \begin{equation}\label{estflat}
\|w_a(\cdot,s_0)-w_0(ae^{\frac{s_0}2},s_0)\|_{L^2_\rho}\le Ce^{-\frac{s_0}6},
  \end{equation}
  where
$\sszeroun(A)$
   and $\q D(A,s_0)$ are defined in Proposition
  \ref{propinit}. 
\end{lem}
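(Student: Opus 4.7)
The plan is to reduce this statement, via the translation identity \eqref{wawb}, to a Lipschitz-type estimate on $w_0(\cdot,s_0)$, and then invoke the sharp $L^\infty$ bound on the gradient supplied by Proposition \ref{propinit}(ii).

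First I would apply \eqref{wawb} with $b=a$ and $s=s_0$ to rewrite the difference as
\[
w_a(y,s_0) - w_0(ae^{\frac{s_0}{2}},s_0) = w_0(y+ae^{\frac{s_0}{2}},s_0) - w_0(ae^{\frac{s_0}{2}},s_0).
\]
Since $w_0(\cdot,s_0)$ is given by the explicit formula \eqref{defw0} and satisfies $\|\nabla w_0(s_0)\|_{L^\infty}\le C e^{-\frac{s_0}{6}}$ by Proposition \ref{propinit}(ii), a straight-line integration (or the mean value theorem) furnishes the pointwise bound
\[
|w_a(y,s_0) - w_0(ae^{\frac{s_0}{2}},s_0)| \le |y|\cdot\|\nabla w_0(s_0)\|_{L^\infty} \le C|y|\,e^{-\frac{s_0}{6}},
\]
valid for every $y\in\m R^2$ and uniformly in $a\in\m R^2$.

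It then remains to integrate this pointwise estimate against the Gaussian weight $\rho$ introduced in \eqref{defro}. Squaring and using that $\int_{\m R^2}|y|^2\rho(y)\,dy$ is a finite absolute constant, we obtain
\[
\|w_a(\cdot,s_0) - w_0(ae^{\frac{s_0}{2}},s_0)\|_{L^2_\rho}^2 \le C e^{-\frac{s_0}{3}}\int_{\m R^2}|y|^2\rho(y)\,dy \le C e^{-\frac{s_0}{3}},
\]
and taking square roots yields \eqref{estflat}.

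I do not anticipate any serious obstacle in carrying this out: the proof is essentially a one-line consequence of the translation covariance \eqref{wawb} combined with the gradient control already proved in Proposition \ref{propinit}(ii). The only point worth emphasizing is that the bound on $\|\nabla w_0(s_0)\|_{L^\infty}$ is uniform over all of $\m R^2$, which is precisely what makes the estimate hold uniformly in the translation parameter $a$.
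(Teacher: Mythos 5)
Your proof is correct and follows essentially the same route as the paper, which simply refers the reader to the argument of Claim \ref{clreduc} — i.e., the Taylor/Lipschitz estimate \eqref{lipwa} combined with the gradient bound of Proposition \ref{propinit}(ii) and integration against the Gaussian weight. You have just spelled out the details that the paper leaves implicit.
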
  
\begin{proof}
The proof follows by the same argument as  Claim \ref{clreduc}, in
particular, estimate \eqref{lipwa}.
\end{proof}
Following Lemma \ref{lemflat}, let us remark that equation \eqref{eqw}
satisfied by $w_a(y,s)$ has 3 bounded and nonnegative explicit ``flat'' solutions:
$0$, $\kappa$, and
\begin{equation}\label{defpsi}
\psi(s) = \kappa(1+e^s)^{-\frac 1{p-1}}
\end{equation}
(note that $\psi$ is a heteroclinic orbit connecting $\kappa$ to $0$,
and that all its time shifts are also solutions). In fact, 
it happens that $w_a(0,s_0)=w_0(ae^{\frac {s_0}2},s_0)$ belongs to the
interval $[0, \kappa+Ce^{-\frac{s_0}3}]$, from \eqref{wawb} and Proposition
\ref{propinit}. In other words, from Lemma \ref{lemflat}, we are in
the vicinity of one of those 3 explicit solutions, whose stability
properties are known! Indeed, if $0$ and $\psi$ are stable, as we will show
below (see Propositions \ref{proptrap} and \ref{proptrap2}), $\kappa$ has both
stable and unstable directions, as one may see from the linearization
of equation \eqref{eqw} around $\kappa$ (see below in \eqref{eqva}),
where the linearized operator appears to be $\q L$ introduced in
\eqref{defR}, whose spectrum is given in \eqref{specL}. In particular,
the existence of the heteroclinic orbit $\psi$ \eqref{defpsi} shows
the instability of $\kappa$. In other
words, if $w_0(ae^{\frac{s_0}2},s_0)$ is close to $\kappa$, we need to
refine estimate \eqref{estflat} in order to better understand the
dynamics of $w_a(y,s)$ for $s\ge s_0$.

\medskip

Accordingly, we will decompose the space into 3 regions, where
$w_a(y,s_0)$ will be in the vicinity of one of the 3 above-mentioned
explicit solutions, leading to 3 different scenarios for the behavior
of $w_a(y,s)$ for $s\ge s_0$. More precisely, given $m<M$, we
introduce 3 regions $\q R_i(m,M,s_0)$ for $i=1,2,3$ as follows:
\begin{align}
            \q R_1 &= \{a\in \m R^2\;|\;  Me^{-s_0}\le G_0(a)\},\nonumber\\
\q R_2 &= \{a\in \m R^2\;|\; me^{-s_0}\le G_0(a)\le Me^{-s_0}\},\nonumber\\
  \q R_3&= \{a\in \m R^2\;|\; G_0(a)\le me^{-s_0}\},
           \label{defqR}
\end{align}
where
\begin{equation}\label{defG0}
  G_0(a)
   = \frac{p-1}\kappa\left[a_1^2a_2^2+\delta(a_1^6+a_2^6)\right].
\end{equation}
In the following lemma, we will see that $G_0(a)$ is a kind of norm
which measures the size of $w_a(0,s_0)=w_0(ae^{-\frac{s_0}2},s_0)$:
\begin{lem}[Size of initial data in the three regions]\label{lemregions}
  For any $M\ge 1$, there exists $\Cdix(M)>0$ such that for any $A\ge
   1$, there exists $\sdix(A,M)$ such that for any $s_0\ge \sdix(A,M)$
   and $d\in \q D(A,s_0)$ defined in Proposition \ref{propinit}, for
   any $m\in (0,1)$, the following holds:\\
- If $a\in \q R_1$, then $0\le w_0(ae^{\frac{s_0}2},s_0)\le \kappa
  (1+M)^{-\frac 1{p-1}}+ \Cdix(M) e^{-\frac{s_0}3}$.\\
- If $a\in \q R_2$, then
  $\kappa (1+M)^{-\frac 1{p-1}} -  \Cdix(M)e^{-\frac{s_0}3}\le
  w_0(ae^{\frac{s_0}2},s_0)\le \kappa (1+m)^{-\frac 1{p-1}}+  C e^{-\frac{s_0}3}$.\\
- If $a\in \q R_3$, then
$\kappa (1+m)^{-\frac 1{p-1}} -  C e^{-\frac{s_0}3} \le w_0(ae^{\frac{s_0}2},s_0)\le \kappa +Ce^{-\frac{s_0}3}$.
\end{lem}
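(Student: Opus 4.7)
My plan is to compute $w_0(ae^{s_0/2}, s_0)$ directly from the explicit formula \eqref{defw0} and compare it with the intermediate profile $\Phi(y,s)$ defined in \eqref{defPhi}. The starting observation is that substituting $y = ae^{s_0/2}$ into \eqref{defD} yields
\[
D(ae^{s_0/2}, s_0) = (p-1)\bigl(1+e^{s_0}G_0(a)\bigr),
\]
so $\Phi(ae^{s_0/2}, s_0) = \kappa(1+e^{s_0}G_0(a))^{-1/(p-1)}$. The defining inequalities of the three regions $\q R_i$ then translate directly into the $\Phi$-bounds appearing on the right-hand side of the lemma; proving the lemma thus reduces to showing that $w_0(ae^{s_0/2}, s_0)$ is exponentially close to $\Phi(ae^{s_0/2}, s_0)$ in each region.

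The core step is the pointwise estimate
\[
\bigl| w_0(ae^{s_0/2}, s_0)^{p-1} - \Phi(ae^{s_0/2}, s_0)^{p-1} \bigr| \le C(A,\delta)\,e^{-s_0/3}
\]
uniformly in $a \in \m R^2$. Since $\Phi^{p-1}=1/D$, formula \eqref{defw0} identifies the left-hand side with
\[
\frac{e^{-s_0}P(y)+e^{-2s_0}Q(y)}{D(y,s_0)}+\frac{p-1}{\kappa D(y,s_0)^2}\Bigl(Ae^{-2s_0}S(y)+As_0^2e^{-3s_0}\bar S(y)\Bigr),
\]
evaluated at $y = ae^{s_0/2}$. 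Using the explicit expressions from \eqref{lowP}, \eqref{expQ} and \eqref{defS}, the numerators become polynomials in $a$ of degree at most $6$ with coefficients polynomial in $A, \delta, s_0$, whereas $D$ contains the degree-$6$ term $\frac{(p-1)^2 \delta}{\kappa} e^{s_0}(a_1^6+a_2^6)$ with an exponentially large coefficient. A calibration argument analogous to the proof of Lemma \ref{lemphi}(ii) -- rescaling via $z = e^{-s_0/6}a$ and optimizing the ratio of a bounded-degree polynomial against this denominator -- then yields the claimed $e^{-s_0/3}$ bound, with the $s_0^2$ and the $A,\delta$-dependent prefactors absorbed by enlarging $\sdix(A,M)$.

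Finally I would transfer this $(p-1)$-power estimate to $w_0$ itself via a first-order Taylor expansion of $\xi \mapsto \xi^{1/(p-1)}$, expanded around the target value rather than around $\Phi^{p-1}$. In $\q R_1$, the upper bound $\Phi^{p-1} \le 1/[(p-1)(1+M)]$ together with the calibration gives $w_0^{p-1} \le 1/[(p-1)(1+M)] + Ce^{-s_0/3}$, and Taylor around the constant $1/[(p-1)(1+M)]$ (which is bounded below by a positive function of $M$) produces $w_0 \le \kappa(1+M)^{-1/(p-1)} + \Cdix(M)e^{-s_0/3}$ with $\Cdix(M)$ scaling like $(1+M)^{(p-2)/(p-1)}$ when $p > 2$. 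In $\q R_3$, $\Phi^{p-1} \ge 1/[(p-1)(1+m)] \ge 1/[2(p-1)]$ (since $m\le 1$), so the symmetric Taylor step around this universal constant produces the lower bound $w_0 \ge \kappa(1+m)^{-1/(p-1)} - Ce^{-s_0/3}$ with universal $C$; the upper bound $w_0 \le \kappa+Ce^{-s_0/3}$ follows from the same calibration combined with $\Phi \le \kappa$, or alternatively from Lemma \ref{lemphi}(ii) applied to $\varphi$ plus a perturbation estimate. The $\q R_2$ bounds follow analogously. The non-negativity $w_0 \ge 0$ is immediate from taking the positive $(p-1)$-root in \eqref{defw0}, which is well defined by Proposition \ref{propinit}(i). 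The main obstacle will be the uniform calibration: among the four numerator pieces, the perturbation $As_0^2 e^{-3s_0}\bar S(ae^{s_0/2})$ has leading part $As_0^2 d_{6,0}(a_1^6+a_2^6)$ with no $s_0$-decay, and fitting it into the $e^{-s_0/3}$ budget requires the full degree-$12$ optimization against $D^2$, which ultimately produces a bound of order $As_0^2 e^{-s_0}$ absorbed into $e^{-s_0/3}$ upon enlarging $\sdix(A,M)$.
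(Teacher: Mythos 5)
Your proposal is correct and follows the same route as the paper: establish $|w_0(ae^{s_0/2},s_0)^{p-1}-1/D|\le Ce^{-s_0/3}$ (which is exactly $|w_0^{p-1}-\Phi^{p-1}|$ since $\Phi^{p-1}=1/D$), use the identity $D=(p-1)(1+e^{s_0}G_0(a))$ to translate the $\q R_i$-membership into bounds on $1/D$, and then pass from the $(p-1)$-power to $w_0$ itself. Your more explicit Taylor step (and the observation that the derivative factor, and hence $\Cdix(M)$, degenerates as $M\to\infty$ when $p>2$ but stays uniform near the $\q R_3$ thresholds since $m\le 1$) just spells out what the paper leaves implicit in its final sentence, and your calibration of the $S$, $\bar S$ terms is the same device used in \eqref{boundS}, simply quotiented once more by $D\ge p-1$.
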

\begin{proof}
 See Section \ref{secregions}.
\end{proof}
Following the 2 previous lemmas and what we have mentioned concerning
the stability of the 3 explicit solutions of equation \eqref{eqw}
mentioned in \eqref{defpsi} and the line before, 3 scenarios become clear
for the proof of estimate \eqref{g2}:\\
- \textbf{Scenario 1}: if $a\in \q R_1$, provided that $M$ is large
enough, we are in the vicinity of the zero solution. Thanks to its
stability, $w_a(s)$ will remain small and \eqref{g2} will follow.\\
- \textbf{Scenario 2}: if $a\in \q R_2$, we are in the vicinity of
$\psi(s)$ \eqref{defpsi}. Thanks to its stability, $w_a(s)$ will
remain close to $\psi(s)$.
Since $\psi(s) \le
\kappa$, estimate \eqref{g2} holds.\\
- \textbf{Scenario 3}: if $a\in \q R_3$, we are in the vicinity of
$\kappa$, which has both stable and unstable directions as mentioned earlier. For that
reason, the bounds on $w_a(0,s_0)=w_0(ae^{\frac{s_0}2},s_0)$ given in Lemma
\ref{lemregions} are not enough, and
we need a more refined expansion of $w_a(y,s_0)$, followed by an
integration of PDE \eqref{eqw} satisfied by $w_a$. This step is the key
point of our argument. It is inspired by our techniques in
\cite{MZimrn21} for the control of $w_a$ where $a$ is a blow-up point
located near some given non-isolated blow-up point. In fact, from our careful
design of initial data in \eqref{defw0}, the integration of the PDE
will show that for $a$ not ``very small'' (in a sense that will naturally
appear in the proof), $w_a(s)$ will be attracted to
the vicinity of the heteroclinic orbit $\psi$ \eqref{defpsi}, leading 
us to \textit{Scenario 2}, where the stability of $\psi$ will imply
estimate \eqref{g2}. If $a$ is ``very small'', then, thanks to the gradient estimate of Proposition \ref{propgrad}, the boundedness of
$\|w_a(s)\|_{L^2_\rho}$ will follow from the boundedness of
$\|w_0(s)\|_{L^2_\rho}$, itself a consequence of the fact that
$q(s)\in V_A(s)$ given in Definition \ref{defvas} (see \eqref{smallVA}).

\medskip

In the following, we give details for those 3 scenarios in 3 subsections.

\subsection{Control of $w_a(y,s)$ in Region $\q R_1$}

The stability of the zero solution for equation \eqref{eqw} (under some
$L^\infty$ a priori bound) is crucial for the argument, as we wrote
above in \textit{Scenario 1}. Let us state it in the following:
\begin{prop}[Stability of the zero solution for equation \eqref{eqw}
  under an $L^\infty$ a priori bound]
  \label{proptrap}
  There exists $\epsilon_0>0$ and $M_0\ge 1$ such that
  if $w$ solves equation \eqref{eqw} with $|w(y,s)|\le 2 \kappa$ for all $(y,s)\in \m R^2\times
  [\suno,\sdue]$ for some $\sdue\ge \suno$, with
  $\|w(\suno)\|_{L^2_\rho}\le \epsilon_0$ and
  $\nabla w(0)(1+|y|)^{-k}\in L^\infty$ for some $k\in \m N$, then
\[
  \forall s\in [\suno,\sdue],\;\; \|w(s)\|_{L^2_\rho}
  \le M_0 \|w(\suno)\|_{L^2_\rho}
e^{-\frac{s}{p-1}}.
\]
  \end{prop}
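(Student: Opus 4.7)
The plan is to exploit the spectral gap of the linearization of equation \eqref{eqw} around the zero solution together with the $L^\infty$ a priori bound $|w|\le 2\kappa$. Writing \eqref{eqw} as
\[
\partial_s w = \Bigl(\q L - \tfrac{p}{p-1}\Bigr)w + |w|^{p-1}w,
\]
the spectrum of the linearized operator $\q L - \tfrac{p}{p-1}$ is $\{-\tfrac{1}{p-1} - m/2 : m\in \m N\}$ by \eqref{specL}, so every eigenvalue is strictly negative with largest value $-\tfrac{1}{p-1}$. Consequently, the associated semigroup $\q K(s) := e^{s(\q L - p/(p-1))}$ satisfies $\|\q K(s)v\|_{L^2_\rho}\le e^{-s/(p-1)}\|v\|_{L^2_\rho}$, which yields the advertised decay rate at the linear level.

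The main difficulty is that the naive pointwise bound $\||w|^{p-1}w\|_{L^2_\rho}\le (2\kappa)^{p-1}\|w\|_{L^2_\rho}$ gives a growth rate $(2\kappa)^{p-1}=2^{p-1}/(p-1)>1/(p-1)$ that exceeds the spectral gap, so a direct Gronwall estimate on the Duhamel formula would not close. To remedy this, I would establish a delay regularizing estimate analogous to Proposition \ref{propdelay} (and relying on the parabolic smoothing effect of the Ornstein--Uhlenbeck-type operator $\q L$, presumably via Lemma \ref{lemVel}): under the a priori bound $|w|\le 2\kappa$, there exist constants $\tau,C>0$ such that
\[
\|w(s)\|_{L^\infty}\le C\|w(s-\tau)\|_{L^2_\rho}\quad\text{for all } s\ge \suno+\tau.
\]
The hypothesis $\nabla w(\suno)(1+|y|)^{-k}\in L^\infty$ ensures that such parabolic estimates apply at the initial time and justifies the manipulations that follow.

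The conclusion is then obtained by a bootstrap argument. Fixing $M_0$ to be chosen, suppose by contradiction there exists $\bar s\in (\suno,\sdue]$ such that $\|w(s)\|_{L^2_\rho}\le 2M_0\|w(\suno)\|_{L^2_\rho}e^{-s/(p-1)}$ on $[\suno,\bar s]$ with equality at $s=\bar s$. For $\sigma\in [\suno+\tau,\bar s]$ the delay estimate and the bootstrap bound yield $\|w(\sigma)\|_{L^\infty}\le C'\epsilon_0$, which can be made as small as we wish by shrinking $\epsilon_0$, so that
\[
\||w|^{p-1}w(\sigma)\|_{L^2_\rho}\le (C'\epsilon_0)^{p-1}\|w(\sigma)\|_{L^2_\rho},
\]
with coefficient strictly smaller than $1/(p-1)$. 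For $\sigma\in [\suno,\suno+\tau]$ the crude bound together with the short length of the interval contributes only a bounded multiplicative factor $e^{C\tau}$. Plugging into the Duhamel representation and applying integral Gronwall with the exponentially decaying kernel, then choosing $M_0$ large enough and $\epsilon_0$ small enough, one gets $\|w(\bar s)\|_{L^2_\rho}<2M_0\|w(\suno)\|_{L^2_\rho}e^{-\bar s/(p-1)}$, contradicting the choice of $\bar s$.

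The main obstacle is precisely the delay regularizing estimate bridging $L^2_\rho$ and $L^\infty$: this is the step that converts the initial $L^2_\rho$ smallness into $L^\infty$ smallness at later times, making the nonlinear term genuinely subdominant with respect to the spectral gap. Without it, the perturbative argument fails, since $(2\kappa)^{p-1}$ strictly dominates $1/(p-1)$ and the linear decay of $\q K$ cannot absorb the nonlinear term.
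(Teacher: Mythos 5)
The key step of your argument — the delay regularizing estimate
\[
\|w(s)\|_{L^\infty}\le C\|w(s-\tau)\|_{L^2_\rho}
\]
— is false, and this is a genuine gap. The Ornstein--Uhlenbeck-type semigroup $e^{s\q L}$ with kernel \eqref{kernel} does not map $L^2_\rho$ into $L^\infty$: by Cauchy--Schwarz one would need $\sup_y\int e^{s\q L}(y,x)^2\rho(x)^{-1}\,dx<\infty$, but with $\rho^{-1}(x)\sim e^{|x|^2/4}$ and the kernel $\sim \exp\bigl[-\tfrac{|ye^{-s/2}-x|^2}{2(1-e^{-s})}\bigr]$, completing the square in $x$ leaves a positive multiple of $|ye^{-s/2}|^2$ in the exponent, so the $\sup_y$ is infinite. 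Lemma \ref{lemVel} reflects exactly this: item (i) gives the hypercontractive gain $L^r_\rho\to L^{\bar r}_\rho$ only for \emph{finite} $\bar r$, and items (iii)--(iv) merely preserve $L^\infty$. There is no regularization from $L^2_\rho$ to $L^\infty$, so your bootstrap cannot produce the smallness $\|w(\sigma)\|_{L^\infty}\le C'\epsilon_0$ that your argument requires to beat the spectral gap.

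The paper's proof avoids this obstruction by never passing through $L^\infty$ smallness. It works with the energy identity obtained by testing \eqref{eqw} against $w\rho$, which bounds $\frac d{ds}\|w(s)\|_{L^2_\rho}^2$ by $-\tfrac{2}{p-1}\|w(s)\|_{L^2_\rho}^2$ plus the nonlinear term $\int|w|^{p+1}\rho$. The latter is controlled by $\|w(s)\|_{L^{p+1}_\rho}^{p+1}$, and the delay estimate used is the genuinely available hypercontractive one $\|w(s)\|_{L^{p+1}_\rho}\le C^*\|w(s-\stella)\|_{L^2_\rho}$ (combined with the pointwise inequality $\partial_s|w|\le(\q L-1+\tfrac{2^{p-1}-1}{p-1})|w|$, which uses the a priori $L^\infty$ bound $|w|\le 2\kappa$). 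The small-$L^2_\rho$ hypothesis then renders this delayed contribution a higher-order term, and the conclusion follows by a delay-Gronwall/bootstrap argument. If you replace your proposed $L^2_\rho\to L^\infty$ step by an $L^2_\rho\to L^{p+1}_\rho$ delay estimate and estimate the nonlinearity in the energy identity rather than pointwise, your outline lines up with the paper's proof.
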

 \begin{proof}
The proof is somehow classical, apart from a delay estimate
to control the nonlinear estimate we have already presented in Section
\ref{secnonl}. In order to focus only on the main arguments, we
postpone the proof to Section \ref{sectrap} in the appendix.
\end{proof}
Fixing $M\ge 1$ such that
\begin{equation}\label{defM}
\kappa(1+M)^{-\frac 1{p-1}}\le \max\left(\frac{\epsilon_0}2, \frac \kappa{2M_0}\right)
\end{equation}
where $M_0$ and $\epsilon_0$ are given in Proposition \ref{proptrap},
then taking $s_0$ large enough, we see from Proposition \ref{propinit}
that $\nabla w(s_0)\in L^\infty$, and from Lemmas \ref{lemregions}
and \ref{lemflat} that the smallness condition required in Proposition
\ref{proptrap} holds in Region $\q R_1$, leading to the trapping of
$w_a$ near $0$, proving the bound \eqref{g2}. More precisely, this is
our statement:
\begin{cor}[Exponential decay of $w_a(s)$ in Region $\q R_1$]\label{corr1}
  For all $A\ge 1$, there exists
  $\ssquatre(A)\ge 1$
such that if
 $s_0 \ge \ssquatre(A)$,
  $d\in \q D(A,s_0)$ and $a\in \q R_1$ defined in \eqref{defqR},
 then $\nabla w(s_0)\in L^\infty$ and 
 $\|w_a(s_0)\|_{L^2_\rho}\le 2\kappa(1+M)^{-\frac 1{p-1}}\le \epsilon_0$ introduced in Proposition
\ref{proptrap}.
If in addition we have $|w_a(y,s)|\le 2\kappa$, for all $(y,s)\in \m R^2 \times[s_0,s_2]$,
for some $s_2\ge s_0$,
then,
\[
  \forall s\in [s_0,s_2],\;\; \|w_a(s)\|_{L^2_\rho}
  \le M_0 \|w_a(s_0)\|_{L^2_\rho}e^{-\frac{s-s_0}{p-1}}
  \le 2\kappa(1+M)^{-\frac 1{p-1}} M_0 e^{-\frac{s-s_0}{p-1}}\le \kappa,
\]
where $M_0$ is also introduced in Proposition \ref{proptrap}. In
particular, \eqref{g2} holds.
\end{cor}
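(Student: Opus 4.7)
The strategy is essentially a chaining of results already established. First I would verify the initial estimates on $w_a(\cdot,s_0)$, then apply the trapping result of Proposition \ref{proptrap}, using the choice of $M$ in \eqref{defM} to conclude.

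\smallskip

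For the gradient bound, I would use the translation relation \eqref{wawb}: since $w_a(y,s_0)=w_0(y+ae^{s_0/2},s_0)$, we have $\nabla w_a(\cdot,s_0)=(\nabla w_0)(\cdot+ae^{s_0/2},s_0)$, so item (ii) of Proposition \ref{propinit} gives $\|\nabla w_a(s_0)\|_{L^\infty}\le Ce^{-s_0/6}$, in particular $\nabla w_a(s_0)\in L^\infty$. For the $L^2_\rho$ bound at $s=s_0$, I would combine Lemma \ref{lemflat} with Lemma \ref{lemregions}. Indeed, using $\int\rho\,dy=1$ (see \eqref{calculrho}), the $L^2_\rho$ norm of the constant $w_0(ae^{s_0/2},s_0)$ equals $|w_0(ae^{s_0/2},s_0)|$, so by the triangle inequality
\[
\|w_a(s_0)\|_{L^2_\rho}\le \|w_a(\cdot,s_0)-w_0(ae^{s_0/2},s_0)\|_{L^2_\rho}+|w_0(ae^{s_0/2},s_0)|\le Ce^{-s_0/6}+\kappa(1+M)^{-\frac 1{p-1}}+\Cdix(M)e^{-s_0/3},
\]
where in the last step I used the bound for $a\in\q R_1$ from Lemma \ref{lemregions}. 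Taking $s_0$ large enough (depending on $A$ and $M$) so that $Ce^{-s_0/6}+\Cdix(M)e^{-s_0/3}\le \kappa(1+M)^{-\frac 1{p-1}}$, one obtains $\|w_a(s_0)\|_{L^2_\rho}\le 2\kappa(1+M)^{-\frac 1{p-1}}$, and by \eqref{defM} this is no larger than $\epsilon_0$.

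\smallskip

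With these properties in hand, I would apply Proposition \ref{proptrap} to the time-translated solution $\tilde w(y,\sigma)=w_a(y,\sigma+s_0)$, which still solves equation \eqref{eqw} by autonomy. The hypotheses of the proposition are satisfied on $[0,s_2-s_0]$: the a priori bound $|\tilde w|\le 2\kappa$ is given, $\|\tilde w(0)\|_{L^2_\rho}\le \epsilon_0$ from the previous paragraph, and $\nabla \tilde w(0)\in L^\infty$ from the gradient bound. The proposition therefore yields
\[
\|w_a(s)\|_{L^2_\rho}\le M_0\|w_a(s_0)\|_{L^2_\rho}e^{-\frac{s-s_0}{p-1}}\le 2\kappa(1+M)^{-\frac 1{p-1}}M_0 e^{-\frac{s-s_0}{p-1}}
\]
for all $s\in[s_0,s_2]$. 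Finally, the second inequality in \eqref{defM}, namely $2\kappa(1+M)^{-\frac 1{p-1}}M_0\le\kappa$, combined with the trivial bound $e^{-(s-s_0)/(p-1)}\le 1$, gives $\|w_a(s)\|_{L^2_\rho}\le\kappa\le \tfrac32\kappa$, which is exactly the conclusion and implies \eqref{g2}.

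\smallskip

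There is no serious obstacle here; the argument is an assembly of earlier results. The mild technical point is making sure the threshold $\ssquatre(A)$ is large enough to absorb the error terms $Ce^{-s_0/6}$ and $\Cdix(M)e^{-s_0/3}$ into a fraction of $\kappa(1+M)^{-1/(p-1)}$, which is why the bound depends on both $A$ (through Proposition \ref{propinit} and Lemma \ref{lemflat}) and the fixed constant $M$ from \eqref{defM} (which in turn is fixed once and for all in terms of $\epsilon_0$ and $M_0$ coming from Proposition \ref{proptrap}).
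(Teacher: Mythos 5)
Your proof is correct and is exactly the fleshed-out version of the paper's own one-line sketch given just before the statement of the corollary: obtain $\nabla w_a(s_0)\in L^\infty$ from Proposition \ref{propinit}, bound $\|w_a(s_0)\|_{L^2_\rho}$ by combining Lemma \ref{lemflat} with Lemma \ref{lemregions}, and then apply Proposition \ref{proptrap} to the time-shifted solution, closing with \eqref{defM}. One small remark: to use both branches as you (correctly) do, \eqref{defM} should be read as $\kappa(1+M)^{-1/(p-1)}\le\min\left(\tfrac{\epsilon_0}2,\tfrac\kappa{2M_0}\right)$; the ``max'' in the paper is a typo, and your argument implicitly fixes it.
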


\subsection{Control of $w_a(y,s)$ in Region $\q R_2$}
As we explained above in \textit{Scenario 2}, the stability of the
solution $\psi$ \eqref{defpsi} is the key argument. Let us first state
that stability result:
\begin{prop}[Stability of the heteroclinic orbit for equation \eqref{eqw}
  under an $L^\infty$ a priori bound] \label{proptrap2}
   There exists $M_1\ge 1$ such that
 if $w$ solves equation \eqref{eqw} with $|w(y,s)|\le 2 \kappa$
  for all $(y,s)\in \m R^2\times  [0,\sdue]$ for some $\sdue\ge 0$,
  and
  \begin{equation}\label{cond0}
    \nabla w(\suno)(1+|y|)^{-k}\in L^\infty,\;\;
  \|w(\suno)-\psi(\sunos)\|_{L^2_\rho}\le 
\frac {|{\psi'}(\sunos)|}{M_1}
 \end{equation}
for some $k\in \m N$ and $\sunos\in \m R$,
where $\psi$ is defined in \eqref{defpsi}, then
\begin{equation}\label{samir}
  \forall s\in [\suno,\sdue],\;\; \|w(s)-\psi(s+\sunos)\|_{L^2_\rho}
  \le M_1 \|w(\suno) -\psi(\sunos)\|_{L^2_\rho}
  \frac{|{\psi'}(s+\sunos)|}{|{\psi'}(\sunos)|}.
 \end{equation}
\end{prop}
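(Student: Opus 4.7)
The plan is to pass to the variable
\[
V(y,s) = \frac{w(y,s)-\psi(s+\sunos)}{\psi'(s+\sunos)},
\]
which divides out the natural time-translation scaling of the heteroclinic orbit. Setting $v = w-\psi(\cdot+\sunos)$ and $\phi(s)=\psi'(s+\sunos)$, the function $v$ satisfies a linearized equation with potential $V_\psi(s)=p\psi(s+\sunos)^{p-1}-p/(p-1)\le 0$ and nonlinear remainder $B(y,s,v)=|\psi+v|^{p-1}(\psi+v)-\psi^p-p\psi^{p-1}v$. The crucial observation is that $\phi$ is spatially constant and, by differentiating the ODE $\psi'=-\psi/(p-1)+\psi^p$, it solves the linearization: $\phi'=(\q L+V_\psi)\phi=(1+V_\psi)\phi$. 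A direct computation then yields
\[
\partial_s V = \q L_0 V + \frac{B(y,s,v)}{\phi(s)}, \qquad \q L_0 := \q L - I,
\]
in which the dangerous potential has disappeared and the linear operator $\q L_0$ is self-adjoint in $L^2_\rho$ with non-positive spectrum $\{-m/2\,:\,m\in\m N\}$.

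The energy inequality $\frac12\frac{d}{ds}\|V\|_{L^2_\rho}^2 \le \langle V, B/\phi\rangle_\rho$ then reduces the proof to controlling the nonlinear term, which is the main obstacle since $|\phi(s)|\to 0$. I would set this up as a bootstrap: assume $\|V(s)\|_{L^2_\rho}\le M_1\|V(\suno)\|_{L^2_\rho}$ on a maximal subinterval of $[\suno,\sdue]$ and improve the constant by a factor $1/2$. Using the $L^\infty$ bound $|w|\le 2\kappa$ and $|\psi|\le\kappa$, the nonlinear estimate \eqref{boundB} gives $|B(y,s,v)|\le C|v|^{\bar p}$ with $\bar p=\min(p,2)>1$, whence
\[
\left\|\frac{B}{\phi}\right\|_{L^2_\rho} \le C|\phi(s)|^{\bar p-1}\,\|V(s)\|_{L^{2\bar p}_\rho}^{\bar p}.
\]
To turn the $L^{2\bar p}_\rho$ norm into an $L^2_\rho$ norm, I would invoke a delay regularizing estimate for the equation satisfied by $V$, exactly analogous to Proposition \ref{propdelay}: since $V$ solves a linear parabolic problem with a bounded potential plus a superlinear source, the smoothing effect of $\q L_0$ yields
$\|V(s)\|_{L^{2\bar p}_\rho}\le C(\bar p)\|V(s-s^*)\|_{L^2_\rho}+ \text{(controlled nonlinear tail)}$
for some fixed delay $s^*$. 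Combined with the bootstrap, this produces $\|V(s)\|_{L^{2\bar p}_\rho}\le CM_1\|V(\suno)\|_{L^2_\rho}$.

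Inserting this estimate into the energy inequality and integrating gives
\[
\|V(s)\|_{L^2_\rho}^2 \le \|V(\suno)\|_{L^2_\rho}^2\Big(1+CM_1^{\bar p}\|V(\suno)\|_{L^2_\rho}^{\bar p-1}\int_{\suno}^{\sdue}|\phi(s')|^{\bar p-1}\,ds'\Big).
\]
Since $|\phi(s)|\sim c\,e^{-(s+\sunos)/(p-1)}$ as $s+\sunos\to+\infty$ and $\bar p>1$, the integral is uniformly bounded by a constant $C_0$ independent of $\sunos$. The hypothesis $\|V(\suno)\|_{L^2_\rho}\le 1/M_1$ then upgrades this to
\[
\|V(s)\|_{L^2_\rho}^2 \le \|V(\suno)\|_{L^2_\rho}^2(1+CM_1),
\]
and choosing $M_1$ large enough to ensure $1+CM_1\le M_1^2/4$ closes the bootstrap with a factor $M_1/2$, giving the desired bound \eqref{samir} after multiplying back by $|\phi(s)|=|\psi'(s+\sunos)|$.

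The principal technical obstacle is justifying the delay regularizing estimate in this precise setting: one must control the low-order source term $B/\phi$ in a strong enough norm on the interval $[s-s^*,s]$ in order to apply the integral representation of the semigroup generated by $\q L_0$; this uses the bounded-gradient initial assumption $\nabla w(\suno)(1+|y|)^{-k}\in L^\infty$ to give a smooth starting point, exactly as in the proof of Proposition \ref{propdelay}, and constitutes the only non-classical ingredient. The remainder of the proof is standard parabolic energy machinery.
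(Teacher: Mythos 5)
Your proposal is correct in its essential structure and takes a genuinely different — and arguably cleaner — route than the paper. The paper works directly with $v = w-\psi(\cdot+\sunos)$, derives a delay differential inequality for $z(s)=\|v(s)\|_{L^2_\rho}^2$ in which the time-dependent potential $-\frac1{p-1}+p\bar\psi^{p-1}$ appears explicitly, and then closes via a Gronwall comparison with the explicit time-dependent barrier $\bar z_p(s)\propto|\psi'(s+\sunos)|^2$. Your normalization $V = v/\psi'(\cdot+\sunos)$ does exactly this comparison built into the change of variable: the identity $\phi'/\phi = 1+V_\psi$ (which you correctly verify from $\psi'' = -\psi'/(p-1)+p\psi^{p-1}\psi'$) cancels the dangerous potential, turning the time-dependent linear problem into the autonomous one $\partial_s V = (\q L-I)V + B/\phi$, so the barrier becomes a constant. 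This is a transparent way of seeing why the estimate \eqref{samir} has precisely the factor $|\psi'(s+\sunos)|/|\psi'(\sunos)|$. Your treatment of the delay regularizing estimate is also sound: since $B/v$ is uniformly bounded by \eqref{boundB} and the $L^\infty$ a priori bound, $V$ satisfies a linear parabolic inequality with bounded potential, so the argument of Proposition \ref{propdelay} applies (using the gradient hypothesis in \eqref{cond0} for the initial transient, via item (ii) of Lemma \ref{lemVel}), and the uniform integrability of $|\psi'|^{\bar p-1}$ over $\m R$ is correct since $\bar p>1$ and $\psi'$ decays exponentially in both directions.

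There is, however, a small but genuine slip in closing the bootstrap. From $\frac12\frac{d}{ds}\|V\|_{L^2_\rho}^2 \le \|V\|_{L^2_\rho}\|B/\phi\|_{L^2_\rho}$ with the delay estimate $\|V(s)\|_{L^{2\bar p}_\rho}\le CM_1\|V(\suno)\|_{L^2_\rho}$, the right-hand side carries $\|V\|_{L^2_\rho}\cdot\|V\|_{L^{2\bar p}_\rho}^{\bar p}$ and therefore, under the bootstrap, is of size $M_1^{\bar p+1}\|V(\suno)\|^{\bar p+1}|\phi|^{\bar p-1}$, not $M_1^{\bar p}$. After integrating and using $\|V(\suno)\|\le 1/M_1$, the resulting bound is $\|V(s)\|^2\le \|V(\suno)\|^2(1+CM_1^2)$, and the inequality $1+CM_1^2\le M_1^2/4$ only closes if the constant $C$ (which depends on the nonlinearity and $\int|\psi'|^{\bar p-1}$) happens to be $<1/4$. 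The fix is the one the paper implements implicitly: decouple the bootstrap constant $A$ from the smallness threshold $c_0$. Run the bootstrap with a barrier $\|V(s)\|\le A\|V(\suno)\|$ for a fixed $A$ (chosen to dominate the initial-transient growth $e^{\tilde Cs^*}$), impose the smallness $\|V(\suno)\|\le c_0$ with $c_0$ small enough — not tied to $A$ — to absorb the nonlinearity, and only at the very end set $M_1=\max(A,1/c_0)$. This is exactly what the paper does by choosing $M_1'$ and the smallness condition \eqref{defM1} independently and then defining $M_1$ afterwards. With this adjustment your argument is complete.
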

  \begin{nb}
    By definition \eqref{defpsi} of $\psi$, we have exponential decay
    in \eqref{samir}. Moreover,
    since $\kappa$ and $\psi(s)$ \eqref{defpsi} are both solutions to
    equation \eqref{eqw}, $\kappa$ should never satisfy condition
    \eqref{cond0}. This is clear, except when $\sunos\to -\infty$,
    since in that case $\psi(\sunos) \to \kappa$. More precisely,
    we see from \eqref{defpsi} that
    $\kappa-\psi(\sunos)\sim \frac{\kappa e^{\sunos}}{p-1}\sim
    |{\psi'}(\sunos)|$, which shows that \eqref{cond0} is indeed
    sharp, up to a multiplying (small) factor, $1/M_1$.
\end{nb}
\begin{proof}
The proof is much more involved than the proof of Proposition
\ref{proptrap},  since by definition \eqref{defpsi}, $\psi(\sunos)$ may be close to $\kappa$, the
unstable equilibrium of equation \eqref{eqw}. As for the previous
proposition, we postpone the proof to Section \ref{sectrap} in the
appendix. 
\end{proof}
In the following corollary, using Proposition \ref{propinit}, Lemmas \ref{lemflat} and \ref{lemregions}, we show that
$w_a(s)$ is trapped near the heteroclinic orbit $\psi$ \eqref{defpsi}
whenever $a$ is in Region $\q R_2$ \eqref{defqR}. More precisely, this
is our statement:
\begin{cor}[Trapping of $w_a(s)$ near $\psi$ in Region $\q R_2$]\label{corr2}
  There exists $\bar \sigma \in \m R$ such that for all $m\in (0,1)$,
  there exists $\barbelow \sigma(m)\le \bar \sigma$ such that for all
  $A\ge 1$,  there exists $\scinco(A,m)\ge 1$
such that for all $s_0 \ge \scinco(A,m)$, $d\in \q D(A,s_0)$ and $a\in
\q R_2$ defined in \eqref{defqR} (with $M$ defined in \eqref{defM}),
$\nabla w_a(s_0)\in L^\infty$ and 
$\|w_a(s_0)-\psi(\sunos)\|_{L^2_\rho}\le Ce^{-\frac{s_0}6}
\le \frac {|{\psi'}(\sunos)|}{M_1}$ for some 
$\sunos \in [\barbelow\sigma(m), \bar \sigma]$, where $M_1$ was introduced
in Proposition \ref{proptrap2}. 
If in addition we have $|w_a(y,s)|\le 2\kappa$, for all $(y,s)\in \m R^2 \times[s_0,s_2]$,
for some $s_2\ge s_0$,
then,
\[
  \forall s\in [s_0,s_2],\;\;
  \|w_a(s)\|_{L^2_\rho} \le \psi(s+\sunos-s_0)
  + \frac{CM_1}{C(m)}e^{-\frac{s_0}6} \le \kappa + \frac \kappa 4\le
  \frac 32\kappa.
 \]
In particular, \eqref{g2} holds.
\end{cor}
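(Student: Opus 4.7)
The plan is to assemble three results already in hand: the flatness of $w_a(\cdot,s_0)$ in $L^2_\rho$ from Lemma \ref{lemflat}, the trapping of the constant value $w_0(ae^{s_0/2},s_0)$ into a controlled interval in Lemma \ref{lemregions} for region $\q R_2$, and the stability of the heteroclinic orbit $\psi$ from Proposition \ref{proptrap2}. The gradient bound $\nabla w_a(s_0)\in L^\infty$ is immediate from the shift relation \eqref{wawb} together with item (ii) of Proposition \ref{propinit}, which give $\|\nabla w_a(s_0)\|_{L^\infty}=\|\nabla w_0(s_0)\|_{L^\infty}\le Ce^{-s_0/6}$.

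For the initial-time estimate I first apply Lemma \ref{lemflat} to obtain $\|w_a(\cdot,s_0)-c\|_{L^2_\rho}\le Ce^{-s_0/6}$ with $c:=w_0(ae^{s_0/2},s_0)$, so $w_a(\cdot,s_0)$ is $L^2_\rho$-close to a constant. Because $a\in \q R_2$, Lemma \ref{lemregions} locates $c$ inside the interval $[\kappa(1+M)^{-1/(p-1)}-\Cdix(M)e^{-s_0/3},\;\kappa(1+m)^{-1/(p-1)}+Ce^{-s_0/3}]$. The key observation is that $\psi:\m R\to(0,\kappa)$ defined in \eqref{defpsi} is a strictly decreasing bijection with $\psi(\log M)=\kappa(1+M)^{-1/(p-1)}$ and $\psi(\log m)=\kappa(1+m)^{-1/(p-1)}$, so for $s_0$ large enough $c$ lies in $(0,\kappa)$ and I set $\sunos:=\psi^{-1}(c)$. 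The interval bound on $c$ then confines $\sunos$ to a compact set $[\barbelow\sigma(m),\bar \sigma]$, with $\bar \sigma$ depending only on the fixed $M$ (hence absolute) and $\barbelow\sigma(m)$ depending on $m$. The triangle inequality then gives $\|w_a(s_0)-\psi(\sunos)\|_{L^2_\rho}\le Ce^{-s_0/6}$, matching the first claim of the corollary.

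To apply Proposition \ref{proptrap2} I must verify condition \eqref{cond0}, namely $Ce^{-s_0/6}\le|\psi'(\sunos)|/M_1$. Since $\sunos$ is trapped in the compact interval $[\barbelow\sigma(m),\bar \sigma]$ and $|\psi'|$ is continuous and strictly positive there, a lower bound $|\psi'(\sunos)|\ge C(m)>0$ is available, and the condition holds for $s_0\ge \scinco(A,m)$ large enough depending on $A$ and $m$. Exploiting the autonomy of \eqref{eqw} to shift time by $s_0$ and applying Proposition \ref{proptrap2} then yields, for all $s\in[s_0,s_2]$,
\[
\|w_a(s)-\psi(s+\sunos-s_0)\|_{L^2_\rho}\le M_1 Ce^{-s_0/6}\,\frac{|\psi'(s+\sunos-s_0)|}{|\psi'(\sunos)|}\le \frac{CM_1}{C(m)}e^{-\frac{s_0}6},
\]
using the global upper bound on $|\psi'|$. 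Combining with $\psi(s+\sunos-s_0)\le \kappa$ (since $\psi\le \kappa$) and $\|1\|_{L^2_\rho}=1$ from \eqref{calculrho}, a triangle inequality closes the bound on $\|w_a(s)\|_{L^2_\rho}$ by $\kappa+\kappa/4\le \frac{3\kappa}{2}$ for $s_0$ large enough.

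The main obstacle is the loss factor $1/|\psi'(\sunos)|$ in the stability estimate, which becomes catastrophic as $m\to 0$: in that limit $\sunos\to -\infty$ and $\psi(\sunos)\to\kappa$, where $\psi'$ vanishes. This degeneracy is precisely why $\barbelow\sigma(m)$ and $\scinco(A,m)$ must be allowed to blow up as $m\to 0$, and it also explains why the innermost region $\q R_3$ (for which this direct reduction would force $\sunos=-\infty$) cannot be treated by linearization around $\psi$ and will require the more delicate argument of the next subsection.
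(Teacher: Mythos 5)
Your argument is correct and assembles precisely the ingredients the paper points to (Proposition \ref{propinit} for the gradient bound, Lemma \ref{lemflat} for flatness, Lemma \ref{lemregions} for locating $w_0(ae^{s_0/2},s_0)$, and Proposition \ref{proptrap2} after choosing $\sunos=\psi^{-1}(w_0(ae^{s_0/2},s_0))$), with the compactness of $[\barbelow\sigma(m),\bar\sigma]$ giving the needed lower bound on $|\psi'(\sunos)|$ so that condition \eqref{cond0} can be met for $s_0$ large. This matches the route the paper indicates when it omits the proof as a "direct consequence" of those four results.
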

\begin{proof}
The proof is omitted, since it is a direct consequence of Proposition
\ref{proptrap2}, thanks to Proposition \ref{propinit}, Lemmas \ref{lemflat} and \ref{lemregions}.
  \end{proof}

\subsection{Control of $w_a(y,s)$ in Region $\q R_3$}\label{secR3}

Given an initial time $s_0$, some $m\in(0,1)$ and $a\in \q R_3$
defined in \eqref{defqR}, we write $a$ as follows:
\begin{equation}\label{defa}
a=(Ke^{-\frac {s_0} 2}, Le^{-\frac {s_0} 2}),
\end{equation}
for some real numbers $K$ and $L$. Since $w_0(\cdot,s_0)$ is
symmetric with respect to the axes and the bissectrices (see \eqref{defw0}), so is
$w_0(\cdot,s)$ for any later time $s$. For that reason, we only
consider the case where
\begin{equation}\label{condkl}
  0\le K\le L.
\end{equation}
Since
\begin{equation}\label{waw0}
  w_a(y,s_0) = w_0(y+ae^{\frac {s_0} 2},s_0) = w_0(y_1+K, y_2+L,s_0)
  \end{equation}
from \eqref{wawb}, we will use the explicit expression \eqref{defw0}
of $w_0$  in order to estimate the components of $w_a(s_0)$ in
$L^2_\rho$. Taking this as initial data, we will integrate equation
\eqref{eqw} (which is satisfied by $w_a$) in order to estimate
$w_a(s)$ for later times $s\ge s_0$.

\medskip

It happens that the outcome of the integration depends on the size of 
  $a$, which can be measured in
terms of the position of $K+L$ and $A$. For that reason, we
distinguish two cases in the following.

\subsubsection{Case where $K+L\ge A$}\label{seclargekl}
From the decomposition \eqref{defa} of $a=(a_1,a_2)$, this is the case of
``large'' $a$, where $a_1+a_2 \ge Ae^{-\frac{s_0}2}$. 
Let us first estimate $w_a(s_0)$.
In order to be consistent with the definition
\eqref{goodprof} of our profile $\varphi$ and the decomposition in
regions we suggest in \eqref{defqR}, we will give the expansion of
$w_a(s_0)$ in $L^2_\rho$, uniformly with respect to the small variable 
\begin{equation}\label{defiota}
\iota = e^{-s_0}K^2L^2+ \delta e^{-2s_0}[K^6+L^6]
\end{equation}
and the large parameter $A$.
This is our statement:
\begin{lem}[Initial value of $w_a(y,s)$ for large $a$]
  \label{propdeco0}
  For any $A\ge 1$, there exists $\squatre(A)\ge 1$ such that for any
  $s_0\ge \squatre(A)$ and any parameter
  $(d_{0,0}, d_{2,0}, d_{4,0}, d_{4,2}, d_{6,0})\in \q D$ defined in
  Proposition \ref{propinit},  if  $w_0(y,s_0)$ is given by
  \eqref{defw0}, then,  for any $m\in(0,1)$ and $a\in \q R_3$ defined
  in \eqref{defqR} with $a$ decomposed as in \eqref{defa} for some
  $L\ge K\ge 0$, with $K+L\ge A$,  
   the following expansion holds in
 $L^r_\rho(\m R^2)$ for any $r\ge 2$:
  \begin{align*}
    w_a(y,s_0)=\kappa-\iota
     -e^{-s_0}&\left\{2KL^2h_1h_0+2K^2Lh_1h_0+L^2h_2h_0+4KLh_1h_1
      +K^2h_0h_2\right.\\
    &+\left.2Lh_2h_1+2Kh_1h_2+h_2h_2\right\}
        +O\left(\frac \iota A\right)+O(\iota^2),
  \end{align*}
  where $\iota$ is defined in \eqref{defiota}.
 \end{lem}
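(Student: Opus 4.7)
The plan is to Taylor-expand the explicit formula \eqref{defw0} at the shifted argument $(y_1+K, y_2+L)$, convert the resulting monomials in $y$ into Hermite products, and classify terms by size. First, by \eqref{wawb}, $w_a(y,s_0) = w_0(y_1+K, y_2+L, s_0)$. Setting $\mu = (p-1)/\kappa$ and $Y=(y_1+K, y_2+L)$, from \eqref{defE}--\eqref{defD} I would write
\[
D(Y) = (p-1)\bigl(1 + \mu\iota + \mu\tilde D(y)\bigr), \qquad E(Y) = 1 + \tilde E(y),
\]
where $\tilde D(y) = e^{-s_0}\bigl[(y_1+K)^2(y_2+L)^2 - K^2L^2\bigr] + \delta e^{-2s_0}\bigl[(y_1+K)^6 - K^6 + (y_2+L)^6 - L^6\bigr]$ and $\tilde E(y)$ is the analogous shift of $e^{-s_0}P + e^{-2s_0}Q$. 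Raising $E/D$ plus the $S,\bar S$ corrections to the power $1/(p-1)$ and Taylor-expanding $x\mapsto x^{1/(p-1)}$ produces a leading constant $\kappa(1+\mu\iota)^{-1/(p-1)} = \kappa - \iota + O(\iota^2)$, a linear-in-$(\tilde E,\tilde D)$ piece equal at lowest order to $\frac{\kappa}{p-1}\tilde E - \tilde D$, and a quadratic remainder contributing $O(\iota^2)$.

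The key algebraic observation is the identity $P(Y) = \frac{p-1}{\kappa}\bigl(Y_1^2 Y_2^2 - h_2(Y_1)h_2(Y_2)\bigr) = \frac{p-1}{\kappa}\bigl(2Y_1^2 + 2Y_2^2 - 4\bigr)$: the quartic $Y_1^2 Y_2^2$ \emph{cancels}, so $\tilde E$ at order $e^{-s_0}$ is only quadratic in $Y$. Expanding $\frac{\kappa}{p-1}\tilde E$ and $-\tilde D$ in the Hermite basis using $y_i^2 = h_2(y_i)+2$ etc., the $h_0h_0$ constants generated by $-\tilde D$ (namely $-(4 + 2L^2 + 2K^2)e^{-s_0}$) are exactly compensated by those produced by $\frac{\kappa}{p-1}\tilde E$ (namely $+(2K^2+2L^2+4)e^{-s_0}$); the same cancellation occurs for the $h_2 h_0$, $h_0 h_2$, $h_1h_0$, $h_0h_1$ coefficients (a $+2$ or $+4K$, $+4L$ from $\tilde E$ kills the matching $-2$ or $-4K$, $-4L$ from $-\tilde D$). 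What survives is precisely the announced sum $-e^{-s_0}\{2KL^2 h_1 h_0 + 2K^2 L h_0 h_1 + L^2 h_2 h_0 + 4KL h_1 h_1 + K^2 h_0 h_2 + 2L h_2 h_1 + 2K h_1 h_2 + h_2 h_2\}$.

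All remaining contributions must be shown to be $O(\iota/A) + O(\iota^2)$ in $L^r_\rho$. Because polynomials in $y$ have finite $L^r_\rho$ norm thanks to the Gaussian weight $\rho$, only the scalar prefactors matter, and I would WLOG assume $L \ge A/2$ (by \eqref{condkl} and $K+L \ge A$). Two families of estimates then handle everything: (i) from $\delta e^{-2s_0} L^6 \le \iota$ one gets $e^{-s_0}L^j \le (\iota/\delta)^{j/6} e^{(j/3-1)s_0}$ and $\delta e^{-2s_0}L^j \le \iota^{j/6}\delta^{1-j/6}e^{(j/3-2)s_0}$, both $o(\iota)$ for $j \le 4$ and $s_0$ large; (ii) the critical bound $\delta e^{-2s_0} L^5 \le \iota/L \le 2\iota/A$, which is the origin of the $O(\iota/A)$ error from the degree-$5$ part of the binomial shifts $(y_j+\cdot)^6 - (\cdot)^6$. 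The $e^{-2s_0}Q(Y)$ part of $\tilde E$, the $S,\bar S$ corrections in \eqref{defw0} (with $Ae^{-2s_0}$ and $As_0^2 e^{-3s_0}$ prefactors and at most degree-$6$ growth in $Y$), and the quadratic Taylor remainder are dispatched by combining (i)--(ii) with the lower bounds $D \ge p-1$ and $E \ge 1/2$ inherited from Lemma \ref{lemphi}.

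The main obstacle is purely combinatorial: many monomials in $y$ arise from the various shifts of $(y_1+K)^2(y_2+L)^2$, $(y_j+\cdot)^6$, $P$ and $Q$, and each must be assigned to the correct bucket (absorbed into $\kappa - \iota$ via the constant cancellation, contributing to the stated sub-leading sum, or absorbed into $O(\iota/A) + O(\iota^2)$). The almost miraculous cancellation of the $h_0h_0$ and lower-order constants between $-\tilde D$ and $\frac{\kappa}{p-1}\tilde E$, traceable to the choice of $P$ in \eqref{defP}, is the algebraic heart of the lemma; once this classification is set up, the $L^r_\rho$ estimates reduce to the elementary bounds (i)--(ii).
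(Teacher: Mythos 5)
Your proof follows essentially the same route as the paper's: Taylor-expand the explicit formula \eqref{defw0} at the shifted argument, factor out $D=(p-1)(1+\mu\iota+\mu\tilde D)$ and $E=1+\tilde E$, express everything in the Hermite basis, exploit the cancellation between $-\tilde D$ and $\tfrac{\kappa}{p-1}\tilde E$ that is built into the choice \eqref{defP} of $P$, and bound the residual scalar coefficients via $\delta e^{-2s_0}L^j\lesssim \iota/L^{\,6-j}\lesssim \iota/A^{\,6-j}$ and $\iota\le m\kappa/(p-1)$. You have also (correctly) fixed a sign on the $P$-term relative to the paper's displayed Taylor formula \eqref{taylor} and the misprinted $h_1h_0$ that should read $h_0h_1$ in the $2K^2L$ term; your derivation of the constant $h_0h_0$ coefficient collapsing exactly to $\kappa-\iota$ is right and is the algebraic core of the lemma.
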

\begin{proof}
See Section \ref{secwas}.
\end{proof}
 Noting that
\begin{equation}\label{boundiota}
  \iota \le \frac{m\kappa}{p-1} \le \frac \kappa{p-1}
\end{equation}
  from \eqref{defiota} and \eqref{defqR}, it follows from this lemma that $\|w_a(s_0)-\kappa\|_{L^2_\rho} \le C\iota+CJ$ where
\begin{equation}\label{boundJ}
  J\equiv e^{-s_0}(K^2+L^2)\le 2 e^{-s_0}(K^6+L^6)^{\frac 13}
  \le  2 e^{-s_0} \left(\frac{e^{2s_0}\iota}\delta\right)^{\frac 13}
  = 2 e^{-\frac{s_0}3} \left(\frac \iota\delta\right)^{\frac 13}
  \le 2 \iota^{\frac 13}
\end{equation}
from \eqref{defiota} and the fact that $s_0\ge 0$ and $\delta \ge 1$
(see the beginning of Section \ref{secformu}). Using again
\eqref{boundiota},  it follows that
$w_a-\kappa$ is small at $s=s_0$, whenever $m$ is small.
It is natural then to make a linear approximation for equation \eqref{eqw}
around $\kappa$ in order to obtain the expansion of $w_a(y,s)$ for later times, as
long as $w_a(s)-\kappa$ remains small. In fact, we will see that the
projection on $h_0h_0=1$ will dominate in $w_a(y,s)$. For that reason,
given some small $\eta^*$ such that 
\begin{equation}\label{condeta*}
\eta^* \ge \frac{m\kappa}{p-1},
\end{equation}
where $m\in (0,1)$ is the constant appearing in the definition
\eqref{defqR} of $\q R_3$, our
integration will be valid only on the interval $[s_0,s^*]$ where
$s^*=s^*(s_0,a,\eta^*)$ is defined by 
\begin{equation}\label{defs*}
  e^{s^*-s_0} \iota = \eta^*
 \end{equation}
and $\iota$ is given in \eqref{defiota}. Note that $s^*$ is well
defined since $L+K\ge A>0$, hence $\iota>0$. Note also that  $s^*\ge
s_0$ thanks to condition \eqref{condeta*}, as we will see in item (i) of
Lemma \ref{cordyn} below. More precisely, this is our statement:
\begin{lem}[Decreasing from $\kappa$ to $\kappa - \eta^*$ for ``large'' $a$
  under some a priori $L^\infty$ bound]\label{cordyn}
  There exists
$\bMsix>0$,
$\bAsix\ge 1$
and $\etazerozero>0$
such that for all
$A\ge \bAsix$,
there exists $\squatro(A)$ such that
for any $s_0\ge \squatro(A)$ and any parameter
$(d_{0,0}, d_{2,0}, d_{4,0}, d_{4,2}, d_{6,0})\in \q D$ defined in Proposition
\ref{propinit},  if $w_0(y,s_0)$ is given by \eqref{defw0}, then,
for any $\eta^*\in (0, \etazerozero]$
and $m\in (0,\min(1,\frac{\eta^*(p-1)}{\kappa}))$, for
any  $a\in \q R_3$ defined in \eqref{defqR} where $a$ is given by
\eqref{defa} for some $L\ge K\ge 0$, with $K+L\ge A$, the following
holds:\\ 
(i) $s^*\ge s_0$,  where $s^*$ is introduced in \eqref{defs*}.\\
  (ii) If we assume in addition that $\|w_a(s)\|_{L^\infty}\le 2
  \kappa$ for all $s\in [s_0,s_1]$ for some $s_1\ge s_0$, then,
  for all $s\in [s_0, \min(s^*,s_1)]$,
\begin{equation*}
\|w_a(\cdot,s) - \left(\kappa
 - e^{s-s_0}\iota\right)\|_{L^2_\rho}
\le
\bMsix
\left(\eta^*  +A^{-1}\right)e^{s-s_0}\iota
+\bMsix e^{-\frac{s_0}3}.
\end{equation*}
  \end{lem}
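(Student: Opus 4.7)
For part (i), I would proceed by direct computation. Since $m<\min(1,\eta^\ast(p-1)/\kappa)$ and $a\in\q R_3$, definitions \eqref{defqR} and \eqref{defiota} give
\[
\iota=G_0(a)\le \frac{m\kappa}{p-1}<\eta^\ast.
\]
Hence $s^\ast-s_0=\log(\eta^\ast/\iota)>0$, as claimed.

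For part (ii), the plan is to linearize equation \eqref{eqw} around the equilibrium $\kappa$. Setting $V=w_a-\kappa$, the identity $\kappa^p=\kappa/(p-1)$ yields
\[
\partial_s V=\q L V+F(V),\qquad F(V)=|\kappa+V|^{p-1}(\kappa+V)-\kappa^p-p\kappa^{p-1}V,
\]
with $\q L$ the self-adjoint operator of \eqref{defR} whose spectrum and eigenfunctions are described in \eqref{specL}--\eqref{Lhihj}. The a priori bound $\|w_a(s)\|_{L^\infty}\le 2\kappa$ implies $|V|\le 3\kappa$ and $|F(V)|\le C|V|^{\bar p}$ with $\bar p=\min(p,2)>1$ as in \eqref{boundB}. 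I would then decompose $V(\cdot,s)$ along the Hermite basis \eqref{decomp} and write ODEs $V_{i,j}'=(1-\tfrac{i+j}{2})V_{i,j}+F_{i,j}$, together with the analogous equation for $V_-$.

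The initial data is supplied by Lemma \ref{propdeco0}: the dominant $h_0h_0$ mode starts at $V_{0,0}(s_0)=-\iota+O(\iota/A)+O(\iota^2)$, while every other mode $V_{i,j}(s_0)$ is controlled by a product $e^{-s_0}K^aL^b$ with $a+b=i+j\le 4$. Since $\q L(h_0h_0)=h_0h_0$, pure linear integration produces the announced leading term $-e^{s-s_0}\iota$. The $O(A^{-1})$ correction in the statement comes from propagating the $O(\iota/A)$ error, while the $O(\eta^\ast)$ correction absorbs the $O(\iota^2)$ error (since $\iota\le\eta^\ast$ and $e^{s-s_0}\iota^2\le\eta^\ast e^{s-s_0}\iota$ on $[s_0,s^\ast]$). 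For modes $(i,j)\ne(0,0)$, I would use the constraint $\delta e^{-2s_0}(K^6+L^6)\le\iota\le C$ together with Hölder inequalities of the type
\[
e^{-s_0}K^aL^b\le C\bigl(e^{-2s_0}(K^6+L^6)\bigr)^{(a+b)/6}e^{s_0((a+b)/6-1)}\le C e^{-s_0/3}
\]
(for the relevant ranges of $a,b$), so that the combined linear growth $e^{(1-(i+j)/2)(s-s_0)}$ up to time $s^\ast$ remains absorbed in the $\bMsix e^{-s_0/3}$ error term, possibly after exchanging some of this room against the main term $e^{s-s_0}\iota$.

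The main obstacle will be the nonlinear contribution $\int_{s_0}^s e^{(s-\sigma)\q L}F(V(\sigma))\,d\sigma$ in $L^2_\rho$. Since $F(V)=O(|V|^{\bar p})$, its $L^2_\rho$ norm cannot be closed by $\|V\|_{L^2_\rho}$ alone, and I would overcome this exactly as in the regularizing delay Proposition \ref{propdelay}: use the smoothing of the semigroup $e^{s\q L}$ together with the a priori $L^\infty$ bound to estimate $\|V(\sigma)\|_{L^{2\bar p}_\rho}$ in terms of $\|V(\sigma-s^\sharp)\|_{L^2_\rho}$ for a fixed time delay $s^\sharp$, so that
\[
\|F(V)\|_{L^2_\rho}\le \bar C(M)\bigl(\|V(\sigma-s^\sharp)\|_{L^2_\rho}\bigr)^{\bar p}.
\]
With this control I would close a bootstrap: assuming the stated bound holds up to some $s<\min(s^\ast,s_1)$ with a provisional constant, the Duhamel formula together with the initial expansion of Lemma \ref{propdeco0} recovers the same bound with a smaller constant, provided $\eta^\ast$ is chosen small enough, $A$ and $s_0$ large enough, and $\bMsix$ large enough. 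This yields the stated estimate on the whole interval $[s_0,\min(s^\ast,s_1)]$.
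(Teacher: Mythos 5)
Your plan captures the overall architecture of the paper's argument: set $v_a=w_a-\kappa$, project equation \eqref{eqw} onto the Hermite basis \eqref{eigenL}, control the nonlinear term via the delay--regularizing effect of Lemma \ref{lemVel} (exactly as in Proposition \ref{propdelay}), and close a bootstrap. Part (i) is straightforward and correct. The paper uses the slightly sharper quadratic bound $|\bar B(v_a)|\le C|v_a|^2$ (valid because the linearization is about $\kappa>0$ and $|v_a|$ is uniformly bounded, so the Taylor remainder is genuinely quadratic on a compact set), rather than the $|F(V)|\le C|V|^{\bar p}$ you cite; this is a minor difference.

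However, there is a genuine gap in your treatment of the modes $(i,j)\neq(0,0)$. First, the degree count is inverted: the coefficient of $h_ih_j$ in Lemma \ref{propdeco0} is a monomial $e^{-s_0}K^aL^b$ with $a+b=4-(i+j)$, not $a+b=i+j$. For instance $h_1h_0$ carries the coefficient $-2e^{-s_0}KL^2$, with $a+b=3$. Second, and more importantly, the claimed Hölder bound
\[
e^{-s_0}K^aL^b\le C\bigl(e^{-2s_0}(K^6+L^6)\bigr)^{(a+b)/6}\,e^{s_0((a+b)/6-1)}\le Ce^{-s_0/3}
\]
holds only for $a+b\le 2$; for $a+b=3$ the middle expression reduces to $C(\iota/\delta)^{1/2}$ with no decay in $s_0$ at all. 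Yet $a+b=3$ is exactly the case of the degree-one Hermite modes $h_1h_0$ and $h_0h_1$, which grow like $e^{(s-s_0)/2}$ under the linear flow and are therefore the most sensitive. Your remark about ``exchanging room against the main term'' does not resolve this, because the exchange would produce a contribution proportional to $e^{s-s_0}\iota$ with an $O(1)$ prefactor, not the small prefactor $\eta^\ast+A^{-1}$ demanded by the statement. The missing ingredient is the quantitative use of the hypothesis $K+L\ge A$: the paper's proof exploits it through the estimate $J^2\le 16\iota/A^2$ for $J=e^{-s_0}(K^2+L^2)$, combined with a balanced Young inequality such as $2e^{-s_0}KL^2\le e^{-s_0}K^2L^2+e^{-s_0}L^2\le\iota+J$, to get the required smallness in $A$. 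Your plan never invokes $K+L\ge A$ quantitatively and therefore cannot produce the $A^{-1}$ factor in the conclusion.
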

  \begin{proof}
See Section \ref{secwas}.
  \end{proof}
  With this result, we are able to prove estimate \eqref{g2}:
  \begin{cor}[Proof of the $L^2_\rho$ bound when $a\in \q R_3$ is
    ``large'' under some $L^\infty$ a priori bound]\label{corbR3} 
    There exists
$\bAun>0$,
$\etaun>0$
and
$\bsun(\eta^*)\ge 1$,
such that under the
hypotheses of Lemma \ref{cordyn}, if in addition
$A\ge \bAun$,
$\eta^*\le \etaun$
and
$s_0\ge \bsun(\eta^*)$,
then, for all
    $s\in[s_0,s_1]$, $\|w_a(s)\|_{L^2_\rho} \le \frac 32 \kappa$.
  \end{cor}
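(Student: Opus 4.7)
My plan is to split the interval $[s_0,s_1]$ at the time $s^*$ defined in \eqref{defs*} and handle the two subintervals separately. On $[s_0,\min(s^*,s_1)]$ the bound will follow directly from Lemma \ref{cordyn}; on $[s^*,s_1]$, if this second subinterval is nonempty, I would show that $w_a(s^*)$ lies close to the level $\kappa-\eta^*$ of the heteroclinic orbit $\psi$ defined in \eqref{defpsi}, and then propagate this closeness via the stability statement of Proposition \ref{proptrap2}.

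On the first subinterval, the definition of $s^*$ ensures $0\le e^{s-s_0}\iota \le \eta^*$. Provided $\eta^*<\kappa$, the constant (in $y$) function $\kappa-e^{s-s_0}\iota$ lies in $[0,\kappa]$, so its $L^2_\rho$ norm equals $\kappa-e^{s-s_0}\iota$ since $\int\rho=1$. Combined with Lemma \ref{cordyn} and the triangle inequality, this gives
\[
\|w_a(s)\|_{L^2_\rho}\le(\kappa-e^{s-s_0}\iota)+\bMsix(\eta^*+A^{-1})e^{s-s_0}\iota+\bMsix e^{-s_0/3}\le \kappa+\bMsix(\eta^*+A^{-1})\eta^*+\bMsix e^{-s_0/3}.
\]
Taking $A$ large, $\eta^*$ small, and then $s_0$ large enough (depending on $\eta^*$) reduces the right-hand side to at most $\kappa+\kappa/4$, which is below $\frac{3}{2}\kappa$.

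For the second subinterval, I would define $\sunos\in\m R$ by $\psi(\sunos)=\kappa-\eta^*$, which is well defined since $\psi$ is a strictly decreasing bijection from $\m R$ onto $(0,\kappa)$ and $0<\eta^*<\kappa$. A direct computation from \eqref{defpsi} gives $e^{\sunos}\approx (p-1)\eta^*/\kappa$ and hence $|\psi'(\sunos)|\ge c_0\,\eta^*$ for some $c_0>0$ independent of small $\eta^*$. Evaluating Lemma \ref{cordyn} at $s=s^*$ yields
\[
\|w_a(s^*)-\psi(\sunos)\|_{L^2_\rho}\le \bMsix(\eta^*+A^{-1})\eta^*+\bMsix e^{-s_0/3},
\]
and by further tightening $\eta^*\le\etaun$, $A\ge\bAun$ and $s_0\ge\bsun(\eta^*)$, this upper bound can be forced below $|\psi'(\sunos)|/M_1$. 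Shifting time by $s^*$ and invoking Proposition \ref{proptrap2} then gives, for every $s\in[s^*,s_1]$,
\[
\|w_a(s)-\psi(s-s^*+\sunos)\|_{L^2_\rho}\le M_1\|w_a(s^*)-\psi(\sunos)\|_{L^2_\rho}\,\frac{|\psi'(s-s^*+\sunos)|}{|\psi'(\sunos)|}.
\]
Since $\sup_{\m R}|\psi'|<\infty$ and $|\psi'(\sunos)|\ge c_0\eta^*$, the right-hand side is bounded by a constant times $\|w_a(s^*)-\psi(\sunos)\|_{L^2_\rho}/\eta^*$, which can be made $\le\kappa/4$ by the same tightening. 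Combined with $0\le\psi\le\kappa$ and $\int\rho=1$, the triangle inequality delivers $\|w_a(s)\|_{L^2_\rho}\le\kappa+\kappa/4\le\frac{3}{2}\kappa$.

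The main obstacle will be the second phase. Because $\sunos\to-\infty$ as $\eta^*\to 0$, the factor $|\psi'(\sunos)|$ is only of order $\eta^*$, so the smallness hypothesis of Proposition \ref{proptrap2} demands that the initial error at $s^*$ be much smaller than $\eta^*$ itself, not merely small in absolute terms. This forces $s_0$ to be at least of order $\log(1/\eta^*)$ so that the remainder $\bMsix e^{-s_0/3}$ in Lemma \ref{cordyn} is beaten by $\eta^*$; this is precisely why the threshold $\bsun$ in the statement is allowed to depend on $\eta^*$.
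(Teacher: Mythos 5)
Your proposal is correct and follows the paper's argument almost verbatim: the same split at $s^*$, the same use of Lemma \ref{cordyn} (combined with $\int\rho=1$) on the subinterval $[s_0,\min(s^*,s_1)]$, and the same application of Proposition \ref{proptrap2} for $s\ge s^*$, including the crucial observation that $|\psi'(\sunos)|\sim\eta^*$ as $\eta^*\to 0$, which forces the threshold $\bsun$ to depend on $\eta^*$ so that the remainder $\bMsix e^{-s_0/3}$ is dominated by $\eta^*$. The one detail the paper makes explicit that you elide is the verification of the gradient hypothesis of Proposition \ref{proptrap2} (namely that $\nabla w_a(s^*)(1+|y|)^{-k}\in L^\infty$), which the paper supplies from Proposition \ref{propgrad} after further restricting $s_0\ge \sszerodeux(A,1)$.
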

  \begin{proof}
    Following Lemma \ref{cordyn}, we further assume that
    $s_0\ge \sszerodeux(A,1)$
  defined in Proposition \ref{propgrad}, so we can apply that
    proposition. Consider then $s\in [s_0,s_1]$. We distinguish 2
    cases:\\ 
    \textit{Case 1: $s\le s^*$}.
   We write from Lemma \ref{cordyn} and
    \eqref{calculrho}, together with the definitions \eqref{defiota}
    and \eqref{defs*} of $\iota$ and $s^*$
     \begin{align*}
        \|w_a(s)\|_{L^2_\rho}
       &\;\le \kappa + e^{s^*-s_0}\iota
         +\bMsix(\eta^*+A^{-1})  e^{s^*-s_0}\iota
         +\bMsix e^{-\frac{s_0}3}\\
&\;         \le \kappa + \eta^*
         +\bMsix(\eta^*+A^{-1})  \eta^*+\bMsix e^{-\frac{s_0}3}.
     \end{align*}
Since $A\ge 1$, taking $\eta^*$ small enough and $s_0$ large enough, we get the result.\\
      \textit{Case 2: $s\ge s^*$}. In this case, we have $s_0\le
      s^*\le s \le s_1$. 
 We will show that starting at time $s^*$, $w$
      will be trapped near the heteroclinic orbit $\psi$
      \eqref{defpsi}, thanks to Proposition \ref{proptrap2}. In
      particular, its $L^2_\rho$ will remain bounded by $\frac 32 \kappa$.
       More precisely, from item (ii) of Lemma \ref{cordyn}
      and \eqref{defs*}, we see that 
        \[
      \|w_a(s^*) - (\kappa -\eta^*)\|_{L^2_\rho} \le
    \bMsix
    (\eta^*+A^{-1})  \eta^*
 +\bMsix e^{-\frac{s_0}3}.
    \]
Assuming that $\eta^*<\kappa$, we may introduce $\sigma^*\in \m R$ such that $\psi(\sigma^*) = \kappa-\eta^*$,
where $\psi$ is defined in \eqref{defpsi}. Noting that
\begin{equation}\label{dlpsi}
|{\psi'}(\sigma^*)|\sim \frac {\kappa e^{\sigma^*}}{p-1} \sim \kappa-\psi(\sigma^*)=\eta^*\mbox{ as } \eta^*\to 0, 
\end{equation}
we see that taking $\eta^*$
small enough, then $A$ and $s_0$ large enough, we have
\[
  \|w_a(s^*)-\psi(\sunos)\|_{L^2_\rho}
  \le \frac {|{\psi'}(\sunos)|}{M_1\left[1+\frac 2\kappa \|{\psi'}\|_{L^\infty}\right]}, 
 \]
where $M_1$ is introduced in Proposition \ref{proptrap2}. Since
$\nabla w_a(s^*)\in L^\infty$, thanks to Proposition \ref{propgrad},
together with the definition \eqref{defq} of $q$ and the
transformation \eqref{wawb}, Proposition \ref{proptrap2} applies and
we see that at time $s$, we have 
\begin{align*}
\|w_a(s)-\psi(s+\sunos-s^*)\|_{L^2_\rho}
 \le &\; M_1 \|w_a(s^*) -\psi(\sunos)\|_{L^2_\rho}
  \frac{|{\psi'}(s+\sunos)|}{|{\psi'}(\sunos)|}\\
  \le \frac {|{\psi'}(s+\sunos-s^*)|}{1+\frac 2\kappa \|{\psi'}\|_{L^\infty}} \le \frac \kappa 2.
\end{align*}
Since $\psi\le \kappa$ by definition \eqref{defpsi}, using
\eqref{calculrho} we see that
\[
  \|w_a(s) \|_{L^2_\rho}
  \le \psi(s+\sunos-s^*) + \frac \kappa 2 \le \frac 32 \kappa.
 \]
This concludes the proof of Corollary \ref{corbR3}.
  \end{proof}

\subsubsection{Case where $K+L\le A$}

Given $m\in(0,1)$ and $a\in \q R_3$ \eqref{defqR}, we aim in this
section to handle the case where $a$ is ``small'', namely when it
belongs to the triangle $\q T_0$ defined by 
\[
  \q T_0= \{(Ke^{-\frac{s_0}2},Le^{-\frac{s_0}2})\;|\;0\le K\le L\mbox{ and }K+L\le A\}.
\]
Let us recall from the beginning of Section \ref{seccontrolwa} that
$q(s)\in V_A(s)$ given in definition \ref{defvas}, for all $s\in
[s_0,s_1]$ for some $s_1\ge s_0$, hence,
\[
\forall s\in [s_0, s_1],\;\; \|w_0(s)\|_{L^\infty} \le 2 \kappa.
\]
Introducing the segment
\begin{equation}\label{defgs}
\q  G_\sigma=\{(K' e^{-\frac \sigma 2}, L' e^{-\frac \sigma 2})\;|\;
  \;0\le K'\le L'\mbox{ and }K'+L'= A\}
\end{equation}
and the triangle
\[
\q T_1= \{(K'e^{-\frac{s_1}2},L'e^{-\frac{s_1}2})\;|\;0\le K'\le L'\mbox{ and }K'+L'\le A\},
\]
we see that
\[
\q T_0 = \cup_{s_0\le \sigma < s_1} \q G_\sigma \cup \q T_1.
\]
We then proceed in 2 steps:\\
- In Step 1, we handle the case of ``small'' $s$, namely when
$a\in \q G_\sigma$ with $s_0\le s\le \sigma \le s_1$,
and also the case where  $a\in \q T_1$ with $s_0\le s\le s_1$.\\
- In Step 2, we handle the case of ``large'' $s$, namely when
$a\in \q G_\sigma$ with $s_0\le \sigma \le s\le s_1$.

\bigskip

\textbf{Step 1: Case of ``small'' $a$ and ``small'' $s$} \label{ps1}

Consider $a\in \q G_\sigma$ with $s_0\le s\le \sigma < s_1$, or
$a\in \q T_1$ with $s_0\le s\le s_1$. The conclusion will follow from
the $L^2_\rho$ estimate on $w_0$ together with the gradient estimate
of Proposition \ref{propgrad}. Indeed, using a Taylor expansion
together with \eqref{waw0} and \eqref{calculrho}, we write
\begin{align*}
  w_a(0,s)= w_0(a e^{\frac s2},s)
 & = w_0(0,s) +O\left(a e^{\frac s2} \|\nabla w_0(s)\|_{L^\infty}\right),\\
  \int w_0(z,s) \rho(z) dz &= w_0(0,s) +O\left(\|\nabla w_0(s)\|_{L^\infty}\right),
\end{align*}
on the one hand. On the other hand, since $w_0 = \varphi+q$ by definition \eqref{defq},
recalling that $q(s)\in V_A(s)$, we write by definitions
\eqref{goodprof} and \eqref{defvij} of $\varphi$ and $q_{0,0}(s)$,
together with Definition \ref{defvas} of $V_A(s)$,   
\begin{align*}
  \int w_0(z,s) \rho(z) dz
 & = \int \varphi (z,s) \rho(z) dz + \int q(z,s) \rho(z) dz
  = \int \varphi (z,s) \rho(z) dz + q_{0,0}(s)\\
 & = \kappa + O\left(e^{-s}\right) +O\left(Ae^{-2s}\right).
\end{align*}
Introducing $(K',L')$ such that
\begin{equation}\label{defk'l'}
a=(K' e^{-\frac \sigma 2}, L' e^{-\frac \sigma 2})
\end{equation}
with $\sigma =s_1$ if $a\in \q T_1$,
we see that
\[
0\le K'\le L'\mbox{ and }K'+L'\le A.
\]
Recalling that $s\le \sigma$, we write
\[
  |a e^{\frac s2}|=\sqrt{{K'}^2+{L'}^2} e^{\frac{s-\sigma}2}
  \le \sqrt 2 A.
\]
Taking $A\ge 1$ and recalling that $\|\nabla w_0(s)\|_{L^\infty}\le
\delta_0$ from Proposition \ref{propgrad} provided that
$s_0\ge \sszerodeux(A, \delta_0)$,
we write from the previous estimates that 
\[
  |w_a(0,s)|\le \kappa + C\delta_0 +CA \delta_0+ Ce^{-s}+CA e^{-2s}
  \le \frac 32 \kappa,
\]
whenever $\delta_0\le \donzeb(A)$ and $s_0 \ge \sonzeb(A, \delta_0)$ for some
$\sonzeb(A,\delta_0)\ge 1$ and $\donzeb(A)>0$. In particular, estimate
\eqref{g1} holds. 

\bigskip

\textbf{Step 2: Case of ``small'' $a$ and ``large'' $s$}

Now, we consider $a\in \q G_\sigma$ with $s_0\le \sigma \le s\le
s_1$. As we will shortly see, the conclusion follows here from the case
$K+L\ge A$ treated in Section \ref{seclargekl}, if one replaces there
$K$, $L$ and $s_0$ by $K'$, $L'$ and $\sigma$. Indeed, by definition
\eqref{defgs} of $\q G_\sigma$, we have
\begin{equation}\label{k'l'}
  K'+L'=A,
\end{equation}
where $K'$ and $L'$ are defined in \eqref{defk'l'}. Following our
strategy in Section \ref{seclargekl}, we first start by expanding
$w_a(y,\sigma)$ as in Lemma \ref{propdeco0}:
\begin{lem}[Expansion of $w_a(y,\sigma)$]\label{propdeco0'}
  For any $A\ge 1$, there exists
$\squatrep(A)\ge 1$
  such that
  for any
$s_0\ge \squatrep(A)$,
  the following holds:\\
  Assume that  $q(s)\in V_A(s)$ satisfies
  equation \eqref{eqq} for any $s\in [s_0, \sigma]$ for some $\sigma
  \ge s_0$, such that \eqref{qs0} holds and $\nabla q(s_0)\in
  L^\infty(\m R^2)$. Assume in addition that
  $a=(K',L')e^{-\frac \sigma 2}$ such that \eqref{k'l'} holds.
 Then,
   \begin{align}
    w_a(y,\sigma)=\kappa-\iota' & +O\left(\frac {\iota'} A\right)+O({\iota'}^2) \label{expwas}\\
     -e^{-\sigma}&\left\{2K'{L'}^2h_1h_0+2{K'}^2L'h_1h_0+{L'}^2h_2h_0+4K'L'h_1h_1
      +{K'}^2h_0h_2\right. \nonumber\\
                 &+\left.2L'h_2h_1+2K'h_1h_2+h_2h_2\right\}\nonumber\\
                 +q_{6,2}(\sigma)& \left\{
                   {L'}^4h_2h_0 +{K'}^4 h_0h_2
                   +4{L'}^3 h_2h_1 +4{K'}^3 h_1h_2
                   +6({K'}^2+{L'}^2)h_2h_2\right.\nonumber\\
                   &\left.+4K'h_3h_2+4L'h_2h_3
                   +h_4h_2+h_2h_4\right\}, \nonumber 
  \end{align}
in $L^r_\rho$ for any $r\ge 2$, with $|q_{6,2}(\sigma)|\le A \sigma
e^{-2\sigma}$, where 
  \begin{equation}\label{defiota'}
\iota' = e^{-\sigma}{K'}^2{L'}^2+ \delta e^{-2\sigma}[{K'}^6+{L'}^6].
\end{equation}
\end{lem}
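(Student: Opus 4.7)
First, I would split $w_a(y, \sigma)$ into a profile and a perturbation part. Using the translation identity \eqref{wawb} together with \eqref{defk'l'}, \eqref{defq}, and \eqref{k'l'},
\[
w_a(y, \sigma) = w_0(y_1+K', y_2+L', \sigma) = \varphi(y_1+K', y_2+L', \sigma) + q(y_1+K', y_2+L', \sigma).
\]
The profile piece should produce the structural part $\kappa - \iota' - e^{-\sigma}\{\cdots\}$ of \eqref{expwas}, while the perturbation piece should produce the $q_{6,2}(\sigma)\{\cdots\}$ block.

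For the profile piece, I would simply re-run the Taylor computation already used in the proof of Lemma \ref{propdeco0}, now at time $\sigma$ with parameters $(K', L')$ in place of $(K, L)$. Concretely, using the explicit formulas \eqref{goodprof}--\eqref{defD}, the shifted profile can be written as $[E/D](y_1+K', y_2+L', \sigma)$ and Taylor expanded to second order in the small quantity $\iota'$ (and in $e^{-\sigma}$ times polynomials in $y$), yielding the listed structural terms modulo an $O({\iota'}^2)$ remainder in $L^r_\rho$. The $P(y)$ and $Q(y)$ corrections in the numerator \eqref{defE} contribute only to the error at order $O(\iota'/A)$, thanks to the constraint \eqref{k'l'}.

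For the perturbation piece, I would decompose $q(\cdot, \sigma)$ in the eigenbasis \eqref{decomp} and apply the translation identity for the rescaled Hermite polynomials,
\[
h_n(\xi+\mu) = \sum_{k=0}^n \binom{n}{k} \mu^{n-k} h_k(\xi),
\]
to each component. Using the symmetry constraint $q_{i,j} = 0$ for $i$ or $j$ odd, the bounds in Definition \ref{defvas}, and the constraint $K'+L'=A$, one checks that the components $q_{i,j}$ with $i \le 4$ contribute at most $CA^{i+1}e^{-2\sigma}$ after translation, that $q_{6,0}=q_{6,6}$ contributes at most $CA^{7}\sigma^2 e^{-3\sigma}$, and that the infinite-dimensional tail $q_-$ (bounded in $L^2_\rho$ by $A^2\sigma^2 e^{-3\sigma}$) is likewise negligible after translation. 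All these errors fit into $O(\iota'/A) + O({\iota'}^2)$, so that the only surviving explicit contribution is
\[
q_{6,2}(\sigma)\bigl[h_4(y_1+K')\,h_2(y_2+L') + h_2(y_1+K')\,h_4(y_2+L')\bigr],
\]
which, expanded via the shift formula above, produces exactly the displayed $q_{6,2}(\sigma)\{\cdots\}$ block in \eqref{expwas}.

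The main obstacle is the quantitative bookkeeping: after shifting, each term becomes a polynomial in $(K', L')$ whose coefficient sizes are dictated by Definition \ref{defvas}, and one must use $K'+L'=A$ in a sharp way, in tandem with the distinction between the $Ae^{-2\sigma}$ bound for low-order components and the slightly larger $A\sigma e^{-2\sigma}$ bound for $q_{6,2}=q_{6,4}$, to verify that only the $q_{6,2}$ block survives at the claimed accuracy. A secondary technical point is that $L^r_\rho$ norms are not preserved under translation, since $\rho(y-(K',L'))/\rho(y) = e^{(K'y_1 + L'y_2)/2 - ({K'}^2+{L'}^2)/4}$; to handle this I would upgrade the $q$-bounds into a suitably weighted $L^r_\rho$ estimate (in the spirit of Proposition \ref{propdelay}), so that the exponential factor induced by translation of a vector of length $O(A)$ is harmless.
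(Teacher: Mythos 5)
Your proposal matches the paper's proof in all the structural points: the split $w_a=\varphi_a+q_a$ via \eqref{wawb}, the reuse of the Taylor argument of Lemma \ref{propdeco0} (applied with $d=0$ and $s_0$ formally replaced by $\sigma$, which the paper does in Step~1) for the profile piece, the Hermite shift formula together with Definition \ref{defvas} and $K'+L'=A$ to show that only $q_{6,2}=q_{6,4}$ survives among the finite modes, and the need for a parabolic delay estimate (the paper's Lemma \ref{lemq-}) to upgrade the $q_-$ bound to $L^{r'}_\rho$ before treating the translated weight via Cauchy--Schwarz. The one place your sketch is looser than the paper is the final absorption for the tail: after Cauchy--Schwarz the shifted Gaussian contributes a factor of order $e^{c({K'}^2+{L'}^2)}=e^{cA^2}$, which is \emph{not} harmless by itself; what kills it is the extra factor $e^{-\sigma}$ in the $q_-$ bound $A^2\sigma^2 e^{-3\sigma}$ relative to the $e^{-2\sigma}$ scale of $\iota'$, together with choosing $\sigma\ge s_0\ge\squatrep(A)$ large enough in terms of $A$ so that $e^{cA^2}A^2\sigma^2e^{-\sigma}\le C/A^3$, exactly as in \eqref{q4}; making this dependence explicit is the only missing bookkeeping in your plan.
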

\begin{proof}
See Section \ref{secwas}.
\end{proof}
Now, arguing as for Lemma \ref{cordyn}, we see $w_a(\sigma)$ as
initial data then integrate equation \eqref{eqw} to get an expansion of
$w_a(s)$ for later times:
\begin{lem}[Decreasing $w_a(\sigma)$ from $\kappa$ to $\kappa -
  \eta^*$] \label{cordyn'}
  There exists
$\bMsixp>0$, $\bAsixp\ge 1$ and $\etazerop>0$
such that for all
$A\ge \bAsixp$ and $\eta^*\in(0,\etazerop]$,
there exists
$\sonzep(A,\eta^*)$
such that
for any
$\sigma \ge \sonzep$
and $s_1\ge \sigma$, if $q(s)\in V_A(s)$ given
in Definition \ref{defvas} satisfies equation \eqref{eqq}
on $[\sigma,s_1]$ and $\nabla q(\sigma)\in L^\infty$, if $a=(K'e^{-\frac \sigma 2},
L' e^{-\frac \sigma 2})$ with  $K'+L'=A$, then:\\ 
(i) $s^*\ge \sigma$,  where $s^*$ is such that $e^{s^*-\sigma} \iota' =
\eta^*$ where $\iota'$ is defined in \eqref{defiota'}.\\
  (ii) For all $s\in [\sigma, \min(s^*,s_1)]$,
\begin{equation*}
\|w_a(s) - \left(\kappa
  - e^{s-\sigma}\iota'\right)\|_{L^2_\rho}
\le \bMsixp\left(\eta^*+A^{-1}\right)e^{s-\sigma}\iota'+\bMsixp
e^{-\frac\sigma 3}.
\end{equation*}
\end{lem}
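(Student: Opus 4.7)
\textbf{Proof plan for Lemma \ref{cordyn'}.} The strategy mirrors the proof of Lemma \ref{cordyn}, adapted to the new initial time $\sigma$ and to the richer expansion of $w_a(\sigma)$ provided by Lemma \ref{propdeco0'}.

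For item (i), since $K' + L' = A$ with $K', L' \ge 0$, the elementary inequalities $K'L' \le A^2/4$ and $K'^6 + L'^6 \le A^6$ applied to \eqref{defiota'} give
$$
\iota' \le \frac{A^4}{16}\, e^{-\sigma} + \delta\, A^6\, e^{-2\sigma}.
$$
Choosing $\sonzep(A,\eta^*)$ large enough so that the right-hand side does not exceed $\eta^*$, we obtain $\iota' \le \eta^*$, hence $s^* - \sigma = \log(\eta^*/\iota') \ge 0$ by the very definition of $s^*$.

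For item (ii), I would set $v_a := w_a - \kappa$ on $\m R^2 \times [\sigma, s_1]$, so that $v_a$ satisfies
$$
\partial_s v_a = \q L\, v_a + F(v_a), \qquad F(v) := |\kappa+v|^{p-1}(\kappa+v) - \kappa^p - p\kappa^{p-1}v = O(v^2),
$$
with $\q L$ as in \eqref{defR}, whose spectrum and eigenfunctions are recalled in \eqref{specL}--\eqref{Lhihj}. The hypothesis $q(s)\in V_A(s)$ combined with \eqref{wawb} furnishes a uniform $L^\infty$-bound on $v_a$; as long as $v_a(s)$ stays small in $L^2_\rho$ (which is verified by a continuity argument up to $s = \min(s^*, s_1)$), the nonlinear term $F(v_a)$ is a quadratic perturbation controlled via a Duhamel argument and the delay regularizing estimate in the spirit of Proposition \ref{propdelay}, exactly as in the proof of Lemma \ref{cordyn}.

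The core of the argument is then a mode-by-mode integration. From Lemma \ref{propdeco0'}, the initial datum reads
$$
v_a(y, \sigma) = -\iota' + e^{-\sigma}\, P_{\le 4}(y;K',L') + q_{6,2}(\sigma)\, P_{\le 6}(y;K',L') + O_{L^r_\rho}\!\big(\iota'/A + {\iota'}^2\big),
$$
where $P_{\le k}$ denotes a linear combination of $h_ih_j$ with $i+j \le k$ and polynomial coefficients in $K', L'$ of degree at most $3$ (resp.\ $4$). Under the linear flow, the constant mode (eigenvalue $1$) produces $-\iota' e^{s-\sigma}$, matching the leading target term; every other eigenmode appearing in $P_{\le 4}$ has eigenvalue $\le 1/2$ (see \eqref{Lhihj}) and therefore grows at most like $e^{(s-\sigma)/2}$. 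Since $e^{s-\sigma} \le \eta^*/\iota'$ on $[\sigma, s^*]$ and $K', L' \le A$, each of these remaining contributions is of order $A^3 e^{-\sigma}\sqrt{\eta^*/\iota'}$ or smaller, which for $\sigma \ge \sonzep(A,\eta^*)$ large enough is absorbed either into the main error $\bMsixp(\eta^* + A^{-1})\, e^{s-\sigma}\iota'$ or into the residual $\bMsixp\, e^{-\sigma/3}$.

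The main obstacle is the bookkeeping of the polynomial-in-$A$ prefactors coming from the shift $y \mapsto y + a e^{\sigma/2}$ (reflected in the $K'$, $L'$ powers in Lemma \ref{propdeco0'}) and, above all, the treatment of the new $q_{6,2}(\sigma)$ contribution, absent in the proof of Lemma \ref{cordyn}. Its coefficient is bounded by $A\sigma e^{-2\sigma}$, and it populates modes of eigenvalue $\le 0$ (namely $h_4h_2$, $h_2h_4$ with eigenvalue $-2$, $h_3h_2$, $h_2h_3$ with eigenvalue $-3/2$, $h_2h_2$ with eigenvalue $-1$, and $h_ih_j$ with $i+j \le 2$); under the linear flow its $L^2_\rho$-norm is thus bounded by $C A^5 \sigma e^{-2\sigma}\, e^{(s-\sigma)/2}$, which for $\sigma$ large is dominated by $\bMsixp e^{-\sigma/3}$ on $[\sigma, s^*]$. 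Collecting all pieces and invoking the quadratic control on $F(v_a)$ yields the announced estimate and completes the proof.
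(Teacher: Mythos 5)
Your overall strategy is the right one and coincides with what the paper intends (the paper states that the proof ``follows from a straightforward adaptation of the proof of Lemma \ref{cordyn}'' and omits it): you expand $w_a(\sigma)$ via Lemma \ref{propdeco0'}, set $v_a = w_a - \kappa$, integrate mode by mode using the eigenvalues $1-\tfrac i2$ of $\q L$, bootstrap the quadratic nonlinearity as in Lemma \ref{lemdyn}, and handle the new $q_{6,2}(\sigma)$ contribution by its smallness and by the fact that it only populates nonpositive eigenmodes. Item (i) is also handled correctly (and differently than in Lemma \ref{cordyn}, where item (i) came from $a\in\q R_3$; here it comes from $K'+L'=A$ and $\sigma$ large).

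There is, however, a real imprecision in your bookkeeping of the $P_{\le 4}$ contributions that, taken at face value, breaks the absorption step. You bound a term such as the $h_1h_0$ contribution by $A^3 e^{-\sigma}\sqrt{\eta^*/\iota'}$ using only $K',L'\le A$. But the lower bound on $\iota'$ coming from the constraint $K'+L'=A$ is $\iota'\ge \delta e^{-2\sigma}(K'^6+L'^6)\ge \delta e^{-2\sigma}A^6/32$, so $\sqrt{\eta^*/\iota'}\le \sqrt{32\eta^*/\delta}\,e^{\sigma}/A^3$, and hence your bound $A^3 e^{-\sigma}\sqrt{\eta^*/\iota'}\lesssim \sqrt{\eta^*/\delta}$ does \emph{not} decay in $\sigma$; it cannot be absorbed into $\bMsixp e^{-\sigma/3}$ by taking $\sonzep(A,\eta^*)$ large, nor into $\bMsixp(\eta^*+A^{-1})e^{s-\sigma}\iota'$ (which is $O(\eta^{*2})$ at $s=s^*$). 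The point is that the coefficient of $h_1h_0$ is $2K'{L'}^2 e^{-\sigma}$, and this specific coefficient \emph{does} couple to $\iota'$: using $K'L' \le \sqrt{e^\sigma\iota'}$ (from $\iota'\ge e^{-\sigma}K'^2L'^2$) one gets $2K'{L'}^2 e^{-\sigma}\, e^{(s-\sigma)/2}\le 2 L'\sqrt{\iota'}\,e^{-\sigma/2}(\eta^*/\iota')^{1/2}\le 2A\sqrt{\eta^*}\,e^{-\sigma/2}$, which decays in $\sigma$. Analogously, the coefficients of $h_2h_0$, $h_1h_1$, $h_2h_1$, etc.\ should be bounded via $K'L'\le\sqrt{e^\sigma\iota'}$ and $K'+L'\le 2(e^{2\sigma}\iota'/\delta)^{1/6}$, exactly as done with \eqref{boundJ} and \eqref{boundiota} in Step 3 of the paper's proof of Lemma \ref{cordyn}, and \emph{not} via the crude $K',L'\le A$. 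Once this substitution is made, the rest of your sketch (including the treatment of $q_{6,2}(\sigma)$, which is fine since it only excites modes of eigenvalue $\le 0$ and carries the extra factor $A\sigma e^{-2\sigma}$) goes through.
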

\begin{proof}
The proof follows from a straightforward adaptation of the proof of
Lemma \ref{cordyn}, given in Section \ref{secwas}. For that reason,
the proof is omitted.
  \end{proof}
As in the case $K+L\ge A$, we derive from this result the following
corollary where we prove estimate \eqref{g2}:
  \begin{cor}[Proof of the $L^2_\rho$ bound when $a\in \q R_3$ is
    ``small'' and $s$ is ``large'']
   \label{corbR3'}
    There exists
    $\bAunp>0$, $\etaunp>0$ and $\bsunp(\eta^*)\ge 1$,
  such that under the hypotheses of Lemma \ref{cordyn'}, if in
    addition
    $A\ge \bAunp$,  $\eta^*\le \etaunp$ and $s_0\ge \bsunp(\eta^*)$,  
  then, for all
    $s\in[\sigma,s_1]$, $\|w_a(s)\|_{L^2_\rho} \le \frac 32 \kappa$.
  \end{cor}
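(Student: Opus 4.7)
The plan is to mimic closely the proof of Corollary \ref{corbR3}, replacing $s_0$ by $\sigma$ and Lemma \ref{cordyn} by Lemma \ref{cordyn'}. First, I would enlarge $\bsunp$ if necessary so that $\sigma\ge s_0\ge \sszerodeux(A,1)$ and Proposition \ref{propgrad} applies on $[s_0,s_1]$. This guarantees, via the relation \eqref{wawb} and the definition \eqref{defq}, that $\nabla w_a(s)\in L^\infty$ at every time $s\in[\sigma,s_1]$ with a uniform bound, which is exactly what is needed to invoke Proposition \ref{proptrap2} later.

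Now fix $s\in[\sigma,s_1]$ and split into two cases according to the position of $s$ relative to $s^*$ defined in Lemma \ref{cordyn'}(i).

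In the first case, $s\le s^*$. Using Lemma \ref{cordyn'}(ii), the triangle inequality and \eqref{calculrho}, together with the definition of $s^*$ (so $e^{s-\sigma}\iota'\le e^{s^*-\sigma}\iota'=\eta^*$), I obtain
\[
\|w_a(s)\|_{L^2_\rho}\le \kappa + \eta^* + \bMsixp(\eta^*+A^{-1})\eta^* + \bMsixp e^{-\sigma/3}.
\]
Taking $\eta^*$ small, then $A$ large, then $\sigma\ge s_0$ large, the right-hand side is bounded by $\tfrac32\kappa$, as required.

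In the second case, $s\ge s^*$. At the intermediate time $s^*$, Lemma \ref{cordyn'}(ii) gives $\|w_a(s^*)-(\kappa-\eta^*)\|_{L^2_\rho}\le \bMsixp(\eta^*+A^{-1})\eta^*+\bMsixp e^{-\sigma/3}$. Choosing $\sunos\in\mathbb R$ such that $\psi(\sunos)=\kappa-\eta^*$, the asymptotic relation \eqref{dlpsi}, namely $|\psi'(\sunos)|\sim\eta^*$ as $\eta^*\to 0$, lets me absorb the error into $|\psi'(\sunos)|/\bigl(M_1[1+\tfrac{2}{\kappa}\|\psi'\|_{L^\infty}]\bigr)$, provided $\eta^*$ is taken sufficiently small and then $A$ and $\sigma$ sufficiently large. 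Combined with the gradient control just mentioned, the hypothesis \eqref{cond0} of Proposition \ref{proptrap2} is satisfied with initial time $s^*$, and the proposition yields
\[
\|w_a(s)-\psi(s+\sunos-s^*)\|_{L^2_\rho}\le \frac{|\psi'(s+\sunos-s^*)|}{1+\tfrac{2}{\kappa}\|\psi'\|_{L^\infty}}\le \frac{\kappa}{2}.
\]
Since $\psi\le \kappa$ by \eqref{defpsi} and $\int\rho=1$, the triangle inequality gives $\|w_a(s)\|_{L^2_\rho}\le \kappa+\tfrac\kappa 2=\tfrac32\kappa$.

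The main obstacle is the careful ordering of the constants in the second case: I need $\bMsixp(\eta^*+A^{-1})\eta^*+\bMsixp e^{-\sigma/3}$ to be controlled by a small multiple of $|\psi'(\sunos)|\sim \eta^*$, which forces $\eta^*$ to be taken small first (fixing $\etaunp$), then $A$ large (fixing $\bAunp$), and finally $\sigma$ large (fixing $\bsunp(\eta^*)$). The rest of the argument is essentially a transcription of the proof of Corollary \ref{corbR3}.
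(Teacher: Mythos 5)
Your proof is correct and matches the paper's intended argument: the paper omits this proof with the remark that it follows from Lemma \ref{cordyn'} exactly as Corollary \ref{corbR3} follows from Lemma \ref{cordyn}, and that is precisely the transposition you carry out (replacing $s_0$ by $\sigma$, $\iota$ by $\iota'$, $\bMsix$ by $\bMsixp$, and invoking Proposition \ref{propgrad} and Proposition \ref{proptrap2} in the same places and in the same way). The ordering of the constants you identify at the end, with $\eta^*$ fixed first, then $A$, then the lower bound on $s_0$ depending on $\eta^*$, is exactly what the paper does in the proof of Corollary \ref{corbR3}.
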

  \begin{proof}
The proof is omitted since this statement follows from Lemma
\ref{cordyn'} exactly in the same way Corollary \ref{corbR3} follows
from Lemma \ref{cordyn}. 
\end{proof}
  
\subsubsection{Concluding statement for the control of $w_a(s)$ when
  $a\in \q R_3$}
Combining the previous statements given when $a\in  \q R_3$ defined in
\eqref{defqR}, we obtain the following statement:
\begin{lem}[Control of $w_a(s)$ when $a\in \q R_3$ if $w(s_0)$ is
  given by \eqref{defw0} and $q(s)\in V_A(s)$]
  \label{lemr3}

  There exist $\Avingt\ge 1$ and $\mvingt\in(0,1)$
  such that for all $A\ge \Avingt$, there exists $\deltavingt(A)>0$ such
  that for all $\delta_0\in(0,\deltavingt(A))$, there exists
  $\svingt(A,\delta_0) \ge 1$ such that for all $s_0\ge
  \svingt(A,\delta_0)$, $d\in \q D(A,s_0)$ defined in Proposition
  \ref{propinit} and $s_1\ge s_0$, the following holds:\\
Assume that $w$ is the solution of equation \eqref{eqw} with initial data
$w_0(y,s_0)$ defined in \eqref{defw0}, such that for all
$s\in[s_0,s_1]$, $q(s) \in V_A(s)$ given in  Definition \ref{defvas},
where $q(s)$ is defined in \eqref{defq}.
Then, for all $s\in [s_0,s_1]$:\\
(i) $\|\nabla w(s)\|_{L^\infty}\le \delta_0$.\\
(ii) For all $m\in (0, \mvingt]$ and $a\in \q R_3$ defined in
\eqref{defqR}, $|w_a(0,s)|<2\kappa$.
\end{lem}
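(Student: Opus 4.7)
The plan is to treat the two assertions separately, and for part (ii) to glue together the three sub-cases already isolated in Section \ref{secR3}. Part (i) is essentially a restatement of Proposition \ref{propgrad}: once $\svingt(A,\delta_0) \ge \sszerodeux(A,\delta_0)$, that proposition yields $\|\nabla q(s) + \nabla \varphi(s)\|_{L^\infty} \le \delta_0$ on $[s_0, s_1]$, which by definition \eqref{defq} reads $\|\nabla w_0(s)\|_{L^\infty} \le \delta_0$, and the translation formula \eqref{wawb} extends this bound to every $w_a$.

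The key preliminary remark for part (ii) is that the $L^\infty$ clause built into Definition \ref{defvas} of $V_A(s)$ already delivers $\|w_0(s)\|_{L^\infty} \le 2\kappa$, hence by \eqref{wawb} the a priori bound $\|w_a(s)\|_{L^\infty} \le 2\kappa$ required by Lemmas \ref{cordyn}, \ref{cordyn'} and Corollaries \ref{corbR3}, \ref{corbR3'} is available for every $a \in \m R^2$ and every $s \in [s_0,s_1]$. Using the symmetry of $w_0(\cdot,s)$ with respect to the axes and the bissectrices (inherited from \eqref{defw0}), I reduce to $a = (Ke^{-s_0/2}, Le^{-s_0/2})$ with $0 \le K \le L$, and then split into the three sub-cases of Section \ref{secR3}. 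If $K+L \ge A$, Corollary \ref{corbR3} gives $\|w_a(s)\|_{L^2_\rho} \le \frac 32 \kappa$, and combining this with part (i) via the argument of Claim \ref{clreduc} yields $|w_a(0,s)| \le \frac 32 \kappa + \delta_0 < 2\kappa$ whenever $\delta_0 \le \kappa/4$. If $K+L \le A$, I distinguish whether $a$ lies in $\q T_1$ or in some $\q G_\sigma$ with $\sigma \ge s$, in which case Step 1 of Section \ref{secR3} applies directly and produces $|w_a(0,s)| \le \kappa + C(A+1)\delta_0 + Ce^{-s} + CAe^{-2s} < 2\kappa$ for small $\delta_0$ and large $s_0$; or $a \in \q G_\sigma$ with $\sigma \le s$, in which case Corollary \ref{corbR3'} applied at time $\sigma$ (the condition $K'+L'=A$ in \eqref{k'l'} being automatic on $\q G_\sigma$) produces $\|w_a(s)\|_{L^2_\rho} \le \frac 32 \kappa$, and Claim \ref{clreduc}'s argument again closes the estimate.

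It remains to fix the constants consistently. I pick $\eta^* \in (0, \min(\etaun, \etaunp, \etazerozero, \etazerop))$ and then $\mvingt \in (0,1)$ with $\mvingt \le \eta^*(p-1)/\kappa$, so that \eqref{condeta*} holds; set $\Avingt = \max(\bAun, \bAunp, \bAsix, \bAsixp)$; define $\deltavingt(A) = \min(\kappa/4, \donzeb(A))$; and choose $\svingt(A,\delta_0)$ as the maximum of all threshold times encountered above, namely $\sszerodeux(A,\delta_0)$, $\squatre(A)$, $\squatrep(A)$, $\squatro(A)$, $\sonzep(A,\eta^*)$, $\bsun(\eta^*)$, $\bsunp(\eta^*)$ and $\sonzeb(A,\delta_0)$. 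The main subtlety is precisely this bookkeeping: the five thresholds $\Avingt$, $\mvingt$, $\deltavingt(A)$, $\svingt(A,\delta_0)$ (together with the auxiliary $\eta^*$) must be chosen in the correct dependence order, so that when each previous statement is invoked its own hypotheses are already guaranteed and no circularity appears. Once this ordering is settled, the entire analytic content has been packaged in the previously proved corollaries and the verification is a routine combination.
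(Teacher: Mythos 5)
Your proposal is correct and matches the paper's intended argument exactly: the paper omits the proof of Lemma \ref{lemr3} with a one-line pointer to Claim \ref{clreduc}, Corollary \ref{corbR3}, Step~1 of Section \ref{secR3}, and Corollary \ref{corbR3'}, which are precisely the four ingredients you assemble. Your explicit bookkeeping of the dependency order ($\eta^*$ first, then $\mvingt$, then $\Avingt$, then $\deltavingt(A)$, then $\svingt(A,\delta_0)$ as a maximum of the earlier thresholds) is the only non-trivial content, and it is handled consistently.
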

\begin{proof}
The proof is omitted since it is straightforward from Claim
\ref{clreduc}, Corollary \ref{corbR3}, Step 1 given on page \pageref{ps1}
and Corollary \ref{corbR3'}. 
\end{proof}

\subsection{Concluding statement for the control of $w_a(s)$ for any
  $a\in \m R^2$}
Fixing $M$ as in \eqref{defM} and taking $m=\mvingt$ introduced in
Lemma \ref{lemr3}, we see that the 3 regions in \eqref{defR} are
properly defined. Then, combining the previous statements given for the
different regions (namely, Corollaries \ref{corr1} and \ref{corr2},
together with Lemma \ref{lemr3}), we derive the
following:
\begin{prop}[Control of $\|w(s)\|_{L^\infty}$ if $w(s_0)$ is given by
  \eqref{defw0} and $q(s)\in V_A(s)$] \label{propwa}
 There exist $\Avingtetun\ge 1$ 
  such that for all $A\ge \Avingtetun$, there exists
  $\svingtetun(A) \ge 1$ such that for all $s_0\ge
  \svingtetun(A)$, $d\in \q D(A,s_0)$ defined in Proposition
  \ref{propinit} and $s_1\ge s_0$, the following holds:\\
Assume that $w_0$ is the solution of equation \eqref{eqw} with initial data
$w_0(y,s_0)$ defined in \eqref{defw0}, such that for all
$s\in[s_0,s_1]$, $q(s) \in V_A(s)$ given in  Definition \ref{defvas},
where $q(s)$ is defined in \eqref{defq}.
Then:\\
(i) For all $s\in [s_0,s_1]$, $\|w_0(s)\|_{L^\infty}<2\kappa$.\\
(ii) For all $a\in \m R^2$ such that
\begin{equation}\label{alarge}
  |a_1|+|a_2|\ge A e^{-\frac{s_1}2},
  \end{equation}
there exist $\bar s(a)\ge s_0$ and $\bar
M(a)\ge 0$ such that if $\bar s(a)\ge s_1$, then for all $s\in [\bar s(a), s_1]$,
$\|w_a(s)\|_{L^2_\rho} \le \bar M(a) e^{-\frac s{p-1}}$.
\end{prop}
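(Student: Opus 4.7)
The strategy is to combine the three regional bounds of this section into a single estimate on $\|w_0(s)\|_{L^\infty}$, and then to upgrade them to exponential decay in (ii) via the stability of the zero equilibrium. First I would fix $M\ge 1$ as in \eqref{defM} so that Corollary \ref{corr1} is effective in $\q R_1$, take $m=\mvingt$ from Lemma \ref{lemr3} so its conclusion holds in $\q R_3$, then choose $A$ large, $\delta_0\le \kappa/4$ small, and $s_0$ beyond all the resulting thresholds. With these choices the three sets in \eqref{defqR} partition $\m R^2$, and the hypothesis $q(s)\in V_A(s)$ forces $\|w_0(s)\|_{L^\infty}\le 2\kappa$; by \eqref{wawb} this propagates to $|w_a(y,s)|\le 2\kappa$ for every $a\in \m R^2$, which is precisely the a priori bound required by Corollaries \ref{corr1} and \ref{corr2}.

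\textbf{Assertion (i).} For any $y\in \m R^2$ and $s\in [s_0,s_1]$, I would set $a=ye^{-s/2}$, so that $w_0(y,s)=w_a(0,s)$ via \eqref{wawb}, and then split according to the region containing $a$. If $a\in \q R_1$, Corollary \ref{corr1} yields $\|w_a(s)\|_{L^2_\rho}\le \kappa$; if $a\in \q R_2$, Corollary \ref{corr2} yields $\|w_a(s)\|_{L^2_\rho}\le \tfrac{3}{2}\kappa$; if $a\in \q R_3$, Lemma \ref{lemr3}(ii) directly gives $|w_a(0,s)|<2\kappa$. In the first two cases, Lemma \ref{lemr3}(i) supplies $\|\nabla w_a(s)\|_{L^\infty}\le \delta_0$ through \eqref{wawb}, and the Taylor argument of Claim \ref{clreduc} together with $\delta_0<\kappa/4$ upgrades the $L^2_\rho$ bound to $|w_a(0,s)|\le \tfrac{3}{2}\kappa+\delta_0<2\kappa$, yielding (i) with strict inequality.

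\textbf{Assertion (ii) and the main obstacle.} The key observation is that in every region $w_a(s)$ eventually enters the basin of attraction of the zero equilibrium, either directly in $\q R_1$ or after being trapped near the heteroclinic orbit $\psi$ of \eqref{defpsi}, which itself decays exponentially to zero. For $a\in \q R_1$, Corollary \ref{corr1} already gives the conclusion at rate $e^{-(s-s_0)/(p-1)}$ with $\bar s(a)=s_0$. For $a\in \q R_2$, and for $a\in \q R_3$ satisfying \eqref{alarge} (for which the reduction of Section \ref{secR3} places $a$ in the framework of Corollaries \ref{corbR3} or \ref{corbR3'}), Corollary \ref{corr2} shows that $w_a(s)$ stays within $O(e^{-s_0/6})$ of a time-shift of $\psi$; since $\psi\to 0$ exponentially, there is a first time $\bar s(a)\ge s_0$ at which $\|w_a(\bar s(a))\|_{L^2_\rho}\le \epsilon_0$, where $\epsilon_0$ is the constant from Proposition \ref{proptrap}. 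Applying Proposition \ref{proptrap} on $[\bar s(a),s_1]$, with the regularity hypothesis $\nabla w_a(\bar s(a))(1+|y|)^{-k}\in L^\infty$ supplied by Lemma \ref{lemr3}(i), then yields $\|w_a(s)\|_{L^2_\rho}\le \bar M(a)e^{-s/(p-1)}$ with $\bar M(a)=M_0\|w_a(\bar s(a))\|_{L^2_\rho}$. The main obstacle is the quantitative identification of $\bar s(a)$: the delay required for $w_a$ to reach the $\epsilon_0$-neighborhood of zero must be controlled relative to the range $[s_0,s_1]$, and the lower bound \eqref{alarge} is used precisely to keep $G_0(a)$ away from zero on the relevant scale so that this delay is finite.
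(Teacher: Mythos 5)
Your proof of part (i) coincides with the paper's intended argument: assemble the a~priori bound $|w_a(y,s)|\le 2\kappa$ from the hypothesis $q(s)\in V_A(s)$ and the transform \eqref{wawb}, then for each $a$ invoke Corollary~\ref{corr1}, Corollary~\ref{corr2}, or Lemma~\ref{lemr3} according to the region, upgrading the $L^2_\rho$ bounds to pointwise ones via the gradient estimate and the Taylor argument of Claim~\ref{clreduc}. The paper treats this as ``straightforward from the above-mentioned statements'' and omits details; your write-up supplies them correctly.

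For part (ii) you take a genuinely different closing step. The paper reads the exponential decay directly off estimate \eqref{samir} in Proposition~\ref{proptrap2}: once $w_a$ is trapped near a time-shift of $\psi$, one has $\|w_a(s)\|_{L^2_\rho}\le \psi(s+\sunos-s^*)+\|w_a(s)-\psi(s+\sunos-s^*)\|_{L^2_\rho}$, and both summands decay like $e^{-s/(p-1)}$ thanks to the explicit form \eqref{defpsi} of $\psi$ and the ratio $|\psi'(s+\sunos)|/|\psi'(\sunos)|$ in \eqref{samir}. You instead insert an additional relay: you wait for a time $\bar s(a)$ at which $w_a$ enters the $\epsilon_0$-ball around $0$ and then apply Proposition~\ref{proptrap} (stability of the zero equilibrium). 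This is legitimate --- the required $L^\infty$ and gradient hypotheses for Proposition~\ref{proptrap} are indeed available --- but it is an extra step, because the needed decay is already encoded in \eqref{samir} without switching to a new stability result; moreover Proposition~\ref{proptrap} is stated with the time origin at $\suno=0$, so when transplanted to $[\bar s(a),s_1]$ the constant you write as $\bar M(a)=M_0\|w_a(\bar s(a))\|_{L^2_\rho}$ should really carry the extra factor $e^{\bar s(a)/(p-1)}$ to compensate for the time shift. Aside from this, the overall decomposition (regions $\q R_1, \q R_2, \q R_3$, and within $\q R_3$ the cases $K+L\ge A$ versus $K+L\le A$ handled by Corollaries~\ref{corbR3} and \ref{corbR3'}), and the role of \eqref{alarge} in keeping $G_0(a)$ large enough for the change of base time $\sigma$ to land in $[s_0,s_1]$, all match the paper's argument.
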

\begin{proof}$ $\\
(i) The proof is omitted since it is straightforward from the
above-mentioned statements.\\
(ii) Take  $a\in \m R^2$ such that \eqref{alarge} holds.
The conclusion follows according to the position
of $a$ in the 3 regions $\q R_i$ defined in in \eqref{defqR}, with the
constants $M$ and $m$ fixed in the beginning of the current subsection.\\
If $a \in \q R_1$, then the conclusion follows from Corollary
\ref{corr1}.\\
If $a\in \q R_2$, then we see from Corollary \ref{corr2} and its proof
that Proposition \ref{proptrap2} applies. In particular, $w_a(s)$ is
trapped near the heteroclinic orbit $\psi$ \eqref{defpsi}, and the
exponential bound follows from \eqref{samir} and \eqref{defpsi}.\\
Finally, if $a\in \q R_3$, then, following Section \ref{secR3}, we
write
\begin{equation}\label{defa'}
  a=(K,L) e^{-\frac {s_0}2}
\end{equation}
  as in \eqref{defa} and reduce to the
case where $0\le K\le L$ thanks to the symmetries of initial data
$w_0(\cdot, s_0)$ defined in \eqref{defw0}.\\
If $K+L\ge A$, then we see from Corollary \ref{corbR3} and its proof
that $w_a(s)$ is trapped near the heteroclinic orbit $\psi(s)$
\eqref{defpsi} and the exponential bound follows from estimate
\eqref{samir} given in Proposition \ref{proptrap2}.\\
If $K+L\le A$, using \eqref{alarge} and \eqref{defa'},
we may write $a=(K',L')e^{-\frac \sigma 2}$ for
some $\sigma \in [s_0,s_1]$ with $K'+L'=A$, as we did in
\eqref{defk'l'} and \eqref{k'l'}. Using Corollary \ref{corbR3'}, we
see  that $w_a(s)$ is trapped near the heteroclinic orbit $\psi(s)$,
and the conclusion follows from estimate \eqref{samir} again. This concludes
the proof of Proposition \ref{propwa}.
\end{proof}

\section{Proof of the main result} \label{secmainres}
In this section, we collect all the arguments to derive Theorem
\ref{th1}. We proceed in 3 subsections:\\
- We first prove the existence of a solution of equation \eqref{eqq}
trapped in the set $V_A(s)$ defined in Definition \ref{defvas}.\\
- Then, we derive a better description on larger sets, yielding the
so-called \textit{intermediate profile}.\\
- Finally, we prove that the origin is the only blow-up point and
derive the \textit{final profile} given in \eqref{finalprof}.

\subsection{Existence of a solution to equation \eqref{eqq} trapped in
  the set $V_A(s)$}
This is our statement in this section:
\begin{prop}[Existence of a solution $q(y,s)$ in the shrinking set
  $V_A(s)$]\label{proptrap0} $ $\\
  (i)  There exist
$\Czerop>0$,
   $A\ge 1$, $s_0\ge 1$ and a parameter
  $d=(d_{0,0}, d_{2,0}, d_{4,0}, d_{4,2}, d_{6,0})\in \q D(A,s_0)$
defined in Proposition \ref{propinit} such that the solution $w_0(y,s)$
of equation \eqref{eqw} with initial data at $s=s_0$ given by
\eqref{defw0} satisfies $q(s)\in V_A(s)$ and
$\|\nabla q(s)\|_{L^\infty}\le \Czerop$
for any $s\ge s_0$, where $q$ is defined in \eqref{defq} and the
set $V_A(s)$ is given in Definition \ref{defvas}. \\
(ii) If $u(x,t) = (T-t)^{-\frac 1{p-1}}w_0\left(\frac x{\sqrt{T-t}},-\log(T-t)\right)$
with $T=e^{-s_0}$, then $u$ is a solution of equation \eqref{equ}
which blows up only at the origin.
\end{prop}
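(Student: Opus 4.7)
The plan is to combine the local dynamics (Propositions \ref{propdyn} and \ref{propflow}), the global $L^\infty$ control (Proposition \ref{propwa}) and a Brouwer-type topological argument to choose the free parameter $d \in \q D(A, s_0)$, then to read Theorem \ref{th1}'s blow-up statement off the resulting trajectory. I first fix $A\ge 1$ and $s_0$ large enough that Propositions \ref{propdyn}, \ref{propflow}, \ref{propinit}, \ref{propgrad}, and \ref{propwa} all apply simultaneously (with, say, $\delta_0 = 1$). For each $d \in \q D(A, s_0)$, let $w_0^d$ be the solution of \eqref{eqw} with initial datum \eqref{defw0}, put $q^d = w_0^d - \varphi$, and define
\[
s_*(d) = \sup\{s \ge s_0 : q^d(\sigma) \in V_A(\sigma) \text{ for all } \sigma \in [s_0, s]\}.
\]
Proposition \ref{propinit} gives $q^d(s_0) \in V_A(s_0)$, so $s_*(d) > s_0$. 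Item (i) then reduces to exhibiting some $d^\star$ for which $s_*(d^\star) = +\infty$, the pointwise gradient bound being a consequence of Proposition \ref{propgrad} and item (iii) of Lemma \ref{lemphi}.

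The key step is a contradiction argument: assume $s_*(d) < \infty$ for every $d \in \q D$. By continuity, $q^d(s_*)$ lies on $\partial V_A(s_*)$. Item (i) of Proposition \ref{propwa} shows that the $L^\infty$ constraint $\|q+\varphi\|_{L^\infty} \le 2\kappa$ is strict at $s_*$; Corollary \ref{corq-} shows that the constraints on $q_{6,2}, q_{6,4}$ and $q_-$ are strict as well. Hence at $s = s_*(d)$ one of the five ``outgoing'' components must saturate: either $q^d_{i,j}(s_*) = \pm A e^{-2s_*}$ for some $(i,j) \in \{(0,0),(2,0),(4,0),(4,2)\}$, or $q^d_{6,0}(s_*) = \pm A s_*^2 e^{-3 s_*}$. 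Items (i) and (ii) of Proposition \ref{propflow} guarantee that the flow is strictly transverse outgoing on each such wall; this gives continuity of $s_*$ in $d$, and the renormalized exit map
\[
\Psi : \q D \longrightarrow \partial\big([-1,1]^5\big), \qquad
d \longmapsto \left(\tfrac{q^d_{0,0}(s_*)}{Ae^{-2s_*}},\, \tfrac{q^d_{2,0}(s_*)}{Ae^{-2s_*}},\, \tfrac{q^d_{4,0}(s_*)}{Ae^{-2s_*}},\, \tfrac{q^d_{4,2}(s_*)}{Ae^{-2s_*}},\, \tfrac{q^d_{6,0}(s_*)}{A s_*^2 e^{-3s_*}}\right),
\]
is well defined and continuous. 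The injectivity property \eqref{121} in Proposition \ref{propinit} together with the same transverse-outgoing flow ensures that an initial datum with $d \in \partial \q D$ exits $V_A$ already at $s = s_0$, so $\Psi|_{\partial \q D}$ coincides with the rescaling of the initialization map \eqref{121} and is a homeomorphism onto $\partial [-1,1]^5$. This produces a continuous retraction of $\q D$ (homeomorphic to a $5$-ball) onto its boundary, contradicting Brouwer's no-retraction theorem. Hence some $d^\star$ satisfies $s_*(d^\star) = +\infty$, proving (i).

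For (ii), the function $u(x,t) = (T-t)^{-\frac 1{p-1}} w_0(x/\sqrt{T-t}, -\log(T-t))$ with $T = e^{-s_0}$ is a solution of \eqref{equ} on $\m R^2 \times [0,T)$ by \eqref{defw}. Blow-up at the origin at time $T$ follows from $q(s) \to 0$ in $L^2_\rho$ (cf.\ \eqref{smallVA}) together with $\varphi(0, s) \to \kappa$ (Lemma \ref{lemphi}), which forces $u(0, t) \sim \kappa (T-t)^{-1/(p-1)} \to \infty$. For $a \neq 0$, every sufficiently large $s_1$ satisfies $|a_1|+|a_2| \ge A e^{-s_1/2}$, so Proposition \ref{propwa}(ii) gives $\|w_a(s)\|_{L^2_\rho} \le \bar M(a) e^{-s/(p-1)}$ on $[\bar s(a), s_1]$; letting $s_1 \to \infty$ extends this to all $s \ge \bar s(a)$. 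Using the gradient control of Proposition \ref{propgrad} combined with a Taylor expansion exactly as in the proof of Claim \ref{clreduc}, this $L^2_\rho$ bound upgrades to the pointwise estimate $|w_a(0, s)| \le C(a) e^{-s/(p-1)}$, which by \eqref{defw} translates into $|u(a,t)| \le C(a)$ on $[0, T)$, so $a$ is not a blow-up point.

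The main obstacle is the topological step: I must verify that $s_*(d)$ depends continuously on $d$ (which requires ruling out simultaneous saturation on two outgoing walls, handled by strict transversality in Proposition \ref{propflow} together with a small generic perturbation if needed) and that $\Psi$ restricted to $\partial \q D$ has the correct degree, which requires the precise one-to-one correspondence of \eqref{121} and the fact that on $\partial \q D$ the corresponding initial datum is already on the boundary of $V_A(s_0)$ with the flow pointing strictly outward, so it exits instantaneously in the expected direction.
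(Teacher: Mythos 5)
Your item (i) argument is essentially the same as the paper's: argue by contradiction, define an exit time, use Propositions \ref{propwa} and \ref{propflow} together with Corollary \ref{corq-} to reduce the candidate saturating walls to the five ``outgoing'' directions, build the continuous rescaled exit map, identify its boundary restriction with the identity via the one-to-one map \eqref{121}, and invoke degree theory. Your side remark about a ``generic perturbation'' to rule out simultaneous saturation of two walls is not needed: strict transversal-outgoing flow on each wall already forces the exit time to be continuous in $d$ whether or not several walls saturate at once, and the exit map is then automatically continuous since it is the composition of the (continuous) solution map with the (continuous) exit time.

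The genuine gap is in item (ii). To rule out blow-up at $a\neq 0$ you claim to upgrade $\|w_a(s)\|_{L^2_\rho}\le \bar M(a)e^{-\frac s{p-1}}$ to the pointwise estimate $|w_a(0,s)|\le C(a)e^{-\frac s{p-1}}$ by a Taylor expansion as in Claim \ref{clreduc}. But Claim \ref{clreduc} only exploits $\|\nabla w_a(s)\|_{L^\infty}\le \delta_0$, a fixed positive constant that does \emph{not} vanish as $s\to\infty$; the Taylor step therefore only yields $|w_a(0,s)|\le \|w_a(s)\|_{L^2_\rho}+C\delta_0$, hence $|u(a,t)|\le \bar M(a)+C\delta_0(T-t)^{-\frac 1{p-1}}$, which does not exclude blow-up at $a$. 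Corollary \ref{corgra} gives $\|\nabla w_0(s)\|_{L^\infty}\to 0$ but with no rate, so the required $e^{-\frac s{p-1}}$ decay of $|w_a(0,s)|$ still does not follow. The paper avoids the issue entirely by invoking the Giga--Kohn characterization \cite{GKcpam89}: a point $b$ is a blow-up point if and only if $w_b(s)\to \pm\kappa$ in $L^2_\rho$, and otherwise $w_b(s)\to 0$; the $L^2_\rho$ convergences $w_0(s)\to\kappa$ and $w_a(s)\to 0$ that you already have are exactly what this criterion requires. You should replace your pointwise-upgrade sentences by that citation; the same remark also repairs your earlier step, since $q(s)\to 0$ in $L^2_\rho$ alone does not yield the asserted pointwise asymptotics $u(0,t)\sim \kappa(T-t)^{-\frac 1{p-1}}$ without an additional regularity argument, whereas the Giga--Kohn criterion concludes directly.
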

\begin{nb}
Since initial data in \eqref{defw0} is symmetric with respect to the
axes and the bissectrices, the same holds for $w_0(y,s)$.
\end{nb}
Before proving Proposition \ref{proptrap0}, let us
recall
the following consequence of the Liouville Theorem stated in
Proposition \ref{propliou}:
\begin{cor}[Behavior of the gradient]\label{corgra}
  Following Proposition \ref{proptrap0}, it holds that
 $\|\nabla w_0(s)\|_{L^\infty}\to 0$ as $s\to \infty$.
\end{cor}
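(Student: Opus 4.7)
The plan is to argue by contradiction via a translate-and-compactness scheme using the Liouville Theorem stated in Proposition \ref{propliou}. Suppose the conclusion fails; then there exist $\varepsilon>0$ and sequences $s_n\to+\infty$, $y_n\in\m R^2$ such that $|\nabla w_0(y_n,s_n)|\ge\varepsilon$.

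I set $a_n=y_n e^{-s_n/2}$ and define $W_n(y,s)=w_{a_n}(y,s+s_n)$, where $w_a$ is the similarity-variable function \eqref{defw} relative to the point $a$. Each $W_n$ solves exactly the same equation \eqref{eqw}---this is the content of the translation relation \eqref{wawb} and reflects translation invariance of \eqref{equ}---on the time interval $[s_0-s_n,+\infty)$. From \eqref{wawb} we have $\|W_n(\cdot,s)\|_{L^\infty(\m R^2)}=\|w_0(\cdot,s+s_n)\|_{L^\infty(\m R^2)}\le 2\kappa$, thanks to the $V_A(s)$ bound from Definition \ref{defvas} granted by Proposition \ref{proptrap0}. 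Moreover, $\nabla W_n(0,0)=\nabla w_0(y_n,s_n)$, so $|\nabla W_n(0,0)|\ge\varepsilon$ for every $n$.

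Since $\{W_n\}$ is uniformly bounded in $L^\infty$, standard interior parabolic regularity for equation \eqref{eqw} yields uniform $C^{1,\alpha}_{\mathrm{loc}}$ estimates on any prescribed compact subset of $\m R^2\times\m R$, valid once $n$ is large enough that the time domain of $W_n$ contains that set. Arzel\`a--Ascoli and a diagonal extraction provide a subsequence converging in $C^1_{\mathrm{loc}}(\m R^2\times\m R)$ to a uniformly bounded classical solution $W_\infty$ of \eqref{eqw} defined on all of $\m R^2\times\m R$. Proposition \ref{propliou} with $\bar s=+\infty$ then forces $W_\infty$ to be one of the bounded explicit profiles $0$, $\pm\kappa$, $\pm\kappa(1+e^{s-s^*})^{-1/(p-1)}$, and in each of these cases $\nabla W_\infty\equiv 0$. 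Passing to the limit in $|\nabla W_n(0,0)|\ge\varepsilon$ gives $|\nabla W_\infty(0,0)|\ge\varepsilon$, a contradiction.

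The main point that has to be handled with care is that the rescaled sequence must be built by translating through the \emph{blow-up-type} point $a_n$ via \eqref{wawb}, rather than by simply shifting $y\mapsto y+y_n$: the drift $-\tfrac12 y\cdot\nabla w$ in \eqref{eqw} is not space-translation invariant, and a naive spatial shift would produce an extra term $-\tfrac12 y_n\cdot\nabla w$ with $|y_n|$ possibly diverging, preventing passage to a limit equation. The choice $a_n=y_n e^{-s_n/2}$ preserves the form of \eqref{eqw} precisely because the original equation \eqref{equ} is translation invariant in $x$, which is exactly what makes the Liouville theorem applicable to the limit $W_\infty$. The remaining technical input, uniform $C^{1,\alpha}_{\mathrm{loc}}$ regularity from the uniform $L^\infty$ bound, follows from classical Schauder estimates for the linear operator $\partial_s-\Delta+\tfrac12 y\cdot\nabla+\tfrac1{p-1}$ (whose coefficients are smooth and locally bounded), combined with the local Lipschitz (or H\"older for $p\in(1,2)$) character of $|w|^{p-1}w$ on bounded sets.
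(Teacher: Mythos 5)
Your proof is correct and follows exactly the strategy the paper invokes (a translate-and-compactness argument reducing to the Liouville Theorem of Proposition \ref{propliou}, which is the content of Theorem 1.1 in \cite{MZcpam98}). In fact, you are more careful than the paper's own sketch for the analogous Proposition \ref{propgrad}: there the authors pass to a time-shifted sequence and then assert $\|\nabla w(0)\|_{L^\infty}\ge\delta_0$ for the local-uniform limit, without explicitly addressing the possibility that the near-maximizing points of $|\nabla w_n(\cdot,0)|$ escape to spatial infinity; your choice of recentering through $a_n=y_n e^{-s_n/2}$ via \eqref{wawb} pins the large-gradient point at the origin and makes the passage to the limit airtight, while also making transparent (as you note) why the recentering must go through the similarity-variable change of base point rather than a raw spatial shift, since the drift $-\tfrac12 y\cdot\nabla$ in \eqref{eqw} is not space-translation invariant.
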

\begin{proof}
 This is a classical consequence of the  Liouville Theorem stated in Proposition
\ref{propliou}, which follows similarly as does
Proposition \ref{propgrad}. For a statement and a proof, see Theorem 1.1 page 141
in Merle and Zaag \cite{MZcpam98}.
\end{proof}
Let us now prove Proposition \ref{proptrap0}:
\begin{proof}[Proof of Proposition \ref{proptrap0}]
  $ $\\
(i)  This is a classical combination of our previous analysis, namely Lemma
  \ref{lemphi}, Propositions \ref{propflow}, \ref{propinit}, \ref{propgrad} and
  \ref{propwa}.
  The argument relies on a reduction of the
problem in finite dimensions, then the proof of the reduced problem
thanks to the degree theory. We give the argument only for the sake of
completeness. Expert readers may skip this proof.

\medskip

In order to apply the above-mentioned statements, let us fix
$A=\max(\AAzero, 1, \Avingtetun)$
and
$s_0=\max(\ssun(\delta), \Szero(A),\sszeroun(A), \sszerodeux(A,1),\svingtetun(A))$,
where $\delta$ is the constant
appearing in the definition of the profile $\varphi(y,s)$ and fixed at
the beginning of Section \ref{secformu}.

\medskip

We proceed by contradiction and assume that for all
$d=(d_{0,0}, d_{2,0}, d_{4,0}, d_{4,2}, d_{6,0})\in\q D(A,s_0)$
defined in Proposition \ref{propinit}, $q(d,y,s)$ doesn't
remain in the set $V_A(s)$ for all $s\ge s_0$, where $q(d,y,s) =
w_0(d,y,s) - \varphi(y,s)$, $\varphi(y,s)$ is defined in
\eqref{goodprof}, with the constants $\gamma$ and $\delta$ are fixed
in Lemma \ref{lemphi} and the beginning of Section \ref{secformu}, and
$w_0(d,y,s)$ is the solution of equation \eqref{eqw} with initial data
at $s=s_0$ given by \eqref{defw0}.

\medskip

Since the function given in
\eqref{121} is one-to-one from item (ii) in Proposition \ref{propinit}, we may take $\bar d =(\bar d_{0,0}, \bar
d_{2,0},  \bar d_{4,0},  \bar d_{4,2},  \bar d_{6,0}) \in [-1,1]^5$ as a new
parameter, where
\begin{equation}\label{defbd}
  \bar d_{i,j} = \frac{e^{2s_0}}Aq_{i,j}(d,s_0)\mbox{ if }(i,j) \in I_0,\;\;
  \bar d_{6,0}= \frac{e^{3s_0}}{As_0^2}q_{6,0}(d,s_0)
\end{equation}
and $I_0\equiv\{(0,0), (2,0), (4,0), (4,2)\}$. Accordingly, we
introduce the notation
\[
\bar q(\bar d,y,s) = q(d,y,s),\;\; \bar w_0(\bar d,y,s) = w_0(d,y,s)
\]
and so on. Since $q(d,y,s_0)\in V_A(s_0)$ by
Proposition \ref{propinit}, from continuity, we may introduce
$s^*(d)=\bar s^*(\bar d)$
as the minimal time such that $q(d,y,s)\in V_A(s)$ for all $s\in
[s_0,s^*]$ and an equality case occurs at time $s=s^*$ in one of the
$\le$ defining $V_A(s^*)$ in Definition \ref{defvas}.

\medskip

Applying
Proposition \ref{propwa}, we see that no equality case occurs for
$\|w_0(d,s^*)\|_{L^\infty}$.  We claim that no equality case occurs for
$|q_{6,2}(d,s^*)|=|q_{6,4}(d,s^*)|$ neither $\|q_-(d,s_0)\|_{L^2_\rho}$.\\
Indeed, if $s^*(d)=s_0$, this follows from Proposition
\ref{propinit}. If $s^*(d)>s_0$, then, this follows from Corollary
\ref{corq-}, which can be applied thanks to Proposition \ref{propinit}.
In other words, it holds that
\[
  \mbox{either }|q_{i,j}(d,s^*)|= A e^{-2s^*}\mbox{ for some }(i,j) \in
  I_0
  \mbox{ or }|q_{6,0}(d,s^*)|=A{s^*}^2e^{-3s^*}.
\]
This way, the following rescaled flow is well defined:
\[
\begin{array}{rcl}
 \Phi : [-1,1]^5 &\longrightarrow &\partial [-1,1]^5\\
  \bar d& \longmapsto &\frac{e^{2s^*}}A(q_{0,0}, q_{2,0}, q_{4,0}, q_{4,2}, \frac{e^{s^*}}{{s^*}^2}q_{6,0})(d,s^*(d)).
\end{array}
\]
We claim that:\\
(a) $\Phi$ is continuous;\\
(b) $\Phi_{|[-1,1]^5} = \Id$,
which will imply a contradiction, by the
degree theory and finish the proof. Let us prove (a) and (b).\\
\textit{Proof of (a)}: Using items (i) and (ii) in Proposition \ref{propflow}, we see that
the flow of the five components $|q_{i,j}|$ with $(i,j)\in
I_0\cup\{(6,0)\}$ is transverse outgoing on the corresponding boundary
(i.e. $Ae^{-2s}$ if $(i,j) \in I_0$ or $As^2e^{-3s}$ if
$(i,j)=(6,0)$). This implies the continuity of $\Phi$.\\
\textit{Proof of (b)}: If $\bar d \in \partial [-1,1]^5$ and $d$ is
the corresponding original parameter in $\q D$, then we see
by definition \eqref{defbd} that 
\begin{equation}\label{edges0}
  \mbox{either }|q_{i,j}(d,s_0)|= A e^{-2s_0}\mbox{ for some }(i,j) \in
  I_0
  \mbox{ or }|q_{6,0}(d,s_0)|=A{s_0}^2e^{-3s_0}.
\end{equation}
Since $q(d,s_0)\in V_A(s_0)$ by Definition \ref{defvas}, this implies
that $\bar s^*(\bar d) = s^*(d)=s_0$. From \eqref{edges0} and
\eqref{defbd}, we see that $\Phi(\bar d) = \bar d$ and (b) holds.\\
Since we know from the degree theory that there is no continuous function
from $[-1,1]^5$ to its boundary which is equal to the identity on the
boundary, a contradiction follows. Thus, there exists a parameter
$d=(d_{0,0}, d_{2,0}, d_{4,0}, d_{4,2}, d_{6,0})\in\q D(A,s_0)$
defined in Proposition \ref{propinit} such that $q(d,y,s)\in V_A(s)$
for all $s\ge s_0$. Applying Proposition \ref{propgrad} and Lemma \ref{lemphi}, we see that
$\|\nabla q(d,s)\|_{L^\infty}\le 1+C_0$, for all $s\ge s_0$.

\medskip

\noindent (ii) From the similarity variables definition \eqref{defw},
$u$ is indeed a solution of equation \eqref{equ} defined for all
$(x,t)\in \m R^2 \times [0,T)$. From the classical theory by Giga and
Kohn \cite{GKcpam89}, we know that when $s\to \infty$, $w_b(s) \to
\kappa$ when $b$ is a blow-up point, and $w_b(s) \to 0$ if not, in
$L^2_\rho$. Therefore, it's enough to prove that $w_0(s)\to\kappa$ and
$w_a(s)\to 0$ as $s\to \infty$ in order to conclude.\\  
First, since $q(s)\in V_A(s)$ for all $s\ge s_0$ from item (i), we see
from \eqref{smallVA}, the relation \eqref{defq} and the definition \eqref{goodprof} of
$\varphi$ that $w_0(s)\to\kappa$ as $s\to \infty$.\\
Second, if $a\neq 0$, we see that for any $s_1 \ge 2 \log\frac
A{|a_1|+|a_2|}$, 
\eqref{alarge} holds. Since Proposition \ref{propwa} holds here,
applying its item (ii), 
we see that for some $\bar s(a) \ge s_0$ and $\bar M(a)\ge 0$, for any
$s_1\ge \max(2 \log\frac A{|a_1|+|a_2|}, \bar s(a))$, for any $s\in [\bar s(a), s_1]$,
$\|w_a(s)\|_{L^2_\rho} \le \bar M(a) e^{-\frac s{p-1}}$. Making
$s_1\to \infty$, we see that $w_a(s)\to 0$. This concludes the proof
of Proposition \ref{proptrap0}. 
\end{proof}

\subsection{Intermediate profile}
From the construction given in Proposition \ref{proptrap0} together
with \eqref{smallVA} and the expansion \eqref{target} of the profile
$\varphi$ defined in \eqref{goodprof}, we have a solution $w_0(y,s)$ of equation
\eqref{eqw} such that our goal in \eqref{A4} holds, namely
\[
w_0(y,s) =\kappa  - e^{-s}h_2(y_1)h_2(y_2) +o(e^{-s})\mbox{ as }s\to \infty,
\]
in $L^2_\rho$ and uniformly on compact sets, by parabolic
regularity.

\medskip

For large $s$, the second term of this expansion will be
small with respect to the constant $\kappa$, which means that we only
see a flat shape for $w_0$. Extending the convergence beyond compact
sets, say, for $|y|\sim e^{\frac s4}$, would allow to see $w$
escape that constant. This is indeed what Vel\'azquez did in Theorem 1
page 1570 of \cite{Vcpde92}, proving that $w_0$ has the degenerate
profile \eqref{profnaif}, in the sense that
\[
  \sup_{|y|<Ke^{\frac s4}}
  \left|w_0(y,s) - \left(p-1+\frac{(p-1)^2}\kappa e^{-s}y_1^2y_2^2\right)^{-\frac 1{p-1}}\right| \to 0
  \mbox{ as } s\to \infty,
\]
for any $K>0$.

\medskip

If we see in this expansion that $w_0$ departs from the constant
$\kappa$ \eqref{conv}
for $y_1=K_1 e^{\frac s4}$ and
$y_2=K_2 e^{\frac s4}$ for some $K_1>0$ and $K_2>0$, this is not the
case on the axes, namely when $y_1=0$ or $y_2=0$. In accordance with
our idea in deriving the profile $\varphi$ in \eqref{goodprof}, our
techniques in this paper provide us with the following sharper profile,
valid on a larger region:
\begin{prop}[Intermediate profile] \label{propintprof} Consider $w_0(y,s)$ the solution of
  equation \eqref{eqw} constructed in Proposition \ref{proptrap0}. For
  any $K>0$, it holds that
   \[
    \sup_{e^{-s}y_1^2y_2^2+\delta e^{-2s}(y_1^6+y_2^6) <K}
    \left|w_0(y,s)-\Phi(y,s)\right|\to 0 \mbox{ as } t\to T,
    \]
    where $\Phi(y,s)$ is defined in \eqref{defPhi}.
   \end{prop}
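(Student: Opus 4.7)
The plan is to reduce the intermediate profile estimate to a pointwise statement on $w_a(0,s)$ with $a = ye^{-s/2}$, and then exploit the refined asymptotics of Section~\ref{secR3} (in particular Lemma~\ref{cordyn'} and the heteroclinic-stability Proposition~\ref{proptrap2}). Writing $\Phi(y,s) = \psi(\tau(y,s))$ with $e^{\tau(y,s)} = \frac{p-1}{\kappa}\eta(y,s)$, where $\eta(y,s) := e^{-s}y_1^2y_2^2 + \delta e^{-2s}(y_1^6+y_2^6)$ and $\psi$ is the heteroclinic orbit \eqref{defpsi}, the target profile is a time-shifted instance of $\psi$. The gradient bound $\|\nabla w_0(s)\|_{L^\infty}\to 0$ provided by Corollary~\ref{corgra} converts any $L^2_\rho$ approximation of $w_a$ by a function constant in $\tilde y$ into the desired pointwise bound at $\tilde y = 0$, exactly as in Claim~\ref{clreduc}. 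For $y$ bounded, both $w_0(y,s)\to\kappa$ (from $q(s)\in V_A(s)$, \eqref{smallVA} and parabolic regularity) and $\Phi(y,s)\to\kappa$, so the claim is immediate.

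For unbounded $y$ satisfying $\eta(y,s)<K$, the symmetries of $w_0(\cdot,s_0)$ allow us to assume $0\le y_1\le y_2$; the case $y_1+y_2<A$ reduces to the bounded regime, so assume $y_1+y_2\ge A$ and introduce $\sigma\in[s_0,s]$ so that $a = (\alpha' e^{-\sigma/2},\beta' e^{-\sigma/2})$ with $\alpha'+\beta' = A$. The confinement $\eta(y,s)<K$ forces $|y|\lesssim e^{s/3}$, and therefore $\sigma\ge s/3 + 2\log A - O(1)\to+\infty$ as $s\to\infty$; a direct computation reveals the identity $e^{s-\sigma}\iota' = \eta(y,s)$, where $\iota'$ is as in Lemma~\ref{cordyn'}. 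For a prescribed small $\eta^*\in (0,\etaunp]$, Lemma~\ref{cordyn'} then delivers
\[
\|w_a(s')-(\kappa - e^{s'-\sigma}\iota')\|_{L^2_\rho}\le \bMsixp(\eta^*+A^{-1})\,e^{s'-\sigma}\iota' + \bMsixp\, e^{-\sigma/3}, \qquad s'\in[\sigma,\min(s,s^*)],
\]
with $s^*$ defined by $e^{s^*-\sigma}\iota'=\eta^*$. Since $\Phi(y,s) = \kappa - \eta(y,s) + O(\eta(y,s)^2)$ for small $\eta$, this settles the sub-regime $\eta(y,s)\le\eta^*$ (i.e.\ $s\le s^*$) after using the gradient bound.

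For the complementary sub-regime $\eta^*<\eta(y,s)<K$ (i.e.\ $s>s^*$), the linear approximation is no longer sharp, so we pass to the heteroclinic regime via Proposition~\ref{proptrap2}. Defining $\sigma^*<0$ by $\psi(\sigma^*) = \kappa-\eta^*$, the previous estimate at $s=s^*$ yields $\|w_a(s^*)-\psi(\sigma^*)\|_{L^2_\rho} = O(\eta^*(\eta^*+A^{-1}) + e^{-\sigma/3})$; the asymptotic $|\psi'(\sigma^*)|\sim \kappa-\psi(\sigma^*)=\eta^*$ as $\eta^*\to 0$ ensures the admissibility condition \eqref{cond0} of Proposition~\ref{proptrap2}, provided $\eta^*$ is small and $s$ large. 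Invoking that proposition gives $\|w_a(s)-\psi(s-s^*+\sigma^*)\|_{L^2_\rho}\to 0$ as $s\to\infty$ and $\eta^*\to 0$, uniformly in $\eta(y,s)\le K$, while the algebraic identities $e^{s-s^*}=\eta(y,s)/\eta^*$ and $e^{\sigma^*}\sim (p-1)\eta^*/\kappa$ together imply $\psi(s-s^*+\sigma^*) = \Phi(y,s)+O(\eta^*)$ uniformly on $\eta(y,s)\le K$. Converting to pointwise via Corollary~\ref{corgra} and sending first $s\to\infty$ then $\eta^*\to 0$ completes the proof.

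The main obstacle I anticipate is controlling the multiplicative error factor $|\psi'(s-s^*+\sigma^*)|/|\psi'(\sigma^*)|$ in the conclusion \eqref{samir} of Proposition~\ref{proptrap2}, together with verifying admissibility \eqref{cond0} with room to spare; both rest on the sharp asymptotic $|\psi'(\sigma^*)|\sim \eta^*$ for small $\eta^*$, and crucially on the fact that the confinement $\eta(y,s)\le K$ forces $s-s^*+\sigma^*$ to remain bounded above, which prevents the ratio from blowing up as $\eta^*\to 0$.
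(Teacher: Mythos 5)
Your proposal follows the same route as the paper: reduce to a pointwise estimate on $w_a(0,s)$ via the gradient bound of Corollary~\ref{corgra}, introduce the parametrization $a=(\alpha',\beta')e^{-\sigma/2}$ along the slice $\alpha'+\beta'=\text{const}$, use the algebraic identity $e^{s-\sigma}\iota'=\eta(y,s)$, integrate via Lemma~\ref{cordyn'} in the regime $\eta(y,s)\le\eta^*$, then hand off to the heteroclinic stability of Proposition~\ref{proptrap2} for $\eta(y,s)>\eta^*$, and finally match $\psi(s-s^*+\sigma^*)$ with $\Phi(y,s)$. This is precisely the paper's strategy.

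There is however a genuine gap. You fix $\alpha'+\beta'=A$ with $A$ being the constant fixed once and for all in Proposition~\ref{proptrap0}. The error term delivered by Lemma~\ref{cordyn'} is $\bMsixp(\eta^*+A^{-1})\,e^{s'-\sigma}\iota'+\bMsixp e^{-\sigma/3}$, and after passing through Proposition~\ref{proptrap2} and converting to a pointwise bound, the contribution of the $A^{-1}$ factor survives as a residual $O(A^{-1})$ in the final estimate: sending $s\to\infty$ kills the $e^{-\sigma/3}$ term and sending $\eta^*\to 0$ kills the $\eta^*$ term, but $A$ stays fixed. So the supremum you control is only bounded by $C/A>0$, not made arbitrarily small, and the claimed conclusion $\sup_{\eta(y,s)<K}|w_0-\Phi|\to 0$ does not follow. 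The paper resolves this by introducing an \emph{independent} large parameter $\bar A\ge\max(A,\bAsixp)$, observing that $V_A(s)\subset V_{\bar A}(s)$ (so the hypothesis ``$q(s)\in V_{\bar A}(s)$'' of Lemma~\ref{cordyn'} still holds), carrying out the argument with $\alpha'+\beta'=\bar A$, and choosing $\bar A=\bar A(\epsilon)$ large at the end. Your proof needs this extra degree of freedom, and the key observation making it legitimate (monotonicity of $V_A$ in $A$) is missing.

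One smaller remark: in your closing paragraph you attribute the control of the ratio $|\psi'(s-s^*+\sigma^*)|/|\psi'(\sigma^*)|$ to the upper bound on $s-s^*+\sigma^*$ coming from $\eta(y,s)\le K$. In fact the ratio is already controlled by the crude bound $\|\psi'\|_{L^\infty}/|\psi'(\sigma^*)|\sim C/\eta^*$, with no constraint on the argument of the numerator; the boundedness of $s-s^*+\sigma^*$ is instead what is needed in the final algebraic step $\psi(s-s^*+\sigma^*)=\Phi(y,s)+O(\eta^*)$ to hold uniformly. This is a misattribution of roles rather than a gap, but worth straightening out.
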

  \begin{nb}
$\Phi(y,s)$ is referred to as the ``intermediate profile'', since it
holds in the region $\{s\ge s_0=-\log T\}$, which corresponds to the
region $\{0\le t<T\}$ in the original variables defined by
\eqref{defw}. Later in Proposition \ref{propfinal}, we will define the
``final profile'', as a limit of $u(a,t)$ as $t\to T$ when $a\neq 0$,
and holds in some sense for $t=T$, justifying its character as ``final''.
  \end{nb}
  
\begin{proof}
Consider $w_0$ the solution of equation \eqref{eqw} constructed in
Proposition \ref{proptrap0} and defined for all $(y,s)\in \m R^2 \times
[s_0,\infty)$. Given $K_0>0$ and $\epsilon>0$, we look
for $\Sun=\Sun(K_0,\epsilon)$ such that for any $S\ge \Sun$ and $Y\in \m
R^2$ such that  
\begin{equation}\label{R3y}
e^{-S}Y_1^2Y_2^2+\delta e^{-2S}(Y_1^6+Y_2^6) <K_0,
\end{equation}
it holds that
\begin{equation}\label{hadaf0}
 \left|w_0(Y,s)- \Phi(Y,s)\right |
  \le C \epsilon,
\end{equation}
for some universal $C>0$, where $\Phi(y,s)$ is defined in \eqref{defPhi}.
Consider then $S\ge s_0$ and $Y\in \m R^2$ such that \eqref{R3y}
holds. Since $w_0(y,s)$ is symmetric with respect to the axes and the
bissectrices (see the remark following Proposition \ref{proptrap0}), we
may assume that
\begin{equation}\label{Y1Y2}
0\le Y_1\le Y_2.
\end{equation}
Following our strategy in Section \ref{seclargekl},
we introduce $a=a(Y,S)$
such that 
\begin{equation}\label{defa2}
ae^{\frac S2} = Y.
\end{equation}
Since
\begin{equation}\label{wwaS}
w_0(Y,S)=w_a(0,S)
\end{equation}
by \eqref{wawb},
our conclusion will follow from
the study of $w_a(y,s)$ for $s\in [s_0,S]$.
From \eqref{R3y} and \eqref{defa2}, we see that
\[
G_0(a) \le \frac {K_0(p-1)}\kappa e^{-S},
\]
where $G_0$ is defined in \eqref{defG0}. In particular,
\begin{equation}\label{boundai}
0\le a_1\le a_2\le \left(\frac{K_0(p-1)}{\kappa \delta}\right)^{\frac 16}
e^{-\frac S6}.
\end{equation}
Consider $\bar A>0$ to be fixed large enough later.
Taking $S\ge\Sdeux$ for some $\Sdeux(K_0,\bar A)\ge s_0$, we see that
\[
a_1+a_2 \le \bar A e^{-\frac{s_0}2}.
\]
Therefore, we may introduce $\sigma\ge s_0$, $K'$ and $L'$ such that 
\begin{equation}\label{defkls}
a=(K',L')e^{-\frac \sigma 2}\mbox{ with }0\le K'\le L' \mbox{ and } K'+L' =
\bar A.
\end{equation}
The conclusion will follow by applying Lemma \ref{cordyn'} and
Proposition \ref{proptrap2}, exactly as we did in Corollaries
\ref{corbR3'} and \ref{corbR3}. In order to apply those two
statements, let us assume that
$\bar A\ge \max(A,\bAsixp)$ 
and consider
$\eta^*\in (0, \etazerop]$,
where $A$ is fixed in Proposition \ref{proptrap0} and
$\bAsixp$ together with $\etazerop$
are introduced in Lemma \ref{cordyn'}.

\medskip

From \eqref{boundai} and \eqref{defkls}, we see that whenever $S\ge
\Strois$ for some $\Strois(K_0, \bar A,\bar\eta^*)\ge s_0$, it follows that
$\sigma \ge \sonzep(\bar A,\eta^*)$
defined in Lemma \ref{cordyn'}. Let us then assume that $S\ge \Strois$.

\medskip

Since $\bar A\ge A$, we see from Proposition \ref{proptrap0} and
Definition \ref{defvas} that $q(s)\in V_A(s) \subset V_{\bar A}(s)$
and $\nabla q(s)\in L^\infty$, for all $s\ge s_0$.

\medskip

Using \eqref{defkls}, we see that Lemma \ref{cordyn'} applies with any
$s_1\ge \sigma$. In particular, if we introduce $s^*$ such that
\begin{equation}\label{defs*'}
 e^{s^*-\sigma} \iota' =\eta^*
\end{equation}
  where $\iota'$ is defined in
\eqref{defiota'}, then we see that $s^*\ge \sigma$ and for all
$s\in [\sigma, s^*]$, 
\begin{equation}\label{vanice'}
\|w_a(s) - \left(\kappa
 - e^{s-\sigma}\iota'\right)\|_{L^2_\rho}
\le \bMsixp\left(\eta^*+\frac 1{\bar A}\right)\eta^*+\bMsixp
e^{-\frac\sigma 3},
\end{equation}
where the constant
$\bMsixp$
is defined in Lemma
\ref{cordyn'}.

\medskip

Assuming that $\eta^*<\kappa$, we may introduce $\sigma^*\in \m R$
such that
\begin{equation}\label{defsig*}
\psi(\sigma^*) = \kappa-\eta^*,
\end{equation}
where $\psi$ is defined in \eqref{defpsi}. Using \eqref{dlpsi},
\eqref{defkls} and \eqref{boundai}, we see that whenever $\bar A \ge
\Aquatre$, $\eta^*\le \equatre$ and $S\ge \Squatre$, for some
$\Aquatre \ge 1$, $\equatre >0$ and $\Squatre(K_0,\bar A, \eta^*)$, we
have
\begin{equation}\label{initwa}
  \|w_a(s^*)-\psi(\sunos)\|_{L^2_\rho}
 \le \bMsixp\left(\eta^*+\frac 1{\bar A}\right)\eta^*+\bMsixp
e^{-\frac\sigma 3}
   \le \frac {|{\psi'}(\sunos)|}{M_1},
 \end{equation}
where $M_1$ is introduced in Proposition \ref{proptrap2}.  Since
$\nabla w_a(s^*)\in L^\infty$ and $\|w_a(s)\|_{L^\infty}\le 2 \kappa$
for any $s\ge s^*$, thanks to Proposition \ref{proptrap0},
together with the definition \eqref{defq} of $q$ and the
transformation \eqref{wawb}, Proposition \ref{proptrap2} applies to
the shifted function $w_a(y,s+s^*)$ (with any $\sigma_1\ge 0$), and
we see that for all $s\ge s^*$, we have 
\begin{align*}
\|w_a(s)-\psi(s+\sunos-s^*)\|_{L^2_\rho}
 \le &\; M_1 \|w_a(s^*) -\psi(\sunos)\|_{L^2_\rho}
  \frac{|{\psi'}(s+\sunos)|}{|{\psi'}(\sunos)|}.
\end{align*}
Since $\psi'\in L^\infty$ by definition \eqref{defpsi} of $\psi$,
using again \eqref{initwa} and \eqref{dlpsi}, we see that
\begin{equation}\label{s1}
  \forall s\ge s^*,\;\;
  \|w_a(s)-\psi(s+\sunos-s^*)\|_{L^2_\rho} \le
2 M_1 \bMsixp
  \|\psi'\|_{L^\infty}
  \left(\eta^*+\frac 1{\bar A} + \frac{e^{-\frac \sigma3}}{\eta^*}\right),
\end{equation}
whenever $\eta^*\le \ecinq$, for some $\ecinq>0$.

\bigskip

Now, if $s\in [\sigma,s^*]$, using \eqref{calculrho}, we write
\begin{equation}\label{spetit}
 \|w_a(s)-\psi(s+\sunos-s^*)\|_{L^2_\rho} \le
 \|w_a(s)-  \left(\kappa - e^{s-\sigma}\iota'\right)\|_{L^2_\rho}
 +|\psi(s+\sunos-s^*)-\kappa|+e^{s-\sigma}\iota'.
\end{equation}
Since $s\le s^*$, hence $s+\sunos-s^*\le \sunos$ and $\psi$ is decreasing by definition
\eqref{defpsi}, we write from \eqref{defsig*} and \eqref{defs*'}
\begin{equation}\label{pequeno}
  |\psi(s+\sunos-s^*)-\kappa| \le |\psi(\sunos)-\kappa|= \eta^*
  \mbox{ and }e^{s-\sigma}\iota' \le e^{s^*-\sigma}\iota'=\eta^*.
\end{equation}
Therefore, using \eqref{spetit} together with \eqref{vanice'}, we see
that 
\begin{equation}\label{s2}
\forall s\in[\sigma,s^*],\;\;
  \|w_a(s)-\psi(s+\sunos-s^*)\|_{L^2_\rho} \le
\bMsixp\left(\eta^*+\frac 1{\bar A}\right)\eta^*+\bMsixp
e^{-\frac\sigma 3} +2\eta^*.
\end{equation}

Finally, if $s\in [s_0, \sigma]$, noting again that
$s+\sunos-s^*\le \sigma+\sunos - s^*\le \sunos$, we may use
\eqref{pequeno} and write similarly
\[
  \|w_a(s)-\psi(s+\sunos-s^*)\|_{L^2_\rho} \le
  \|w_a(s)-\kappa\|_{L^2_\rho} + \eta^*.
\]
Recalling that $\nabla w(s)\in L^\infty$ from the construction in Proposition
\ref{proptrap0}, we may use a Taylor expansion as in the proof of
Claim \ref{clreduc} and write
\begin{equation}\label{Taylor}
  \|w_a(s)-\kappa\|_{L^2_\rho} \le C|w_a(0,s)-\kappa|+
 C\|\nabla w_0(s)\|_{L^\infty}.
\end{equation}
Since $w_a(0,s) = w_0(ae^{\frac s2},s)$ by \eqref{wawb} and 
\[
  |ae^{\frac s2}|=|(K',L')|e^{\frac{s-\sigma}2}
  \le \sqrt{{K'}^2+{L'}^2}e^{\frac{s-\sigma}2} \le \bar A \sqrt 2
\]
by \eqref{defkls} and the fact that $s\le \sigma$, we may use again a
Taylor expansion for $w_0$ and write
\[
  |w_a(0,s)-\kappa|=|w_0(ae^{\frac s2},s)-\kappa| \le C \|w_0(s) -\kappa\|_{L^2_\rho} +
  C(1+\bar A)\|\nabla w_0(s)\|_{L^\infty}. 
\]
Furthermore, using  the construction in Proposition \ref{proptrap0}, together with
the definitions \eqref{defq} and \eqref{goodprof} of $q(y,s)$ and
$\varphi(y,s)$, and estimate \eqref{smallVA}, we write
\[
  \|w_0(s) - \kappa\|_{L^2_\rho}\le
  \|q(s) \|_{L^2_\rho} 
+ \|\varphi(s) -\kappa\|_{L^2_\rho} \le CAse^{-2s} + Ce^{-s},
\]
where $A$ is given in Proposition \ref{proptrap0}.
Collecting the previous estimates, we write
\begin{equation}\label{s3}
  \forall s\in [s_0,\sigma],\;\;
  \|w_a(s)-\psi(s+\sunos-s^*)\|_{L^2_\rho} \le
CAse^{-2s} + Ce^{-s}
  + C(1+\bar A)\|\nabla w(s)\|_{L^\infty} +   \eta^*.
\end{equation}

Using \eqref{defkls}
and \eqref{boundai} and the gradient estimate in Corollary \ref{corgra}, we see from \eqref{s1}, \eqref{s2} and \eqref{s3} that taking $\bar A\ge
\Asept$, $\eta^*\le \esept$ and $S\ge \Ssept$ for some
$\Asept(\epsilon)\ge 1$, $\esept(\epsilon)>0$ and $\Ssept(\epsilon, \eta^*,
K_0, \bar A)\ge s_0$, we get for all
$s\ge s_0$,
\[
 \|w_a(s)-\psi(s+\sunos-s^*)\|_{L^2_\rho} \le 4\epsilon.
\]
Taking $s=S$, using \eqref{defa2} and \eqref{wwaS}, then proceeding as
with \eqref{Taylor}, we write
\begin{align*}
 |w_0(Y,S) -&\psi(S+\sunos-s^*)|
  = |w_a(0,S) -\psi(S+\sunos-s^*)|\\
  \le &\;C \|w_a(S)-\psi(S+\sunos-s^*)\|_{L^2_\rho}
  +C\|\nabla w_0(S)\|_{L^\infty}
\le C\epsilon  +C\|\nabla w_0(S)\|_{L^\infty}.
\end{align*}
Taking $S$ larger and using again the gradient estimate of Corollary
\ref{corgra}, we see that 
\begin{equation}\label{z1}
 |w_0(Y,S) -\psi(S+\sunos-s^*)| \le 2C\epsilon.
\end{equation}

Since $e^{s^*-\sigma} \iota' =\eta^*$ from \eqref{defs*'}, $e^{\sunos}\sim \eta^*\frac{(p-1)}\kappa$ as $\eta^*\to 0$ from \eqref{dlpsi},
and $e^{S-\sigma}\iota' =e^{-S}Y_1^2Y_2^2+\delta e^{-2S}(Y_1^6+Y_2^6)$
by definitions \eqref{defa2}, \eqref{defkls} and \eqref{defiota'} of
$a$, $(K',L')$ and $\iota'$, we write 
\begin{align*}
 & e^{S+\sunos-s^*}= e^{\sunos}e^{-(s^*-\sigma)}e^{S-\sigma}
  =\left(\frac{p-1}\kappa +\bar \eta\right)
  \eta^*e^{-(s^*-\sigma)}e^{S-\sigma}
  = \left(\frac{p-1}\kappa +\bar \eta\right) \iota' e^{S-\sigma}\\
  =&\; \left(\frac{p-1}\kappa +\bar \eta\right)
  [e^{-S}Y_1^2Y_2^2+\delta e^{-2S}(Y_1^6+Y_2^6)],
\end{align*}
where $\bar \eta \to 0$ as $\eta^*\to 0$. Recalling condition
\eqref{R3y}, then making an expansion of $\psi$ defined in
\eqref{defpsi}, we see that for $\eta^*\le \ehuit$ for some
$\ehuit(K_0,\epsilon)>0$, we have
\begin{equation}\label{z2}
\left|\psi(S+\sunos-s^*)
- \left [p-1+\frac{(p-1)^2}\kappa\left( e^{-s}Y_1^2Y_2^2
  + \delta  e^{-2s}(Y_1^6+Y_2^6)\right)\right]^{-\frac 1{p-1}} \right|
\le \epsilon.
\end{equation}
Combining \eqref{z1}
and \eqref{z2} yields \eqref{hadaf0}.
This concludes the proof of Proposition \ref{propintprof}.

\end{proof}

\subsection{Derivation of the final profile}  

Throughout this section, we consider $u(x,t)$ the solution of equation
\eqref{equ} constructed in Proposition \ref{proptrap0} which blows up
at time $T>0$ only at the origin.

\medskip

Using Propositions \ref{proptrap0} and \ref{propintprof},  together
with the similarity variables' transformation \eqref{defw}, we derive the
following:
\begin{cor}[Intermediate profile for $u(x,t)$] \label{corintprof}
  For any $K>0$, it holds that
  \[
    \sup_{x_1^2x_2^2+\delta(x_1^6+x_2^6)\le K(T-t)}
   \left |(T-t)^{\frac 1{p-1}} u(x,t) - 
    \left[p-1+\frac{(p-1)^2}\kappa\frac{[x_1^2x_2^2+\delta(x_1^6+x_2^6)]}{T-t}\right]^{-\frac 1{p-1}} \right|
 \]
goes to $0$ as $t\to T$.
  \end{cor}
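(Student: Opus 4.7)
The plan is to derive Corollary \ref{corintprof} as a direct translation of Proposition \ref{propintprof} from similarity to original variables, via the change of variables \eqref{defw}. Concretely, I will set $y = x/\sqrt{T-t}$ and $s = -\log(T-t)$, so that $u(x,t) = (T-t)^{-1/(p-1)} w_0(y,s)$, and verify that both the defining region and the intermediate profile $\Phi$ transform exactly into the expressions appearing in the statement.

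The key routine computation is to see how the monomials appearing in the profile scale. With $y_i = x_i/\sqrt{T-t}$ and $e^{-s} = T-t$, one has
\begin{equation*}
e^{-s} y_1^2 y_2^2 = \frac{x_1^2 x_2^2}{T-t}, \qquad e^{-2s}(y_1^6 + y_2^6) = \frac{x_1^6 + x_2^6}{T-t}.
\end{equation*}
Therefore the region $\{e^{-s} y_1^2 y_2^2 + \delta e^{-2s}(y_1^6 + y_2^6) < K\}$ becomes exactly $\{x_1^2 x_2^2 + \delta(x_1^6 + x_2^6) < K(T-t)\}$, and the profile $\Phi(y,s)$ defined in \eqref{defPhi} becomes
\begin{equation*}
\left[p-1 + \frac{(p-1)^2}{\kappa}\cdot\frac{x_1^2 x_2^2 + \delta(x_1^6 + x_2^6)}{T-t}\right]^{-\frac{1}{p-1}},
\end{equation*}
which is precisely the quantity subtracted from $(T-t)^{1/(p-1)} u(x,t)$ in the statement.

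Finally, since $(T-t)^{1/(p-1)} u(x,t) = w_0(y,s)$, the absolute value inside the supremum in Corollary \ref{corintprof} equals $|w_0(y,s) - \Phi(y,s)|$, and the supremum over $\{x_1^2 x_2^2 + \delta(x_1^6 + x_2^6) < K(T-t)\}$ is the same as the supremum over $\{e^{-s} y_1^2 y_2^2 + \delta e^{-2s}(y_1^6 + y_2^6) < K\}$. The conclusion that this supremum tends to $0$ as $t \to T$ (equivalently $s \to \infty$) is then immediate from Proposition \ref{propintprof}.

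There is no real obstacle here: all substantive work has been done in Proposition \ref{propintprof}, and the corollary is purely a change-of-variables corollary. The only care needed is to check the algebraic identification of the two sets and the two profiles, which is elementary from the definitions of $y$, $s$ and $\Phi$.
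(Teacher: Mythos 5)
Your proof is correct and follows precisely the route the paper intends: the paper states Corollary \ref{corintprof} as an immediate consequence of Propositions \ref{proptrap0} and \ref{propintprof} together with the similarity-variable change \eqref{defw}, without a separate proof, and your verification that the region and the profile $\Phi$ transform exactly as claimed is exactly the elementary check that is needed. The only microscopic discrepancy is that Proposition \ref{propintprof} uses the strict inequality $<K$ while the corollary uses $\le K$; this is harmless since one may apply the proposition with any $K' > K$.
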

Using this result, we derive the following estimate for the final profile:
\begin{prop}[Final profile] \label{propfinal}
 For any $a\neq 0$, $u(a,t)$ has a limit as $t\to T$, denoted by
$u(a,T)$. Moreover,
\begin{equation}\label{defu*}
  u(a,T) \sim u^*(a)\mbox{ as } a\to 0,\mbox{ where }
  u^*(a) =\left[ \frac{(p-1)^2}\kappa
    (a_1^2a_2^2+\delta(a_1^6+a_2^6)) \right]^{-\frac 1{p-1}}.
\end{equation}
  \end{prop}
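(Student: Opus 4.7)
The plan is to prove the proposition in two stages --- existence of the pointwise limit $u(a,T)$ first, then its equivalent as $a \to 0$ --- following the framework established in the proof of Proposition \ref{propintprof} but now passing to the limit $t \to T$.

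For existence of the limit I would fix $a \neq 0$. By Proposition \ref{proptrap0}(ii), $0$ is the only blow-up point, so $u$ is uniformly bounded on a cylinder $B(a,r) \times [T-\varepsilon,T)$ for some small $r,\varepsilon > 0$. Standard parabolic Schauder estimates for \eqref{equ} then yield uniform $C^{2,1}$ bounds on a slightly smaller cylinder, from which $u(a,t)$ has a finite limit $u(a,T)$ as $t \to T$.

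For the asymptotic, by symmetry of the constructed $w_0$ (remark after Proposition \ref{proptrap0}) I may assume $0 \le a_1 \le a_2$. Following the Step 2 parameterization of Section \ref{secR3}, for $|a|$ small write uniquely
\[
a = (K',L')\,e^{-\sigma/2}, \qquad 0 \le K' \le L',\quad K' + L' = A,
\]
with $A$ the constant fixed in Proposition \ref{proptrap0} and $\sigma = \sigma(a) \to +\infty$ as $a \to 0$. For small $\eta^* > 0$ define $s^*$ by $e^{s^*-\sigma}\iota' = \eta^*$ (with $\iota'$ as in \eqref{defiota'}) and $\sunos \in \m R$ by $\psi(\sunos) = \kappa - \eta^*$. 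Then exactly as in the proof of Proposition \ref{propintprof} (Lemma \ref{cordyn'} followed by Proposition \ref{proptrap2}),
\[
\|w_a(s) - \psi(s + \sunos - s^*)\|_{L^2_\rho} \le C\bigl(\eta^* + A^{-1} + \eta^{*-1} e^{-\sigma/3}\bigr), \qquad \forall s \ge s^*,
\]
which, via the gradient bound of Corollary \ref{corgra} combined with a Taylor expansion as in Claim \ref{clreduc}, upgrades to the pointwise statement $|w_a(0,s) - \psi(s + \sunos - s^*)| = o(1)$ uniformly in $s \ge s^*$. Inserting into \eqref{defw} with $s = -\log(T-t)$ and using \eqref{defpsi} gives
\[
u(a,t) = (T-t)^{-\frac{1}{p-1}}\, w_a(0,s) = \kappa \bigl[(T-t) + e^{\sunos - s^*}\bigr]^{-\frac{1}{p-1}} + o(1),
\]
so letting $t \to T$ yields $u(a,T) = \kappa\, e^{(s^*-\sunos)/(p-1)} + o(1)$. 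Combining $e^{s^*} = \eta^* e^\sigma/\iota'$ with the expansion $e^{\sunos} \sim (p-1)\eta^*/\kappa$ from \eqref{dlpsi} and the identity $\iota' = e^\sigma [a_1^2 a_2^2 + \delta(a_1^6 + a_2^6)]$ from \eqref{defiota'}, we compute
\[
e^{s^* - \sunos} \sim \frac{\kappa}{(p-1)\bigl[a_1^2 a_2^2 + \delta(a_1^6 + a_2^6)\bigr]},
\]
which after simplification using $\kappa^{p-1} = (p-1)^{-1}$ gives $u(a,T) \sim u^*(a)$, as desired.

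The main obstacle will be the careful management of the three small parameters --- $\eta^*$, $A^{-1}$ and $e^{-\sigma/3}$ --- with $\eta^*$ also appearing in the denominator of one term: they must be tuned in the right order (first $A$ large, then $\eta^*$ small, then $|a|$ small so that $\sigma$ is large, and finally $t \to T$), exactly as in the proof of Proposition \ref{propintprof}. A related subtlety is that $a$ is held fixed while $s \to \infty$, so $a e^{s/2}$ eventually leaves every region where Proposition \ref{propintprof} applies directly; this is precisely why the analysis must proceed through the recentered function $w_a$ and the heteroclinic orbit $\psi$ rather than through $w_0$.
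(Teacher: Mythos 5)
Your proof of the existence of the limit $u(a,T)$ (interior parabolic estimates on a cylinder away from the blow-up point) is fine and essentially equivalent to the paper's citation of the compactness argument in Merle \cite{Mcpam92}.

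For the asymptotic equivalent, however, your route differs from the paper's and contains a genuine gap. The paper does \emph{not} try to estimate $u(a,T)$ by letting $s\to\infty$ in $w_a(0,s)$. Instead it works with the original variables: it freezes a time $t^*(a)$ defined so that $a_1^2a_2^2+\delta(a_1^6+a_2^6)=K_0\,\frac{\kappa}{p-1}(T-t^*(a))$, uses the intermediate profile (Corollary \ref{corintprof}) to get a sharp pointwise value of $u(a,t^*(a))$, and then integrates the pointwise ODE localization of Proposition \ref{propode} (which is a consequence of the Liouville theorem) on $[t^*(a),T)$. This bypasses any $L^2_\rho\!\to L^\infty$ passage in the regime $s\to\infty$.

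The gap in your argument is at the step ``upgrades to the pointwise statement $|w_a(0,s)-\psi(s+\sunos-s^*)|=o(1)$'' followed by multiplication by $(T-t)^{-1/(p-1)}$ and $t\to T$. The error in passing from $L^2_\rho$ control to the value at $y=0$ is bounded by $\|\nabla w_a(s)\|_{L^\infty}=\|\nabla w_0(s)\|_{L^\infty}$, and Corollary \ref{corgra} only says this tends to $0$ as $s\to\infty$ \emph{with no rate}. But you then multiply by $(T-t)^{-1/(p-1)}=e^{s/(p-1)}$. The $L^2_\rho$ contribution from Proposition \ref{proptrap2} is harmless because it decays at the exact rate $|\psi'(s+\sunos-s^*)|\sim C\,e^{-s/(p-1)}$, precisely cancelling the blow-up factor; but the gradient contribution $e^{s/(p-1)}\|\nabla w_0(s)\|_{L^\infty}$ is not under control as $s\to\infty$. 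So the limit $t\to T$ cannot be taken as you claim. This is exactly the reason the paper reaches for the ODE localization property (Proposition \ref{propode}): it delivers pointwise control that can be integrated to $t=T$ without routing through $L^2_\rho$ and the unquantified gradient estimate.

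Your final algebra ($e^{s^*-\sunos}\sim\kappa e^\sigma/[(p-1)\iota']$, the identification with $u^*(a)$ using $\kappa^{p-1}=(p-1)^{-1}$) is correct, so if you could supply a quantitative decay rate for $\|\nabla w_0(s)\|_{L^\infty}$ of order $e^{-s/(p-1)}$ (say by combining a rate-quantified version of the Liouville argument with parabolic regularity on $q$), your approach would close. As written, it does not.
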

  \begin{nb}
Following the remark given after Proposition \ref{propintprof}, we
justify the name of $u^*$ as ``final profile''.
\end{nb}

This result is in fact a consequence of the
following ODE localization property of the PDE, proved in our earlier
paper \cite{MZcpam98}, and which is a direct consequence of the
Liouville Theorem stated in Proposition \ref{propliou}:
\begin{prop}[ODE localization for $u(x,t)$]\label{propode}
For any $\epsilon>0$, there exists $C_\epsilon>0$ such that for all
$(x,t)\in \m R^2 \times [0,T)$,
\[
  (1-\epsilon) u(x,t)^p - C_\epsilon \le \partial_t u(x,t)
  \le (1+\epsilon) u(x,t)^p + C_\epsilon.
\]
  \end{prop}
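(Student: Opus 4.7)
The plan is to proceed by contradiction, combining parabolic rescaling with the Liouville theorem stated in Proposition~\ref{propliou}. Assume the upper inequality fails, so there exist $\epsilon_0>0$ and a sequence $(x_n,t_n) \in \m R^2 \times [0,T)$ with
\[
\partial_t u(x_n,t_n) - |u(x_n,t_n)|^{p-1}u(x_n,t_n) \ge \epsilon_0\, |u(x_n,t_n)|^{p-1} u(x_n,t_n) + n.
\]
Interior parabolic regularity forces $M_n := |u(x_n,t_n)| \to \infty$, since otherwise both $u$ and $\partial_t u$ would remain locally bounded near $(x_n,t_n)$, incompatible with the $+n$ on the right.

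I would then rescale around $(x_n,t_n)$ by
\[
v_n(y,s) = \frac{1}{M_n}\, u\bigl(x_n + M_n^{-(p-1)/2} y,\ t_n + M_n^{-(p-1)} s\bigr),
\]
which again solves \eqref{equ}, satisfies $|v_n(0,0)| = 1$, and by the Type 1 bound \eqref{type1} obeys $|v_n(y,s)| \le C(\tau_n - s)^{-1/(p-1)}$ for $s < \tau_n$, where $\tau_n = M_n^{p-1}(T-t_n)$. The normalization $|v_n(0,0)|=1$ already gives $\tau_n \ge c>0$, so extracting a subsequence I may assume $\tau_n \to \tau_\infty \in (0,+\infty]$. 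Standard interior parabolic estimates then yield $C^{2,1}_{\mathrm{loc}}$ convergence $v_n \to v_\infty$ on $\m R^2 \times (-\infty,\tau_\infty)$ to a solution $v_\infty$ of \eqref{equ} still obeying the Type 1 bound with horizon $\tau_\infty$; in particular, the contradiction hypothesis passes to the limit as
\[
\bigl|\partial_s v_\infty(0,0) - |v_\infty(0,0)|^{p-1} v_\infty(0,0)\bigr| \ge \epsilon_0.
\]

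The conclusion follows by applying Proposition~\ref{propliou} to the similarity variable lift of $v_\infty$: picking any $T_* \ge \tau_\infty$ (or $T_* < +\infty$ arbitrary when $\tau_\infty = +\infty$) and setting $W(y,s) = (T_*-t)^{1/(p-1)} v_\infty\bigl(y\sqrt{T_*-t},t\bigr)$ with $s = -\log(T_* - t)$, one obtains a uniformly bounded solution of \eqref{eqw} on $\m R^2 \times (-\infty,\bar s)$ for a suitable $\bar s \le +\infty$. Proposition~\ref{propliou} then classifies $W$ as one of the space-independent solutions $0$, $\pm\kappa$ or $\pm\kappa(1\pm e^{s-s^*})^{-1/(p-1)}$. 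Translating back, $v_\infty$ depends only on $t$ and is either identically $0$ or of the form $\pm\kappa(T_0-t)^{-1/(p-1)}$ for some $T_0 \ge \tau_\infty$, hence satisfies the pure ODE $\partial_t v_\infty = |v_\infty|^{p-1}v_\infty$ identically, contradicting the inequality just displayed. The lower bound follows symmetrically.

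The main obstacle will be the bookkeeping in the finite-horizon case $\tau_\infty < +\infty$: one must ensure that $\tau_n$ stays uniformly away from $0$ (so that $(0,0)$ lies safely in the existence domain of $v_\infty$ and $\partial_s v_n(0,0)$ enjoys enough regularity to pass to the limit), and then verify that the version of Proposition~\ref{propliou} with $\bar s < +\infty$ (the remark following that proposition) is applicable to $W$ for the chosen $T_*$. Once these points are handled, the rest of the argument is a clean compactness passage combined with the classification.
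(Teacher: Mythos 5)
Your general strategy (rescaling at the point where the ODE inequality fails, compactness, and the Liouville theorem of Proposition~\ref{propliou}) is indeed the correct one and is essentially how Theorem~1.7 of \cite{MZcpam98} is proved, which is all the paper actually cites here. However, there is a genuine gap, and it is precisely at the point you flag but then dismiss too quickly: the claim that ``the normalization $|v_n(0,0)|=1$ already gives $\tau_n\ge c>0$'' is false. Note that $\tau_n = M_n^{p-1}(T-t_n)=\bigl[w_{x_n}(0,s_n)\bigr]^{p-1}$, where $w_{x_n}$ is the similarity-variables function of \eqref{defw}. The Type~1 bound \eqref{type1} gives only the \emph{upper} bound $\tau_n\le C^{p-1}$, and no lower bound, since $w_{x_n}(0,s_n)$ can be arbitrarily small. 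Indeed $\tau_n\to 0$ precisely when $u(x_n,t_n)=o\bigl((T-t_n)^{-1/(p-1)}\bigr)$, the degenerate regime in which $(x_n,t_n)$ is far from the blow-up in parabolic scale --- exactly the case one has to work hardest to rule out. In that regime your Type~1 bound for $v_n$ reads $|v_n(y,s)|\le C(\tau_n-s)^{-1/(p-1)}$ with $\tau_n\to 0$, so the uniform $L^\infty$ control near $(0,0)$ is lost: the bound at $s=0$ is $C\tau_n^{-1/(p-1)}\to\infty$, and the $C^{2,1}_{\mathrm{loc}}$ compactness up to $(0,0)$, which you need to pass $\Delta v_n(0,0)$ to the limit, fails.

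For the same reason, your opening step --- that parabolic regularity forces $M_n\to\infty$ --- is also not justified as stated: when $t_n\to T$ the Type~1 bound controls $u$ only at scale $(T-t_n)^{-1/(p-1)}$ on a shrinking parabolic neighbourhood, so interior estimates give at best $|\partial_t u(x_n,t_n)|\lesssim (T-t_n)^{-p/(p-1)}$, which is perfectly compatible with $\partial_t u(x_n,t_n)\ge n$ even if $M_n$ stays bounded. The degenerate case $w_{x_n}(0,s_n)\to 0$ (equivalently $\tau_n\to 0$) needs a separate argument: the rescaled limit collapses to $W\equiv 0$, and merely knowing $\Delta W\equiv 0$ is not enough because both $\Delta w_{x_n}(0,s_n)$ and $\epsilon_0\, w_{x_n}(0,s_n)^p$ tend to zero with no comparison of rates. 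One needs something quantitative, such as the stability of the zero solution of \eqref{eqw} (Proposition~\ref{proptrap}), to close this case. Until that regime is addressed, the proof is incomplete.
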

  \begin{nb}
Since initial data given in \eqref{defw0} are
nonnegative, the same holds for $w_0(y,s)$, solution of equation
\eqref{eqw}, and $u(x,t)$, solution of equation \eqref{equ}, both constructed in
Proposition \ref{proptrap0}. This justifies the notation $u(x,t)^p$
without absolute value.
\end{nb}
\begin{proof}
See Theorem 1.7 page 144 in \cite{MZcpam98}. The fact that initial
data given in \eqref{defw0} is in $W^{2,\infty}(\m R^2)$ is necessary
to have the estimate in $\m R^2 \times [0,T)$, otherwise, if initial
data in only in $L^\infty(\m R^2)$, we will have
$u(t_0)\in W^{2,\infty}(\m R^2)$ for any $t_0>0$ by parabolic
regularity, and the statement will hold uniformly in $\m
R^2\times[t_0, T)$ from any $t_0\in(0,T)$.
\end{proof}

Now, we are ready to prove Proposition \ref{propfinal}, thanks to
Corollary \ref{corintprof} and Proposition \ref{propode}. 
  \begin{proof}[Proof of Proposition \ref{propfinal}]
 The existence of the limiting profile $u(a,T)$ follows by compactness,
exactly as in Proposition 2.2 page 269 in Merle \cite{Mcpam92}.
It remains to prove \eqref{defu*}.

\medskip

Consider any $K_0>0$.
Given any small enough $a\neq 0$, we introduce the
time $t^*(a) \in [0,T)$ such that
\begin{equation}\label{deft*}
a_1^2a_2^2+\delta(a_1^6+a_2^6)= K_0\frac \kappa{p-1}(T-t^*(a)).
\end{equation}
Note that $t^*(a) \to T$ as $a\to 0$. The conclusion will follow from
the study of $u(a,t)$ on the time interval $[t^*(a),T)$. Consider first some arbitrary
$\epsilon>0$.\\
\textit{Step 1: Initialization}. From Corollary \ref{corintprof}, we
see that 
\begin{equation}\label{init*}
\left|(T-t^*(a))^{\frac 1{p-1}}u(a,t^*(a))- \kappa(1+K_0)^{-\frac
    1{p-1}}\right|
\le \epsilon \kappa(1+K_0)^{-\frac 1{p-1}},
\end{equation}
provided that $a$ is small enough.\\
\textit{Step 2: Dynamics for $t\in [t^*(a,T)$}. From Proposition
\ref{propode}, we see that
\begin{equation}\label{ode*}
  \forall t\in [t^*(a),T),\;\;
  (1-\epsilon) u(a,t)^p - C_\epsilon \le \partial_t u(x,t)
  \le (1+\epsilon) u(a,t)^p + C_\epsilon 
\end{equation}
for some $C_\epsilon>0$. Using \eqref{init*} and \eqref{ode*}, one
easily shows that for $a$ small enough,
\[
\forall t\in [t^*(a),T),\;\;C_\epsilon \le \epsilon u(a,t)^p.
\]
Therefore, we see from \eqref{ode*} that
\[
    \forall t\in [t^*(a),T),\;\;
(1-2\epsilon) u(a,t)^p \le \partial_t u(x,t)
  \le (1+2\epsilon) u(a,t)^p.
\]
Using again \eqref{init*}, we may explicitly integrate the 2 differential
inequalities we have just derived and write for all $t\in
[t^*(a),T)$,
\begin{align*}
  \kappa \left[(T-t^*(a))(1+K_0)(1-\epsilon)^{1-p}
  -(1-2\epsilon)(t-t^*(a))\right]^{-\frac 1{p-1}} \le u(a,t) \\
  \le
   \kappa \left[(T-t^*(a))(1+K_0)(1+\epsilon)^{1-p}
    -(1+2\epsilon)(t-t^*(a))\right]^{-\frac 1{p-1}}. 
\end{align*}
Making $t\to T$, we see that
\begin{align*}
  \kappa (T-t^*(a))^{-\frac 1{p-1}} \left[(1+K_0)(1-\epsilon)^{1-p}
  -(1-2\epsilon)\right]^{-\frac 1{p-1}} \le u(a,T) \\
  \le \kappa (T-t^*(a))^{-\frac 1{p-1}} \left[(1+K_0)(1+\epsilon)^{1-p}
  -(1+2\epsilon)\right]^{-\frac 1{p-1}}.
 \end{align*}
Taking $\epsilon$ small enough, we see that
\[
  \left| u(a,T)
    - \kappa (T-t^*(a))^{-\frac 1{p-1}} K_0^{-\frac 1{p-1}}\right|
  \le C(K_0) \epsilon  (T-t^*(a))^{-\frac 1{p-1}}.
\]
Using the definitions \eqref{deft*} and \eqref{defu*} of $t^*(a)$ and
$u^*(a)$, we see that
\[
|u(a,T) - u^*(a)|\le C'(K_0) \epsilon u^*(a).
\]
Since $\epsilon>0$ was arbitrarily chosen, this concludes the proof of
Proposition \ref{propfinal}.
  \end{proof}
  
\subsection{Conclusion of the proof of the main statement}
In this section, we gather the previous statements to derive Theorem
\ref{th1}.

\begin{proof}[Proof of Theorem \ref{th1}]
Let us first note that $\delta>0$ was chosen large enough in the
beginning of Section \ref{secformu}. In fact, as one can easily see
from the proof, our analysis holds for any $\delta \ge \delta_0$, for
some large enough $\delta_0$.

\medskip

  Using Proposition \ref{proptrap0}, we have the existence
  of a solution $u(x,t)$ of equation \eqref{equ} blowing up at some time
  $T>0$ only at the origin, such that $q(s) \in V_A(s)$ given in
  Definition \ref{defvas} for any $s\ge s_0 = -\log T$, where $q$ is
  defined in \eqref{defq}. In particular, $\|q(s)\|_{L^2_\rho}\le A
  se^{-2s}$ for all $s\ge s_0$, by \eqref{smallVA}. From the expansion \eqref{target} of the
  profile $\varphi$ defined in \eqref{goodprof}, we see that item (i)
  of Theorem \ref{th1} holds in $L^2_\rho(\m R^2)$. Using parabolic
  regularity, the convergence holds also uniformly in any compact set
  of $\m R^2$.

  \medskip

  As for items (ii) and (iii) of Theorem \ref{th1}, they directly
  follow from Propositions \ref{propintprof} and \ref{propfinal}.
This concludes the proof of Theorem \ref{th1}.
\end{proof}

\section{Proof of the technical details}\label{sectech}
In this section, we proceed in several subsections to justify all the
technical ingredients, which were stated without proofs in the
previous sections.
\subsection{Estimates of the remainder term $R(y,s)$ defined in \eqref{defR}}\label{secR}
In this section, we prove estimate \eqref{boundR} and Lemma
\ref{lemR}. Since the former is a direct consequence of the latter,
thanks to the decomposition \eqref{decomp}, we only prove the lemma.

\begin{proof}[Proof of Lemma \ref{lemR}]
Take $r\ge 2$.  In the following, all the expansions are valid in
  $L^r_\rho$ for $s\to \infty$. This is the case in particular for \eqref{target}.
 As one may convince himself by differentiating the expression
  \eqref{goodprof} then making an expansion as $s\to \infty$, we will
  find the same result as if we have directly differentiated the
  expression \eqref{target}. Using \eqref{Lhihj},
  it follows that
  \begin{align*}
\partial_s\varphi(y,s) =& \;e^{-s} h_2h_2 \nonumber\\
                                 &-2e^{-2s}\left\{-\delta (h_6h_0+h_0h_6)
                                    +\gamma (h_4h_2+h_2h_4)
                +\frac p{2\kappa} h_4h_4\right\}
    +O\left( e^{-3s}\right),\nonumber\\
(\q L-1)  \varphi(y,s) =& \; 2 e^{-s} h_2h_2 \\
                                 & +e^{-2s}\left\{3\delta (h_6h_0+h_0h_6)
                                    -3\gamma (h_4h_2+h_2h_4)
                -\frac {2p}\kappa h_4h_4\right\}
                 +O\left( e^{-3s}\right).\nonumber
  \end{align*}
 Since $\|\varphi(\cdot,s)\|_{L^\infty} \le \kappa+C_0$ for all $s\ge
 0$ (see item (ii) of Lemma \ref{lemphi}), using a Taylor expansion, we write
  \[
    \varphi^p = \kappa^p+p\kappa^{p-1}(\varphi-\kappa)
    +\frac{p(p-1)}2\kappa^{p-2}(\varphi-\kappa)^2 +O((\varphi-\kappa)^3).
  \]
  Since
  \[
    \varphi - \kappa = O(e^{-s})\mbox{ and }
    (\varphi - \kappa)^2 = e^{-2s}h_2(y_1)^2 h_2(y_2)^2+O(e^{-3s})
  \]
  from \eqref{target}, noting that
  \[
h_2(\xi)^2=h_4(\xi)+8h_2(\xi)+8h_0(\xi)
\]
by definition \eqref{defhj}, the result follows from the above
estimates, thanks to the definitions \eqref{defR} and \eqref{conv}
of $R(y,s)$ and $\kappa$.  This concludes the proof of Lemma \ref{lemR}.
 \end{proof}

\subsection{Dynamics of equation \eqref{eqq}} \label{sectechdyn}
 We prove Proposition \ref{propdyn} in this section.

 \begin{proof}[Proof of Proposition \ref{propdyn}]
    Consider $q(s)\in V_A(s)$ a solution of equation \eqref{eqq} on
    some time interval $[s_0,s_1]$ for some $A>0$. Note that
    $\|q(s)\|_{L^\infty}$ is uniformly bounded, hence, \eqref{boundB}
    holds. Assume that the initial condition \eqref{qs0}
    holds. Consider then $s\in[s_0,s_1]$.\\ 
    (i) Consider
$i\in \m N$
     and $0\le j \le i$. If we multiply equation
    \eqref{eqq} by $k_{i-j}(y_1)k_j(y_2)$ where the polynomial $k_n$
    is introduced in \eqref{defkn}, we get the conclusion,
    by definition
    \eqref{defvij} of $q_{i,j}$ together with \eqref{Lhihj},
    \eqref{boundV}, \eqref{smallVA}, \eqref{boundB} and H\"older's
    inequality.\\ 
     (ii) If $P_-$ is the $L^2_\rho$ projector on
\begin{equation}\label{defE-}
   E_-\equiv {\rm span
    }\{h_{i-j}h_j \;|\;i\ge 8\mbox{ and }0\le j\le i\},
\end{equation}
    then $q_-(s)=P_-(q(s))$ and $R_-(s) = P_-(R(s))$. Applying this
    projector to equation \eqref{eqq}, we get
\[
\partial_s q_- = \q L q_-+P_-(Vq+B)+R_-.
\]
Multiplying by $q_-\rho$ then integrating in space, we write
\[
  \frac 12\frac d{ds} \|q_-\|_{L^2_\rho}^2 =
 \int_{\m R^2} q_- \q L q_- \rho dy
  +\int_{\m R^2} q_- [P_-(Vq+B)+R_-] \rho dy.
\]
Since the highest eigenvalue of $\q L$ on $E_-$ is $\lambda = -3$ (see
\eqref{Lhihj}), it follows that
\[
\int_{\m R^2} q_- \q L q_- \rho dy \le -3\|q_-\|_{L^2_\rho}^2.
\]
Recalling that $|B|\le C|q|^{\bar p}$ where $\bar p = \min(p,2)>1$  thanks to \eqref{boundB}, then using the Cauchy-Schwarz inequality, we write
\begin{align*}
  \left|\int_{\m R^2} q_- [P_-(Vq+B)+R_-] \rho dy\right|
&\le \|q_-\|_{L^2_\rho}\left[\| P_-(Vq+B)\|_{L^2_\rho}+\|R_-\|_{L^2_\rho}\right]\\
&\le \|q_-\|_{L^2_\rho}\left[\| Vq \|_{L^2_\rho}+C\|B\|_{L^2_\rho}+\|R_-\|_{L^2_\rho}\right]\\
  &\le \|q_-\|_{L^2_\rho}\left[\|V\|_{L^4_\rho}\|q\|_{L^4_\rho}
    +C\|q\|_{L^{2\bar p}_\rho}^{\bar p}+\|R_-\|_{L^2_\rho}\right].
\end{align*}
Using the delay regularizing estimate given in Proposition
\ref{propdelay}, together with the estimate \eqref{boundV}
satisfied by $V$,
we conclude the proof of Proposition \ref{propdyn}.
    \end{proof}

\subsection{Parabolic regularity}\label{sectechdelay}
In this section, we prove Proposition \ref{propdelay}, which gives a
parabolic regularity estimate for equation \eqref{eqq}. Through a
Duhamel formulation for equation \eqref{eqq}, we reduce the question
to the linear level, where the estimate was already proved by Herrero
and Vel\'azquez \cite{HVihp93}, as we recall in the following, along
with another classical regularity estimate:
\begin{lem}(Regularizing effect of the operator $\q L$)\label{lemVel}
 $ $\\
(i) {\bf (Herrero and Vel\'azquez \cite{HVihp93})} 
For any $r>1$, $\bar r>1$, $v_0 \in L^r_\rho(\m R^N)$ and
$s>\max\left(0, -\log(\frac{r-1}{\bar r-1})\right)$, it holds that
\[
  \|e^{s\q L}v_0\|_{L^{\bar r}_\rho(\m R^N)}\le  \frac{C(r,\bar r)
e^s
  }{(1-e^{-s})^{\frac N{2r}}(r-1-e^{-s}(\bar r-1))^{\frac N{2\bar r}}} \|v_0\|_{L^r_\rho(\m R^N)}.
\]
(ii) Consider $r\ge 2$ and $v_0\in L^r_\rho(\m R^N)$
such that $|v_0(y)|+|\nabla v_0(y)|\le C(1+|y|^k)$ for some $k\in \m N$.
Then, for all $s\ge 0$, we have
$\|e^{s\q L }v_0\|_{L^r_\rho(\m R^N)}\le Ce^s\|v_0\|_{L^r_\rho(\m
  R^N)}$.\\
(iii) For any $v_0\in L^\infty(\m R^N)$ and $s\ge 0$, it holds that
$e^{s\q L} v_0\in L^\infty$ with
$\|e^{s\q  L}v_0\|_{L^\infty} \le Ce^s\|v_0\|_{L^\infty}$.\\
(iv) For any $v_0\in W^{1,\infty}(\m R^N)$ and $s>0$, it holds that
$e^{s\q L} \nabla v_0\in L^\infty$ with 
$\|e^{s\q  L}\nabla v_0\|_{L^\infty} \le \frac{Ce^s}{\sqrt{1-e^{-s}}}\|v_0\|_{L^\infty}$.
\end{lem}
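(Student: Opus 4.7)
The plan is to reduce all three non-cited bounds to the explicit Mehler-type representation of the semigroup $e^{s\q L}$, while item (i) is a direct citation of the regularizing estimate of \cite{HVihp93} and needs no further work. Since $\q L = \q L_0 + \Id$ with $\q L_0 = \Delta - \frac12 y \cdot \nabla$, we have $e^{s\q L} = e^s\, e^{s\q L_0}$, which accounts for the $e^s$ prefactor in (ii), (iii) and (iv). The Ornstein--Uhlenbeck semigroup $e^{s\q L_0}$ admits the representation
\[
(e^{s\q L_0} v_0)(y) = \int_{\m R^N} K_0(s,y,x)\, v_0(x)\, dx,\qquad K_0(s,y,x) = \frac{\exp\!\left(-\frac{|x-e^{-s/2}y|^2}{4(1-e^{-s})}\right)}{(4\pi(1-e^{-s}))^{N/2}},
\]
and $K_0(s,y,\cdot)$ is a probability density on $\m R^N$ (since $\q L_0 \cdot 1 = 0$, hence $e^{s\q L_0} 1 = 1$). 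A direct check shows that the natural $N$-dimensional version of $\rho$ defined in \eqref{defro} is the invariant probability measure of the process, so $e^{s\q L_0}$ is self-adjoint on $L^2_\rho(\m R^N)$.

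For (ii), Jensen's inequality applied to the probability kernel $K_0$ yields $|e^{s\q L_0}v_0(y)|^r \le (e^{s\q L_0}|v_0|^r)(y)$; integrating against $\rho$ and using invariance gives the contractivity $\|e^{s\q L_0}v_0\|_{L^r_\rho} \le \|v_0\|_{L^r_\rho}$, and multiplying by $e^s$ concludes. The polynomial growth hypothesis on $v_0$ and $\nabla v_0$ is used only to justify that the kernel integral defining $e^{s\q L_0}v_0$ converges absolutely and that $v_0$ sits in $L^r_\rho$, which both follow from the Gaussian decay of $K_0$ and $\rho$. Item (iii) follows immediately from the probability-density property of $K_0(s,y,\cdot)$: pointwise $|e^{s\q L_0}v_0(y)|\le\|v_0\|_{L^\infty}$, and the prefactor $e^s$ concludes.

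For (iv), the key trick is to move the derivative from $v_0$ onto the kernel by integration by parts,
\[
\bigl(e^{s\q L_0}(\nabla v_0)\bigr)(y) = -\int_{\m R^N} \nabla_x K_0(s,y,x)\, v_0(x)\, dx,
\]
which is justified for $v_0\in W^{1,\infty}$ by smooth compactly supported approximation, the Gaussian decay of $K_0$ in $x$ killing the boundary terms in the limit. Since $\nabla_x K_0(s,y,x) = -\frac{x - e^{-s/2}y}{2(1-e^{-s})}\, K_0(s,y,x)$, a centered Gaussian first moment estimate gives
\[
\int_{\m R^N} |\nabla_x K_0(s,y,x)|\, dx \le \frac{1}{2(1-e^{-s})} \int_{\m R^N} |x - e^{-s/2}y|\, K_0(s,y,x)\, dx \le \frac{C}{\sqrt{1-e^{-s}}},
\]
because the inner integral is the mean absolute deviation of a Gaussian with variance $2(1-e^{-s})$. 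Multiplying by $e^s$ then yields (iv). No step is genuinely difficult; the only mild care required is the smooth approximation argument in (iv) so that no boundary terms survive the integration by parts, and the verification that $\rho$ is precisely the invariant measure of $\q L_0$ (so that contractivity in $L^r_\rho$ holds with constant $1$).
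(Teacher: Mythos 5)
Your proof is correct. Items (i), (iii), (iv) match the paper's own proof exactly: (i) is the same citation of \cite{HVihp93}, and (iii), (iv) are the same computations from the explicit kernel \eqref{kernel} that the paper declares ``straightforward'', only with the details written out (the probability-density property of $K_0$ for (iii), and the first absolute moment of the Gaussian after moving the derivative onto the kernel for (iv)). For (ii) the paper delegates to Lemma 2.1 of \cite{MZimrn21}, so your Jensen-plus-invariance argument is a self-contained alternative; it yields the sharp $L^r_\rho$-contraction $\|e^{s\q L_0}v_0\|_{L^r_\rho}\le\|v_0\|_{L^r_\rho}$, and it uses neither the gradient bound nor, in fact, the polynomial-growth hypothesis in an essential way: since $K_0(s,y,\cdot)\,\rho(\cdot)^{-1/r}$ is a decaying Gaussian in $x$ for every $s>0$ and every $r>1$, membership $v_0\in L^r_\rho$ already forces absolute convergence of the kernel integral. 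This is mildly at odds with the remark following the lemma in the paper, which attributes the absence of $s\to 0$ blow-up in (ii), as opposed to (i), to the growth control of $\nabla v_0$; your argument shows that $L^r_\rho$-contractivity of the Mehler semigroup, which comes for free from $\rho$ being the invariant measure of $\q L_0$, is all that is needed, so the extra hypothesis in (ii) is there for convenience in the later applications rather than as the mechanism producing the bound.
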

\begin{nb}
  When $r=\bar r$ in item (i), then constant in the right-hand side blows
  up as $s\to 0$, which may seem surprising, since one expects some
  continuity of the norm in $L^r_\rho$. In fact, such a continuity is
  obtained in item (ii), thanks to an additional growth control of the gradient.
 \end{nb}
\begin{proof}
  $ $\\
(i) See Section 2 page 139 in \cite{HVihp93} where the one-dimensional
case is proved. The adaptation to higher dimensions is
straightforward.\\
(ii) See Lemma 2.1 in \cite{MZimrn21}.\\
(iii) and (iv) These estimates are straightforward from the definition
of the kernel:
 \begin{equation}\label{kernel}
e^{s\q L}(y,x)= \frac{e^s}{[4\pi(1-e^{-s})]^{N/2}}\exp\left[-
  \frac{|ye^{-\frac s2}-x|^2}{4(1-e^{-s})}\right].
\end{equation}
\end{proof}
Let us now give the proof of Proposition \ref{propdelay}.

\medskip

\begin{proof}[Proof of Proposition \ref{propdelay}]
Let us consider $A>0$ and $s_1\ge s_0 \ge 0$, and assume that $q(y,s)$
satisfies equation \eqref{eqq} for all $(y,s)\in \m R^2\times [s_0,s_1]$, with $q(s) \in
V_A(s)$ defined in \eqref{defvas}. Assume also that the initial
condition \eqref{qs0} holds. Note in particular that
$\|q(s)\|_{L^\infty}$ is uniformly bounded. Using \eqref{eqlin} together with
\eqref{boundVb}, we see that for some universal constant $C^*>0$ and for almost every
$(y,s)\in \m R^2 \times[s_0,s_1]$, we have 
\begin{equation}\label{difq}
\partial_s |q| \le (\q L +C^*)|q|+|R|.
\end{equation}
Consider now some $r\ge 2$ and introduce $s^* = -
\log(\frac{2-1}{r-1})\ge 0$, which is involved in the statement of
Lemma \ref{lemVel}. Consider then some $s\in [s_0,s_1]$, and introduce
\begin{equation}\label{defs'}
s'=\max[s_0,s-s^*].
\end{equation}
We then introduce the following Duhamel formulation of \eqref{difq}
on the interval $[s',s]$:
\[
|q(s)| \le e^{(\q L +C^*)(s-s')}|q(s')|+\int_{s'}^s  e^{(\q L +C^*)(s-\tau)}|R(\tau)|d\tau,
\]
which implies that
\begin{equation}\label{qij}
  \|q(s)\|_{L^r_\rho(\m R^2)} \le I+J
\end{equation}
where
\[
  I=\|e^{(\q L+C^*)(s-s')}|q(s')|\|_{L^r_\rho(\m R^2)}\mbox{ and }
J=\int_{s'}^s \| e^{(\q L +C^*)(s-\tau)}|R(\tau)|\|_{L^r_\rho(\m R^2)} d\tau.
\]
Let us first bound $J$ then $I$.\\
Since $R(y,s)$ and $\nabla R(y,s)$ are clearly bounded by a polynomial in $y$ by
definition \eqref{defR}, we write from item (ii) of Lemma \ref{lemVel}
and the bound \eqref{boundR} on $R$:
\begin{align*}
  J \le &\; C\int_{s'}^s e^{(1+C^*)(s-\tau)}\|R(\tau)\|_{L^r_\rho(\m R^2)}d\tau\\
  \le &\; C\int_{s'}^s e^{(1+C^*)(s-\tau)} e^{-2\tau}d\tau
  \le e^{(1+C^*)(s-s')}(s-s')e^{-2s'}d\tau.
\end{align*}
Since $s'\ge s-s^*$ from \eqref{defs'}, it follows that $s-s'\le s^*$
and $e^{-2s'}\le e^{2s^*-2s}$, hence
\begin{equation}\label{j}
J\le Cs^*e^{(3+C^*)s^*}e^{-2s}.
\end{equation}
In order to bound $I$, we consider 2 cases:\\
- If $s-s^*\ge s_0$, then $s'=s-s^*$ and $s-s'=s^*$. Using item (i) in
Lemma \ref{lemVel}, we write
\[
I\le C(r)\|q(s-s^*)\|_{L^2_\rho}.
\]
Since $q(s-s^*)\in V_A(s-s^*)$ by hypothesis, it follows from
\eqref{smallVA} that
\[
\|q(s-s^*)\|_{L^2_\rho} \le CA(s-s^*)e^{-2(s-s^*)}\le Ce^{2s^*}Ase^{-2s}.
\]
Therefore, we conclude that
\begin{equation}\label{i1}
I\le C(r) Ase^{-2s}.
\end{equation}
- If $s-s^*<s_0$, then $s'=s_0$. This time, from hypothesis
\eqref{qs0}, we see that we can apply item (ii) of Lemma \ref{lemVel}
to write
\[
I \le Ce^{(1+C^*)(s-s_0)}\|q(s_0)\|_{L^r_\rho} \le  C(r)e^{(1+C^*)s^*}As_0e^{-2s_0}.
\]
Since $s_0\le s <s_0+s^*$, it follows that
\begin{equation}\label{i2}
  I \le C(r) Ase^{-2s}.
\end{equation}
Combining \eqref{qij}, \eqref{j}, \eqref{i1} and \eqref{i2} concludes
the proof of Proposition \ref{propdelay}.
\end{proof}

\subsection{Position of the flow of \eqref{eqq} on the boundary of $V_A(s)$}\label{sectechflow}
We prove Proposition \ref{propflow} in this section.

\begin{proof}[Proof of Proposition \ref{propflow}]
 Consider $q(s)\in V_A(s)$ a solution of equation \eqref{eqq} on some time
    interval $[s_0,s_1]$ for some $A>0$, where $s_0\ge \sinit(A)$ is
    large enough so that both  Proposition \ref{propdyn} and Lemma
    \ref{lemR} apply. Assume that the initial
    condition \eqref{qs0} holds. We only prove items (i) and (iv),
    since items (ii) and (iii) follow in the same way.\\
  (i) Consider $i\le 4$, with $i$ and $j$ both even, and assume that
    \[
q_{i,j}(s_1) = \theta A e^{-2s_1},
    \]
    for some $\theta = \pm 1$. Since Lemma \ref{lemR} implies that
    $|R_{i,j}(s)|\le Ce^{-2s}$,
   by definition \eqref{defvij},
  we write from  item (i) of Proposition \ref{propdyn}, 
 \begin{equation}\label{un}
   \theta q_{i,j}'(s_1)
   \ge (1-\frac i2)A e^{-2s_1} - C_iAs_1e^{-3s_1}-Ce^{-2s_1}
\ge \frac{1-i}2 A e^{-2s_1},
\end{equation}
on the one hand, taking $A$ large enough,
then $s_0$ large enough. On the other hand, we have
\begin{equation}\label{deux}
\frac d{ds}
 A{e^{-2s}}_{|s=s_1}= - 2Ae^{-2s_1}.
\end{equation}
Since $\frac {1-i}2 \ge \frac {1-4}2 = -\frac 32 \ge -2$, the
conclusion follows.\\
(iv)
Let us assume
that $\|q_-(s_1)\|_{L^2_\rho} = A^2s_1^2
e^{-3s_1}$. Using Lemma \ref{lemR}, we see that
$\|R_-(s)\|_{L^2_\rho}\le Ce^{-3s}$, by definition \eqref{decomp}. Using 
item (ii) in Proposition \ref{propdyn}, it follows that
\begin{equation}\label{trois}
  \frac d{ds}\|q_-(s_1)\|_{L^2_\rho}
  \le -3 A^2s_1^2e^{-3s_1}+CAs_1e^{-3s_1}+Ce^{-3s_1}
  \le  -3 A^2s_1^2e^{-3s_1}+A^2s_1e^{-3s_1}
\end{equation}
on the one hand, taking $A$ large enough,
then $s_0$ large enough. On the other hand, we compute
\begin{equation}\label{quatre}
\frac d{ds} {A^2s^2 e^{-3s}}_{|s=s_1}= A^2  e^{-3s_1}(2s_1-3s_1^2),
\end{equation}
and the conclusion follows.
This concludes the proof of Proposition \ref{propflow}.
\end{proof}

\subsection{Details for the initialization} \label{sectechinit}
We prove Proposition \ref{propinit} here.

\begin{proof}[Proof of Proposition \ref{propinit}]
We will be using the notation  $d=(d_{0,0}, d_{2,0}, d_{4,0},
d_{4,2}, d_{6,0})$ for simplicity. Let us consider $A\ge 1$ and $s_0\ge 1$. The
first item (i) will be proved for any $d\in[-2,2]^5$.
The set $\q D$ will be introduced while proving item
(ii). Since the set $\q D$ we intend to construct
will be in  $[-2,2]^5$,
there will be no need to revisit the
proof of item (i) afterwards.
Note that all the expansions given below are valid
in $L^r_\rho$ for any $r\ge 2$.\\
(i) By Definition \eqref{defS} of $S(y)$ and $\bar S(y)$ and
Definition \eqref{defD} of $D$, together with \eqref{defNi}
and \eqref{tot2},  we write for $s_0$ large enough,
\begin{align}
\frac{Ae^{-2s_0}|S(y)|}D\le &\;\frac{CAe^{-2s_0}(1+|y|^4)}D\le
CAe^{-\frac{s_0}2},\nonumber\\
\frac{As_0^2e^{-3s_0}|\bar S(y)|}D\le & \;
CAs_0^2e^{-3s_0}\frac{1+|y|^6}{e^{-2s_0}+e^{-2s_0}|y|^6} \le C As_0^2e^{-s_0}. \label{boundS}
\end{align}
Using item (i) of Lemma \ref{lemphi}, the conclusion follows for $s_0$
large enough.\\
(ii) Since $D\ge p-1>0$ by definition \eqref{defD} of $D$, using the expressions \eqref{defw0} and \eqref{goodprof} of $w_0(y,s_0)$
and $\varphi$, together with item (ii) of Lemma \ref{lemphi} and \eqref{boundS}, we write
\begin{equation}\label{boundw0}
|w_0(y,s_0)|^{p-1} \le |\varphi(y,s_0)|^{p-1} +\frac {p-1}
{\kappa D}(Ae^{-2s_0}S(y) + As_0^2e^{-3s_0}\bar S(y))
\le \frac 1{p-1} +Ce^{-\frac{s_0}3},
\end{equation}
for $s_0$ large enough, and the bound on $\|w_0(s_0)\|_{L^\infty}$ follows.\\
Taking the logarithm then the gradient of $w_0(y,s_0)$ \eqref{defw0},
we write
\begin{equation}\label{gradw0}
  |\nabla w_0(y,s_0)|\le \frac{|w_0(y,s_0)|}{p-1}\left[ \frac{|\nabla {\bar
        E|}}{\bar E}+ \frac{|\nabla D|}D\right]
\end{equation}
where
\[
  \bar E = E + \frac {p-1}{\kappa D} (Ae^{-2s_0}S(y) +As_0^2 e^{-3s_0}\bar S(y)).
\]
Using \eqref{boundS}, \eqref{defE}, \eqref{lowP} and \eqref{lowQ}, we write
\begin{equation}\label{lowEb}
 \bar E\ge \frac {E_0}C \mbox{ where } E_0 =1+ e^{-s_0}(y_1^2+y_2^2)+e^{-2s_0}(y_1^4+y_2^4).
\end{equation}
Then, we write
\[
\nabla\left[\frac {S(y)}D\right]= \frac{\nabla S(y)}D-\frac{S(y)\nabla D}{D^2},
\]
hence, by definition \eqref{defS} of $S(y)$, we have
\[
  Ae^{-2s_0}\left|\nabla\left[\frac {S(y)}D\right]\right|
  \le  CAe^{-2s_0}\frac{(1+|y|^3)}D+Ae^{-2s_0}\frac{|S(y)|}D
  \frac{|\nabla D|}D
  \le CA e^{-\frac 23 s_0},
\]
thanks to \eqref{boundS} and \eqref{boundD}, together with the 
technique we used for \eqref{tot2}. Similarly, we derive that
\[
 As_0^2e^{-3s_0}\left|\nabla\left[\frac {\bar S(y)}D\right]\right|
  \le As_0^2 e^{-\frac 76 s_0}.
\]
Therefore, using \eqref{gradw0},
\eqref{lowEb} together with \eqref{boundED}, we write
\begin{equation}\label{gradEb}
  \frac{|\nabla \bar E|}{\bar E}
  \le C \frac{|\nabla E|}{E_0} + CA e^{-\frac 23 s_0} +C As_0^2 e^{-\frac 76 s_0}
  \le Ce^{-\frac {s_0}3}.
\end{equation}
Using \eqref{gradw0}, \eqref{boundw0}, \eqref{gradEb} together with
\eqref{boundD}, we obtain the following bound
\[
|\nabla w_0(y,s_0)|\le Ce^{-\frac {s_0}6}.
\]
Arguing as for \eqref{dlphi}, we show the following:
\begin{align}
             &w_0(y,s_0)=\kappa +e^{-s_0}\left(\frac\kappa{p-1} P(y)-y_1^2y_2^2\right)\nonumber\\
             & +e^{-2s_0}\left(\frac\kappa{p-1}Q(y) +AS(y) +\frac{\kappa(2-p)}{2(p-1)^2}P(y)^2
- \frac{P(y)}{p-1} y_1^2y_2^2-\delta y_1^6-\delta y_2^6+\frac
               p{2\kappa}y_1^4y_2^4\right)\nonumber\\
              &
                +As_0^2e^{-3s_0}\bar S(y) +O(Ae^{-3s_0}),\nonumber
\end{align}
uniformly for $d\in[-2,2]^5$.
Using again the expansion \eqref{dlphi} of
$\varphi$ together with definition \eqref{q0} of $q(y,s_0)$, we derive
that
\begin{equation}\label{expq}
  q(y,s_0) = Ae^{-2s_0}S(y) +As_0^2e^{-3s_0}\bar S(y)
    +O(Ae^{-3s_0})\mbox{ as }s_0\to \infty.
\end{equation}
By definition \eqref{defS} of $S(y)$ and $\bar S(y)$, together with
the definition \eqref{defvij} of the projections, we clearly see that for all  $d\in[-2,2]^4$
and $(i,j)\in I_0\equiv\{(0,0), (2,0), (4,0), (4,2)\}$,
\[
  q_{i,j}(s_0) =Ae^{-2s_0} d_{i,j} +O(Ae^{-3s_0})
  \mbox{ and }q_{6,0}(s_0) = As_0^2e^{-3s_0} d_{6,0} +O(Ae^{-3s_0})
\]
(please note that this identity holds after differentiation in $d$).
We also have $\|q_-(s_0)\|_{L^2_\rho}=O(Ae^{-3s_0})$ and
\[
  q_{i,j}(s_0) = O(Ae^{-3s_0})
  \mbox{ whenever } (i,j) \mbox{ and }(i,i-j) \mbox{ are not in }
  I_0\cup \{(6,0)\}.
\]
Recalling estimate \eqref{boundw0} and the definition \eqref{q0} of $q(y,s_0)$,
we see that
this clearly gives the existence of 
$\q D\subset [-2,2]^5$ such that
$q(s_0)\in V_A(s_0)$, \eqref{qs0} holds and
$|q_{6,2}(s_0)|+\|q_-(s_0)\|_{L^2_\rho}\le CAe^{-3s_0}$, whenever
$d\in \q D$,
with the function in \eqref{121} one-to-one,
provided that $s_0$ is large enough.
This concludes the proof of Proposition \ref{propinit}.

\end{proof}

\subsection
{Gradient estimate in the shrinking set}\label{secgrad}
This section is devoted to the proof of Proposition \ref{propgrad},
thanks to the Liouville theorem recalled in Proposition \ref{propliou}.

\begin{proof}[Proof of Proposition \ref{propgrad}]
Consider $A\ge 1$ and ${\delta_0}>0$. Proceeding by contradiction, we may
exhibit a sequence $q_n$ of solutions to equation  \eqref{eqq}
defined for all  $(y,s)\in \m R^2\times [s_{0,n}, s_{1,n}]$ for some
$s_{1,n}\ge s_{0,n}\ge n$ such that $q_n(s_{0,n})$ is given by \eqref{q0} for some parameter  $(d_{0,0,n},
  d_{2,0,n}, d_{4,0,n}, d_{4,2,n}, d_{6,0,n})\in \q D_n$ where $\q D_n=\q
  D(A,s_{0,n})$ is defined in Proposition \eqref{propinit}, with $q_n(s)\in
  V_A(s)$ for all $s\in [s_{0,n},s_{1,n}]$, and
  \begin{equation}\label{inf}
    \|\nabla q_n(s_{2,n})+\nabla \varphi(s_{2,n})\|_{L^\infty}>\delta_0 \mbox{ for some }
    s_{2,n}\in [s_{0,n}, s_{1,n}].
  \end{equation}
  Note that
  \begin{equation}\label{lims2n}
s_{2,n}\to \infty\mbox{ as } n\to \infty.
  \end{equation}
  We claim that it is enough to prove that
  \begin{equation}\label{goal}
s_{2,n}-s_{0,n}\to \infty \mbox{ as } n\to \infty
  \end{equation}
  in order to conclude. Indeed, if \eqref{goal} holds, then,
  introducing
  \begin{equation}\label{defwn}
w_n(y,s) = q_n(y,s+s_{2,n})+\varphi(y,s+s_{2,n}),
  \end{equation}
  we see from \eqref{defq} that $w_n$ is a solution of equation
  \eqref{eqw} defined for all $(y,s)\in \m R^2 \times
  [s_{0,n}-s_{2,n},0]$. Since $q_n(s) \in V_A(s)$ for all $s\in
  [s_{0,n},s_{2,n}]$, it follows by Definition \ref{defvas} that
$\|q_n(s)+\varphi(s)\|_{L^\infty}\le 2 \kappa$.
  Using this together with \eqref{inf}
   we see that for $n$ large enough, we have:
  \begin{align}
    \forall s\in [s_{0,n}-s_{2,n},0],\;\; \|w_n(s)\|_{L^\infty}&\le
2 \kappa,
\label{prop1}\\
   \|\nabla w_n(0)\|_{L^\infty} &\ge \delta_0.\label{prop2}
  \end{align}
 Applying a classical parabolic regularity technique to equation
 \eqref{eqw}, we obtain the following estimate:
 \begin{lem}[Parabolic regularity for equation \eqref{eqw}]\label{propara}
   For all $n\in \m N$, it holds that
 \begin{equation}\label{regul}
\|w_n\|_{C^{2,1,\alpha}(\m R^2\times(s_{0,n}-s_{2,n}+1,0))}\le C_0
\end{equation}
for some $C_0>0$, where $C^{2,1,\alpha}$ stands for the set of functions of space and time,
with 2 space derivatives and 1 time derivative which are $\alpha$-H\"older continuous
in both variables.
\end{lem}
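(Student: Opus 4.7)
The plan is to apply interior parabolic regularity to equation \eqref{eqw}, but first I must neutralise the unbounded drift coefficient $-\tfrac12 y\cdot\nabla w$. Since the subsequent use of this lemma only needs compactness of $(w_n)$ in $C^{2,1,\alpha}_{\mathrm{loc}}$ (to extract a Liouville-type limit), it suffices to show that for every base point $(y_0, s_0) \in \m R^2 \times (s_{0,n}-s_{2,n}+1, 0]$, the parabolic $C^{2,1,\alpha}$ norm of $w_n$ on a unit cylinder around $(y_0, s_0)$ is bounded by a constant $C_0$ independent of $(y_0, s_0, n)$.

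To achieve this, I would fix such $(y_0, s_0)$ and define
\[
\tilde w_n(z, \sigma) = w_n\bigl(z + y_0\, e^{(\sigma-s_0)/2},\, \sigma\bigr), \qquad (z,\sigma) \in B(0,2) \times [s_0-1, s_0].
\]
Using $\tfrac{d}{d\sigma}\bigl[y_0 e^{(\sigma-s_0)/2}\bigr] = \tfrac12 y_0 e^{(\sigma-s_0)/2}$, a direct computation shows that the extra drift contribution coming from the moving shift exactly cancels the shifted portion of $-\tfrac12 y\cdot\nabla w_n$, so that $\tilde w_n$ again satisfies equation \eqref{eqw} in the $(z,\sigma)$ variables, with the same bound $\|\tilde w_n\|_{L^\infty} \le 2\kappa$ inherited from \eqref{prop1}. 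On the new cylinder the drift coefficient $|z|/2 \le 1$ is bounded, and the source $g(\tilde w_n) := -\tilde w_n/(p-1) + |\tilde w_n|^{p-1}\tilde w_n$ is uniformly bounded in $L^\infty$ by $C(p,\kappa)$, with $g$ locally Lipschitz on $[-2\kappa, 2\kappa]$ since $p > 1$.

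On this bounded-coefficient, bounded-source problem I would then run the standard parabolic regularity bootstrap: interior $L^p$ theory (in the spirit of Ladyzhenskaya--Solonnikov--Ural'tseva) yields $\tilde w_n \in W^{2,1,p}$ on a slightly smaller cylinder for every $p<\infty$; Sobolev embedding then provides parabolic Hölder continuity at some exponent $\alpha_0>0$; the Lipschitz source $g(\tilde w_n)$ inherits the same Hölder regularity; and the Schauder estimates finally yield $\|\tilde w_n\|_{C^{2,1,\alpha}(B(0,1)\times[s_0-1/2, s_0])} \le C_0$ with $C_0=C_0(p,\kappa)$ independent of $(y_0, s_0, n)$. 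Translating back through the smooth map $(z,\sigma) \mapsto (z + y_0 e^{(\sigma-s_0)/2}, \sigma)$, whose Jacobian is universally controlled on the relevant cylinder, transfers this bound to $w_n$ on a unit cylinder around $(y_0, s_0)$, and \eqref{regul} follows by the arbitrariness of $(y_0, s_0)$. The only real obstacle---the linear growth of the drift in $y$---is neutralised precisely by following its characteristic curves $\sigma \mapsto y_0 e^{(\sigma-s_0)/2}$ in the definition of $\tilde w_n$.
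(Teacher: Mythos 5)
Your proposal is correct, and the essential mechanism --- neutralising the unbounded drift $-\tfrac12 y\cdot\nabla w$ by shifting along its characteristic curves --- is exactly the paper's device. Your $\tilde w_n(z,\sigma)=w_n(z+y_0 e^{(\sigma-s_0)/2},\sigma)$ is the same as the paper's $w_{a,n}(y,s)=w_n(y+ae^{s/2},s)$ with $a=y_0 e^{-s_0/2}$, and the verification that it again solves \eqref{eqw} is correct: the time-derivative of the moving center exactly supplies the drift term at the shifted location.

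Where you differ is in how the local estimate on the bounded cylinder is then obtained. The paper uses the similarity-variables transformation \eqref{defw} once more, in the other direction, to turn the local question for \eqref{eqw} on $B(0,1)$ into a regularity question for the \emph{undressed} equation \eqref{equ} (no drift, no zero-order term), and then cites a classical interior estimate (Friedman, Theorem 3 page 406 of \cite{Fjmm58}, via \cite{MZgafa98}). You instead stay in the similarity variables, observe that on $B(0,2)$ the drift coefficient $|z|/2$ and the source $g(\tilde w_n)$ are uniformly bounded, and run an explicit $L^p$ $\to$ parabolic H\"older $\to$ Schauder bootstrap. Both routes are valid: the paper's is shorter at the cost of a reference and an extra change of variables, yours is more self-contained and keeps all estimates in the single rescaled frame. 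One small point worth making explicit in your write-up is that the Lipschitz bound on $g(u)=-u/(p-1)+|u|^{p-1}u$ on $[-2\kappa,2\kappa]$ holds for all $p>1$ because $g'(u)=-1/(p-1)+p|u|^{p-1}$ is bounded there (not just for $p\ge 2$), so the H\"older regularity of $g(\tilde w_n)$ really does come for free from that of $\tilde w_n$. Also, since the final claim is a $C^{2,1,\alpha}$ bound, the statement is already satisfied with $\alpha=\alpha_0$ after a single Schauder step; no further iteration is needed.
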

\begin{proof}
Since this estimate is classical, we leave its justification to
Section \ref{appreg} in the Appendix.
\end{proof}
Recalling that $s_{0,n}-s_{2,n}\to -\infty$ from \eqref{goal}, we may use
the compactness provided by this lemma, combined to a diagonal
process, in order to extract a subsequence (still denoted by
$w_n$) such that $w_n \to w$ in $C^{2,1}$ of any compact subset of $\m
R^2\times(-\infty,0]$. Since $w_n$ is a solution of equation
\eqref{eqw}, the same holds for $w$. From Properties \eqref{prop1} and
\eqref{prop2}, it follows that
\begin{align*}
  \forall s\le 0,\;\; \|w(s)\|_{L^\infty}&\le 2\kappa,\\
   \|\nabla w(0)\|_{L^\infty} &\ge \delta_0.
\end{align*}
Using Proposition \ref{propliou}, we see that this is a
contradiction. Thus, it remains to prove \eqref{goal} in order to conclude.

\medskip

\textit{Proof of \eqref{goal}}: Since $w_n$ satisfies equation
\eqref{eqw}, differentiating this equation in space, we obtain the
following vector-valued equation on $\nabla w_n$, for all $(y,s)\in \m
R^2 \times [s_{0,n}-s_{2,n},0]$:
\[
\partial_s \nabla w_n = (\q L-\frac 32-\frac 1{p-1}) \nabla w_n+p|w_n|^{p-1}\nabla w_n,
\]
where the operator $\q L$ is defined in \eqref{defR}.\\
Using a Duhamel formulation based on the kernel \eqref{kernel},
together with the $L^\infty$ bound in \eqref{prop1}, we see
that for all $s\in  [ s_{0,n}-s_{2,n},0]$:
\begin{align*}
  \|\nabla w_n(s)\|_{L^\infty} \le
 & e^{-(\frac 1 2+\frac 1{p-1})(s+s_{2,n}-s_{0,n})}  \|\nabla w_n(s_{0,n}-s_{2,n})\|_{L^\infty}\\
&  +p(2\kappa)^{p-1}\int_{s_{0,n}-s_{2,n}}^0\|\nabla w_n(s')\|_{L^\infty} ds'.
\end{align*}
Since $\kappa^{p-1}=\frac 1{p-1}$ by definition \eqref{conv}
using Gronwall's lemma, we see that
\begin{equation}\label{gron}
  \|\nabla w_n(0)\|_{L^\infty}\le
  e^{(\frac{p 2^{p-1}-1}{p-1}-\frac 12)(s_{2,n}-s_{0,n})}  \|\nabla w_n(s_{0,n}-s_{2,n})\|_{L^\infty},
\end{equation}
on the one hand. On the other hand, recalling that by hypothesis, $q_n(s_{0,n})$ is
given by \eqref{q0} for some parameters
$(d_{0,0,n}, d_{2,0,n}, d_{4,0,n}, d_{4,2,n}, d_{6,0,n})\in \q D_n$  where
$\q D_n=\q  D(A,s_{0,n})$ is defined in Proposition \eqref{propinit},
we see from that proposition and definition \eqref{defwn} of $w_n$
that
\[
  \|\nabla w_n(s_{0,n}-s_{2,n})\|_{L^\infty}
= \|\nabla q_n(s_{0,n})+\nabla \varphi(s_{0,n}) \|_{L^\infty}
  \le Ce^{-\frac{s_{0,n}}6}.
\]
Using this together with \eqref{prop2} and \eqref{gron}, we write
\begin{align*}
  0<\delta_0 \le
  \|\nabla w_n(0)\|_{L^\infty}&\le
                                 e^{(\frac{p 2^{p-1}-1}{p-1}-\frac 12)(s_{2,n}-s_{0,n})}
                                 \|\nabla w_n(s_{0,n}-s_{2,n})\|_{L^\infty} \\
&\le Ce^{(\frac{p 2^{p-1}-1}{p-1}-\frac 12)(s_{2,n}-s_{0,n})} e^{-\frac{s_{0,n}}6}.
 \end{align*}
Since $s_{0,n} \ge n$ and $\frac{p 2^{p-1}-1}{p-1}-\frac 12>\frac 12>0$,
it follows that $s_{2,n}-s_{0,n}\to
\infty$, and \eqref{goal} holds. Since we have already showed that
\eqref{goal} yields a contradiction, this concludes the proof of
Proposition \ref{propgrad}.
 \end{proof}

\subsection{Size of the solution in the 3 regions}\label{secregions}
In this section, we prove Lemma \ref{lemregions}.

\begin{proof}[Proof of Lemma \ref{lemregions}]
  Consider
   $A\ge 1$,
$s_0\ge \sszeroun(A)$
  and $d=(d_{0,0}, d_{2,0},
  d_{4,0}, d_{4,2}, d_{6,0})\in \q D(A,s_0)$, where
 $\sszeroun(A)$
and $\q
  D(A,s_0)$ are defined in Proposition \ref{propinit}. Consider then
  $a\in \m R^2$ and introduce $y=ae^{\frac{s_0}2}$. By definition
  \eqref{defw0} and \eqref{goodprof} of initial data $w_0(y,s_0)$ and
  the profile $\varphi$, arguing as for \eqref{total}, we may improve
  that estimate and write
  \begin{equation}\label{1D}
\left|w_0(y,s_0)^{p-1} - \frac 1D \right|\le
C\left\{e^{-s_0}+\frac{N_2+N_3}D+ \frac{Ae^{-2s_0}S(y)}{D^2}
+\frac{As_0^2e^{-3s_0}\bar S(y)}{D^2}\right\}
\le C e^{-\frac{s_0}3}, 
  \end{equation}
  where $D$, $N_2$, $N_3$, $S(y)$ and $\bar S(y)$ are defined in \eqref{defD},
  \eqref{defNi} and \eqref{defS}, and where we have used the bounds \eqref{tot2},
  \eqref{tot3} and \eqref{boundS}.\\
By definitions \eqref{defD} and \eqref{defG0} of $D$ and
$G_0(a)$, we see that
\[
D = p-1+\frac{(p-1)^2}\kappa\left( e^{-s_0}y_1^2y_2^2 + \delta
  e^{-2s_0}(y_1^6+y_2^6)\right)
= (p-1)[1 + e^{s_0}G_0(a)]. 
\]
Consider now 2 nonnegative numbers $m$ and $M$
such that $0<m\le 1 \le M$.
If $a\in \q R_1$ (resp. $\q R_2$, resp. $\q R_3$) defined in \eqref{defqR}, we see by
definition \eqref{defqR} that $(p-1)(1+M)\le D$ (resp.
$(p-1)(1+m)\le D \le (p-1)(1+M)$, resp. $D \le (p-1)(1+m)$). Since
$0\le w_0(y,s_0)\le \kappa+Ce^{-\frac{s_0}3}$ by Proposition \ref{propinit}, combining
this with \eqref{1D} concludes the proof of  Lemma \ref{lemregions}.

\end{proof}

\subsection{Details for the control of $w_a(y,s)$ for $a$ in Region
  $\q R_3$}
\label{secwas}

In this section, we prove Lemmas \ref{propdeco0}, \ref{cordyn} and \ref{propdeco0'}.

\begin{proof}[Proof of  Lemma \ref{propdeco0}]
  Consider $A\ge 1$ and
$s_0\ge \sszeroun(A)$,
together with the parameter
  $d=(d_{0,0}, d_{2,0}, d_{4,0}, d_{4,2}, d_{6,0})\in \q D(A,s_0)$, where
$\sszeroun$
 and $\q D$ are defined in Proposition
  \ref{propinit}. Consider also $w_0(y,s_0)$ defined in
  \eqref{defw0}. Recalling that $\q D\subset[-2,2]^5$, we may write
  the following Taylor expansion:
  \begin{equation}\label{taylor}
    \left| w_0(y,s)-
    \kappa\left[1-\frac X{p-1}-\frac{e^{-s_0}P}{p-1}\right] \right|
    \le C\left\{  I
    +J^2
    +\m 1_{\{1<p<\frac 32\}}J^{\frac 1{p-1}} \right\}
  \end{equation}
  where
\begin{align*}
  X(y_1,y_2,s_0)=&\;\frac{p-1}\kappa\left[e^{-s_0}y_1^2y_2^2
     +\delta e^{-2s_0}(y_1^6+y_2^6)\right],\\
I= & \; Xe^{-s_0}|P|+e^{-2s_0}|Q|(1+X)
                                                +X^2(1+e^{-s_0}|P|+e^{-2s_0}|Q|)\\
  &+Ae^{-2s_0}(1+|y|^4)+As_0^2e^{-3s_0}(1+|y|^6),\\
 J=&\;X+e^{-s_0}|P|+I,
\end{align*}
  the polynomials $P$ and $Q$ are given in \eqref{defP} and
  \eqref{defQ} and have respectively 2 and 4 as degree (see
  \eqref{lowP} and \eqref{expQ}), and the constant $\delta\ge 1$ was
  already fixed large enough at the beginning of Section
  \ref{secformu}. 

  \medskip

  Consider now some $m\in (0,1)$ and $a\in \q R_3$ defined in
  \eqref{defqR}, with $a$ decomposed as in \eqref{defa}, for some
  $L\ge K\ge 0$, with $L+K\ge A$.
Given some $r\ge 2$, we may use the relation \eqref{waw0} together
with \eqref{taylor} to derive an expansion for $w_a(y,s_0)$, showing
error terms bounded by small terms in scales of $1/(L+K)$ and
$e^{-s_0}$.
In particular, the following expansions are useful:
\begin{align*}
  &P(y_1+K,y_2+L)= \frac{2(p-1)}\kappa
  \left[K^2+L^2+2+2Kh_1(y_1)+2Lh_1(y_2)+h_2(y_1)+h_2(y_2)\right],\\
  &X(y_1+K,y_2+L,s_0)=\;e^{-s_0}
  \left[y_1^2y_2^2+2Ly_1^2y_2 +2Ky_1y_2^2 +L^2y_1^2+4KLy_1y_2 +K^2y_2^2\right.\\
  &\left.+2KL^2y_1+2K^2Ly_2+K^2L^2\right]
+\delta e^{-2s_0}(K^6+L^6)+O((K^5+L^5)e^{-2s_0})
\end{align*}
in $L^r_\rho(\m R^2)$.
This latter estimate can be easily written in the
Hermite polynomials basis \eqref{defhj}.
Since by definition \eqref{defiota} of $\iota$ and \eqref{boundiota}, it follows that   
\[
  \iota \le \frac{m\kappa}{p-1}\le \frac \kappa{p-1},\;\;
  K+L\le 2 \iota^{\frac 16}e^{\frac{s_0}3}
  \mbox{ and } KL \le \sqrt\iota e^{\frac{s_0}2}, 
\]
one can easily bound all the error terms by $O\left(\frac\iota
  A\right)$ and $O\left(\iota^2\right)$, as required by the statement
of the lemma.
This concludes the proof of Lemma \ref{propdeco0}.
\end{proof}

\begin{proof}[Proof of Lemma \ref{cordyn}]
  Take
   $A\ge 1$ and
$s_0\ge \max[\sszeroun(A),\squatre(A)]$
where
$\sszeroun$
and $\squatre$ are defined in Proposition \ref{propinit} and Lemma
  \ref{propdeco0}. Consider then
 $(d_{0,0},d_{2,0}, d_{4,0}, d_{4,2}, d_{6,0})\in \q D$ defined in Proposition
\ref{propinit} and initial data $w(y,s_0)$ defined in
\eqref{defw0}. Consider also some $\eta^*\le \frac \kappa{p-1}$,
$0<m <\frac{\eta^*(p-1)}\kappa\le 1$
and $a\in \q R_3$ \eqref{defqR} given
by \eqref{defa} for some $L\ge K\ge 0$ such that $L+K\ge A$.
In this case, Proposition \ref{propinit} applies, and so does Lemma
\ref{propdeco0}. In particular, the expansion given there holds for $w_a(y,s_0)$.

\medskip

(i) Note that condition \eqref{condeta*} holds from the choice of
$\eta^*$ and $m$. Therefore, using \eqref{boundiota}, we see that
$\iota \le \frac{m\kappa}{p-1} \le \eta^*$. By definition \eqref{defs*}
of $s^*$, it follows that $ s^*= s_0+\log\frac{\eta^*}{\iota} \ge s_0$.

\medskip

(ii) Assume now that
\begin{equation}\label{walinf}
\forall s\in [s_0,s_1],\;\; \|w_a(s)\|_{L^\infty}\le 2 \kappa,
\end{equation}
for some $s_1\ge s_0$.
Introducing
\begin{equation}\label{defva}
  v_a = w_a-\kappa
  \mbox{ and }\bar s = \min(s^*,s_1),
\end{equation}
we work in the following in the interval $[s_0,\bar s]$, and proceed in 3 steps
in order to give the proof:\\  
- In Step 1, we write an equation satisfied by $v_a$ and project it on
the various components $v_{a,i,j}$ defined in \eqref{defvij}.\\
- In Step 2, we integrate those equations.\\
- In Step 3, we collect the previous information to conclude the proof.

    \bigskip

    \textbf{Step 1: Dynamics for $v_a$}

    Since $w_a$ satisfies equation \eqref{eqw}, by definition \eqref{defva}, it follows
  that $v_a$ satisfies the following equation:
\begin{equation}\label{eqva}
\forall s\in [s_0,\bar s],\;\;\partial_s v_a = \q L v_a + \bar B(v_a),
\end{equation}
where the linear operator $\q L$ is introduced in \eqref{defR} and
\begin{equation}\label{bva}
\bar B(v_a) = |\kappa+v_a|^{p-1}(\kappa+v_a) - \kappa^p - p\kappa^{p-1}v_a.
\end{equation}
In this step, we project equation
    \eqref{eqva} in order to write differential inequalities satisfied
    by the various components $v_{a,i,j}$ defined in
    \eqref{defvij} as well as $\bar P(v_a)$ where $\bar P$ is the
    $L^2_\rho$ orthogonal projector on
 \begin{equation}\label{Ebar}
\bar E={\rm span} \{h_ih_j\;|\; (i,j) \not \in \{(0,0), (1,0),(0,1),(2,0),
(1,1), (0,2), (1,2), (2,1), (2,2)\}\;\},
\end{equation}
    the orthogonal supplement of the directions appearing in the
    expansion of Lemma \ref{propdeco0}. This is our statement:
    \begin{lem}[Projections of equation \eqref{eqva}] \label{lemproj}
      Under the hypotheses of Lemma \ref{cordyn},
      for all $s\in [s_0, \bar s]$,
      for all $i\in \m N$ and $j=0,\dots,i$, it holds that
\begin{equation}\label{eqvaij}
\left|v_{a,i,j}'(s)-\left(1-\frac i2\right)v_{a,i,j}(s)\right|\le C(i)\|v_a(s)\|_{L^2_\rho}^2.
\end{equation}
In addition,
\begin{align}
  \frac d{ds}\|\bar P(v_a(s))\|_{L^2_\rho}\le&
 -\frac 12 \|\bar P(v_a(s))\|_{L^2_\rho}\label{eqpbar}\\
 &+ \m 1_{\{s_0\le s\le  s_0+2\}}\Chuit\|v_a(s_0)\|_{L^4_\rho}^2
 +\m 1_{\{s_0+2\le s \le \bar s\}}\Chuit\|v_a(s-2)\|_{L^2_\rho}^2,\nonumber
\end{align}
for some universal constant $\Chuit$,  where $\bar s$ is defined in
\eqref{defva} and the projector $\bar P$ is defined right before
\eqref{Ebar}. 
\end{lem}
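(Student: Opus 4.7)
The plan is to test equation \eqref{eqva} satisfied by $v_a$ against two different classes of functions: a single Hermite eigenfunction $k_{i-j}(y_1) k_j(y_2)$ for item (i), and $\bar P(v_a)$ itself for item (ii). The key pointwise input, valid throughout $[s_0, \bar s]$ by the a priori $L^\infty$ bound \eqref{walinf} giving $|v_a| \le 3\kappa$, is the quadratic estimate $|\bar B(v_a)| \le C v_a^2$, obtained by Taylor-expanding $f(x) = |x|^{p-1} x$ around $x = \kappa$. In the subquadratic range $1 < p < 2$, the singularity of $f''(x) \sim |x|^{p-2}$ at $x = 0$ remains integrable in the remainder formula $\bar B(v_a) = \int_\kappa^{\kappa+v_a} f''(\tau) (\kappa + v_a - \tau)\, d\tau$, yielding a finite constant $C(\kappa, p)$ on the compact range $\kappa + v_a \in [-2\kappa, 2\kappa]$.

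For item (i), I would test equation \eqref{eqva} against $k_{i-j}(y_1) k_j(y_2) \rho(y) \, dy$. The definition \eqref{defvij} identifies the left-hand side as $v_{a,i,j}'(s)$, while \eqref{Lhihj} produces the eigenvalue term $(1 - \frac{i}{2}) v_{a,i,j}(s)$. The remaining contribution $\int \bar B(v_a)\, k_{i-j}(y_1) k_j(y_2) \rho \, dy$ is estimated by combining the pointwise quadratic bound on $\bar B$, the $L^\infty$ control on $v_a$, and Cauchy--Schwarz, using that polynomial factors are absorbed by the Gaussian weight so that $k_{i-j}(y_1) k_j(y_2) \rho(y)^{1/2}$ is bounded by some $C(i)$, which gives the claimed quadratic control by $\|v_a(s)\|_{L^2_\rho}^2$.

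For item (ii), I would apply the projector $\bar P$ to \eqref{eqva}, multiply by $\bar P(v_a)\, \rho$, and integrate. The directions explicitly excluded from $\bar E$ in \eqref{Ebar} correspond precisely to the eigenfunctions appearing in the Taylor expansion of Lemma \ref{propdeco0}; since all remaining eigendirections in $\bar E$ have total degree at least $3$, formula \eqref{Lhihj} yields the dissipative estimate $\int \bar P(v_a)\, \mathcal{L} \bar P(v_a)\, \rho \, dy \le -\tfrac{1}{2} \|\bar P(v_a)\|_{L^2_\rho}^2$. The nonlinear contribution is controlled by Cauchy--Schwarz and $|\bar B(v_a)| \le C v_a^2$,
\[
\Bigl|\int \bar P(v_a)\, \bar P(\bar B(v_a))\, \rho \, dy\Bigr| \le \|\bar P(v_a)\|_{L^2_\rho} \|\bar B(v_a)\|_{L^2_\rho} \le C \|\bar P(v_a)\|_{L^2_\rho} \|v_a\|_{L^4_\rho}^2,
\]
and dividing by $\|\bar P(v_a)\|_{L^2_\rho}$ (with the standard regularization at its zeros) gives an ODE with source $C \|v_a\|_{L^4_\rho}^2$. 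The principal obstacle is converting this $L^4_\rho^2$ source into the delayed $L^2_\rho^2$ bound stated in \eqref{eqpbar}: for $s \ge s_0 + 2$, I would apply a Duhamel formulation of \eqref{eqva} on $[s-2, s]$ combined with Lemma \ref{lemVel}(i) (with $r = 2$, $\bar r = 4$, whose validity requires $s - \tau > \log 3$, comfortably satisfied by the delay of $2$) to obtain $\|v_a(s)\|_{L^4_\rho} \le C \|v_a(s-2)\|_{L^2_\rho}$ modulo a nonlinear remainder absorbed via the bounded $\bar B$ and the $L^\infty$ control on $v_a$, exactly as in the proof of Proposition \ref{propdelay}; squaring yields the $\|v_a(s-2)\|_{L^2_\rho}^2$ term. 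For $s \in [s_0, s_0 + 2]$, the smoothing has not yet had time to act, so I would instead bound $\|v_a(s)\|_{L^4_\rho}^2 \le C\|v_a(s_0)\|_{L^4_\rho}^2$ via a short-time Duhamel using Lemma \ref{lemVel}(ii), which exploits the polynomial growth of the initial data and its gradient transmitted forward by the semigroup, thereby producing the $\mathbbm{1}_{\{s_0 \le s \le s_0 + 2\}}$ term.
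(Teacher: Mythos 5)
Your plan matches the paper's: project equation \eqref{eqva} onto the Hermite eigenfunctions for item (i), onto $\bar P(v_a)$ for item (ii), establish the quadratic bound $|\bar B(v_a)| \le C|v_a|^2$ from the $L^\infty$ a priori bound (your care with the $1<p<2$ singularity is correct, though compactness of the range $\kappa+v_a\in[-2\kappa,2\kappa]$ also settles it directly), read off the $-\tfrac 12$ spectral gap on $\bar E$, and convert the $\|v_a\|_{L^4_\rho}^2$ source into a delayed $L^2_\rho$ source through Lemma \ref{lemVel} with delay $2$. Your item (ii) is essentially the paper's argument.

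There is, however, a gap in your detail for item (i). Absorbing the polynomial into $\rho^{1/2}$ leaves $\int v_a^2\,\rho^{1/2}\,dy$, and since $\rho^{1/2}/\rho=\sqrt{4\pi}\,e^{|y|^2/8}$ is unbounded, this integral is \emph{not} dominated by $\|v_a\|_{L^2_\rho}^2$. The honest Cauchy--Schwarz pairing of $v_a^2$ against $|k_{i-j}k_j|$ in $L^2_\rho$ gives $\|v_a\|_{L^4_\rho}^2\,\|k_{i-j}k_j\|_{L^2_\rho}$, and combining this with $|v_a|\le 3\kappa$ yields only the \emph{linear} bound $C(i)\|v_a\|_{L^2_\rho}$, not the claimed quadratic one. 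To make the source genuinely quadratic in (i) one needs the same delay smoothing you correctly deploy for (ii): Duhamel over $[s-2,s]$ and Lemma \ref{lemVel}(i) turn $\|v_a(s)\|_{L^4_\rho}^2$ into $C\|v_a(s-2)\|_{L^2_\rho}^2$ (with the short-time variant from Lemma \ref{lemVel}(ii) near $s_0$). This is what the paper silently invokes when it refers the reader to the proof of Proposition \ref{propdyn}(i), which itself rests on the delay estimate of Proposition \ref{propdelay}. For the subsequent Gronwall integration in Lemma \ref{lemdyn} a delay by $2$ changes nothing, since the bootstrap bound \eqref{vas} on $\|v_a(s)\|_{L^2_\rho}$ is essentially monotone in $s$; but as written, the undelayed estimate \eqref{eqvaij} is not established by your $\rho^{1/2}$-absorption argument.
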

\begin{proof}
  Note first from \eqref{walinf}, \eqref{defva} and \eqref{bva} that
\begin{equation}\label{bvabound}
 \forall s\in [s_0,\bar s],
  \;\;|\bar B(v_a)|\le C|v_a|^2.
\end{equation}
This way, the proof follows as for Proposition \ref{propdyn} proved in Section \ref{sectechdyn}.
More precisely, identity \eqref{eqvaij} follows from equation
\eqref{eqva} and the quadratic estimate \eqref{bvabound} exactly as for
item (i) of that proposition. As for \eqref{eqpbar}, arguing as for
item (ii) of the same proposition, we write
  \[
    \frac d{ds}\|\bar P(v_a(s))\|_{L^2_\rho}\le
    -\frac 12 \|\bar P(v_a(s))\|_{L^2_\rho} +
    C\|v_a(s)^2\|_{L^2_\rho},
   \]
   since $\lambda = -\frac 12$ is the largest eigenvalue of $\q L$
   corresponding to the components spanning $\bar E$ \eqref{Ebar}, the
   image of the projector $\bar P$
   (see \eqref{Lhihj}). Note that
   $\lambda =-\frac 12$ corresponds to the eigenfunctions $h_3h_0$
   and $h_0h_3$.\\
   The question then reduces to the control of
   $\|v_a(s)^2\|_{L^2_\rho}=\|v_a(s)\|_{L^4_\rho}^2$.
   Arguing as we did  in Section \ref{sectechdelay} for the proof of Proposition
   \ref{propdelay}, we can apply here items (i) and (ii) of Lemma
   \ref{lemVel} (with a delay time equal to $2$). Indeed, $v_a(s)$, $v_a(s_0)$ and $\nabla
   v_a(s_0)$ are in $L^\infty(\m R^2)$, thanks to \eqref{walinf} and
   Proposition \ref{propinit}, through the transformations
   \eqref{defva} and \eqref{waw0}.
  This concludes the proof of Lemma \ref{lemproj}.
\end{proof}
   
\bigskip

\textbf{Step 2: Integration of the equations of Lemma \ref{lemproj}}

We claim the following:
 \begin{lem}[Integration of equation \eqref{eqva}]\label{lemdyn} There
   exist $\Msix$,
$\bAsixs\ge 1$
 and $\etazero>0$
such that  under the hypotheses of Lemma \ref{cordyn},
 if $A\ge \bAsixs$
and $\eta^*\le \etazero$,
 then for all $s\in [s_0,\bar s]$, it holds that   
 \begin{align*}
    \forall i=0,\dots,4\mbox{ and }j=0,\dots,i,\;\;
    |v_{a,i,j}(s)- e^{(1-\frac i2)\tau}v_{a,i,j}(s_0)|\le
    &\;\Msix(\eta^*+A^{-2})e^\tau \iota,\\
  \|\bar P(v_a(s))\|_{L^2_\rho} \le
  e^{-\frac \tau 2}  \|\bar P(v_a(s_0))\|_{L^2_\rho}
  +  &\;\Msix(\eta^*+A^{-2})e^\tau \iota,
 \end{align*}
 provided that $A$ and $s_0$ are large enough and $\eta^*$ is small enough,
  where $\bar s$, $\iota$ and $\bar P$ are defined in \eqref{defva},
  \eqref{defiota} and right before \eqref{Ebar}, with $\tau=s-s_0$.
\end{lem}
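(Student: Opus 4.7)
The plan is to integrate the differential inequalities of Lemma \ref{lemproj} via a Duhamel formula combined with a bootstrap argument on $[s_0,\bar s]$. The crucial observation is that the defining relation \eqref{defs*} forces $e^\tau \iota \le \eta^\ast$ on this interval, so the anticipated size $O(e^\tau \iota)$ of $\|v_a(s)\|_{L^2_\rho}$ is itself controlled by $\eta^\ast$; this lets the quadratic nonlinearity $\bar B(v_a) = O(v_a^2)$ play the role of a lower-order perturbation, since each factor $e^\tau \iota$ produced by the quadratic source absorbs one factor of $\eta^\ast$.

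First I would read off the initial components from Lemma \ref{propdeco0}: for $i\le 4$ and $0\le j\le i$, the value $v_{a,i,j}(s_0)$ is an explicit polynomial in $K$, $L$ and $e^{-s_0}$ of size at most $O(\iota)$ (smaller for higher modes), with overall error $O(\iota/A) + O(\iota^2)$; and since the projector $\bar P$ of \eqref{Ebar} annihilates every Hermite direction appearing in the main part of that expansion, we have $\|\bar P(v_a(s_0))\|_{L^2_\rho} = O(\iota/A) + O(\iota^2)$. Next I would set up the bootstrap by letting $s^\sharp \in [s_0,\bar s]$ denote the maximal time on which $\|v_a(s)\|_{L^2_\rho} \le C_\star e^\tau \iota$ holds, for a constant $C_\star$ to be determined at the end.

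Under the bootstrap hypothesis, the right-hand side of \eqref{eqvaij} is $O(C_\star^2 e^{2(\sigma-s_0)}\iota^2)$, and Duhamel's formula applied to \eqref{eqvaij} gives, for each $i\le 4$,
\begin{equation*}
|v_{a,i,j}(s) - e^{(1-i/2)\tau}v_{a,i,j}(s_0)| \le C(i)\, C_\star^2\, \iota^2 \int_{s_0}^s e^{(1-i/2)(s-\sigma)+2(\sigma-s_0)}\, d\sigma \le C'(i)\, \iota^2 e^{2\tau} \le C'(i)\, \eta^\ast e^\tau \iota,
\end{equation*}
the intermediate integral being bounded since $2-(1-i/2) = 1+i/2 > 0$. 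For $\bar P(v_a)$ I would integrate \eqref{eqpbar}, splitting the quadratic source on the initial window $[s_0,s_0+2]$ (where the $L^4_\rho$ bound on $v_a(s_0)$ comes from Lemma \ref{propdeco0} together with an inequality of the form $K^{2}L^{4}\le C(K^{6}+L^{6})$) and the delayed window $[s_0+2,s]$ (where the bootstrap is applied at time $\sigma-2$); the semigroup decay $e^{-\tau/2}$ combined with $e^{2(\sigma-s_0)}\iota^2 \le \eta^\ast e^{\sigma-s_0}\iota$ then produces the $O(\eta^\ast e^\tau \iota)$ remainder on top of the linear decay $e^{-\tau/2}\|\bar P(v_a(s_0))\|_{L^2_\rho}$.

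The main obstacle will be the fine bookkeeping that delivers the sharper $A^{-2}$ (rather than $A^{-1}$) factor in the remainder, together with the handling of the $i=0$ mode whose Duhamel kernel $e^{s-\sigma}$ has the slowest decay. For the latter, one must use the identity $\iota^2 e^{2\tau} = (e^\tau \iota)(e^\tau \iota)\le \eta^\ast e^\tau \iota$ verbatim so as not to accumulate any factor of $\tau$. For the former, one tracks separately how the $O(\iota/A)$ initial error contributes: on low modes it is absorbed into the explicit leading term $e^{(1-i/2)\tau}v_{a,i,j}(s_0)$ and drops out of the remainder, while what survives passes through the quadratic cross-terms of $\bar B(v_a)$, so after one application of Cauchy--Schwarz against the $O(\iota)$ leading part of $v_a$ and one time integration it yields the advertised $A^{-2}e^\tau\iota$ correction. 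Assembling the componentwise bounds with the $\bar P$ bound then gives $\|v_a(s)\|_{L^2_\rho}\le M\, e^\tau \iota$ with $M$ independent of $C_\star$, and choosing $C_\star > M$ closes the bootstrap and forces $s^\sharp = \bar s$.
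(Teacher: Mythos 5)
Your bootstrap ansatz $\|v_a(s)\|_{L^2_\rho}\le C_\star e^\tau \iota$ is false already at $s=s_0$, and this invalidates the whole set-up. From Lemma \ref{propdeco0}, $v_a(s_0)=w_a(s_0)-\kappa$ contains, besides the $(0,0)$--component $-\iota$, the low-mode terms $-e^{-s_0}\{L^2 h_2h_0+K^2 h_0h_2+\cdots\}$, whose $L^2_\rho$ size is of order $J:=e^{-s_0}(K^2+L^2)$. One has $J\le 2\iota^{1/3}$ but \emph{not} $J\lesssim \iota$: take $K=0$, $L=A$, so that $\iota=\delta e^{-2s_0}A^6$ and $J=e^{-s_0}A^2$, hence $J/\iota=e^{s_0}/(\delta A^4)\to\infty$ as $s_0\to\infty$. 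So no universal $C_\star$ makes the ansatz hold at the initial time, and $s^\sharp=s_0$. The paper avoids this by bootstrapping directly on the two quantities appearing in the conclusion of the lemma — the deviations $v_{a,i,j}(s)-e^{(1-i/2)\tau}v_{a,i,j}(s_0)$ and $\|\bar P(v_a(s))\|_{L^2_\rho}-e^{-\tau/2}\|\bar P(v_a(s_0))\|_{L^2_\rho}$ — rather than on $\|v_a(s)\|_{L^2_\rho}$. Under that ansatz one obtains the intermediate a priori bound $\|v_a(s)\|_{L^2_\rho}\le 2M'(e^\tau\iota+J)$, in which the potentially large $J$ is carried along but is harmless because it is absorbed into the free linear evolution $e^{(1-i/2)\tau}v_{a,i,j}(s_0)$ of the low modes, and so never enters the remainders.

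This also explains where the $A^{-2}$ actually comes from, which your Cauchy--Schwarz heuristic does not capture: the quadratic source in Lemma \ref{lemproj} then has size $\lesssim e^{2\tau}\iota^2+J^2$, and the key algebraic point is that
\[
  J^2=e^{-2s_0}(K^2+L^2)^2\le e^{-2s_0}\frac{(K+L)^2}{A^2}(K^2+L^2)^2\le \frac{16}{A^2}e^{-2s_0}(K^6+L^6)=\frac{16\,\iota}{\delta A^2}\le \frac{16\,\iota}{A^2},
\]
using $K+L\ge A$ and $\delta\ge1$. Multiplying the $O(\iota/A)$ initial error against the $O(\iota)$ leading part, as you propose, yields $\iota^2/A$, which is the wrong power of $A$. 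Your use of $e^{2\tau}\iota^2\le\eta^* e^\tau\iota$ is correct and is indeed what produces the $\eta^*$ factor, and your observation that $\bar P$ annihilates all the explicit terms in Lemma \ref{propdeco0} (so that $\|\bar P(v_a(s_0))\|_{L^2_\rho}=O(\iota/A)+O(\iota^2)$) is also right; but without the $J$-bookkeeping above, the argument cannot close the bootstrap nor produce the stated $A^{-2}$.
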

\begin{proof}
  This integration is at the heart of our argument, as it was already
  the case in our previous work \cite{MZimrn21} dedicated to the
  classification of the possible behaviors near non isolated blow-up
  points for equation \eqref{equ} (see in particular Section 4.4 in
  that paper). Looking in that paper
  is certainly convincing for the expert reader. To be nice to all
  readers, we summarize the integration argument below.

  \medskip

Consider $\Msix>0$ to be fixed later.
For $s_0$ large enough, we note that both identities in Lemma \ref{lemdyn} are true at
$s=s_0$. Therefore, we may proceed by contradiction and assume that one of these
identities doesn't hold at some time in the interval 
$[s_0, \bar s]$ where $\bar s$ is defined in \eqref{defva}. 
If $\ssix$ is the infimum of such times, then we 
see from continuity that both identities hold on the interval $[s_0,\ssix]$ and that one
of them has an equality case at $s=\ssix$. In the following, we will
prove that no equality case occurs, yielding a contradiction.

\medskip

Since both identities hold for all $s\in [s_0,\ssix]$,
using the estimates at initial time $s=s_0$ given in Lemma
\ref{propdeco0} and using the definition \eqref{defva} of $v_a$,
we derive the following bounds for $s_0$ large enough and for
all $s\in[s_0,\ssix]$ and $r\in \{2,4\}$:
\begin{align}
\|v_a(s_0)\|_{L^r_\rho}\le &\;\Msixp (\iota+J)
 \mbox{ where }  J=e^{-s_0}(K^2+L^2),\label{defJ}\\
  \|v_a(s)\|_{L^2_\rho}\le &  \;\Msixp (e^\tau \iota+J)
+\Msixp\Msix(\eta^*+A^{-2}) e^\tau \iota,\nonumber 
 \end{align}
 for some universal constant $\Msixp>0$,
 where $\tau=s-s_0$ hereafter.
Taking $\eta^*>0$ small enough and $A$ large enough, we write
for all $s\in[s_0,\ssix]$,
\begin{equation}\label{vas}
  \|v_a(s)\|_{L^2_\rho}\le   2\Msixp (e^\tau \iota+J).  
\end{equation}
Using Lemma \ref{lemproj}
together with \eqref{vas}, we write for $s_0$
large enough, for all  $i=0,\dots,4$,
$j=0,\dots,i$ and $s\in[s_0,\ssix]$,
\begin{align*}
  \left|v_{a,i,j}'(s)-\left(1-\frac i2\right)v_{a,i,j}(s)\right|
  \le &\;\Chuitp {\Msixp}^2 (e^{2\tau} \iota^2+J^2),\\
\frac d{ds}\|\bar P(v_a(s))\|_{L^2_\rho}\le
 -\frac 12 \|\bar P(v_a(s))\|_{L^2_\rho}
+&\;\Chuitp{\Msixp}^2(e^{2\tau}\iota^2+J^2),
\end{align*}
for some universal constant $\Chuitp>0$.\\
Integrating the first equation, we see that
for all $s\in[s_0,\ssix]$, 
\begin{equation}\label{i}
  |v_{a,i,j}(s)- e^{(1-\frac i2)\tau} v_{a,i,j}(s_0)|
  \le   \Chuitp {\Msixp}^2 (e^{2\tau} \iota^2+2e^\tau J^2).                                     
 \end{equation}
Integrating the second inequality, we see that  for all
$s\in[s_0,\ssix]$,
\begin{align}
  \|\bar P(v_a(s))\|_{L^2_\rho}
  \le \;& e^{-\frac \tau 2}\|\bar P(v_a(s_0))\|_{L^2_\rho}
        +\Chuitp{\Msixp}^2 \left(\frac 25 e^{2\tau}\iota^2+2J^2\right).\label{pb}
 \end{align}
Recalling that $\tau= s-s_0 \le \ssix - s_0 \le \bar s -s_0\le s^*-s_0$, we see by
definition \eqref{defs*} that
\begin{equation}\label{eti}
  e^\tau \iota \le \eta^*.
  \end{equation}
Moreover, recalling that $K+L\ge A$ and $\delta\ge 1$ (see the
beginning of Section \ref{secformu}), we write by definitions
\eqref{defJ} and \eqref{defiota} of $J$ and $\iota$:
\[
  J^2 \le e^{-2s_0}\frac{(K+L)^2}{A^2}(K^2+L^2)^2
  \le  e^{-2s_0}\frac{16}{A^2}(K^6+L^6) \le \frac{16\iota}{\delta A^2}
  \le \frac{16\iota}{A^2}.
\]
  Using this together with \eqref{eti}, \eqref{i} and \eqref{pb}, we see that
for $s_0$ large enough, we have  for all $s\in[s_0,\ssix]$,
 \begin{align*}
    \forall i=0,\dots,4\mbox{ and }j=0,\dots,i,\;\;
    |v_{a,i,j}(s)- e^{(1-\frac i2)\tau}v_{a,i,j}(s_0)|\le
 & \;32\Chuitp{\Msixp}^2\left(\eta^*+ A^{-2}\right)e^\tau \iota,\\
   \|\bar P(v_a(s))\|_{L^2_\rho} \le
  e^{-\frac \tau 2}\|\bar P(v_a(s_0))\|_{L^2_\rho}
   \;+&\;32\Chuitp{\Msixp}^2\left(\eta^*+A^{-2}\right)e^\tau \iota.
 \end{align*}
  Fixing
   \[
\Msix = 33\Chuitp{\Msixp}^2, 
   \]
   we see that no equality case occurs in both identities shown in
   Lemma \ref{lemdyn}. A contradiction follows from the beginning of
   the proof. This concludes the proof of Lemma \ref{lemdyn}.
 \end{proof}

\bigskip

\textbf{Step 3: Conclusion of the proof of Lemma \ref{cordyn}}

Recalling the transformation \eqref{defva}, then using Lemma
\ref{lemdyn} together with Lemma \ref{propdeco0} and the
Cauchy-Schwarz inequality, we see that for all
$s\in [s_0,\bar s]$,
\[
  \left\|w_a(\cdot,s) -\left(\kappa-e^\tau \iota\right)\right\|_{L^2_\rho}
  \le \uMsix \left\{ \Msix (\eta^*+A^{-2}) e^\tau \iota
      + e^\tau\left(\frac \iota A + \iota^2\right)
       +(L^2+K^2) e^{-s_0}\right\},
\]
for some universal constant
$\uMsix>0$.
Using the bounds \eqref{boundJ} and \eqref{boundiota}, then recalling
that $\delta\ge 1$ from the beginning of Section \ref{secformu}, we
see that
\[
  e^{-s_0}(L^2+K^2) \le 2e^{-\frac{s_0}3}
  \left(\frac \kappa{p-1}\right)^{\frac 13}.
\]
This concludes the proof of Lemma \ref{cordyn}.
\end{proof}

\medskip

\begin{proof}[Proof of Lemma \ref{propdeco0'}]
Since $w_0(y,\sigma) = \varphi(y,\sigma)+q(y,\sigma)$ by the relation
\eqref{defq}, proceeding as in \eqref{waw0}, we may introduce
\begin{align}
  \varphi_a(y,\sigma)=&\; \varphi(y+ae^{\frac \sigma 2},\sigma)
                        =\varphi(y_1+K',y_2+L',\sigma), \nonumber\\
q_a(y,\sigma) = &\; q(y+ae^{\frac \sigma 2}, \sigma)
                    = q(y_1+K', y_2+L',\sigma)\label{defqa}
\end{align}
(use \eqref{defk'l'}). This way, we write $w_a(y,\sigma) =
\varphi_a(y,\sigma) + q_a(y,\sigma)$, and the proof of \eqref{expwas}
follows by adding the expansions of $\varphi_a(y,\sigma)$ and
$q_a(y,\sigma)$, performed in 2 steps.

\medskip

\textbf{Step 1: The expansion of $\varphi_a(y,\sigma)$}

We claim that the expansion of $\varphi_a(y,\sigma)$ follows
from Lemma \ref{propdeco0}. Indeed, the input in that lemma is initial
data $w_0(y,s_0)$ \eqref{defw0}, and if one takes the parameter
$(d_{0,0}, d_{2,0}, d_{4,0},d_{4,2},d_{6,0})=(0,0,0,0,0)$ and formally
replaces $s_0$ by $\sigma$ in the definition \eqref{defw0} of initial
data $w_0(y,s_0)$, then, we recover $\varphi(y,\sigma)$ defined in
\eqref{goodprof}. In addition, the point $a$ we consider is given by
\eqref{defk'l'} with $K'+L'=A$, which falls in the framework
considered in Section \ref{seclargekl}. Therefore, Lemma
\ref{propdeco0} applies and we see that
 \begin{align}
    \varphi_a(y,\sigma)=\kappa-\iota'
     -e^{-\sigma}&\left\{2K'{L'}^2h_1h_0+2{K'}^2L'h_1h_2+{L'}^2h_2h_0+4K'L'h_1h_1
      +{K'}^2h_0h_2\right. \nonumber\\
    &+\left.2L'h_2h_1+2K'h_1h_2+h_2h_2\right\}
        +O\left(\frac {\iota'} A\right)+O({\iota'}^2), \label{expfa}
  \end{align}
in $L^r_\rho$ for any $r\ge 2$,  where $\iota'$ is given in \eqref{defiota'}.

\bigskip

\textbf{Step 2: The expansion of $q_a(y,\sigma)$}

Take $r\ge 2$. Using the decomposition \eqref{decomp} for $q(y,\sigma)$,
we write from \eqref{defqa}:  
\begin{equation}\label{decomqa}
  q_a(y,\sigma) = \bar q_a(y,\sigma) +\barbelow q_a(y,\sigma)
  \end{equation}
  where
  \begin{equation}\label{defqb}
  \bar q_a(y,\sigma) =
  \sum_{i=0}^7\sum_{j=0}^iq_{i,j}(\sigma) h_{i-j}(y_1+K')h_j(y_2+L'),
\;\; \barbelow   q_a(y,\sigma) =q_-(y_1+K',y_2+L',\sigma).
\end{equation}
Concerning $\bar  q_a(y,\sigma)$,
recalling that $q(\sigma)\in V_A(\sigma)$ defined in Definition
\ref{defvas}, then proceeding as for the proof of Lemma
\ref{propdeco0}
given at the beginning of this subsection,
we derive that  
\begin{align}
\bar q_a(y,\sigma) =   q_{6,2}(\sigma)& \left\{
                   {L'}^4h_2h_0 +{K'}^4 h_0h_2
                   +4{L'}^3 h_2h_1 +4{K'}^3 h_1h_2
                   +6({K'}^2+{L'}^2)h_2h_2\right.\nonumber\\
                   &\left.+4K'h_3h_2+4L'h_2h_3
                     +h_4h_2+h_2h_4\right\}
+\left(\frac {\iota'} A\right) \label{q1}
\end{align}
in $L^r_\rho$.
As for $\barbelow  q_a(y,\sigma)$, we can bound it thanks to the following parabolic
regularity estimate on $q_-(\sigma)$:
\begin{lem}[Parabolic regularity for $q_-(\sigma)$]\label{lemq-}
  Under the hypotheses of Lemma \ref{propdeco0'}, it holds that for
  all $r'\ge 2$, 
\[
  \forall s\in [s_0, \sigma],\;\;
  \|q_-(s)\|_{L^{r'}_\rho} \le C(r')A^2 s^2e^{-3s}. 
\]
\end{lem}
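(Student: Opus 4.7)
The strategy is to adapt the parabolic regularity argument developed for Proposition \ref{propdelay} in Section \ref{sectechdelay} to the equation for $q_-$ obtained by projecting \eqref{eqq} onto $E_-$; the main source of smallness is the spectral gap of $\q L$ on $E_-$ (top eigenvalue $-3$, by \eqref{specL} and \eqref{defE-}), which is responsible for the $e^{-3s}$ decay rate. First I would apply $P_-$ to \eqref{eqq} to obtain
\[
\partial_s q_- = \q L q_- + G(y,s),\qquad G := P_-(Vq + B) + R_-,
\]
and then, for a fixed delay $s^*>0$ chosen sufficiently large in terms of $r'$, I would write the Duhamel formula
\[
q_-(s) = e^{s^*\q L} q_-(s-s^*) + \int_{s-s^*}^s e^{(s-\tau)\q L} G(\tau)\, d\tau,\qquad s\ge s_0+s^*,
\]
and the analogous formula starting from $s_0$ (using the initial bound from \eqref{qs0} and the sharper estimate $\|q_-(s_0)\|_{L^2_\rho}\le CAe^{-3s_0}$ given by Proposition \ref{propinit}) when $s-s^*<s_0$.

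To bound the free term $e^{s^*\q L} q_-(s-s^*)$, I would split $e^{s^*\q L}$ as a composition of two semigroups over $s^*/2$: the inner one acts on $q_-(s-s^*)\in E_-$ as a contraction in $L^2_\rho$ multiplied by the factor $e^{-3s^*/2}$ coming from the spectral gap, and the outer one produces the $L^2_\rho\to L^{r'}_\rho$ regularization of Lemma \ref{lemVel}(i) with a constant $C(r',s^*)$. Combined with the $V_A$ bound $\|q_-(s-s^*)\|_{L^2_\rho}\le A^2(s-s^*)^2 e^{-3(s-s^*)}$ provided by $q(s-s^*)\in V_A(s-s^*)$, this yields a contribution of size $C(r')A^2 s^2 e^{-3s}$, which is exactly the target bound.

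The source terms in $G$ are then estimated piecewise in $L^{r'}_\rho$: the remainder satisfies $\|R_-(\tau)\|_{L^{r'}_\rho}\le Ce^{-3\tau}$ because, by Lemma \ref{lemR}, the $O(e^{-2\tau})$ part of $R$ involves only Hermite polynomials of total degree at most $6$, which are orthogonal to $E_-$; the potential term is controlled by $\|Vq(\tau)\|_{L^{r'}_\rho}\le \|V(\tau)\|_{L^{2r'}_\rho}\|q(\tau)\|_{L^{2r'}_\rho}\le C(r')A\tau e^{-3\tau}$ via \eqref{boundV} and Proposition \ref{propdelay}; finally \eqref{boundB} yields $\|B(q(\tau))\|_{L^{r'}_\rho}\le C(r')(A\tau e^{-2\tau})^{\bar p}$. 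Integrating these bounds against the short-time semigroup on $[s-s^*,s]$, the boundedness of $e^{(s-\tau)\q L}$ on $L^{r'}_\rho$ (with a factor $e^{s^*}$ absorbed in $C(r',s^*)$) produces the expected contribution to $\|q_-(s)\|_{L^{r'}_\rho}$.

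The main obstacle will be the nonlinear contribution when $\bar p = \min(p,2) < 3/2$: then $(A\tau e^{-2\tau})^{\bar p}$ decays only like $e^{-2\bar p\tau}$, strictly slower than the target rate $e^{-3\tau}$. To overcome this I would bootstrap. A first Duhamel iteration using only the crude bound $\|q_-(\tau)\|_{L^{r'}_\rho}\le C(r')A\tau e^{-2\tau}$ inherited from Proposition \ref{propdelay} on the full $q$ yields an intermediate improvement in the decay rate; iterating this procedure a finite number of times (depending only on $\bar p$) and at each stage using the decomposition $q = q_b + q_-$ together with the Lipschitz bound $|B(q_b+q_-)-B(q_b)|\le C|q_-|$ to isolate the genuinely smaller $q_-$ contribution in the nonlinearity, one eventually captures the full $e^{-3s}$ decay. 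Combining with the free-term estimate above then delivers the announced bound $\|q_-(s)\|_{L^{r'}_\rho}\le C(r')A^2 s^2 e^{-3s}$.
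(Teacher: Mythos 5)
Your proposal follows the same overall line as the paper: project \eqref{eqq} onto $E_-$, write a Duhamel formula with a fixed delay $\sigma^*(r')$, control the free term via the $V_A$ bound on $q_-$ together with the $L^2_\rho\to L^{r'}_\rho$ regularization of Lemma \ref{lemVel}~(i), and control the source term via \eqref{boundV}, \eqref{boundB}, Lemma \ref{lemR} and Proposition \ref{propdelay}. The explicit factor $e^{-3\sigma^*/2}$ you extract by splitting $e^{\sigma^*\q L}$ into two half-length semigroup applications is correct but not actually needed: the paper simply applies the one-step regularization to $q_-(\sigma-\sigma^*)$ and absorbs the resulting $e^{3\sigma^*}$ from rewriting $e^{-3(\sigma-\sigma^*)}=e^{3\sigma^*}e^{-3\sigma}$ into the constant $C(r')$, since $\sigma^*$ is a fixed constant depending only on $r'$. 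The short-$s$ case (when $\sigma-\sigma^*<s_0$) is handled as in your sketch, using \eqref{qs0} and Lemma \ref{lemVel}~(ii).

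The interesting content is in your last paragraph. You are right that the crude nonlinear bound inherited from \eqref{boundB} and Proposition \ref{propdelay} gives only $\|B(q(\tau))\|_{L^{r'}_\rho}\le C(A\tau e^{-2\tau})^{\bar p}$, which is \emph{not} $O(A\tau e^{-3\tau})$ when $\bar p<3/2$ (i.e.\ $1<p<3/2$): the ratio grows like $e^{(3-2\bar p)\tau}$. The paper's proof at this point simply asserts ``$\|G(\sigma')\|_{L^{r'}_\rho}\le C(r')A\sigma' e^{-3\sigma'}$'' without addressing this regime, so you have put your finger on a real loose end. However, the bootstrap you sketch does not close it as stated. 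The decomposition $q=q_b+q_-$ plus the Lipschitz bound only improves the $q_-$ contribution; the remaining term $B(q_b)$ is not controlled by \eqref{boundB} at all, because that bound is derived from the uniform $L^\infty$ control on $q$, whereas $q_b=P_b(q)$ is a polynomial of degree $\le 7$ with polynomially growing coefficients and is \emph{not} uniformly bounded in $y$. So the pointwise estimate $|B(q_b)|\le C|q_b|^{\bar p}$ is not available, and $\|B(q_b)\|_{L^{r'}_\rho}$ cannot be dispatched this way. To close the case $\bar p<3/2$ one would need a sharper structure argument for $P_-(B(q))$ --- for instance a genuine Taylor expansion of $B$ around $0$ with a quantitative remainder, combined with a separate analysis of the (weighted-negligible) region where $\varphi$ is small and the expansion degenerates. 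As written, your bootstrap step is a gap, albeit one that matches an unstated gap in the paper itself.
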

Indeed, using \eqref{defqb}, we write
\begin{equation}\label{q2}
  \int |\barbelow q_a(y,\sigma)|^r \rho(y) dy
 = \int |q_-(z,\sigma)|^r \rho(z_1-K',z_2-L') dz.
\end{equation}
Then, by definition \eqref{defro}, we write
\[
\rho(z_1-K',z_2-L') = \rho(z) e^{\frac{K'z_1+L'z_2}2}e^{\frac{{K'}^2+{L'}^2}4}.
\]
Using the Cauchy-Schwarz inequality together with Lemma \ref{lemq-},
we write
\begin{align}
  &\int |q_-(z,\sigma)|^r \rho(z_1-K',z_2-L') dz\nonumber\\
  \le\;& e^{\frac{{K'}^2+{L'}^2}4}  \left(\int |q_-(z,\sigma)|^{2r} \rho(z) dz\right)^{\frac 12}
  \left(\int e^{K'z_1+L'z_2} \rho(z) dz\right)^{\frac 12}\nonumber\\
 \le\;& e^{\frac 34 ({K'}^2+{L'}^2)}  \left(\int |q_-(z,\sigma)|^{2r}
   \rho(z) dz\right)^{\frac 12}
  \le C  e^{\frac 34 ({K'}^2+{L'}^2)}  (A^2\sigma^2 e^{-3 \sigma})^r.\label{q3}
\end{align}
Using \eqref{k'l'} and recalling the definition \eqref{defiota'} of
$\iota'$, then taking  $s_0$ large enough (remember that $\sigma \ge s_0$), we
see that 
\begin{equation}\label{q4}
   e^{\frac 3{4r} ({K'}^2+{L'}^2)}  A^2\sigma^2 e^{-3 \sigma}
  \le e^{-2\sigma}\frac{A^6}A
  =e^{-2\sigma}\frac{(K'+L')^6}A
  \le \frac {2^6} A e^{-2\sigma} ({K'}^6+{L'}^6)
  \le \frac{2^6\iota'}{A\delta}.
\end{equation}
Using \eqref{q1}, \eqref{q2}, \eqref{q3} and \eqref{q4}, we obtain an expansion
for $q_a(z,\sigma)$, by \eqref{decomqa}. Adding the expansion
\eqref{expfa}, we obtain the desired expansion \eqref{expwas} in Lemma
\ref{propdeco0'}.  It remains to justify Lemma \ref{lemq-}.

\medskip

\begin{proof}[Proof of Lemma \ref{lemq-}]
  Assume here that $s_0\ge \sinit(A)$ defined in Proposition
  \ref{propdyn}, so that Proposition \ref{propdelay} applies. Consider
  then some $r'\ge 2$. Proceeding as for item (ii) of Proposition
  \ref{propdyn}, we project equation \eqref{eqq} for all
  $s\in [s_0, \sigma]$ as follows
  \[
    \partial_s q_- = \q L q_-+G \mbox{ where }
      G = P_-(Vq+B+R) 
  \]
    and $P_-$ is the $L^2_\rho$ projector on the subspace $E_-$ \eqref{defE-}.
  Given some $\sigma_0\in [s_0, \sigma]$, we may write a Duhamel
  formulation based on the kernel given in \eqref{kernel}:
  \[
    q_-(\sigma) = e^{(\sigma-\sigma_0)\q L}q(\sigma_0)
    +\int_{\sigma_0}^\sigma e^{(\sigma-\sigma')\q L}G(\sigma')d\sigma'.
  \]
  Taking the $L^{r'}_\rho$ norm, we write
  \begin{equation}\label{q-ode}
    \|q_-(\sigma)\|_{L^{r'}_\rho} \le I + II \equiv
    \|e^{(\sigma-\sigma_0)\q L}q(\sigma_0) \|_{L^{r'}_\rho} 
    +\int_{\sigma_0}^\sigma\| e^{(\sigma-\sigma')\q L}G(\sigma') \|_{L^{r'}_\rho}  d\sigma'.
  \end{equation}
  We start by bounding $II$. We claim that for any $\sigma'\in [s_0,
  \sigma]$, $G(\sigma')$ and its gradient have polynomial
  growth in $y$, allowing the application of item (ii) of Lemma
  \ref{lemVel}. Indeed, by definition \eqref{defR}, together with
  the $L^\infty$ bound on $q(s)$ from Definition \ref{defvas} and the
  gradient estimate of Proposition \ref{propgrad}, we see that
  $Vq+B(q)+R$ and its gradient have polynomial growth in $y$. By definition
  of the $P_-$ operator (see \eqref{defE-} and \eqref{decomp}), so has
  $G$. Applying item (ii) of Lemma \ref{lemVel}, we see that
  \begin{equation}\label{bound2}
    |II|\le \int_{\sigma_0}^\sigma \|G(\sigma')\|_{L^{r'}_\rho} d\sigma'.
  \end{equation}
  Since
  \[
    \|P_-(g)\|_{L^{r'}_\rho}\le C(r')\|g\|_{L^{r'}_\rho}
    \mbox{ for any }g\in L^{r'}_\rho
  \]
   (see again \eqref{defE-} and \eqref{decomp}), we write
  \[
    \|G(\sigma')\|_{L^{r'}_\rho} \le \|Vq(\sigma')\|_{L^{r'}_\rho} +
    \|B(q(\sigma'))\|_{L^{r'}_\rho} + \|R_-(\sigma')\|_{L^{r'}_\rho}.
  \]
  Using Proposition \ref{propdelay} and proceeding as for item (ii) of
  Proposition \ref{propdyn}, we see that
  \begin{equation}\label{boundG}
    \|G(\sigma')\|_{L^{r'}_\rho} \le  C(r')A \sigma' e^{-3 \sigma'}.
   \end{equation}
  As for the term $I$, introducing $\sigma^*(r')>0$ and $C^*(r')>0$
  such that the following delay regularizing effect holds for any
  $v\in L^2_\rho$ (see item (i) of Lemma \ref{lemVel}): 
 \begin{equation}\label{estvel1}
\|e^{\sigma^*(r')\q L}(v)\|_{L^{r'}_\rho} \le C^*(r')\|v\|_{L^2_\rho},
 \end{equation}
 we distinguish two cases in the following:\\
 \textbf{Case 1}: $\sigma\ge s_0+\sigma^*$. Fixing $\sigma_0 =
 \sigma-\sigma^*$, we see that $\sigma_0\ge s_0$. Using
 \eqref{estvel1}, we see by definition \eqref{q-ode} of $I$ and
 Definition \ref{defvas} of $V_A(s)$ that
 \[
   |I|\le C^*(r')\|q_-(\sigma-\sigma^*)\|_{L^2_\rho}
   \le C^*(r') A^2 (\sigma-\sigma^*)^2 e^{-3(\sigma -\sigma^*)}.
  \]
 Using \eqref{q-ode}, \eqref{bound2} and \eqref{boundG}, we see that
 \[
   \|q_-(\sigma)\|_{L^{r'}_\rho}
   \le C^*(r') A^2 \sigma^2 e^{-3(\sigma-\sigma^*)}
   +\sigma^* C(r') A (\sigma-\sigma^*) e^{-3(\sigma-\sigma^*)}
   \le \bar C(r') A^2\sigma^2 e^{-3\sigma}
 \]
 and the conclusion of Lemma \ref{lemq-} follows. \\
 \textbf{Case 2}:  $s_0\le\sigma\le s_0+\sigma^*$. Fixing $\sigma_0 = s_0$,
 and noting that $q(s_0)$ and $\nabla q(s_0)$ are bounded (see the
 hypotheses of Lemma \ref{propdeco0'} and Definition \ref{defvas} of
 $V_A(s_0)$), we can apply item (ii) of Lemma \ref{lemVel} and write
 by definition \eqref{q-ode} of $I$ and Definition \ref{defvas}:
\[
|I|\le C e^{\sigma-s_0}\|q_-(s_0)\|_{L^2_\rho} 
\le C e^{\sigma-s_0} A^2s_0^2 e^{-3s_0}
\le  C e^{\sigma^*} A^2 \sigma^2 e^{-3(\sigma-\sigma^*)},
\]
and the conclusion follows as in Case 1.
  This concludes the proof of Lemma \ref{lemq-}.
\end{proof}
This concludes the proof of Lemma \ref{propdeco0'} too.

\end{proof}

  \appendix

\section{A classical parabolic regularity estimate for equation \eqref{eqw}}\label{appreg}
  We prove Lemma \ref{propara} here. Since the argument was
  extensively used in our earlier papers, we won't give 
  details.
   \begin{proof}[Proof of Lemma \ref{propara}]
    We proceed in 2 steps: we first justify the estimate locally in space, then
    we extend it to the whole space.\\
   \textit{Step 1: Proof of a local version, where $\m R^2$ is
  replaced by $B(0,1)$, the unit ball of $\m R^2$}. 
  When restricting to $B(0,1)$, one can use the similarity variables'
  transformation to translate the problem into a regularity question
  for equation \eqref{equ}. Using the technique of Step 2 page 1060 of
  \cite{MZgafa98}, which relies on Theorem 3 page 406 of Friedman
  \cite{Fjmm58}, we get the result.\\
  \textit{Step 2: Extension to the whole space $\m R^2$}.
Introducing for any $a\in \m R^2$,
\begin{equation}\label{wanwn}
w_{a,n}(y,s) = w_n(y+ae^{\frac s2},s),
\end{equation}
we see from the similarity variables transformation \eqref{defw} that
$w_{a,n}$ is also a solution of \eqref{eqw} defined for all $s\in
  [s_{0,n},s_{2,n}]$ and satisfying the uniform bound
  \eqref{prop1}. Applying the same local regularity technique on
  $w_{a,n}$ as in Step 1,
  we show that \eqref{regul} holds also for $w_{a,n}$ with $\m R^2$
  replaced by $B(0,1)$, uniformly in $a\in \m R^2$. Using
\eqref{wanwn} and varying $a$ in the whole space $\m R^2$, we recover
the full estimate \eqref{regul} (on $\m R^2$) for $w_n$.
This concludes the proof of Lemma \ref{propara}.
\end{proof}

\section{Stability results for equation \eqref{eqw}}\label{sectrap}
This section is devoted to the proof of Propositions \ref{proptrap} and
\ref{proptrap2}.

\medskip

\begin{proof}[Proof of Proposition \ref{proptrap}]
  Consider $w$ a solution of equation \eqref{eqw} defined for all $(y,s)\in \m
  R^2\times [\suno,\sdue]$ for some $\sdue\ge \suno$, with
\begin{equation}\label{normeinf}  
  |w(y,s)|\le 2 \kappa \mbox{ and }
\nabla w(0)(1+|y|)^{-k}\in L^\infty
\end{equation}
 for some $k\in \m N$.
We aim at proving that
\begin{equation}\label{hadaf}
  \forall s\in [\suno,\sdue],\;\; \|w(s)\|_{L^2_\rho}
  \le M_0 \|w(\suno)\|_{L^2_\rho}
 e^{-\frac{s}{p-1}},
\end{equation}
provided that
\begin{equation}\label{cond}
  \|w(\suno)\|_{L^2_\rho}\le \epsilon_0,
  \end{equation}
for some large $M_0\ge 1$ and small $\epsilon_0>0$.\\
We will assume that
\begin{equation}\label{assume}
\|w(\suno)\|_{L^2_\rho}>0,
\end{equation}
  otherwise $w\equiv 0$ and \eqref{hadaf} is trivial.\\
Since $\Delta w - \frac 12 y \cdot \nabla w = \frac 1 \rho\nabla \cdot
\left(\rho \nabla w\right)$ by definition \eqref{defro} of $\rho$, multiplying equation \eqref{eqw} by
$w\rho$, integrating in space, then using an integration by parts, we write for all $s\in [\suno,\sdue]$:
\begin{equation}\label{ode0}
\frac 12  \frac d{ds} \int w(y,s)^2 \rho(y) dy \le
-\frac 1{p-1} \int w(y,s)^2\rho(y) dy+\int |w(y,s)|^{p+1} \rho(y) dy. 
\end{equation}
Note that the fact that $w(0)\in L^\infty$ and $\nabla
w(0)(1+|y|)^{-k}\in L^\infty$ for some $k\in \m N$ (see
\eqref{normeinf}) is
important to justify this integration by parts,
as it is the case in item (ii) of Lemma \ref{lemVel}.
The conclusion will follow from 2 arguments: a rough estimate for
general data, then a delicate estimate for small data. In the final
step, we combine both arguments to conclude.

\medskip

\textbf{Step 1: A rough estimate for general data}

Using \eqref{ode0} together with \eqref{normeinf} and the definition
\eqref{conv}
of $\kappa$, we write  for all $s\in [\suno,\sdue]$,
\[
\frac 12  \frac d{ds} \int w(y,s)^2 \rho(y) dy \le
  \frac {2^{p-1}-1}{p-1}\int w(y,s)^2\rho(y) dy,
\]
hence,
\begin{equation}\label{rough}
\|w(s)\|_{L^2_\rho}\le e^{\frac{(2^{p-1}-1)}{p-1}s}\|w(\suno)\|_{L^2_\rho}.
\end{equation}

\medskip

\textbf{Step 2: A delicate estimate for small data and large $\sdue$}

Using again equation \eqref{eqw}, together with 
\eqref{normeinf} and the definitions
\eqref{conv}
and \eqref{defR}
of $\kappa$ and $\q L$, we write for almost every $(y,s) \in \m R^2 \times
[\suno,\sdue]$,
\[
\partial_s|w|\le (\q L-1+\frac{(2^{p-1}-1)}{p-1})|w|.
\]
Using the regularizing effect of Lemma \ref{lemVel}, we derive the
existence of $\stella>0$ and $C^*>0$ such that
if $\sdue\ge \stella$,
then
for all $s\in[\stella,\sdue]$,
we have
\begin{equation}\label{reg}
\|w(s)\|_{L^{p+1}_\rho}\le C^*\|w(s-\stella)\|_{L^2_\rho}.
\end{equation}
Now, if $s\in[\suno,\min(\stella, \sdue)]$,
we use the $L^\infty$ bound
\eqref{normeinf} and the definition
\eqref{conv}
of $\kappa$ to derive the
following rough control of the nonlinear term:
\[
\|w(s)\|_{L^{p+1}_\rho}^{p+1}\le (2\kappa)^{p-1}\|w(s)\|_{L^2_\rho}^2
= \frac{2^{p-1}}{p-1}\|w(s)\|_{L^2_\rho}^2.
\]
Using \eqref{ode0} with these 2 controls of the nonlinear term, we write for all
$s\in[\suno,\sdue]$,
the following delay differential inequality:
\begin{align}
\frac 12  \frac d{ds} \|w(s)\|_{L^2_\rho}^2 \le &- \frac
                                                  1{p-1}\|w(s)\|_{L^2_\rho}^2 \label{delay1}\\
  &+ \1_{\{\suno\le s\le  \stella\}}
\frac{2^{p-1}}{p-1}\|w(s)\|_{L^2_\rho}^2+
    \1_{\{\stella\le s\le \sdue\}}
     (C^*)^{p+1}\|w(s-\stella)\|_{L^2_\rho}^{p+1}.\nonumber
\end{align}

\medskip

\textbf{Step 3: Conclusion of the proof}

Fixing $M_0>0$ and $\epsilon_0$ such that
\begin{equation}\label{defM0}
 M_0 =2\max\left(1, e^{\frac{2^{p-1}}{p-1}\stella}\right)
\mbox{ and }
 4\left(C^*M_0\right)^{p+1}\epsilon_0^{p-1}
    e^{\frac{2\stella}{p-1}}= M_0^2 
\end{equation}
we are ready to finish the proof of \eqref{hadaf}, if \eqref{cond} and
\eqref{assume} hold. By continuity of the $L^2_\rho$ norm
of $w(s)$
\footnote{This is a consequence of the continuity in
$L^\infty$ for equation \eqref{equ}, through the transformation
\eqref{defw}}
and noting that $M_0>1$, if we proceed by contradiction and
assume that identity \eqref{hadaf} fails, then we may introduce $\bar
s\in (\suno,\sdue]$ such that
\begin{align}
  \forall s\in [\suno,\bar s],\;\;
  \|w(s)\|_{L^2_\rho}
\le M_0 \|w(\suno)\|_{L^2_\rho}
  e^{-\frac s{p-1}},
  \label{hypo}\\
   \|w(\bar s)\|_{L^2_\rho}=
 M_0 \|w(\suno)\|_{L^2_\rho}
e^{-\frac{\bar s}{p-1}}.
\label{contra}
\end{align}
We will reach a contradiction in each of the 2 cases we consider in the following.\\
\textit{Case 1:
$\bar s \le \stella$}.
In this case, using \eqref{rough}, the assumption \eqref{assume} and
the choice of $M_0$ in \eqref{defM0}, we write
\begin{align*}
\|w(\bar s)\|_{L^2_\rho}&\le
e^{\frac{(2^{p-1}-1)}{p-1}\bar s}\|w(\suno)\|_{L^2_\rho}
\le e^{\frac{(2^{p-1}-1)}{p-1}\stella}\|w(\suno)\|_{L^2_\rho}\\
                        &\le \frac{M_0}2\|w(\suno)\|_{L^2_\rho}
 e^{-\frac{\stella}{p-1}}
<
                          M_0 \|w(\suno)\|_{L^2_\rho}
 e^{-\frac{\bar s}{p-1}},
\end{align*}
and a contradiction follows by \eqref{contra}.\\
\textit{Case 2: $\stella \le \bar s \le \sdue$}. In this case, identity
\eqref{delay1} holds for any $s\in [\suno,\bar s]$. Using \eqref{rough}
when $\suno\le s \le \stella$ and \eqref{hypo} when $\stella \le s \le
\bar s$, we write for all $s\in [\suno,\bar s]$:
\begin{align*}
&  \frac 12 \frac d{ds}  \|w(s)\|_{L^2_\rho}^2\le
  -\frac 1{p-1}  \|w(s)\|_{L^2_\rho}^2\\
  &+
    \1_{\{\suno\le s\le  \stella\}}  \frac{2^{p-1}}{p-1}e^{\frac{2(2^{p-1}-1)}{p-1}s}
    \|w(\suno)\|_{L^2_\rho}^2\\
  &+    \1_{\{\stella\le s\le \bar s\}}  (C^*M_0 \|w(\suno)\|_{L^2_\rho})^{p+1}
 e^{-\frac{(p+1)(s-\stella)}{p-1}}.
\end{align*}
Integrating this equation, we see that
\begin{align*}
  \|w(\bar s)\|_{L^2_\rho}^2 &\le
  e^{-\frac{2\bar s}{p-1}}
  \{\|w(\suno)\|_{L^2_\rho}^2
    +e^{\frac{2\cdot 2^{p-1}\stella}{p-1}}\|w(\suno)\|_{L^2_\rho}^2
    +2\left(C^*M_0 \|w(\suno)\|_{L^2_\rho}\right)^{p+1}
    e^{\frac{2\stella}{p-1}}\}\\
&\le e^{-\frac{2\bar s }{p-1}}
                                      \left\{ \left(\frac{M_0}2\right)^2  \|w(\suno)\|_{L^2_\rho}^2
                                      +\left(\frac{M_0}2\right)^2
                                      \|w(\suno)\|_{L^2_\rho}^2
                                      +\frac{M_0^2}2
                                      \|w(\suno)\|_{L^2_\rho}^2\right\}\\
                                      &< \frac 34 M_0^2e^{-\frac{2\bar s}{p-1}} \|w(\suno)\|_{L^2_\rho}^2
\end{align*}
thanks to the definition \eqref{defM0} of $M_0$ and $\epsilon_0$,
together with \eqref{assume}. Thus,
a contradiction follows from \eqref{contra}. This concludes the proof
of Proposition \ref{proptrap}.
  \end{proof}

  \bigskip

  Now, we give the proof of Proposition \ref{proptrap2}.
  \begin{proof}[Proof of Proposition \ref{proptrap2}]
Consider $w$ a solution of equation of \eqref{eqw}
  defined for all $(y,s)\in \m R^2\times [\suno,\sdue]$ for some 
  $\sdue\ge \suno$, with
  \begin{equation}\label{w2k}
    \nabla w(0)(1+|y|)^{-k}\in L^\infty\mbox{ and }
    |w(y,s)|\le 2 \kappa,
   \end{equation}
 for some
  $k\in \m N$. We will prove that for some universal
 constant $M_1\ge 1$, if
\begin{equation}\label{init}
  \|w(\suno)-\psi(\sunos)\|_{L^2_\rho}\equiv \epsilon_1
\le \frac {|{\psi'}(\sunos)|}{M_1}
\end{equation}
  for some $\sunos\in \m R$, where
   $\psi$ is defined in \eqref{defpsi}, then, it holds that
\begin{equation}\label{finalg}
  \forall s\in [\suno,\sdue],\;\; \|w(s)-\psi(s+\sunos)\|_{L^2_\rho}
  \le M_1 \|w(\suno) -\psi(\sunos)\|_{L^{\bar p+1}_\rho}
\frac{|{\psi'}(s+\sunos)|}{|{\psi'}(\sunos)|}.
\end{equation}
We may assume that
\begin{equation}\label{assume1}
  \epsilon_1>0,
\end{equation}
otherwise $w(y,s)=\psi(s+\sunos)$ for any $s\ge \suno$,
from the uniqueness of solutions to equation \eqref{eqw}, and \eqref{finalg}
is trivial.

\medskip

Since $w$ and $\psi$ are both solutions of \eqref{eqw}, introducing
\begin{equation}\label{defv}
\bar \psi(s) = \psi(s+\sunos)\mbox{ and }
v(y,s) = w(y,s)- \bar\psi(s),
\end{equation}
we write the following PDE satisfied by $v$, for all
$(y,s)\in \m R^2\times  [\suno, \sdue]$:
\begin{equation}\label{eqv}
\partial_s v = \Delta v - \frac 12y\cdot \nabla v - \frac v{p-1}
+p|\tilde w|^{p-1}v,
\end{equation}
where
\begin{equation}\label{deftw}
  \tilde w(y,s)\in [w(y,s), \bar \psi(s)].
  \end{equation}
Arguing as for \eqref{ode0}, we derive the following identity from \eqref{eqv},
for all $s\in [\suno, \sdue]$:
\begin{equation}\label{ode1}
\frac 12 z'(s) \le - \frac{z(s)}{p-1}
+p\int |\tilde w(y,s)|^{p-1}v(y,s)^2 \rho(y) dy,
\mbox{ where }z(s) = \int v(y,s)^2 \rho(y) dy. 
\end{equation}
The fact that $\nabla w(0)(1+|y|)^{-k}\in L^\infty$ is useful to
justify the integration by parts in \eqref{ode1} and elsewhere.
We proceed in 2 steps, first deriving a differential inequality for
$z(s)$, then using a Gronwall argument to conclude.

\medskip

\textbf{Step 1: A differential inequality on $z(s)$}.

Since $\tilde w$ and $\bar \psi$ are bounded by \eqref{defpsi}, 
\eqref{deftw} and \eqref{w2k}, 
using the definitions \eqref{deftw} and \eqref{defv} of
$\tilde w$ and $v$, we write by continuity:
\[
  \left||\tilde w(y,s)|^{p-1}-\bar \psi^{p-1} \right|
  \le C_0 |\tilde w(y,s) - \bar \psi(s)|^{\bar p -1}
  \le C_0|v(y,s)|^{\bar p -1}
\]
for some $C_0>0$, where $\bar p = \min(p,2)$.
Plugging this in \eqref{ode1}, we write
\begin{equation}\label{ode2}
  \frac 12 z'(s) \le [- \frac 1{p-1} +p \bar \psi(s)^{p-1}]z(s)
  + C_0 \int|v(y,s)|^{\bar p+1}\rho(y) dy.
\end{equation}
Let us now bound $\|v(s)\|_{L^{\bar p+1}_\rho}$.
Using equation \eqref{eqv}, the bound \eqref{w2k}, the definitions
\eqref{deftw} and \eqref{defpsi} of $\tilde w$ and $\psi$, together with 
the definitions
\eqref{conv}
and \eqref{defR}
of $\kappa$ and $\q L$, we write for almost every $(y,s) \in \m R^2 \times
[\suno,\sdue]$,
\begin{equation}\label{eqbv}
\partial_s|v|\le (\q L-1+\frac{(2^{p-1}p-1)}{p-1})|v|.
\end{equation}
Arguing as for \eqref{reg}, we see that if $\sdue\ge \stella$, 
then, we have for all $s\in[\stella,\sdue]$,
\begin{equation}\label{after}
  \|v(s)\|_{L^{\bar p+1}_\rho}\le \bar C\|v(s-\stella)\|_{L^2_\rho}
  =\bar Cz(s-\stella)^{\frac 12},
\end{equation}
for some possibly different $\stella(p)>0$ and $\bar C>0$.
Now, if $s\in[\suno,\min(\stella, \sdue)]$, using \eqref{w2k} and the
definition \eqref{defpsi} of $\psi$, we see by definition \eqref{defv}
of $v$ that $|v|\le 3\kappa$ and $\nabla v(0)(1+|y|)^{-k}\in
L^\infty$. Therefore,
\begin{equation}\label{funes}
  \int|v(y,s)|^{\bar p+1}\rho(y) dy
  \le (3\kappa)^{\bar p -1}\int v(y,s)^2\rho(y) dy.
\end{equation}
In addition, using \eqref{eqbv}, we see that  we can apply item (ii) of
Lemma \ref{lemVel} and get from \eqref{init}
\[
\forall s\in[\suno,\min(\stella, \sdue)],\;\;
  \|v(s)\|_{L^2_\rho}\le C^*\|v(\suno)\|_{L^2_\rho}
  = C^*\epsilon_1.
\]
Using this together with \eqref{funes}, \eqref{ode2} and \eqref{after}, we see that
for all $s\in [\suno, \sdue]$, 
\begin{align}
  \frac 12 z'(s) \le & [-\frac 1{p-1}+p\bar \psi(s)^{p-1}]z(s) \label{delay2}\\
&+C_1\1_{\{\suno\le s\le  \stella\}} \epsilon_1^2
+ C_1\1_{\{\stella\le s\le \sdue\}} z(s-\stella)^{\frac{\bar p+1}2},\nonumber
\end{align}
for some $C_1>0$.

\bigskip

\textbf{Step 2: A Gronwall estimate}

Let us define
\begin{equation}\label{defza}
\bar z_p(s) = z_p(s+\sunos) \mbox{ where }
z_p(s) = \frac{e^{-\frac{2s}{p-1}}}{(1+e^{-s})^{\frac{2p}{p-1}}}
  =\left[\frac{p-1}\kappa \psi'(s)\right]^2
\end{equation}
and $\psi$ is defined in \eqref{defpsi}. Since $\psi(s)$ satisfies
equation \eqref{eqw}, it follows that $\bar z_p(s)$ 
is a solution of the linear part of \eqref{delay2},
namely
\begin{equation}\label{eqza}
\bar z_p'(s) = 2\left(-\frac 1{p-1}+p\bar \psi(s)^{p-1}\right)\bar z_p(s).
\end{equation}
Then, we introduce the following barrier
\begin{equation}\label{defbz}
\bar z(s) =\frac{M_1'\epsilon_1^2}{z_p(\sunos)}\bar z_p(s),
\end{equation}
where $M_1'>1$ will be fixed large enough later.
With this definition and recalling the definition \eqref{ode1} of
$z(s)$, we suggest to prove that 
\begin{equation}\label{finalg'}
  \forall s\in [\suno,\sdue],\;\;  z(s) \le \bar z(s),
\end{equation}
if $\epsilon_1$ defined in \eqref{init} is small enough, which clearly
implies \eqref{finalg},
by definition \eqref{defza} of $z_p$.
We proceed by contradiction and assume that identity \eqref{finalg'} fails. Since
\begin{equation}\label{pos}
  0<z(\suno)= \epsilon_1^2< M_1'\epsilon_1^2= \bar z(0),
\end{equation}
by \eqref{ode1}, \eqref{defv}, \eqref{init},
\eqref{assume1}, \eqref{defbz} and \eqref{defza},
using the continuity in time of the $L^2_\rho$ norm of $v(s)$ solution of
equation \eqref{eqv} (which is a consequence of the continuity in
$L^\infty$ for equation \eqref{equ}, through the transformations
\eqref{defw} and \eqref{defv}),
we see that \eqref{finalg'} holds at least on a
small interval to the right of $\suno$. Hence,
we may introduce $\bar s\in (\suno,\sdue]$ such that
\begin{align}
  \forall s\in [\suno,\bar s],\;\;
  z(s) \le \bar z(s),\label{hypo1}\\
  z(\bar s) =\bar z(\bar s). \label{contra1}
 \end{align}
 Using the differential inequality \eqref{delay2} together with the
 auxiliary function $\bar z_p$ which satisfies equation \eqref{eqza}, \eqref{pos}
 and \eqref{hypo1}, we write the following Gronwall estimate:
  \[
   z(\bar s) \le \bar z_p(\bar s)
   \left\{
     \frac{\epsilon_1^2}{\bar z_p(\suno)}
    +C_1 \epsilon_1^2J_1
     +C_1\left(\frac{M_1'\epsilon_1^2}{z_p(\sunos)}\right)^{\frac{\bar
         p+1}2}J_2
     \right\}
 \]
 where
 \[
   J_1 = \int_{\suno}^{\stella}\frac{d\sigma}{\bar z_p(\sigma)}
   \mbox{ and } J_2  =\int_{\stella}^{\bar s}
   \frac{\bar z_p(\sigma-\stella)^{\frac{\bar p+1}2}}{\bar z_p(\sigma)}d\sigma.
 \]
 By definition \eqref{defza}, we see that for all $s'\in[0,\stella]$, we
 have $z_p(\sunos+s') \ge
 z_p(\sunos)e^{-2\frac{(p+1)}{p-1}\stella}$, hence
 \[
  J_1 = \int_{\sunos}^{\sunos+\stella} \frac{d\sigma'}{z_p(\sigma')}
   \le \frac{\stella e^{2\frac{(p+1)}{p-1}\stella}}{z_p(\sunos)}.
 \]
 We also have
 \[
 J_2  =\int_{\sunos+\stella}^{\sunos+\bar s}
         \frac{z_p(\sigma'-\stella)^{\frac{\bar  p+1}2}}
         {z_p(\sigma')}d\sigma'
         \le \int_{-\infty}^\infty
         \frac{z_p(\sigma'-\stella)^{\frac{\bar  p+1}2}}
         {z_p(\sigma')}d\sigma' \equiv C_2(s^*).
       \]
       Imposing that
       \begin{equation}\label{defM1}
         \epsilon_1^2\le \frac{z_p(\sunos)}{M_1'(C_1C_2(\stella))^{\frac 2{\bar p-1}}},
          \end{equation}
         we see that
         \[
           z(\bar s)
           \le \frac{\epsilon_1^2}{z_p(\sunos)}\bar  z_p(\bar s)
           [1+C_1 \stella  e^{2\frac{(p+1)}{p-1}\stella} +1].
           \]
           Fixing
           \[
M_1'=3+C_1 \stella  e^{2\frac{(p+1)}{p-1}\stella},
\]
we see that a contradiction follows from \eqref{contra1},
         \eqref{hypo1} and \eqref{defbz} (remember that $\bar z(\bar
         s)>0$ by definition \eqref{defbz}, together with
         \eqref{defza} and \eqref{assume1}). Thus, \eqref{finalg'}
         holds. Since $z_p(s) = C|{\psi'}(s)|^2$ from \eqref{defza},
        using the definitions \eqref{ode1}  and
         \eqref{defbz} of $z(s)$ and $\bar z(s)$, together with the
         condition \eqref{defM1}, we conclude the proof of Proposition
         \ref{proptrap2}.  
  \end{proof}

  \def\cprime{$'$} \def\cprime{$'$}


\noindent{\bf Address}:\\
CY Cergy Paris Universit\'e, D\'epartement de math\'ematiques, 
2 avenue Adolphe Chauvin, BP 222, 95302 Cergy Pontoise cedex, France.\\
\vspace{-7mm}
\begin{verbatim}
e-mail: merle@math.u-cergy.fr
\end{verbatim}
Universit\'e Sorbonne Paris Nord, Institut Galil\'ee, 
Laboratoire Analyse, G\'eom\'etrie et Applications, CNRS UMR 7539,
99 avenue J.B. Cl\'ement, 93430 Villetaneuse, France.\\
\vspace{-7mm}
\begin{verbatim}
e-mail: hatem.zaag@math.cnrs.fr
\end{verbatim}

\end{document}